\newcommand\Modified{Sept. 18, 2010}
\documentclass{amsart}

\usepackage{amsfonts, amsmath,amscd, amssymb, latexsym, mathrsfs, stmaryrd, verbatim, wasysym }
\usepackage{graphicx}
\usepackage[all]{xy}


\newtheorem{theorem}{Theorem}
\newtheorem{definition}{Definition}

\newtheorem{corollary}[theorem]{Corollary}

\newtheorem{lemma}[theorem]{Lemma}
\newtheorem{proposition}[theorem]{Proposition}
\newtheorem{remark}[theorem]{Remark}

\numberwithin{equation}{section}
\numberwithin{theorem}{section}

 \newcommand\datver[1]{\def\datverp%
 {\par\boxed{\boxed{\text{Version: #1; Run: \today}}}}}
 
 \usepackage{color}
\definecolor{darkgreen}{cmyk}{1,0,1,.2}
\definecolor{m}{rgb}{1,0.1,1}

\renewcommand{\bar}{\overline}

\newcommand{\df}[1]{\mathfrak{#1}}
\renewcommand{\hat}[1]{\widehat{#1}}

\newcommand{\rest}[1]{\big\rvert_{#1}} 
\newcommand{\script}[1]{\textsc{#1}}
\renewcommand{\tilde}{\widetilde}
\newcommand{\wt}[1]{\widetilde{#1}}
\newcommand{\wh}{\widehat}
\newcommand\Iielaga{{}^{\mathrm{iie}}\Lambda^*_{\Gamma}} 

\newcommand{\dR}{\mathrm{dR}}
\newcommand{\Image}{\operatorname{Image}}

\newcommand\eps\varepsilon

\newcommand\pa{\partial}

\newcommand\spec{\mathrm{spec}}
\newcommand\Spec{\mathrm{Spec}}

\newcommand\ie{\operatorname{ie}}
\newcommand\iie{\operatorname{iie}}
\newcommand\pie{\operatorname{pie}}


\newcommand\Ie{{}^{\ie}}
\newcommand\Iie{{}^{\iie}}

\newcommand\ch{\operatorname{ch}}
\newcommand\id{\operatorname{id}}

\newcommand\CI{{\mathcal{C}}^{\infty}}
\newcommand\CIc{{\mathcal{C}}^{\infty}_c}
\newcommand\CmI{{\mathcal{C}}^{-\infty}}


\newcommand{\lrpar}[1]{\left( #1 \right)}

\newcommand\ang[1]{\left\langle #1 \right\rangle}
\newcommand{\lrbrac}[1]{\left\lbrace #1 \right\rbrace}
\newcommand{\norm}[1]{\lVert #1 \rVert}
\newcommand{\abs}[1]{\left\lvert #1 \right\rvert}

\DeclareMathOperator*{\btimes}{\times} 


\newcommand{\Ch}{\operatorname{Ch}}

\newcommand\Diff{\operatorname{Diff}}

\newcommand{\Dom}{\operatorname{Dom}}
\newcommand\dvol{\operatorname{dvol}}

\newcommand{\Hom}{\operatorname{Hom}}
\newcommand\Id{\operatorname{Id}}
\newcommand{\ind}{\operatorname{ind}}
\newcommand{\Ind}{\operatorname{Ind}}

\newcommand{\loc}{\operatorname{loc}}

\newcommand{\sign}{\operatorname{sign}}

\newcommand{\supp}{\operatorname{supp}}

\newcommand\Mand{\text{ and }}

\newcommand\Mforevery{\text{ for every }}

\newcommand\Mif{\text{ if }}

\newcommand\Mor{\text{ or }}

\newcommand\Mst{\text{ s.t. }}

\newcommand\Mthen{\text{ then }}
\newcommand\Mwhere{\text{ where }}



\DeclareMathAlphabet{\mathpzc}{OT1}{pzc}{m}{it}
\newcommand{\cl}[1]{\mathpzc{cl}\left( #1 \right)}


\newcommand\paperintro%
        {%
         }
\newcommand\paperbody%
        {%
         }


\newcommand\bbC{\mathbb{C}}
\newcommand\bbD{\mathbb{D}}

\newcommand\bbK{\mathbb{K}}

\newcommand\bbN{\mathbb{N}}

\newcommand\bbQ{\mathbb{Q}}
\newcommand\bbR{\mathbb{R}}
\newcommand\bbS{\mathbb{S}}

\newcommand\bbZ{\mathbb{Z}}

\newcommand\cB{\mathcal{B}}

\newcommand\cD{\mathcal{D}}
\newcommand\cE{\mathcal{E}}
\newcommand\cF{\mathcal{F}}

\newcommand\cH{\mathcal{H}}
\newcommand\cI{\mathcal{I}}

\newcommand\cK{\mathcal{K}}

\newcommand\cM{\mathcal{M}}
\newcommand\cN{\mathcal{N}}
\newcommand\cO{\mathcal{O}}
\newcommand\cP{\mathcal{P}}

\newcommand\cS{\mathcal{S}}
\newcommand\cT{\mathcal{T}}
\newcommand\cU{\mathcal{U}}
\newcommand\cV{\mathcal{V}}

\newcommand\bN{\mathbf{N}}

\newcommand{\RR}{\mathbb{R}}

\newcommand{\CC}{\mathbb{C}}

\newcommand{\e}{\epsilon}
\newcommand{\del}{\partial}

\newcommand{\calC}{{\mathcal C}}

\newcommand{\calH}{{\mathcal H}}

\newcommand{\calU}{{\mathcal U}}
\newcommand{\calV}{{\mathcal V}}
\newcommand{\calW}{{\mathcal W}}

\newcommand{\frakS}{{\mathfrak S}}

\newcommand{\ovl}{\overline}




\datver{\Modified}

\begin{document}
\title[The signature package on Witt space]{The signature package on Witt spaces.}

\author{Pierre Albin}
\address{Courant Institute and Institute for Advanced Study \newline
Current address: Department of Mathematics, University of Illinois at Urbana-Champaign}
\email{palbin@illinois.edu}
\author{Eric Leichtnam}
\address{CNRS Institut de Math\'ematiques de Jussieu}
\author{Rafe Mazzeo}
\address{Department of Mathematics, Stanford University}
\email{mazzeo@math.stanford.edu}
\author{Paolo Piazza}
\address{Dipartimento di Matematica, Sapienza Universit\`a di Roma}
\email{piazza@mat.uniroma1.it}


\begin{abstract}
In this paper we prove a variety of results about the signature operator
on Witt spaces. First, we give a parametrix construction for the signature 
operator on any compact, oriented, stratified pseudomanifold $X$ which satisfies 
the Witt condition. This construction, which is inductive over the `depth' of the 
singularity, is then used to show that the signature operator
is essentially self-adjoint and has discrete spectrum of finite multiplicity,
so that its index -- the analytic signature of $X$ -- is well-defined. This provides
an alternate approach to some well-known results due to Cheeger. We then prove some
new results. By coupling this parametrix construction to a $C^*_r\Gamma$ Mishchenko bundle 
associated to any Galois covering of $X$ with covering group $\Gamma$, we prove  
analogues of the same analytic results, from which it follows that one may define 
an analytic signature index class as an element of the
$K$-theory of $C^*_r\Gamma$.  We go on to establish in this setting and for this class the full 
range of conclusions which sometimes goes by the name of the signature package. 
In particular, we prove a new and purely topological theorem, asserting the stratified homotopy 
invariance of the higher signatures of $X$,  defined through the homology $L$-class 
of $X$, whenever the rational assembly map
$K_* (B\Gamma)\otimes\bbQ \to K_*(C^*_r \Gamma)\otimes \bbQ$ is injective.
\end{abstract}

\maketitle
\section{Introduction}
Let $X$ be an orientable closed compact Riemannian manifold with fundamental group $\Gamma$. Let $X^\prime$ 
be a Galois $\Gamma$-covering and $r: X\to B\Gamma$ a classifying map for $X^\prime$.  The {\it signature 
package} for the pair $(X, \, r\!:\!X\to B\Gamma)$ refers to the following collection of results: 
\begin{enumerate}
\item the signature operator with values in the Mishchenko bundle $r^* E\Gamma \times_\Gamma C^*_r\Gamma$
defines  a signature index class  $\Ind (\wt \eth_{\sign})\in K_* (C^*_r \Gamma)$, $* \equiv \dim X \;({\rm mod}\; 2)$; 
\item the signature index class is a bordism invariant; more precisely  it defines a group homomorphism 
$\Omega^{{\rm SO}}_* (B\Gamma) \to K_* (C^*_r \Gamma)$;
\item the signature 
index class is a  homotopy invariant; 
\item  there is a  K-homology signature class $[\eth_{\sign}]\in K_* (X)$ whose Chern
 character is, rationally, the Poincar\'e dual of the L-Class;
\item the assembly map $\beta: K_* (B\Gamma)\to K_* (C^*_r\Gamma)$ sends the
class $ r_* [\eth_{\sign}]$ into $\Ind (\wt \eth_{\sign})$;
\item  if the assembly map is rationally injective, one can deduce from (1) - (5) that the Novikov higher signatures 
$$\{\langle L(X)\cup r^* \alpha,[X] \rangle, \ \alpha\in H^* (B\Gamma,\bbQ)\}$$
are homotopy invariant.
\end{enumerate}

We call this list of results, together with the following item, the {\em full} signature package:

\smallskip
 \begin{list}
 {(7)} \item there is a ($C^*$-algebraic) symmetric signature $\sigma_{C^*_r\Gamma} (X,r)\in K_* (C^*_r \Gamma)$,
which is topologically defined, a bordism invariant $\sigma_{C^*_r\Gamma}: \Omega^{{\rm SO}}_* (B\Gamma) \to 
K_* (C^*_r \Gamma)$ and, in addition, is equal to the signature index class.
\end{list}

\medskip
For history and background see \cite{oberwolfach} \cite{rosenberg-anft} and for a survey we refer to \cite{Kasparov-contemporary}.

\medskip
{\it The main goal of this paper is to formulate and establish the signature package for a class of stratified pseudomanifolds known as Witt spaces. In particular, we prove by analytic methods
a new and purely topological result concerning the stratified homotopy invariance of suitably defined 
higher signatures under an injectivity assumption on the assembly map for the group
$\Gamma$.}

\medskip
The origins of the signature package on a closed oriented manifold $X$ can be traced back
to the Atiyah-Singer proof of the signature formula of Hirzebruch, 
$\sigma_{\mathrm{top}}(X) = \mathcal{L} (X):=\langle L(X),[X] \rangle $ .
In this proof the central
object is the Fredholm index of the signature operator which is proved to be simultaneously equal
to the topological signature of the manifold $\sigma_{\mathrm{top}}(X)$ and to its L-genus $\mathcal{L} (X)$:
$$\sigma_{\mathrm{top}}(X)=\ind  (\eth_{\sign})=\mathcal{L} (X)\,.$$
The idea of using index theory to investigate topological properties of $X$ received new impetus through 
the seminal work of Lusztig, who used the family index theorem of Atiyah-Singer in order to establish the Novikov 
conjecture on the homotopy invariance of the higher signatures of $X$ when $\pi_1 (X) = \bbZ^k$.
Most of the signature package as formulated here can be seen as a noncommutative version of the results of Lusztig. 
Crucial in the formulation and proof of the signature package are the following issues:
\begin{itemize}
\item the Poincar\'e duality property for the (co)homology of $X$ and more generally,
the Algebraic Poincar\'e Complex structure of its (co)chain complex;
\item the possibility of defining bordism groups $\Omega^{{\rm SO}} ( T)$, $T$ a topological space,
with cycles given by closed oriented manifolds endowed with a reference map to $T$;
\item an elliptic theory which allows one to establish the analytic properties
of $\eth_{\sign} $ and then connect them to the topological properties of $X$;
\item the possibility of extending this elliptic theory to signature operators 
twisted by a bundle of finitely generated projective $A$-modules, where $A$ is a $C^*$-algebra.
The prototype is the signature operator $\widetilde{\eth}_{\sign}$ 
twisted by the Mishchenko bundle $r^* E\Gamma\times_\Gamma C^*_r \Gamma$. 

\end{itemize}

\medskip

Once one moves from closed oriented manifold  to stratified pseudomanifolds, many of these issues need 
careful reformulation and substantially more care. First, it is well-known that Poincar\'e duality fails on a 
general stratified pseudomanifold $\wh{X}$. Next, the bordism group $\Omega^{{\rm pseudo}} (T)$, the cycles of which
are {\it arbitrary} stratified pseudomanifolds endowed with a reference map to $T$, is not the right one; indeed,
as explained in \cite{Banagl}, the coefficients of such a theory, $\Omega^{{\rm pseudo}} ({\rm point})$, are trivial. 
Finally, the analytic properties of the signature operator on the regular part of a stratified pseudomanifold 
endowed with an `incomplete iterated edge metric' (which is a particularly simple and natural type of metric that
can be constructed on such a space) are much more delicate than in the closed case. In particular, this operator may not even be
essentially self-adjoint, and the possibility of numerous distinct self-adjoint extensions complicates the 
possible connections to topology. 

The first problem has been tackled by Goresky and MacPherson in the topological setting \cite{GM} \cite{GM2}
and by Cheeger in the analytic setting \cite{Ch-PNAS} \cite{Cheeger-symp} (at least for the particular subclass of 
stratified pseudomanifolds we consider below). The search for a cohomology theory on such spaces with
some vestiges of Poincar\'e duality led Goresky and MacPherson to their discovery of intersection (co)homology 
groups, $I\! H^*_p ( \wh{X},\bbQ)$, where $p$ is a `perversity function', and to the existence of a perfect pairing 
$$I\! H^*_p (\wh{X},\bbQ)\times I\! H^*_q ( \wh{X},\bbQ)\to\bbQ$$
where $p$ and $q$ are complementary perversities. Notice that we still do not obtain a signature 
unless the perversities can be chosen the same, i.e.\ unless there is a perfect pairing 
$$I\! H^*_m (\wh{X},\bbQ)\times I\! H^*_m (\wh{X},\bbQ)\to\bbQ$$ for some perversity function $m$.  
Witt spaces constitute a subclass of stratified pseudomanifolds for which all of these difficulties can be overcome.

\smallskip
A stratified pseudomanifold $\wh{X}$ is a Witt space if any  even-dimensional link $L$ satisfies $I\! H_m ^{\dim L/2} (L,\bbQ)=0$, 
where $m$ is the upper-middle perversity function. Examples of Witt spaces include any singular projective variety over
${\mathbb C}$. We list some particularly interesting properties of Witt spaces:

\begin{itemize}
\item the upper-middle and lower-middle perversity functions define the same intersection cohomology groups, 
which are then denoted $I\! H^*_m (\wh{X})$;
\item there is a perfect pairing $$I\! H^*_m (\wh{X},\bbQ)\times I\! H^*_m (\wh{X},\bbQ)\to\bbQ\;;$$
in particular, there is a well defined intersection cohomology signature;
\item there are well-defined and nontrivial Witt bordism groups $\Omega^{{\rm Witt}} (T)$
(for example, these are rationally isomorphic to the connected version of $KO$-homology, $ko (T)\otimes_{\bbZ} \bbQ$);
\item there is a class of Riemannian metrics on the regular part of $\wh{X}$ for which 
\begin{itemize}
\item the signature operator is essentially self-adjoint
\item its unique self-adjoint extension has discrete spectrum of finite multiplicity
\item there is a de Rham-Hodge theorem, connecting the Hodge cohomology,
the $L^2$-cohomology and the intersection cohomology $I\! H^*_m (\wh{X},\bbC)$.
\end{itemize}
\end{itemize}
The topological results here are due to Goresky-MacPherson and Siegel. The analytic results are due initially to Cheeger,
though there is much further work in this area, see, for example, \cite{BS}, \cite{L}, \cite{Hunsicker-Mazzeo}, \cite{Sch-MSRI}.  Cheeger's 
results on the signature operator are based on a careful analysis of the heat kernel of the associated Laplacian. 

\smallskip
We have a number of goals in this article:
\begin{itemize}
\item we give a new treatment of Cheeger's result on the signature operator based on the methods
of geometric microlocal analysis;
\item this approach is then adapted to the signature operator $\widetilde{\eth}_{\sign}$
with value in the Mishchenko bundle $r^* E\Gamma\times_\Gamma C^*_r \Gamma$;
\item we carefully analyze the resulting index class, with particular emphasis on its stability property;
\item we collect this analytic information and establish the whole range
of results encompassed by the signature package on Witt spaces. In particular, we 
prove a {\it Novikov conjecture on Witt spaces} whenever the assembly map
for the fundamental group is rationally injective. We note again that this is a new and purely topological result.
\end{itemize}

This article is divided into three parts. In the first, we give a detailed account of the
resolution, through a series of blowups, of an arbitrary stratified pseudomanifolds (not necessarily 
satisfying the Witt condition) to a manifold with corners. This has been studied in the past, most 
notably by Verona \cite{Ver}; the novelty in our treatment is the introduction of iterated fibration structures, 
a notion due to Melrose, as an extra structure on the boundary faces of the resolved manifold with
corners. We also show that a manifold with corners with an iterated fibration structure can be 
blown down to a stratified pseudomanifold. In other words, the classes of stratified pseudomanifolds and 
of manifold with corners with iterated fibration structure are equivalent. Much of this material is based
on unpublished work by Richard Melrose, and we are grateful to him for letting us use and develop these
ideas here. We then describe the {\it (incomplete) iterated edge metrics}, which are the simplest type of incomplete
metrics adapted to this class of singular space. We show in particular that the space of such metrics is 
nonempty and path-connected.  We also consider, for any such metric, certain conformally related complete,
and `partially complete' metrics used in the ensuing analysis.

The second part of this article focuses on the analysis of natural elliptic operators, specifically, the de Rham
and signature operators, associated to incomplete iterated edge metrics.  Our methods are drawn from
geometric microlocal analysis. Indeed, in the case of simply stratified spaces, with only one singular stratum, 
there is a very detailed pseudodifferential theory \cite{Mazzeo:edge} which can be used for problems of this type,
and in the even simpler case of manifolds with isolated conic singularities, one may use the somewhat simpler 
$b$-calculus of Melrose, see \cite{APS Book}. In either of these cases, a crucial step is to consider the de Rham 
or signature operator associated to an incomplete edge or conic metric as a singular factor multiplying an elliptic 
operator in the edge or $b$-calculus, and then to study this latter, auxiliary, operator using methods adapted to 
the geometry of an associated complete metric $\tilde{g}$ on the interior of the resolved space $\wt{X}$. 

This idea was employed by Gil and Mendoza \cite{Gil-Mendoza} in the conic case, where $\widetilde{X}$ is a manifold 
with boundary and $\tilde g$ is a $b$- (or asymptotically cylindrical) metric, and also by Hunsicker and Mazzeo 
\cite{Hunsicker-Mazzeo}, for Witt spaces with simple edge singularities. We sketch this transformation briefly in these two cases.

First suppose that $(\wh{X},g)$ is a space with isolated conic singularity. Then we can write $\eth_{\mathrm{sign}} = r^{-1}D$, 
where $D$ is an elliptic differential $b$-operator of order $1$; in local coordinates $r \geq 0$ and $z$ on $F$ 
(so $F = \partial \wt{X}$), 
\begin{equation}\label{intr-sign-b}
D = A(r,z) \left(r\del_r + \eth_{\mathrm{sign}, F}\right).
\end{equation}
The second term on the right is the signature operator on the link $F$. Thus $D$ defines a $b$-operator on $\wt{X}$.
Mapping properties of the signature operator and regularity properties for solutions of $\eth_{\mathrm{sign}}u = 0$ are
consequences of the corresponding properties for $D$, which can be studied using the calculus of pseudodifferential 
$b$-operators.
 
Next, suppose that $\wh{X}$ has a simple edge singularity; then $\widetilde{X}$ is a manifold with fibered
 boundary and $\tilde g = r^{-2}g$ is a complete edge metric, where $r$ is the distance to the singular
stratum in $(\wh{X},g)$. Here too, $\eth_{\mathrm{sign}} = r^{-1}D$ where $D$ is an elliptic edge operator.
Locally, using coordinates $(r,y,z)$, where $r$ is as above (hence is the radial variable in the cone fibres), 
and $z \in F$ and $y$ are coordinates on the edge, we have
\begin{equation}\label{intr-sign-edge}
D = A(r,y,z) \left(r\del_r + \sum B_i(r,y,z) r\del_{y_i} + \eth_{\mathrm{sign},F}\right).
\end{equation}
Thus $D$ is an elliptic differential edge operator on $\widetilde{X}$ in the sense of \cite{Mazzeo:edge}, and
the pseudodifferential edge calculus from that paper can be used to obtain all necessary properties of $\eth_{\mathrm{sign}}$. 

One of the main elements in the $b$- and edge calculi is the use of model operators associated to an operator
such as $D$. In the $b$-calculus, $D$ is modelled near the cone point by its indicial operator; in the edge 
calculus, $D$ has two models: its indicial operator and its normal operator. The latter
captures the tangential behaviour of $D$ along the edge, as well as its asymptotic behaviour in the $r$
and $z$ directions.  Their mapping properties, as determined by the construction of
inverses for them, are key in understanding the analytic properties of $D$ and hence of $\eth_{\mathrm{sign}}$.

For iterated edge spaces, we proceed in a fairly similar way, using an inductive procedure. Let $(\wh{X},g)$ 
be an iterated edge space and $Y$ a stratum of maximal depth, so that $Y$ is a compact smooth manifold without boundary
and some neighbourhood of $Y$ in $\wh{X}$ is a cone bundle over $Y$ with each fibre a cone over a compact space $F$.
If this maximal depth is greater than one, then $F$ is an iterated edge space with depth one less than 
that of $\wh{X}$. If $r$ is the radial coordinate in this cone bundle, then $\eth_{\mathrm{sign}} = r^{-1}D$ where 
$D = A(r,y,z) \left(r\del_r + \sum B_i(r,y,z) r\del_{y_i} + \eth_{\mathrm{sign},F}\right)$.  Here $\eth_{\mathrm{sign},F}$ is the
signature operator on $F$, and is an iterated edge operator. The gain is that since $F$ is one step `simpler' 
than $\wh{X}$, by induction we can assume that the analytic properties of $\eth_{\mathrm{sign},F}$ are already known,
and from these we deduce the corresponding properties for $\eth_{\mathrm{sign}}$ on $\wh{X}$. Notice that we are 
conformally rescaling in only the `final' radial variable and appealing to the geometry of 
the {\it partially complete} metric $r^{-2}g$ on the complement of $Y$ in $\wh{X}$. 

Ideally, at this stage we could appeal to a complete pseudodifferential calculus adapted to this iterated edge
geometry. Such a calculus does not yet exist, but we can take a shorter route for the problems at hand. 
Rather than developing all aspects of this pseudodifferential theory at each step of this induction, we develop 
only certain parts of the Fredholm and regularity theory for the signature operator, and phrase these in terms
of a priori estimates rather than the sharp structure of the Schwartz kernel of a parametrix for it.
By establishing the correct set of estimates at each stage of the induction, we can prove the corresponding estimates 
for spaces of one greater depth. This involves analyzing the normal and indicial operators of the partial 
completion of $\eth_{\mathrm{sign}}$, and uses the Witt hypothesis in a crucial way. 

As noted earlier, an important feature of this approach is that it carries over directly when $\eth_{\mathrm{sign}}$ 
is coupled to a $C^*$ bundle. Hence the main theorem in the higher setting can be deduced with little extra effort from 
the techniques used for the ordinary case.  This is a key motivations for developing a geometric microlocal 
approach to replace the earlier successful methods of Cheeger. The fact that such techniques are well suited
to this higher setting has already played a role, for example, on manifolds with boundary, cf.\ the work of Leichtnam, Lott and 
Piazza \cite{LLP}   on the Novikov conjecture on manifolds with boundary and the survey \cite{LPFOURIER}.

This leads eventually to our main analytic and topological Theorems:
  
\begin{theorem}
Let $\wh{X}$ be any smoothly stratified pseudomanifold satisfying the Witt hypothesis. Let $g$ be any
adapted Riemannian metric on the regular part of $\wh{X}$. Denote by $\eth = d + \delta$ either the 
Hodge-de Rham operator $\eth_{\dR}$  or the signature operator $\eth_{\sign}$ associated to $g$. Then:
\begin{itemize}
\item[1)]
As an unbounded operator on $C_c^\infty (X,  \Iie\Lambda^*(X))\subset L^2_{\iie}(X;  \Iie\Lambda^*(X) )$, 
$\eth$ has a unique closed  extension, hence is essentially self-adjoint. 
\item[2)]
For any $\eps >0$, the domain of this unique closed extension, still denoted $\eth$, is contained in 
\begin{equation*}
	 \rho^{1-\eps}L^2_{\iie}(X; \Iie\Lambda^*(X)) \cap H^1_{\loc}(X;  \Iie\Lambda^*(X))
\end{equation*}
which is compactly included in $L^2_{\iie}(X;  \Iie\Lambda^*(X))$. 
\item[3)]
As an operator on its maximal domain endowed with the graph norm, $\eth$ is Fredholm.
\item[4)]
$\eth$ has discrete spectrum of finite multiplicity.
\end{itemize}
\label{MT1}
\end{theorem}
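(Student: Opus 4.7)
The plan is to prove all four conclusions simultaneously by induction on the depth of the stratification of $\wh{X}$. The base case, when $\wh{X}$ is a closed smooth manifold, is classical. For the inductive step, fix a stratum $Y$ of maximal depth with link $F$; by the inductive hypothesis, conclusions (1)--(4) hold for the signature (resp. de Rham) operator $\eth_{\sign,F}$ on $F$ with its induced adapted iterated edge metric. Near $Y$, the resolved space $\wt{X}$ carries a boundary fibration over $Y$ with fibre the resolved cone on $F$, and, as recalled in the introduction, one can write $\eth = r^{-1}D$ where, in local coordinates $(r,y,z)$,
\begin{equation*}
D = A(r,y,z)\Bigl(r\partial_r + \sum_i B_i(r,y,z)\, r\partial_{y_i} + \eth_{\sign,F}\Bigr),
\end{equation*}
an elliptic operator in the iterated edge sense with respect to the partially complete metric $\tilde g = r^{-2}g$ on $\wh{X}\setminus Y$.

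The heart of the argument is the analysis of the model operators of $D$ along $Y$. The indicial family is $I(D,\lambda)=\lambda+\eth_{\sign,F}$, whose invertibility on a critical strip around $\Re\lambda=0$ is precisely what the Witt hypothesis delivers: it forces the middle-degree $L^2$-kernel of $\eth_{\sign,F}$ acting on $F$ to vanish, so there are no indicial roots on that line. Together with the inductive hypothesis (essential self-adjointness and discrete spectrum for $\eth_{\sign,F}$), this yields invertibility of the Bessel-type normal operator $N(D)$ on appropriate weighted $L^2$ spaces via a Mellin transform in $r$ and a Fourier transform in $y$. I would then assemble these model inverses into a right parametrix $Q$ for $\eth$ whose remainder gains both a power $\rho^{1-\eps}$ of the total boundary defining function and a derivative order, and which is therefore compact on $L^2_{\iie}(X;\Iie\Lambda^*(X))$. (Alternatively, as the authors indicate, one can phrase everything in terms of a priori weighted estimates rather than constructing the full Schwartz kernel, but the structural input is the same.)

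All four conclusions follow from this parametrix. Conclusion (2) is read off directly: any $u\in\Dom(\eth_{\max})$ satisfies $u=Q\eth u + Ru$ with $R$ smoothing and weight-gaining, so $u\in \rho^{1-\eps}L^2_{\iie}\cap H^1_{\loc}$; the compact inclusion of this space into $L^2_{\iie}$ follows by a Rellich argument on the interior of $\wt X$ combined with the weight gain near $Y$ (which handles tightness there). Conclusion (1), essential self-adjointness, follows because the weight gain $\rho^{1-\eps}$ in the maximal domain allows one to cut off near $Y$ and mollify to approximate any $u\in\Dom(\eth_{\max})$ in graph norm by elements of $C_c^\infty(X;\Iie\Lambda^*(X))$; since $\eth$ is symmetric and has a unique closed extension it is essentially self-adjoint. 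Fredholmness (3) is immediate from the existence of a parametrix with compact remainder. Finally (4) follows by combining (1)--(3): $(\eth\pm i)^{-1}$ is a bounded self-adjoint operator on $L^2_{\iie}$ with range in the compactly embedded domain of (2), hence compact, which forces $\eth$ to have discrete spectrum of finite multiplicity.

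The main obstacle is the inductive passage for the normal operator: one must show that the qualitative analytic information on $\eth_{\sign,F}$ supplied by the inductive hypothesis (essential self-adjointness, $\rho^{1-\eps}$ domain regularity, discrete spectrum) is actually strong enough to invert $N(D)$ with sharp weights and to do so uniformly in the base variables $y\in Y$, so that the induction can be closed with the same statement at depth one greater. This is exactly where the Witt hypothesis is used in an essential way, ruling out the pathological self-adjoint extensions that would otherwise obstruct both the self-adjointness statement and the sharp $\rho^{1-\eps}$ weight appearing in (2).
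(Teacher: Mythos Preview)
Your overall strategy matches the paper's: induction on depth, analysis of indicial and normal operators of the partially completed operator $x^{1/2}\eth x^{1/2}$ near the maximal-depth stratum, use of the Witt condition (together with the adapted-metric spectral gap) to exclude critical indicial roots, and then deduction of (1)--(4) from the resulting domain regularity $\cD_{\max}\subseteq\rho^{1-\eps}H^1_{\iie}$.

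The execution, however, differs from your sketch in two respects worth noting. First, the paper deliberately does \emph{not} assemble the model inverses into a parametrix $Q$ as you propose, because (as the introduction states explicitly) no pseudodifferential calculus adapted to iterated edge geometry exists. Instead, after proving bijectivity of the normal operator $N_q(P_a)$ on the relevant weighted space, the paper proves a Mellin-transform integration-by-parts identity (Lemma~\ref{lem:integration}) and combines it with the generalized inverse of the \emph{transpose} of the normal operator to extract the extra vanishing directly; essential self-adjointness then comes from a Gil--Mendoza argument (multiply by $\rho^{1/n}$ and use the pairing characterization of $\cD_{\min}$), which is close in spirit to your cutoff-and-mollify step. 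Your parenthetical alternative (``a priori weighted estimates'') is thus much closer to what the paper actually does than the parametrix picture. Second, your indicial family $I(D,\lambda)=\lambda+\eth_{\sign,F}$ is oversimplified: the actual indicial family is a $2\times 2$ system mixing $\eth^Z_{\dR}$ with the vertical number operator $\bN$ and the fibre dimension, and the indicial-root computation (Lemma~\ref{lem:Indicial}) together with the Bessel/Kodaira analysis of the normal operator (Lemma~\ref{lem:Bessel}) is where both the Witt hypothesis and the adapted-metric condition enter in an essential and somewhat intricate way.
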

Items 1), 3) and 4)  have been proved by  Cheeger  \cite{Ch} (using the heat-kernel) for metrics quasi-isometric to a piecewise flat ones.



\begin{theorem}
There is a well defined signature class $[ \eth_{\sign}]\in K_* (\wh{X})$, $*=\dim \wh{X}$ $({\rm mod}\; 2)$, which is 
independent of the choice  of the adapted metric on the regular part of $\wh{X}$. When $\dim X$ is even, the index 
of the signature operator is well-defined.

If $\wh{X}' \to \wh{X}$ is a Galois covering with group $\Gamma$ and $r: \wh{X} \to B\Gamma$ is the
classifying map, then the signature operator $\wt \eth_{\sign}$ with coefficients in the Mishchenko bundle,
together with the $C^*_r\Gamma$-Hilbert module $L^2_{\iie,\Gamma}(X;\Iie\Lambda_\Gamma^*X)$ 
define an unbounded Kasparov $(\bbC,C^*_r\Gamma)$-bimodule and hence a class in $KK_* (\bbC, C^*_r \Gamma)$
$=$$K_* (C^*_r\Gamma)$,
which we call the index class associated to $\wt \eth_{\sign}$ and denote by $\Ind (\wt \eth_{\sign})\in K_* (C^*_r\Gamma)$.
If $[[\eth_{\sign}]]\in KK_*(C(\widehat{X})\otimes C^*_r\Gamma, C^*_r\Gamma)$ is the class obtained from $[\eth_{\sign}]\in 
KK_*(C(\widehat{X}),\bbC)$ by tensoring with  $C^*_r\Gamma$, then $\Ind (\wt \eth_{\sign})$ is equal to the 
Kasparov product of the class defined by the Mishchenko bundle $[\widetilde{C^*_r}\Gamma]\in  KK_0(\bbC,C(\widehat{X})\otimes 
C^*_r\Gamma)$ with  $[[\eth_{\sign}]]$:
\begin{equation}\label{tensor1}
\Ind (\wt \eth_{\sign})= [\widetilde{C^*_r}\Gamma)]\otimes [[\eth_{\sign}]]\, .
\end{equation}
In particular, the index class $\Ind (\wt \eth_{\sign})$ does not depend on the choice of the adapted metric on 
the regular part of $\wh{X}$. Finally, if $\beta:  K_* (B\Gamma)\to K_* (C^*_r\Gamma)$ denotes the assembly 
map in K-theory, then 
\begin{equation}
\beta(r_*  [\eth_{\sign}])=\Ind (\wt \eth_{\sign})\text{ in }  K_* (C^*_r\Gamma)
\end{equation}
\end{theorem}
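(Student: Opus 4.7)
The plan is to convert the analytic information of Theorem MT1 into Kasparov KK-classes and then identify them via the Kasparov product. First, to construct $[\eth_{\sign}] \in K_*(\wh X) = KK_*(C(\wh X),\bbC)$, I take the Hilbert space $L^2_{\iie}(X;\Iie\Lambda^*X)$ with the action of $C(\wh X)$ by multiplication and the unique self-adjoint extension of $\eth_{\sign}$ provided by Theorem MT1. The resolvent is compact by the compact embedding in item (2) of MT1, so it remains to produce a dense subalgebra of $C(\wh X)$ on which $[\eth_{\sign}, f]$ is bounded. The subalgebra of Lipschitz functions with respect to the iterated edge distance works, since such functions have $L^\infty$-bounded differential on the regular part and $[\eth_{\sign},f]$ is Clifford multiplication by $df$. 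Metric independence follows because the space of adapted iterated edge metrics was shown in Part I to be path-connected; along a path $g_t$ the parametrix construction depends continuously on the parameter, so the associated bimodules are operator-homotopic.

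For the Mishchenko-bundle statement, the parametrix construction used for Theorem MT1 has been designed to extend verbatim upon tensoring with the flat bundle of $C^*_r\Gamma$-modules $\cL = r^*E\Gamma\times_\Gamma C^*_r\Gamma$. This yields the analogue of MT1 in this setting: $\wt\eth_{\sign}$ is regular and self-adjoint on the $C^*_r\Gamma$-Hilbert module $L^2_{\iie,\Gamma}(X;\Iie\Lambda^*_\Gamma X)$ with $C^*_r\Gamma$-compact resolvent, so $(L^2_{\iie,\Gamma},\wt\eth_{\sign})$ is an unbounded Kasparov $(\bbC,C^*_r\Gamma)$-bimodule and defines a class $\Ind(\wt\eth_{\sign}) \in KK_*(\bbC,C^*_r\Gamma)$. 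Metric independence at this level is proved by the same homotopy argument as above, now interpreted in the $C^*_r\Gamma$-linear setting. To establish (\ref{tensor1}), I would verify the Connes--Skandalis criteria for the unbounded product: the Mishchenko bundle represents $[\wt{C^*_r\Gamma}] \in KK_0(\bbC, C(\wh X)\otimes C^*_r\Gamma)$ by the pair $(\cL, 0)$, and $\wt\eth_{\sign}$ is a Grassmann-connection lift of $\eth_{\sign}\otimes 1$ to $\cL$. The connection and positivity conditions then follow from the observation that locally $\wt\eth_{\sign}$ differs from $\eth_{\sign}\otimes 1$ only by a zeroth-order endomorphism-valued term.

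Finally, the assembly identity $\beta(r_*[\eth_{\sign}]) = \Ind(\wt\eth_{\sign})$ follows from (\ref{tensor1}) together with the standard description of $\beta : K_*(B\Gamma) \to K_*(C^*_r\Gamma)$ as Kasparov product with the universal Mishchenko class, combined with the naturality of this product under the pushforward $r_*$ (which in KK-language corresponds to pullback of the Mishchenko bundle by $r$). The main technical obstacle I anticipate is the verification of the Connes--Skandalis criteria on a stratified pseudomanifold of arbitrary depth: although $\wt\eth_{\sign}$ and $\eth_{\sign}\otimes 1$ differ only by a zeroth-order term, the various compactness and boundedness statements required must be interpreted in the $C^*_r\Gamma$-module sense, and this in turn relies on the inductively constructed parametrix carrying the $C^*_r\Gamma$-tensor structure through each depth level without loss of uniform control over the normal and indicial operators.
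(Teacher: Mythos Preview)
Your construction of $[\eth_{\sign}]\in K_*(\wh X)$ and the assembly identity match the paper: same Hilbert space, same dense subalgebra of Lipschitz functions with the Clifford-multiplication commutator argument, and metric independence via the path-connectedness of adapted iterated edge metrics (the paper phrases this as an unbounded operatorial homotopy \`a la Hilsum). The assembly statement is, as you say, immediate from the product formula and the definition of $\beta$.

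The gap is in the twisted setting, specifically your claim that extending the parametrix construction yields \emph{regularity} of $\wt\eth_{\sign}$ on the Hilbert $C^*_r\Gamma$-module. On a Hilbert $C^*$-module a closed self-adjoint operator need not be regular: surjectivity of $1+D^2$ is an extra condition that does not follow from self-adjointness or from a parametrix modulo $C^*_r\Gamma$-compacts, and the paper explicitly cites a counterexample of Hilsum. The parametrix extension (using that the covering trivializes over distinguished neighborhoods, so $N_q(\wt\eth_{\dR})$ is conjugate to $N_q(\eth_{\dR})\otimes\Id$) does give essential self-adjointness and $C^*_r\Gamma$-compact inclusion of the domain, but the paper establishes regularity by a different route: it shows that $\wt\eth_{\sign}$ is a $\mathcal{D}$-connection in the sense of Skandalis and Rosenberg--Weinberger for the bimodule $(\mathcal{E},\mathcal{D})$ obtained by tensoring $[\eth_{\sign}]$ with $C^*_r\Gamma$, and invokes their theorem that every $\mathcal{D}$-connection is automatically regular. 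This has the payoff of delivering the product identity \eqref{tensor1} at the same time. Your Connes--Skandalis verification is essentially the same local computation as the $\mathcal{D}$-connection check, so the fix is organizational: rather than asserting regularity from the parametrix and then verifying the product criteria separately, verify the connection condition first and deduce both regularity and the product formula from the Rosenberg--Weinberger theorem.
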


These theorems establish property 1), the first part of property 4) and property 5) of the signature package on 
Witt spaces. The rest of the signature package is proved in the third part of this paper. 

The Witt bordism invariance of the signature index class $\Ind(\wt \eth_{\sign})$ in $  K_* (C^*_r\Gamma)$
is proved using $KK$-techniques, just as in the closed case. 

The proof that $\Ind(\wt \eth_{\sign})\in  K_* (C^*_r\Gamma)$ is a stratified homotopy invariant is more difficult.
In Section \ref{sect:shi} we follow the strategy of Hilsum and Skandalis, but encounter extra 
complications caused by the singular structure of $\wh{X}$.  To deal with these we use the interplay between 
the compact singular space $\wh{X}$ with its incomplete metric and its resolution $\widetilde{X}$ with the
conformally related complete metric.

The equality of the Chern character of the signature K-homology class $[\eth_{\sign}]\in K_* (\widehat{X})$
with the homology L-class $L_* (\widehat{X})$ had already been proved by Moscovici and Wu using Cheeger's 
methods, and we simply quote their result. The stratified homotopy invariance of the higher signatures, defined 
as the collection of numbers
$$
\{ \langle \alpha,r_* (L_* (\widehat{X})) \rangle\,,\;\; \alpha\in H^* (B\Gamma,\bbQ) \} ,
$$ 
is proved in Section \ref{section:novikov} under the hypothesis that the assembly map $\beta$ is rationally injective. 
Finally, in Section \ref{section:symmetric} we prove  the (rational) equality of our index class $ \Ind (\wt \eth_{\sign})$ 
in $K_* (C^*_r\Gamma)$ with the $C^*_r \Gamma$-symmetric signature  $\sigma^{{\rm Witt}}_{C^*_r\Gamma} (\widehat{X})$ 
obtained from the one recently defined by Banagl in $L^* (\bbQ\Gamma)$. The Witt-bordism invariance of 
$\Ind (\wt \eth_{\sign})$ and $\sigma^{{\rm Witt}}_{C^*_r\Gamma} (\widehat{X})$ plays a fundamental role in the proof of this 
last item in the signature package. 

In the brief final section, we explain where the proof of each item in the signature package may be found
in this paper. 

\medskip
\noindent
{\bf Acknowledgements.} The authors are happy to thank the referee for very helpful comments. 
P.A., R.M. and P.P. are grateful to MSRI for hospitality and financial support during the Fall Semester of 2008 when 
much of the analytic part of this paper was completed. P.A. was partly supported by NSF grant DMS-0635607002 and 
by an NSF Postdoctoral Fellowship, and thanks Stanford for support during visits; R.M. was supported by 
NSF grants DMS-0505709 and DMS-0805529, and enjoyed the hospitality and financial support of MIT, Sapienza 
Universit\`a di Roma and the Beijing International Center for Mathematical Research; P.P. wishes to thank the CNRS and 
Universit\'e Paris 6 for financial support during  visits to Institut de Math\'ematiques de Jussieu in Paris. E.L was partially 
supported during visits to Sapienza Universit\`a di Roma by CNRS-INDAM (through the bilateral agreement GENCO 
(Non commutative Geometry)) and the Italian {\it Ministero dell' Universit\`a  e della  Ricerca Scientifica} (through the 
project "Spazi di moduli e teoria di Lie").

\smallskip

The authors are particularly grateful to Richard Melrose for his help and encouragement at many stages of this project and for 
allowing them to include some of his unpublished ideas in section 2. They also thank: Markus Banagl for many interesting discussions 
on the notion of symmetric signature on Witt spaces; Shmuel Weinberger for suggesting the proof of Proposition \ref{prop:magic}; 
and Michel Hilsum for useful remarks on the original manuscript. Finally, they thank Thomas Krainer,  Eugenie Hunsicker and 
Gerardo Mendoza for helpful correspondence.

\section{{Stratified spaces and resolution of singularities}} \label{Resolution}
This section describes the class of smoothly stratified pseudomanifolds. We first
recall the notion of a stratified space with `control data'; this is a topological space which
decomposes into a union of smooth strata, each with a specified tubular neighbourhood with fixed 
product decomposition, all satisfying several basic axioms. This material is taken from the paper of
Brasselet-Hector-Saralegi \cite{BHS}, but see Verona \cite{Ver} and Pflaum \cite{Pfl}
for more detailed expositions. We also refer the reader to \cite{Mather}, \cite{Hughes-Weinberger}, 
\cite{Banagl} and \cite{Kirwan-Woolf}. 
Definitions are not entirely consistent across those sources, so one purpose of 
reviewing this material is to specify the precise definitions used here. A second goal here
is to prove the equivalence of this class of smoothly stratified pseudomanifolds
and of the class of manifolds with corners with iterated fibration structures, as introduced by Melrose. 
The correspondence between elements in these two classes is by blowup (resolution) and blowdown, respectively 
We introduce the latter class in \S 2.2 and show that any manifold with corners with 
iterated fibration structure can be blown down to a smoothly stratified pseudomanifold. The converse, 
that any smoothly stratified pseudomanifold can be blown up, or resolved, to obtain a manifold with 
corners with iterated fibration structure, is proved in \S 2.3; this resolution was already defined
by Brasselet et al.\ \cite{BHS}, cf.\ also Verona \cite{Ver}, though those authors did not phrase it 
in terms of the fibration structures on the boundaries of the resolution.  The proper definition of 
isomorphism between these spaces is subtle; we discuss this and propose a suitable definition, 
phrased in terms of this resolution, in \S 2.4. This alternate description of smoothly stratified pseudomanifolds
also helps to elucidate certain notions such as the natural classes of structure vector fields, metrics, etc.

\subsection{Smoothly stratified spaces}
\begin{definition}
A stratified space $X$ is a metrizable, locally compact, second countable space which admits a locally finite 
decomposition into a union of locally closed {\rm strata} $\frakS = \{Y_\alpha\}$, 
where each $Y_\alpha$ is a smooth (usually open) manifold, with dimension depending on the index $\alpha$. 
We assume the following:

\begin{itemize}
\item[i)] If $Y_\alpha, Y_\beta \in \frakS$ and $Y_\alpha \cap \overline{Y_\beta} \neq \emptyset$,
then $Y_\alpha \subset \overline{Y_\beta}$. 
\item[ii)] Each stratum $Y$ is endowed with a set of `control data' $T_Y$, $\pi_Y$ and $\rho_Y$;
here $T_Y$ is a neighbourhood of $Y$ in $X$ which retracts onto $Y$, $\pi_Y: T_Y \longrightarrow Y$ is 
a fixed continuous retraction and $\rho_Y: T_Y \to [0,2)$ is a proper `radial function' in this 
tubular neighbourhood such that $\rho_Y^{-1}(0) = Y$. Furthermore, we require that if $Z \in \frakS$ and 
$Z \cap T_Y \neq \emptyset$, then 
\[
(\pi_Y,\rho_Y): T_Y\cap Z \longrightarrow Y \times [0,2)
\]
is a proper differentiable submersion. 
\item[iii)] If $W,Y,Z \in \frakS$, and if $p \in T_Y \cap T_Z \cap W$ and $\pi_Z(p) \in T_Y \cap Z$,
then $\pi_Y(\pi_Z(p)) = \pi_Y(p)$ and $\rho_Y(\pi_Z(p)) = \rho_Y(p)$.
\item[iv)] If $Y,Z \in \frakS$, then
\begin{eqnarray*}
Y \cap \ovl{Z} \neq \emptyset & \Leftrightarrow & T_Y \cap Z \neq \emptyset, \\
T_Y \cap T_Z \neq \emptyset & \Leftrightarrow & Y \subset \ovl{Z}, \ Y=Z\ \  \mbox{or}\ Z \subset \ovl{Y}.
\end{eqnarray*}
\item[v)] For each $Y \in \frakS$, the restriction $\pi_Y: T_Y \to Y$ is a locally trivial fibration with
fibre the cone $C(L_Y)$ over some other stratified space $L_Y$ (called the link over $Y$), with atlas $\calU_Y = 
\{(\phi,\calU)\}$ where each $\phi$ is a trivialization $\pi_Y^{-1}(\calU) \to \calU \times C(L_Y)$, and the 
transition functions are stratified isomorphisms (in the sense of Definition \ref{correct-smooth} below) of $C(L_Y)$ which preserve the rays of 
each conic fibre as well as the radial variable $\rho_Y$ itself, hence are suspensions of isomorphisms of 
each link $L_Y$ which vary smoothly with the variable $y \in \calU$. 
\end{itemize}

If in addition we let $X_j$ be the union of all strata of dimensions less than or equal to $j$, and
require that 
\begin{itemize}
\item[vi)] $X = X_n \supseteq X_{n-1} = X_{n-2} \supseteq X_{n-3} \supseteq \ldots \supseteq X_0$ and
$X \setminus X_{n-2}$ is dense in $X$
\end{itemize}
then we say that $X$ is a stratified pseudomanifold. 
\end{definition}

Some of these conditions require elaboration: 

\medskip

$\bullet$ The depth of a stratum $Y$ is the largest integer $k$ such that there is a chain of strata
$Y = Y_k, \ldots, Y_0$ with $Y_j \subset \overline{Y_{j-1}}$ for $1 \leq j \leq k$. A stratum of maximal 
depth is always a closed manifold. The maximal depth of any stratum in $X$ is called the depth of $X$ as 
a stratified space. (Note that this is the opposite convention of depth from that in \cite{BHS}.)  

We refer to the dense open stratum of a stratified pseudomanifold $\wh{X}$ as its regular set,
and the union of all other strata as the singular set,
\[
\mathrm{reg}(\wh{X}) := \wh{X}\setminus \mathrm{sing}(\wh{X}), \qquad \mathrm{where}\qquad
\mathrm{sing}(\wh{X}) = \bigcup_{{Y\in \mathfrak S}\atop{\mathrm{depth}\, Y > 0}} Y.
\]

\smallskip

$\bullet$ If $X$ and $X'$ are two stratified spaces, a stratified isomorphism between them is 
a homeomorphism $F: X \to X'$ which carries the open strata of $X$ to the open strata of $X'$ 
diffeomorphically, and such that $\pi_{F(Y))}^\prime \circ F = F \circ \pi_Y$, $\rho_Y^\prime 
= \rho_{F(Y)} \circ F$ for all $Y \in \frakS(X)$. (We shall discuss this in more detail below.)

\smallskip

$\bullet$ If $Z$ is any stratified space, then the cone over $Z$, denoted $C(Z)$, is the space $Z \times \RR^+$ 
with $Z \times \{0\}$ collapsed to a point. This is a new stratified space, with depth one greater than $Z$
itself. The vertex $0 := Z \times \{0\} / \sim $ is the only maximal depth stratum; $\pi_0$ is the natural
retraction onto the vertex and $\rho_0$ is the radial function of the cone. 

\smallskip

$\bullet$ There is a small generalization of the coning construction. For any $Y \in \mathfrak{S}$, 
let $S_Y = \rho_Y^{-1}(1)$. This is the total space of a fibration $\pi_Y: S_Y \to Y$ with fibre $L_Y$. 
Define the mapping cylinder over $S_Y$ by $\mathrm{Cyl}\,(S_Y,\pi_Y) = S_Y \times [0,2)\, /\sim$ where 
$(c,0) \sim (c',0)$ if $\pi_Y(c) = \pi_Y(c')$. The equivalence class of a point $(c,t)$ is sometimes 
denoted $[c,t]$, though we often just write $(c,t)$ for simplicity. Then there is a stratified isomorphism 
\[
F_Y: \mathrm{Cyl}\,(S_Y,\pi_Y)  \longrightarrow T_Y;
\]
this is defined in the canonical way on each local trivialization $\calU \times C(L_Y)$ and
since the transition maps in axiom v) respect this definition, $F_Y$ is well-defined.

\smallskip

$\bullet$ Finally, suppose that $Z$ is any other stratum of $X$ with $T_Y \cap Z \neq \emptyset$,
so by axiom iv), $Y \subset \bar{Z}$. Then $S_Y \cap Z$ is a stratum of $S_Y$.

\medskip

We have been brief here since these axioms are described more carefully in the references cited above. 
Axiom v) is sometimes considered to be a consequence of the other axioms. In the topological
category (where the local trivializations of the tubular neighbourhoods are only required to
be homeomorphisms) this is true, but the situation is less clear for smoothly stratified spaces,
so we prefer to leave this axiom explicit. Let us direct the reader to \cite{Mather} and \cite{Hughes-Weinberger}
for more on this. 

We elaborate further on the definition of stratified isomorphism. This definition is strictly determined by 
the control data on the domain and range, i.e.\ by the condition that $F$ preserve the product decomposition 
of each tubular neighbourhood. It is nontrivial to prove that the same space $X$ endowed with two 
different sets of control data are isomorphic in this sense. There are other even more rigid definitions 
of isomorphism in the literature. The one in 
\cite{Pfl} requires that the spaces $X$ and $X'$ are differentiably embedded into some ambient Euclidean 
space, and that the map $F$ locally extends to a diffeomorphism of these ambient spaces. 
For example, let $X$ be a union 
of three copies of the half-plane $\RR \times \RR^+$, as follows. The first and second are embedded as 
$\{(x,y,z): z=0, y \geq 0 \}$ and $\{(x,y,z): y=0, z \geq 0\}$, while the third is given by
$\{(x,y,z): y = r \cos \alpha(x), z = r \sin \alpha(x), r \geq 0\}$ where $\alpha: \RR \to (0,\pi/2)$
is smooth. In other words, this last sheet is the union of a smoothly varying family of 
rays orthogonal to the $x$-axis, with slope $\alpha(x)$ at each slice. Requiring a stratified 
isomorphism to extend to a diffeomorphism of the ambient $\RR^3$ would make these spaces 
for different functions $\alpha(x)$ inequivalent. 
We propose a different definition below which has various advantages over either of the ones above.

\subsection{Iterated fibration structures} $\;$\newline
The definition of an iterated fibration structure was proposed by Melrose in the late '90's as the boundary 
fibration structure in the sense of \cite{Melrose:Kyoto} associated to the resolution of an iterated edge space 
(what we are calling a smoothly stratified space). It has not appeared in the literature previously (though 
we can now refer to \cite{AlbMel}, which was finished after the present paper), and we are grateful to him for allowing us to present
it here. The passage to this resolution is necessary in order to apply the methods of geometric microlocal analysis. 
A calculus of pseudodifferential iterated edge operators, when it is eventually written down fully, will yield direct proofs
of most of the analytic facts in later sections of this paper.  

Let $\wt{X}$ be a manifold of dimension $n$ with corners up to codimension $k$. This means that any point $p \in \wt{X}$ 
has a neighbourhood $\calU \ni p$ which is diffeomorphic to a neighbourhood of the origin $\calV$ 
in the orthant $(\RR^+)^\ell \times \RR^{n-\ell}$ for some $\ell \leq k$, with $p$ mapped to the origin. 
There are induced local coordinates $(x_1,\ldots, x_\ell, y_1, \ldots, y_{n-\ell})$, where each 
$x_i \geq 0$ and $y_j \in (-\e,\e)$. There is an obvious decomposition of $X$ into its interior and
the union of its boundary faces of various codimensions. We make the additional global assumption that 
each face is itself an embedded manifold with corners in $\wt{X}$, or in other words, that no boundary face
intersects itself.

We shall frequently encounter fibrations $f: \wt{X} \to \wt{X}'$ between manifolds with corners. 
By definition, a map $f$ is a fibration in this setting if it satisfies the following three properties:
$f$ is a `$b$-map', which means that if $\rho'$ is any boundary defining function in $\wt{X}'$, then $f^*(\rho')$ 
is a product of boundary defining functions of $\wt{X}$ multiplied by a smooth nonvanishing function;
next, each $q \in \wt{X}'$ has a neighbourhood $\calU$ such that $f^{-1}(\calU)$ is diffeomorphic to 
$\calU \times F$ where the fibre $F$ is again a manifold with corners; finally, we require that each fibre
$F$ be a `$p$-submanifold' in $\wt{X}$, which means that in terms of an appropriate adapted corner coordinate 
system $(x,y) \in (\RR^+)^\ell \times \RR^{n-\ell}$, as above, each $F$ is defined by setting some subset of 
these coordinates equal to $0$. 

The collection of boundary faces of codimension one play a special role, and is denoted $\calH = 
\{H_\alpha\}_{\alpha \in A}$ for some index set $A$. Each boundary face $G$ is the intersection of 
some collection of boundary hypersurfaces, $G = H_{\alpha_1} \cap \ldots \cap H_{\alpha_\ell}$,
which we often write as $H_{A'}$ where $A' = \{\alpha_1, \ldots, \alpha_\ell\} \subset A$. 

\begin{definition}[Melrose]
An iterated fibration structure on the manifold with corners $\wt{X}$ consists of the following data: 
\begin{itemize}
\item[a)] Each $H_\alpha$ is the total space of a fibration $f_\alpha: H_\alpha \to B_\alpha$,
where both the fibre $F_\alpha$ and base $B_\alpha$ are themselves manifolds with corners.
\item[b)] If two boundary hypersurfaces meet, i.e.\ $H_{\alpha \beta} := H_\alpha \cap H_\beta \neq 
\emptyset$, then $\dim F_\alpha \neq \dim F_\beta$.
\item[c)] If $H_{\alpha \beta} \neq \emptyset$ as in b), and $\dim F_\alpha < \dim F_\beta$, then the 
fibration of $H_\alpha$ restricts naturally to $H_{\alpha\beta}$ (i.e.\ the leaves of the fibration of 
$H_\alpha$ which intersect the corner lie entirely within the corner) to give a fibration of 
$H_{\alpha \beta}$ with fibres $F_\alpha$, whereas the larger fibres $F_\beta$ must be transverse to 
$H_\alpha$ at $H_{\alpha\beta}$. Writing $\del_\alpha F_\beta$ for the boundaries of these fibres at the 
corner, i.e.\ $\del_\alpha F_\beta := F_\beta \cap H_{\alpha\beta}$, then $H_{\alpha \beta}$ is also the total 
space of a fibration with  fibres $\del_\alpha F_\beta$. Finally, we assume that the fibres
$F_\alpha$ at this corner are all contained in the fibres $\del_\alpha F_\beta$, and in fact that
each fibre $\del_\alpha F_\beta$ is the total space of a fibration with fibres $F_\alpha$. 
\end{itemize}
Two spaces $\wt{X}$ and $\wt{X}'$ with iterated fibration structures are isomorphic precisely when there 
exists a diffeomorphism $\Phi$ between these manifolds with corners which preserves all of the fibration 
structures at all boundary faces. 
\end{definition}

The index set $A$ has a partial ordering: the ordered chains $\alpha_1 < \ldots < \alpha_r$ in $A$ are in 
bijective correspondence with the corners $H_{A'} := H_{\alpha_1} \cap \ldots \cap H_{\alpha_r}$, 
$A' = \{\alpha_1, \ldots, \alpha_r\}$, where by definition $\alpha_i  < \alpha_j$ 
if $\dim F_{\alpha_i} < \dim F_{\alpha_j}$.  In particular, $\alpha < \beta$ implies $H_\alpha \cap H_\beta \neq \emptyset$. 
We say that $H_\alpha$ has depth $r$ if the longest chain $\beta_1 < \beta_2 < \ldots < \beta_r$ in $A$ 
with maximal element $\beta_r = \alpha$ has length $r$. The depth of a manifold with corners with iteration fibration structure 
is the maximal depth of any of its boundary hypersurfaces, equivalently, the maximal codimension of any of its corners. 
The precise relationships between the induced fibrations on each corner are not easy to describe in general, but these 
do not play a role here. 

\begin{lemma}
If $\alpha < \beta$, then the boundary of each fibre $F_\alpha \subset H_\alpha$ is disjoint from the interior of 
$H_{\alpha \beta}$.  Furthermore, the restriction of $f_\alpha$ to $H_{\alpha \beta}$  has image lying within $\del B_\alpha$, 
whereas the restriction of $f_\beta$ to $H_{\alpha \beta}$ has image intersecting the interior of $B_\beta$. In particular, 
if $\alpha$ and $\beta$ are, respectively, minimal and maximal elements in $A$, then the fibres $F_\alpha$ and the 
base $B_\beta$ are closed manifolds without boundary.
\label{le:ifs}
\end{lemma}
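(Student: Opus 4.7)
The plan is to exploit condition (c) to classify each boundary hypersurface $H_{\alpha\gamma}$ of $H_\alpha$ as either \emph{vertical} or \emph{horizontal} for the fibration $f_\alpha$, according to the partial order: if $\alpha<\gamma$, then condition (c) says the leaves $F_\alpha$ lie entirely in $H_{\alpha\gamma}$ whenever they meet it (vertical for $f_\alpha$), while the larger fibres $F_\gamma$ meet $H_\alpha$ transversely in a codimension-one face of $F_\gamma$ (horizontal for $f_\gamma$). Dually, faces $H_{\alpha\gamma}$ with $\gamma<\alpha$ are horizontal for $f_\alpha$. All three assertions of the lemma will follow from this dichotomy together with local product coordinates in which $f_\alpha$ is projection to the base.

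For the first claim, fix $\alpha<\beta$ and any fibre $F_\alpha$. Writing out $\partial F_\alpha = F_\alpha \cap \partial H_\alpha$ in these local models, only horizontal faces can contribute codimension-one cells of the fibre's boundary, so $\partial F_\alpha = \bigcup_{\gamma<\alpha} F_\alpha\cap H_{\alpha\gamma}$. Since $\alpha<\beta$, no $\gamma<\alpha$ can equal $\beta$; hence each point of $\partial F_\alpha$ that happens to lie in $H_{\alpha\beta}$ must sit in some $H_{\alpha\gamma}$ with $\gamma\notin\{\alpha,\beta\}$, i.e.\ in a triple corner $H_{\alpha\beta\gamma}\subset\partial H_{\alpha\beta}$, proving $\partial F_\alpha\cap\mathrm{int}(H_{\alpha\beta})=\emptyset$.

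For the second claim, pick $p\in\mathrm{int}(H_{\alpha\beta})$. In product coordinates $(x,t)$ for $f_\alpha$ near $p$, verticality of $H_{\alpha\beta}$ (from $\alpha<\beta$) makes it locally $\{t_1=0\}$ for a base boundary coordinate $t_1\geq 0$, so $f_\alpha(H_{\alpha\beta})$ is locally, hence globally, contained in $\partial B_\alpha$. In coordinates trivialising $f_\beta$ instead, horizontality of $H_{\alpha\beta}$ for $f_\beta$ makes it $\{x_1=0\}$ for a fibre boundary coordinate $x_1$, and $f_\beta$ projects this slice onto the entire local $t$-neighbourhood in $B_\beta$. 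Because $p$ lies in the interior of a codimension-one corner, it lies in no other boundary face of $H_\beta$, and in particular in no vertical face for $f_\beta$; thus $f_\beta(p)\in\mathrm{int}(B_\beta)$, so $f_\beta(H_{\alpha\beta})$ meets $\mathrm{int}(B_\beta)$.

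The third assertion is then formal. If $\alpha$ is minimal, every $\gamma$ with $H_{\alpha\gamma}\neq\emptyset$ satisfies $\gamma>\alpha$, so by the first half of (2) the fibre over any $b\in\mathrm{int}(B_\alpha)$ avoids every $H_{\alpha\gamma}$ and lies in $\mathrm{int}(H_\alpha)$, hence $F_\alpha$ is closed. If $\beta$ is maximal, there are no vertical faces for $f_\beta$; since $f_\beta^{-1}(\partial B_\beta)$ is a union of vertical faces, it is empty, and surjectivity of $f_\beta$ forces $\partial B_\beta=\emptyset$. The main delicacy I anticipate is keeping the vertical/horizontal bookkeeping coherent when the same corner $H_{\alpha\beta}$ is viewed simultaneously as a face of $H_\alpha$ and of $H_\beta$; once this is pinned down, each step is a purely local inspection in product coordinates.
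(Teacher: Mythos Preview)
Your proof is correct and follows essentially the same strategy as the paper: both arguments reduce to a local coordinate inspection near $H_{\alpha\beta}$ using condition (c) of the iterated fibration structure. The paper simply writes down adapted coordinates $(x_\alpha,x_\beta,y',y_1'',y_2'')$ that simultaneously straighten both fibrations and reads off all the assertions at once, whereas you organize the same information through the vertical/horizontal dichotomy for the boundary faces of $H_\alpha$ and $H_\beta$; this is a presentational difference rather than a substantive one.
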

\begin{proof} Choose adapted local coordinates $(x_\alpha,x_\beta,y_1, \ldots, y_{n-2})$ in $H_{\alpha\beta}$ which simultaneously 
straighten out these fibrations. Thus $(x_\beta, y)$ are coordinates on $H_\alpha = \{x_\alpha = 0\}$, and there is a splitting
$y = (y', y'')$ so that $(x_\beta, y', y'') \mapsto (x_\beta, y'')$ represents the fibration $H_\alpha \to B_\alpha$.  
By part c) of the definition, since $\dim F_\alpha < \dim F_\beta$, there is a further decomposition $y'' = (y_1'', y_2'')$ so that 
the fibration of $H_{\alpha \beta}$ with fibres $\del_\alpha F_\beta$ is represented by $y \mapsto y_2''$. 
Thus $(x_\beta,y'')$ and $y'$ are local coordinates on $B_\alpha$ and each $F_\alpha$, and $y_2''$ and $(x_\alpha, y', y_1'')$
are local coordinates on $B_\beta$ and each $F_\beta$, respectively. All the assertions are direct consequences of this.
\end{proof}

Unlike for smoothly stratified spaces, the structure of control data has not been incorporated into this definition of 
iterated fibration structures, because its existence and uniqueness can be inferred from standard
facts in differential topology. Nonetheless, these data are still useful, and we discuss them now.
\begin{definition}
Let $\wt{X}$ be a manifold with corners with an iterated fibration structure. Then a control data set for $\wt{X}$
consists of a collection of triples $\{\tilde{T}_H,, \tilde{\pi}_H, \tilde{\rho}_H\}$, one for each $H \in \calH$, where
$\tilde{T}_H$ is a collar neighbourhood of the hypersurface $H$, $\tilde{\rho}_H$ is a defining 
function for $H$ and $\tilde{\pi}_H$ is a diffeomorphism from each slice $\tilde{\rho}_H = \mbox{const.}$
to $H$. Thus the pair $(\tilde{\pi}_H,\tilde{\rho}_H)$ gives a diffeomorphism $\tilde{T}_H \to 
H \times [0,2)$, and hence an extension of the fibration of $H$ to all of $\tilde{T}_H$. 
These data are required to satisfy the following additional properties: for any hypersurface $H'$ which
intersects $H$ with $H' < H$, the restriction of $\tilde{\rho}_H$ to $H' \cap \tilde{T}_H$ is constant
on the fibres of $H'$; finally, near any corner $H_{A'}$, $A' = \{\alpha_1, \ldots, \alpha_r\}$, the extension
of the set of fibrations of $H_{A'}$ induced by the product decomposition 
\[
\left.(\tilde{\pi}_{H_{\alpha_j}}, \tilde{\rho}_{H_{\alpha_j}})\right|_{\alpha_j \in A'}: 
\bigcap_{j=1}^r \tilde{T}_{H_{\alpha_j}} \cong H_{A'} \times [0,2)^r
\]
preserves all incidence and inclusion relationships between the various fibres. 
\label{de:controldata}
\end{definition}

The existence of control data for an iterated fibration structure on a manifold with corners $\wt{X}$
is discussed in \cite[Proposition 3.7]{AlbMel}, so we make only a few remarks here.  We can find some set of control
data by successively choosing the maps $\tilde{\pi}_H$ and defining functions $\tilde{\rho}_H$ in order 
of increasing depth, at each step making sure to respect the compatibility relationships with all previous 
hypersurfaces. The uniqueness up to diffeomorphism can be established in much the same way, based 
on the fact that there is a unique product decomposition of a collar neighbourhood of any $H$ up to diffeomorphism. 
\begin{proposition}
Let $\wt{X}$ be a manifold with corners with iterated fibration structure, and suppose that
$\{\tilde{\pi}_H,\tilde{\rho}_H\}$ and $\{\tilde{\pi}_H',\tilde{\rho}_H'\}$ are two sets
of control data on it. Then there is a diffeomorphism $\tilde{f}$ of $\wt{X}$ which preserves the 
iterated fibration structure, and which intertwines the two sets of control data.
\label{pr:eqcd}
\end{proposition}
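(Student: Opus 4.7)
The plan is to construct $\tilde f$ by induction on the depth of $\wt X$ as a manifold with corners with iterated fibration structure. The depth $0$ case is trivial. For the inductive step, fix a depth-$k$ manifold $\wt X$ and assume the result for all such manifolds of depth less than $k$. I will produce $\tilde f$ as a composition of diffeomorphisms, each one adjusting the control data at one hypersurface without disturbing the data at the hypersurfaces processed earlier.

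Order the hypersurfaces $H_\alpha$ according to increasing depth in the partial order $(A,<)$; begin with those $\alpha$ minimal in $A$ (for which the fibres $F_\alpha$ are closed, by Lemma \ref{le:ifs}). For such an $\alpha$ there is no corner $H_\alpha \cap H_\gamma$ with $\gamma < \alpha$, so there are no prior compatibility constraints. The two data furnish collar structures $\tilde T_{H_\alpha} \cong H_\alpha \times [0,2)$ and $\tilde T'_{H_\alpha} \cong H_\alpha \times [0,2)$ which are both compatible with the fibration $f_\alpha$. A fibrewise collar neighborhood argument along $f_\alpha$, combined with a Moser-type interpolation matching $\tilde\rho'_{H_\alpha}$ to $\tilde\rho_{H_\alpha}$, yields a diffeomorphism of $\wt X$, supported in a small enlargement of $\tilde T_{H_\alpha} \cup \tilde T'_{H_\alpha}$, that intertwines the two data at $H_\alpha$.

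Proceed inductively through the remaining $\alpha$'s in order of increasing depth. For each subsequent $H_\alpha$ and each $\beta < \alpha$, the restriction-compatibility axiom in Definition \ref{de:controldata} forces both $\tilde\rho_{H_\alpha}$ and $\tilde\rho'_{H_\alpha}$ to be constant on the fibres of $f_\beta$ near the corner $H_{\alpha\beta}$, while the matching already achieved at $H_\beta$ means $\tilde\pi_{H_\beta} = \tilde\pi'_{H_\beta}$ near that corner. Working in adapted local coordinates $(x_\alpha, x_\beta, y', y_1'', y_2'')$ of the kind used in the proof of Lemma \ref{le:ifs}, one straightens the $\alpha$-collar fibrewise in the $(x_\alpha, y', y_1'')$-variables with smooth parametric dependence on $(x_\beta, y_2'')$, and inserts a cutoff in $x_\alpha$ and in each $x_\beta$ (for $\beta < \alpha$) so that the corrective diffeomorphism is the identity on a neighborhood of $\bigcup_{\beta<\alpha} H_\beta$ and the identity outside $\tilde T_{H_\alpha}$. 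Composing these diffeomorphisms in order gives the desired $\tilde f$.

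The main obstacle is precisely this compatibility bookkeeping: each successive corrective diffeomorphism must simultaneously match the control data at $H_\alpha$, preserve all matchings achieved at previously processed hypersurfaces, and respect the nested fibration structures at corners of all codimensions. This reduces, via the product decompositions near corners guaranteed by Definition \ref{de:controldata} and the iterated fibration axioms, to a uniform fibrewise Moser-type construction in adapted coordinates, assembled via a partition of unity subordinate to a locally finite cover of $\wt X$ by such charts. The proof of existence of control data (cited from \cite{AlbMel}) proceeds by essentially the same inductive scheme, and the uniqueness proof given here may be viewed as its parametrized counterpart.
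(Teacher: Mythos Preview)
Your proposal is correct and follows essentially the same strategy the paper sketches: process the hypersurfaces in order of increasing depth and invoke uniqueness of collar neighbourhoods at each step, taking care that later adjustments respect the fibration compatibilities already achieved at shallower faces. The paper does not give a full proof either; it only remarks that one can pull both sets of control data back to ``pre-control data'' on the union of the inward-pointing normal bundles and then compare them there inductively via the standard collar-uniqueness theorem. Your version works directly on $\wt X$ rather than passing through the normal bundles, but this is a cosmetic difference---the normal-bundle language is just a convenient way to say ``compare the germs of the two collars along each $H_\alpha$''---and the substantive content (induction on depth, Moser-type straightening, cutoff to preserve earlier matchings) is the same.
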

The key idea in the proof is that we can pull back any set of control data on 
$\wt{X}$ to a `universal' set of control data defined on the union of the inward pointing normal 
bundles to each boundary hypersurface which satisfies the obvious set of compatibility conditions.
The fact that any two such sets of `pre-control data' are equivalent can then be deduced inductively
using standard results about uniqueness up to diffeomorphism of collar neighbourhoods of
these boundary hypersurfaces.

Finally, note that if $\wt{X}$ has an iterated fibration structure, then any corner $H_{A'}$ inherits such a 
structure too (we forget about the fibration of its interior), with depth equal to $k - \mathrm{codim}\, H_{A'}$. 

\begin{proposition} If $\wt{X}$ is a compact manifold with corners with an iterated fibration structure, 
then there is a smoothly stratified space $\wh{X}$ obtained from $\wt{X}$ by a process of successively 
blowing down the connected components of the fibres of each hypersurface boundary of $\wt{X}$ in order 
of increasing fibre dimension (or equivalently, of increasing depth). The corresponding blowdown map will be
denoted $\beta: \wt{X} \to \wh{X}$. 
\label{pr:blowdown}
\end{proposition}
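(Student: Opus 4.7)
The plan is to argue by induction on the depth $k$ of $\wt{X}$. The base case $k = 0$ is immediate: $\wt{X}$ is a closed manifold and $\wh{X} := \wt{X}$ is stratified with a single stratum. For the inductive step, assume the proposition holds for all iterated fibration structures of depth strictly less than $k$. A key observation is that each fibre $F_\alpha$ inherits an iterated fibration structure of depth strictly less than $k$: by Lemma \ref{le:ifs}, its boundary hypersurfaces are $\partial_\gamma F_\alpha = F_\alpha \cap H_{\gamma\alpha}$ for $\gamma < \alpha$, and by condition c) each of these is the total space of a fibration with fibre $F_\gamma$, the nesting compatibilities at corners of $F_\alpha$ being inherited from those of $\wt{X}$.

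First I would construct $\wh{X}$ globally as the quotient $\wt{X}/\!\sim$, where $\sim$ is the equivalence relation generated by: $p \sim q$ whenever $p, q \in H_\alpha$ for some $\alpha$ and $f_\alpha(p) = f_\alpha(q)$. The compatibility condition c) ensures this is well-behaved at corners: if $p, q \in H_{\alpha\gamma}$ with $\alpha < \gamma$ and $f_\alpha(p) = f_\alpha(q)$, then since each fibre $F_\alpha$ at the corner is contained in a single fibre $\partial_\alpha F_\gamma$, one also has $f_\gamma(p) = f_\gamma(q)$. Let $\beta: \wt{X} \to \wh{X}$ denote the quotient map, and take the candidate strata to be $Y_\top := \beta(\wt{X}^\circ)$, identified with the interior of $\wt{X}$, together with, for each $\alpha \in A$, the stratum $Y_\alpha := \beta(f_\alpha^{-1}(B_\alpha^\circ))$, naturally diffeomorphic to the interior of the base $B_\alpha$. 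By Lemma \ref{le:ifs}, the corner hypersurfaces of $B_\alpha$ are images of $H_{\alpha\gamma}$ for $\gamma > \alpha$, so their blowdowns lie in lower-dimensional strata $Y_\gamma$, compatibly with the filtration of $\wh{X}$ by depth.

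After fixing a set of control data $\{\tilde{T}_{H_\alpha}, \tilde{\pi}_{H_\alpha}, \tilde{\rho}_{H_\alpha}\}$ on $\wt{X}$ as in Definition \ref{de:controldata}, one obtains control data on $\wh{X}$ by descent: $T_{Y_\alpha}$ is the image of $\tilde{T}_{H_\alpha}$ under $\beta$, the retraction $\pi_{Y_\alpha}$ descends from $f_\alpha \circ \tilde{\pi}_{H_\alpha}$ (which is constant along each equivalence class by construction), and $\rho_{Y_\alpha}$ descends from $\tilde{\rho}_{H_\alpha}$ (which, by the compatibility condition in Definition \ref{de:controldata}, is constant on the fibres of all lower hypersurfaces at corners). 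With these data in place, axioms i), ii), iv) and vi) of a stratified pseudomanifold follow readily from the partial order on $A$ and the transversality of the corner fibrations, while axiom iii) --- the commutativity relations $\pi_Y \circ \pi_Z = \pi_Y$ and $\rho_Y \circ \pi_Z = \rho_Y$ --- is a direct consequence of the compatibility of control data near corners built into Definition \ref{de:controldata}. Axiom v) is where the inductive hypothesis enters: in a local trivializing chart $\calU \subset B_\alpha^\circ$ for $f_\alpha$, the preimage $\tilde{\pi}_{H_\alpha}^{-1}(f_\alpha^{-1}(\calU))$ is diffeomorphic to $\calU \times F_\alpha \times [0,2)$, and descends under $\beta$ to $\calU \times C(\wh{F}_\alpha)$, where $\wh{F}_\alpha$ is the smoothly stratified space obtained by applying the inductive hypothesis to $F_\alpha$; this $\wh{F}_\alpha$ then plays the role of the link $L_{Y_\alpha}$.

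The main obstacle I anticipate is checking that these local conic trivializations assemble into a coherent global atlas as demanded by v). Specifically, one must show that the inductively produced link $\wh{F}_\alpha$ is well-defined up to stratified isomorphism (independent of choices of control data on $F_\alpha$), and that the transition functions between overlapping trivializations of $f_\alpha$ blow down to stratified isomorphisms of $C(\wh{F}_\alpha)$ preserving the radial rays and the radial function. The first point uses the inductive hypothesis together with Proposition \ref{pr:eqcd} applied to $F_\alpha$; the second follows from the naturality of the blowdown construction and the fact that the control data on $\wt{X}$ restrict, along each collar $\tilde{T}_{H_\alpha} \supset H_\alpha \supset F_\alpha$, to compatible control data on $F_\alpha$ uniformly in the base $B_\alpha$.
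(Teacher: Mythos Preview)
Your approach is correct but takes a genuinely different route from the paper's. The paper argues by induction on depth as you do, but instead of constructing $\wh{X}$ directly as a global quotient and verifying all six axioms from scratch, it employs a \emph{doubling construction}: one forms $\wt{X}'$ by doubling $\wt{X}$ across all of its maximal-depth hypersurfaces, obtaining a manifold with corners of depth $k-1$ carrying an iterated fibration structure and an involution $\tau$. By the inductive hypothesis, $\wt{X}'$ blows down to a smoothly stratified space $\wh{X}'$ of depth $k-1$, and the maximal-depth face $H_k$ survives as a hypersurface $\wh{H_k}$ fibring over $B_k$ with fibre the (inductively constructed) blowdown $\wh{F}_k$. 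One then collapses the fibres of $\wh{H_k}$ using the product decomposition of its collar, producing the final $\wh{X}$. The point is that the axioms for $\wh{X}'$ come for free from the inductive hypothesis, so one only has to check them near the last blowdown.

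What each approach buys: the paper's doubling trick localizes the axiom-checking to a single collar neighbourhood at each stage and keeps the intermediate objects inside the category of smoothly stratified pseudomanifolds (which by definition have no codimension-one boundary), at the cost of introducing the auxiliary doubled space and its involution. Your direct quotient construction is conceptually cleaner and avoids the doubling machinery, but forces you to verify all of axioms i)--vi) simultaneously for every stratum at once; in particular the coherence of the conic atlas in axiom v), which you correctly flag as the delicate point, has to be argued globally rather than being inherited from a lower-depth space. Both routes ultimately rely on the inductive hypothesis applied to the fibres $F_\alpha$ to produce the links, and on the compatibility of control data (Definition~\ref{de:controldata} and Proposition~\ref{pr:eqcd}) to make the local pictures match up.
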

\begin{proof}
We warm up to the general case by first considering what happens when $\wt{X}$ is a manifold with boundary, 
so $\del \wt{X}$ is the total space of a fibration with fibre $F$ and base space $Y$ and both $F$ and $Y$ are closed 
manifolds. Choose a boundary defining function $\rho$ and fix a product decomposition $\del \wt{X} \times 
[0,2)$ of the collar neighbourhood $\calU = \{\rho < 2\}$. This defines a retraction $\tilde{\pi}: \calU \to \del X$, 
as well as a fibration of $\calU$ over $\del \tilde{X}$ with fibre $\tilde{\pi}^{-1}(F) = F \times [0,2)$. 
Now collapse each fibre $F$ at $x=0$ to a point. This commutes with the restriction to 
each $F \times [0,2)$,  
so we obtain a bundle of cones $C(F)$ over $Y$. We call this space the blowdown of $\tilde{X}$ along the fibration, 
and write it as $X/F$. Denote by $T_Y$ the image of $\calU$ under this blowdown. 
The map $\tilde{\pi}$ induces a retraction map $\pi(\calU) = T_Y \to Y$, and $\rho$ also descends to $T_Y$. 
Thus $\{T_Y, \pi,\rho\}$ are the control data for the singular stratum $Y$, and it is easy to check that 
these satisfy all of the axioms in \S 2.1, hence $X/F$ is a smoothly stratified space.

Now turn to the general case, which is proved by induction on the depth. As in the next subsection, 
where we we follow an argument from \cite{BHS} and show how to blow up a smoothly stratified space, 
we use a `doubling construction' to stay within the class of stratified pseudomanifolds while
applying the inductive hypothesis to reduce the complexity of the problem.  To set this up, beginning with
$\wt{X}$, a manifold with corners with iterated fibration structure of depth $k$, form a new manifold with 
corners and iterated fibration structure of depth $k-1$ by simultaneously 
doubling $\wt{X}$ across all of its maximal depth hypersurfaces. In other words, consider
\[
\wt{X}' = \left((\wt{X} \times {-1}) \sqcup (\wt{X} \times {+1}) \right)/\sim
\]
where $(p,-1) \sim (q,+1)$ if and only if $p = q \in H \in \calH$ where $\mbox{depth}\,(H) = k$. 
By standard arguments in differential topology, one can give $\wt{X}'$ the structure 
of a manifold with corners up to codimension $k-1$. If $H_j \in \calH$ is any face with depth $j < k$
which intersects a face $H_k$ of depth $k$, then as in Lemma \ref{le:ifs}, the boundaries of the fibres 
$F_j \subset H_j$ only meet the corners $H_{ij}$ for $i < j$, and do not meet the interior of $H_{jk}$.
In terms of the local coordinates $(x_j, x_k, y)$ in that Lemma, we simply let $x_k$ vary in $(-\e,\e)$ rather 
than just $[0,\e)$, and it is clear how to extend the fibrations accordingly.

The dimensional comparisons and inclusion relations at all other corners remain unchanged.
Therefore, $\wt{X}'$ has an iterated fibration structure. This new space also carries a smooth involution
which has fixed point set the union of all depth $k$ faces, where the two copies of $\wt{X}$ are joined, as 
well as a function $\rho_k$ which is positive on one copy of $\wt{X}$, negative on the other, and which 
vanishes simply on the interface between the two copies of $\wt{X}$. For simplicity, 
assume that there is only one depth $k$ face, $H_k$. We can also choose $\tilde{\rho}_k$ so that 
it is constant on the fibres of all other boundary faces, and a retraction $\tilde{\pi}_k$ defined on
the set $|\tilde{\rho}_k| < 2$ onto $H_k$. 

Now apply the inductive hypothesis to blow down the boundary hypersurfaces of $\wt{X}'$ in order of 
increasing fibre dimension to obtain a smoothly stratified space $\wh{X}'$ of depth $k-1$. The 
function $\rho_k$ descends to a function (which we give the same name) on this space. Consider
the open set $\wh{X}^+ := \wh{X}' \cap \{\rho_k > 0\}$, and also $\del_k \wh{X} := 
\wh{X}' \cap \{\rho_k = 0\}$. Both of these are smoothly stratified spaces; for the former this
is because (in the language of \cite{BHS}) we are restricting to a `saturated' open set of $\wh{X}'$,
though we do not need to appeal to this terminology since the assertion is clear, whereas for the latter 
it follows by induction since it is the blowdown of $H_k$, which has depth less than $k$. This space
$\del_k \wh{X}$, which we denote by $\wh{H_k}$ is the total space of a fibration induced
from the fibration of the face $H_k$ in $\wt{X}$. By Lemma \ref{le:ifs}, since the $H_k$ are
maximal, the base $B_k$ has no boundary, and the fibres $\wt{F}_k$ are manifolds with corners with 
iterated fibration structures of depth less than $k$. 

Hence after the blowdown, the base of the fibration of $\del_k \wh{X}$ is 
still $B_k$ while the fibres are the blowdowns $\wh{F}_k$ of the spaces $\wt{F}_k$, which are again well 
defined by induction. Finally, using the product decomposition of a neighbourhood of $H_k$ in $\wt{X}$, 
collapsing the fibres of $H_k$ identifies the blowdown of this neighbourhood with the mapping cylinder
for the fibration of $\del_k \wh{X}$. This produces the final space $\wh{X}$. 

It suffices to check that the stratification of $\wh{X}$ satisfies the axioms of a smoothly stratified space 
only near where this final blowdown takes place, since the inductive hypothesis guarantees that they hold
elsewhere. These axioms are not difficult to verify from the local description of $\wt{X}$ in a product 
neighbourhood of $H_k$.
\end{proof}
\begin{remark}
There is a subtlety in this result since there is typically more than one smoothly stratified space $\wh{X}$ which
may be obtained by blowing down a manifold with corners $\wt{X}$ with iterated fibration structure. More specifically,
there is a minimal blowdown, which associates to each connected hypersurface boundary of $\wt{X}$ a 
stratum of the blowdown $\wh{X}$. However, it may occur that two strata of $\wh{X}$ of highest depth,
for example, are diffeomorphic, and after identifying these strata we obtain a new smoothly stratified
space. It may not be easy to quantify the full extent of nonuniqueness, but we do not attempt (nor need) this here.  
\label{nonuniqueness}
\end{remark}

\subsection{The resolution of a smoothly stratified space}$\;$\newline
The other part of this description of the differential topology of smoothly stratified spaces is the 
resolution process: namely, conversely to the blowdown construction above, if $\widehat{X}$
is any smoothly stratified space, one may resolve its singularities by successively blowing up 
its strata in order of decreasing depth to obtain a manifold with corners $ \wt{X} $ with iterated fibration 
structure.  Following Remark~\ref{nonuniqueness}, two different smoothly stratified spaces $\widehat{X}_1, 
\widehat{X}_2$ may resolve to the same manifold with corners $\widetilde{X}$. 

\begin{proposition}\label{prop:BlowUp}
Let $\wh{X}$ be a smoothly stratified pseudomanifold. Then there exists a manifold with corners $\wt{X}$ 
with an iterated fibration structure, and a blowdown map $\beta: \widetilde{X} \to \widehat{X}$
which has the following properties:
\begin{itemize}
\item there is a bijective correspondence $Y \leftrightarrow \wt{X}_Y$ between the strata $Y \in \mathfrak S$ 
of $\wh{X}$ and the (possibly disconnected) boundary hypersurfaces of $\wt{X}$ which blow down to these strata;
\item $\beta$ is a diffeomorphism between the interior of $\wt{X}$ and the regular set of $\wh{X}$;
we denote by $X$ this open set, which is dense in either $\wt{X}$ or $\wh{X}$; 
\item $\beta$ is also a smooth fibration of the interior of each boundary hypersurface $\wt{X}_Y$ 
with base the corresponding stratum $Y$ and fibre the regular part of the link of $Y$ in $\wh{X}$;
moreover, there is a compactification of $Y$ as a manifold with corners $\wt{Y}$ such
that the extension of $\beta$ to all of $\wt{X}_Y$ is a fibration with base $\wt{Y}$ and fibre
$\wt{L_Y}$; finally, each fibre $\wt{L_Y} \subset \wt{X}_Y$ is a manifold with corners with iterated 
fibration structure and the restriction of $\beta$ to it is the blowdown onto the smoothly stratified space
$\bar{Y}$. 
\end{itemize}
\end{proposition}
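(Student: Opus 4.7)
The plan is to proceed by induction on the depth $k$ of $\wh{X}$, mimicking the doubling strategy used in the proof of Proposition \ref{pr:blowdown} but in reverse. When $k=0$, $\wh{X}$ is already a closed smooth manifold and we may take $\wt{X}=\wh{X}$ with $\beta=\id$, so assume the result for all smoothly stratified spaces of depth less than $k$, and let $\wh{X}$ have depth $k\geq 1$. Let $Y_1,\dots,Y_N$ be the connected components of the union of strata of maximal depth $k$; each $Y_j$ is a closed smooth manifold, and axiom v) together with the mapping cylinder identification $F_{Y_j}:\mathrm{Cyl}(S_{Y_j},\pi_{Y_j})\to T_{Y_j}$ exhibits a neighbourhood of $Y_j$ as the mapping cylinder of a fibration $\pi_{Y_j}:S_{Y_j}\to Y_j$ whose fibre $L_{Y_j}$ is a smoothly stratified pseudomanifold of depth at most $k-1$.

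The first step is to blow up the maximal-depth strata: replace each $T_{Y_j}$ by the product $S_{Y_j}\times[0,2)$, obtained from the mapping cylinder by deleting the vertex section and using $\rho_{Y_j}$ as the new defining function. This yields a smoothly stratified space with boundary $\wh{X}^{(1)}$, of depth $k-1$, with new boundary component $\bigsqcup_j S_{Y_j}$. To remain within the category of closed stratified pseudomanifolds, form the double $\wh{X}^{(1)}_d$ by gluing two copies of $\wh{X}^{(1)}$ along $\bigsqcup_j S_{Y_j}$; by the local triviality of $\pi_{Y_j}$ and compatibility of the control data on the remaining strata, this is a smoothly stratified pseudomanifold of depth $k-1$, carrying a $\bbZ/2$ involution whose fixed point set is $\bigsqcup_j S_{Y_j}$ together with a globally defined sign function $\rho_k$.

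The second step is to apply the inductive hypothesis to $\wh{X}^{(1)}_d$ to produce a manifold with corners with iterated fibration structure $\wt{X}^{(1)}_d$ together with its blowdown map. After choosing control data equivariantly with respect to the involution (this can be done inductively using Proposition \ref{pr:eqcd}), the involution lifts to a smooth involution on $\wt{X}^{(1)}_d$ whose fixed set is a $p$-submanifold; we define $\wt{X}$ to be the closure of the component where $\rho_k>0$, so that $\wt{X}$ acquires one additional boundary hypersurface $\wt{X}_{Y_j}$ for each $Y_j$, resolving $S_{Y_j}$. Independently, the inductive hypothesis applied to each link $L_{Y_j}$ (which has depth $<k$) produces the resolution $\wt{L}_{Y_j}$, and one checks that the extension of $\pi_{Y_j}$ to $\wt{X}_{Y_j}$ is a fibration with base the compactification $\wt{Y}_j$ (obtained by restricting the already-resolved strata of $\wh{X}^{(1)}$ that meet $\overline{Y_j}$) and fibre $\wt{L}_{Y_j}$. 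The bijective correspondence between strata and (possibly disconnected) boundary hypersurfaces, the fact that $\beta$ restricts to a diffeomorphism on interiors, and the fibration property on each $\wt{X}_{Y_j}$ all follow from tracking what the blowup procedure does on each piece of control data.

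The main obstacle will be verifying conditions (b) and (c) in the definition of an iterated fibration structure at the corners of $\wt{X}$, where two boundary hypersurfaces $\wt{X}_Y$ and $\wt{X}_Z$ meet. These corners correspond precisely to pairs of strata with $Y\subset\overline{Z}$, and the required inclusion of the smaller fibre inside the boundary of the larger one, together with the induced fibration of $\partial_Y\wt{L}_Z$ with fibres $\wt{L}_Y$, must be shown to follow from axiom iii) of the stratified space definition, namely the identities $\pi_Y\circ\pi_Z=\pi_Y$ and $\rho_Y\circ\pi_Z=\rho_Y$ on $T_Y\cap T_Z$. Concretely, this identity implies that $\pi_Z$ maps the nested tubular neighbourhood of $Y$ inside $T_Z$ fibrewise over $Y$, which after blowup translates into the tower-of-fibrations compatibility demanded by Melrose's axioms; all other clauses of an iterated fibration structure are either immediate or follow by induction from the structure already present on $\wt{X}^{(1)}_d$.
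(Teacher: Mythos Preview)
Your proposal is correct and follows essentially the same strategy as the paper: both proceed by induction on depth, replace the maximal-depth strata by the cylinder $S_Y\times[0,2)$, double across the resulting boundary to stay in the category of closed pseudomanifolds, and then invoke the depth-$(k-1)$ case, with the main technical work being the verification of the corner compatibility conditions (b) and (c) via axiom iii). The paper presents this as an explicit $k$-fold iteration producing $k$ commuting involutions on a closed manifold and then taking a fundamental domain, whereas you directly apply the inductive hypothesis to the doubled space and take the closure of one half; these are the same argument up to unrolling the recursion.
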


We sketch the proof, adapting the construction from \cite{BHS}, to which we refer for
further details. The proof is inductive: if $\wh{X}$ has depth $k$ and we simultaneously blow up 
the union of the depth $k$ strata to obtain a space $\wh{X}_1$, then all the control data of the 
stratification on $\wh{X}$ lifts to give $\wh{X}_1$ the structure of a smoothly 
stratified space of depth $k-1$. Iterating this $k$ times completes the proof. 
However, in order to stay within the category of smoothly stratified pseudomanifolds, 
which by definition have no codimension one boundaries, we proceed as in the proof of Proposition \ref{pr:blowdown} 
(and as in \cite{BHS}) and construct a space $\wh{X}_1'$ which is the double across the boundary hypersurface of 
the blowup of $\wh{X}$ along its depth $k$ strata, and show that $\wh{X}'_1$ is a smoothly stratified 
pseudomanifold of depth $k-1$. This space $\wh{X}'_1$ is equipped with an involution $\tau_1$ 
which interchanges the two copies of the double; the actual blowup is the closure of one component 
of the complement of the fixed point set of this involution. Iterating this $k$ times, we obtain 
a smooth compact manifold $\wh{X}'_k$ equipped with $k$ commuting involutions $\{\tau_j\}_{j=1}^k$;
the manifold with corners we seek is any one of the $2^k$ fundamental domains for this action.
\begin{proof}
To begin, fix a stratum $Y$ which has maximal depth $k$; this is a smooth closed manifold. Recall 
the notation from \S 2.1, and in particular the stratified isomorphism $F_Y$ from the mapping cylinder of 
$(S_Y,\pi_Y)$ to $T_Y$ and the family of local trivializations $\phi:\pi_Y^{-1}(\calU) \to \calU \times C(L_Y)$ 
for suitable $\calU \subset Y$. If $u \in T_Y \cap \pi_Y^{-1}(\calU)$, we write $\phi(u) = (y,z,t)$ where 
$y \in \calU$, $z \in L_Y$ and $t = \rho_Y(u)$; by axiom v), there is a retraction $R_Y: 
T_Y \setminus Y \to S_Y$, given on any local trivialization by $(y,z,t) \to (y,z,1)$ 
(which is well defined since $t \neq 0$). 

To construct the first blowup, assume for simplicity that there is only one stratum $Y$ of 
maximal depth $k$. Define
\begin{equation}
\wt{X}_1' := \left((\wh{X} \setminus Y) \times \{-1\}\right) \sqcup \left((\wh{X} \setminus Y) \times \{+1\}\right)
\sqcup \big(S_Y \times (-2,2) \big) / \sim
\label{first-blow-up}
\end{equation}
where (if $\e = \pm 1$), 
\begin{equation}
(p, \e) \sim (R_Y(p),\rho_Y(p)) \quad\text{if}\quad p \in T_Y\setminus Y \  \text{and}\ \e t  >0.
\label{eq-relation}
 \end{equation}
Let $\wh{X}' = (\wh{X} \times \{-1\}) \sqcup (\wh{X} \times \{+1\}) / \sim$ where
$(u,\e) \sim (u',\e')$ if and only if $u=u' \in Y$.  Note that $\wt{X}_1' \setminus S_Y 
\times\{0\}$ is naturally identified with $\wh{X}' \setminus Y$, so this construction replaces $Y$ with $S_Y$. 

There is a blowdown map $\beta_1: \wt{X}_1' \to \wh{X}'$ given by
\[
\beta_1(u,\e) = (u,\e) \quad \text{if}\quad u \notin Y, \qquad
\beta_1(u,0) = \pi_Y(u). 
\]
Clearly $\beta_1: \wt{X}_1'  \setminus S_Y \times \{0\} \to \wh{X}' \setminus Y$ is an isomorphism
of smoothly stratified spaces and $(S_Y \times (-2,2))$ is a tubular neighbourhood of $(\beta_1)^{-1}Y= 
S_Y\times \{0\}$ in $\wt{X}_1'$.

We shall prove that $\wt{X}_1'$ is a smoothly stratified space of depth $k-1$ equipped with an involution
$\tau_1$ which fixes $S_Y \times \{0\}$ and interchanges the two components of the complement of this
set in $\wt{X}_1'$, and which fixes all the control data of $\wt{X}_1'$. 
To do all of this, we must fix a stratification ${\mathfrak S}_1$ of $\wt{X}_1'$ and define all of
the corresponding control data and show that these satisfy properties i) - vi). 

\medskip

$\bullet$ Fix any stratum $Z\in \mathfrak{S}$ of $\wh{X}$ with $\text{depth}\,(Z) < k$, and define
\begin{equation}
\wt{Z}_1' := (Z \times \{\pm 1\}) \sqcup \left( (S_Y \cap Z)\times (-2,2)\right)/ \sim, 
\label{new-stratum}
\end{equation}
where $\sim$ is the same equivalence relation as in \eqref{eq-relation}. The easiest way to see that this is 
well-defined is to note that $S_Y \cap Z$ is a stratum of the smoothly stratified space $S_Y$ and that 
the restriction 
\begin{equation}
F_Y : \mathrm{Cyl}\,(S_Y \cap \bar{Z}, \pi_Y) \longrightarrow \bar{Z} \cap T_Y
\label{eq:cylstrat}
\end{equation}
is an isomorphism. (This latter assertion follows from axiom ii).)

As above, let $Z'$ be the union of two copies of $\bar{Z}$ joined along $\bar{Z} \cap Y$. 

\smallskip

$\bullet$ Now define the stratification ${\mathfrak S}_1$ of $\wt{X}_1'$
\begin{equation}\label{stratification-w-b}
{\mathfrak S}_1 := \{\wt{Z}_1': Z \in \mathfrak{S}\setminus Y\}. 
\end{equation}
We must now define the control data $\{ T_{\wt{Z}_1'}, \pi_{\wt{Z}_1'}, \rho_{\wt{Z}_1'}\}_{\wt{Z}_1'
\in \mathfrak{S}_1}$ associated to this stratification. 

\smallskip

$\bullet$ Following \eqref{new-stratum}, set
\begin{equation}\label{tube-for-z}
T_{\wt{Z}'_1}:= T_Z \times \{\pm 1\} \sqcup \left((S_Y\cap T_Z)\times (-2,2)\right)/\sim,
\end{equation}
where $(p,\e) \sim (c,t)$ if $t\e >0$ and $p=F_Y (c,|t|)$. Extending (or `thickening') (\ref{eq:cylstrat}),
by axiom iii) we also have that $F_Y$ restricts to an isomorphism between $\mbox{Cyl}\,(T_Z \cap S_Y ,\pi_Y) $ 
and $T_Z \cap T_Y$. In turn, using axiom ii) again, within the smoothly stratified space $S_Y$, $F_{T_Y \cap Z}$ 
is an isomorphism from $\mbox{Cyl}\,(S_Y \cap S_Z, \pi_Z)$ to the tubular neighbourhood of $Z \cap S_Y$ in $S_Y$, 
which is the same as $T_{S_Y \cap Z}$. Using these representations, the fact that (\ref{tube-for-z}) is well-defined 
follows just as before. 

Note that $Y$ has been stretched out into $S_Y \times \{0\}$, and $T_{\wt{Z}_1'} \cap (S_Y \times \{0\})$ is
isomorphic to $T_{\wt{Z}_1'} \cap (S_Y \times \{t\})$ for any $t \in (-2,2)$. 

\smallskip

$\bullet$ The projection $\pi_{\wt{Z}_1'}$ is determined by $\pi_Z$ on each slice $(S_Y \cap T_Z) 
\times \{t\}$, at least when $t \neq 0$, and extends uniquely by continuity to the slice at $t=0$
in $\wt{X}_1'$. A similar consideration yields the function $\rho_{\wt{Z}_1'}$. 

\smallskip

$\bullet$ One must check that the space $\wt{X}_1'$ and this control data for its stratification
satisfies axioms i) - vi). This is somewhat lengthy but straightforward, so details are left
to the reader. 

\smallskip

$\bullet$ Finally, this whole construction is symmetric with respect to the reflection $\tau_1$
defined by $t \mapsto -t$ in $T_Y$ and which extends outside of $T_Y$ as the interchange of the 
two components of $X' \setminus Y$. The fixed point set of $\tau_1$ is the slice $S_Y \times \{0\}$.

\medskip

This establishes that the space $\wt{X}_1'$ obtained by resolving the depth $k$ smoothly stratified
space $\wh{X}$ along its maximal depth strata via this doubling-blowup construction is a smoothly
stratified space of depth $k-1$, equipped with one extra piece of data, the involution $\tau_1$. 

This process can now be iterated. After $j$ iterations we obtain a smoothly stratified
space $\wt{X}_j'$ of depth $k-j$ which is equipped with $j$ commuting involutions $\tau_i$,
$1 \leq i \leq j$. In particular, the space $\wh{X}_k'$ is a compact closed manifold. 

It is easy to check, e.g.\ using the local coordinate descriptions, that these involutions are `independent' 
in the sense that for any point $p$ which lies in the fixed point set of more than one of the $\tau_i$, the 
$-1$ eigenspaces of the $d\tau_i$ are independent. 

The complement of the union of fixed point sets of the involutions $\tau_i$ has $2^k$ components, 
and $\wt{X}$ is the closure of any one of these components. 

The construction is finished if we show that $\wt{X}$ carries the structure of a manifold
with corners with iterated fibration structure. We proved already that $\wt{X}$ has
the local structure of a manifold with corners, but we must check that the boundary
faces are embedded. For this, first note that all faces of the resolution of $\wh{X}_1'$ are embedded, 
and by its description in the resolution construction, $H_k$ is as well; finally, all corners of $\wt{X}$ 
which lie in $H_k$ are embedded since they are faces of the resolution of $S_Y$ where $Y$ is the maximal 
depth stratum and we may apply the inductive hypothesis. This proves that $\wt{X}$ is a manifold with corners. 

Now examine the structure on the boundary faces inductively. The case $k=1$ is obvious since then 
$\wt{X}$ is a manifold with boundary; $\del\wt{X}$ is the total space 
of a fibration and there are no compatibility conditions with other faces. 
Suppose we have proved the assertion for all spaces of depth less
than $k$, and that $X$ is a smoothly stratified space of depth $k$. Let $Y$ be the union
of all strata of depth $k$ and consider the doubled-blowup space $\wt{X}_1'$. This is
a stratified space of depth $k-1$, so its resolution is a manifold with corners up to
codimension $k-1$ with iterated fibration structure. Since $S_Y$ is again a
smoothly stratified space of depth $k-1$, its resolution $\wt{S_Y}$ is also a manifold with 
corners with iterated fibration structure. The blowdown of $\wt{S_Y}$ along the fibres
of all of its boundary hypersurfaces is a smoothly stratified space $\wh{S_Y}$ and
this is the boundary $H_k$ of $\wt{X}_1$, the `upper half' of $\wt{X}_1'$. 

Once we have performed all other blowups, we know that the compatibility conditions are satisfied 
at every corner except those which lie in $\wt{S_Y}$. The images of the other boundaries of 
$\wt{X}_1$ by blowdown into $\wh{X}_1$ are the singular strata of this space. 
Furthermore, there is a neighbourhood of $H_k'$ in $\wt{X}_1$ of the form $S_Y \times [0,2)$ (using 
the variable $t$ in this initial blowup as the defining function $\rho_k$), so that near $H_k$, 
$\wt{X}$ has the product decomposition $\wt{S_Y} \times [0,2)$. From this it follows that each
fibre $F_j$ of $H_j$, $j < k$, lies in the corresponding corners $H_k \cap H_j$; it also follows
that each fibre $F_k$ of $H_k$ is transverse to this corner, and has boundary $\del_j F_k$ equal 
to a union of the fibres $F_j$. This proves that conditions a) - c) of the iterated fibration structure are satisfied. 
\end{proof}

\subsection{Smoothly stratified isomorphisms} $\;$ \newline
We now return to a closer discussion of a good definition of isomorphism between smoothly stratified spaces. 
Following Melrose, these isomorphisms are better understood through their lifts to the resolutions. 

To begin, we state a result which is a straightforward consequence of the resolution and blowdown
constructions above. 
\begin{proposition} Let $\wh{X}$ and $\wh{X}'$ be two smoothly stratified spaces and $\wt{X}$, $\wt{X}'$
their resolutions, with blowdown maps $\beta: \wt{X} \to \wh{X}$ and $\beta': \wt{X}' \to \wh{X}'$.
Suppose that $\hat f : \wh{X}\to \wh{X}'$ is a stratified isomorphism as in \cite{BHS}, \S 2. 
Then there is a unique 
diffeomorphism of manifolds with corners $\tilde{f}: \wt{X}\to  \wt{X}'$ which preserves the iterated 
fibration structures and which satisfies $\hat{f}\circ \beta=\beta' \circ \tilde{f}$.
\end{proposition}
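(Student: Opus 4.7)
The plan is to prove existence by induction on the depth $k$ of $\wh{X}$, imitating the doubling-blowup resolution from Proposition~\ref{prop:BlowUp} one stage at a time, and to prove uniqueness separately by a density argument.

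For uniqueness: both $\beta$ and $\beta'$ restrict to diffeomorphisms from the interiors of $\wt{X}$, $\wt{X}'$ to the regular parts of $\wh{X}$, $\wh{X}'$. Hence on the dense open set $\mathrm{reg}(\wt{X})$ any lift $\tilde{f}$ is forced to equal $(\beta')^{-1}\circ\hat{f}\circ\beta$, and since $\tilde{f}$ is required to be a continuous (indeed smooth) map of manifolds with corners, it is determined on all of $\wt{X}$. For the base case $k=0$, nothing is resolved and $\tilde{f}:=\hat{f}$ works trivially.

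For the inductive step, suppose the statement holds in all depths $<k$, and let $\wh{X}$ have depth $k$. Because $\hat{f}$ is a stratified isomorphism in the sense of \cite{BHS}, it sends strata to strata of the same depth; in particular it maps the union $Y$ of depth-$k$ strata diffeomorphically onto the union $Y':=\hat{f}(Y)$ of depth-$k$ strata of $\wh{X}'$. The control-data identities $\pi_{Y'}\circ\hat{f}=\hat{f}\circ\pi_Y$ and $\rho_{Y'}\circ\hat{f}=\rho_Y$ (which are part of the definition of a stratified isomorphism) show that $\hat{f}$ restricts to a stratified isomorphism $\hat{f}|_{S_Y}:S_Y\to S_{Y'}$ of smoothly stratified spaces of depth $k-1$, covering the smooth map $\hat{f}|_Y:Y\to Y'$. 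Using the description \eqref{first-blow-up} of the first doubled blowup $\wt{X}_1'$ (and the analogous $(\wt{X}')_1'$), we define
\begin{equation*}
\hat{f}_1':\wt{X}_1'\longrightarrow (\wt{X}')_1'
\end{equation*}
by the formula $\hat{f}$ on each copy of $\wh{X}\setminus Y$ and by $\hat{f}|_{S_Y}\times\mathrm{id}_{(-2,2)}$ on the slab $S_Y\times(-2,2)$. Compatibility with the gluing relation \eqref{eq-relation} is exactly the pair of control-data identities above, and compatibility with the new stratification and control data of $\wt{X}_1'$ defined in \eqref{stratification-w-b} and \eqref{tube-for-z} follows by direct inspection of those formulae. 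The resulting $\hat{f}_1'$ is a stratified isomorphism of smoothly stratified spaces of depth $k-1$ which intertwines the doubling involutions $\tau_1$ on the two sides. The inductive hypothesis applied to $\hat{f}_1'$ yields a unique diffeomorphism of manifolds with corners between the resolutions of $\wt{X}_1'$ and $(\wt{X}')_1'$ preserving iterated fibration structures; by uniqueness this lift is equivariant for the $\tau_1$'s, hence restricts to the chosen fundamental domains to give $\tilde{f}:\wt{X}\to\wt{X}'$. The intertwining $\beta'\circ\tilde{f}=\hat{f}\circ\beta$ holds on the interior by construction and then everywhere by continuity.

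The step I expect to require the most care is verifying that $\hat{f}_1'$ really is a stratified isomorphism in the strong sense of axiom (v): it must intertwine the local trivializations of the cone bundles around the new stratum $S_Y\times\{0\}$ as well as around each old stratum $\wt{Z}_1'$. This reduces to showing that the smooth family of stratified isomorphisms of links $L_Y$ encoded in the transition functions of $\wh{X}$ is mapped to the corresponding family for $\wh{X}'$ by $\hat{f}|_{S_Y}$, which is a direct consequence of the control-data hypothesis; however, organising the bookkeeping of these compatibilities at the successively deeper corners created by the iterated blowups, and checking at each stage that the lifted map is smooth up to the boundary faces and respects the fibration on each face, is where the real work lies.
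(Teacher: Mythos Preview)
Your uniqueness argument is exactly the paper's. For existence, however, the paper takes a much shorter route: having observed uniqueness, it notes that existence can be checked locally, and then simply cites \cite{BHS}, \S2, Prop.~3.2 and Remark~4.2, where the lift is constructed in adapted local coordinates; the only additional remark is that an inspection of that local construction shows the lifted map respects the boundary fibrations.

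Your approach is genuinely different: you run an induction on depth, lifting $\hat f$ to a stratified isomorphism $\hat f_1'$ of the depth-$(k-1)$ doubled spaces $\wt X_1'$, $(\wt X')_1'$, invoking the inductive hypothesis to resolve that, and then using uniqueness to obtain $\tau_1$-equivariance and hence a map on fundamental domains. This is correct in outline and has the virtue of being self-contained within the paper's own framework---it never leaves the doubling/blowup machinery of \S2.3, and the preservation of the iterated fibration structure comes out of the induction rather than from inspecting someone else's proof. The cost is exactly what you flag: one must verify that $\hat f_1'$ is a stratified isomorphism in the full sense (control data on the $\wt Z_1'$, axiom~(v) for the new tubular neighbourhoods), and that the resolution of $\wt X_1'$ with its lifted $\tau_1$ really has $\wt X$ as a fundamental domain. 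Both of these are routine but require bookkeeping that the paper's citation bypasses entirely.
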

\begin{proof}
If such a lift exists at all, it must be unique simply because it is defined by continuous extension from a map defined 
between the interiors of $\wt{X}$ and $\wt{X}'$. Because of this uniqueness, it suffices to prove the existence of the 
lift in local coordinates, and this is done in \cite{BHS}, \S 2 Prop. 3.2 and Remark 4.2. Of course, since those authors 
are not using the notion of iterated 
fibration structures, they do not consider the issue of whether the lift preserves the fibrations at the boundaries;
however, a cursory inspection of their proof shows that the map they construct does have this property.
\end{proof}

The converse result is also true, up to a technical point concerning connectedness of the links.
\begin{proposition} Given $\wt{X}$, $\wt{X}'$, $\wh{X}$ and $\wh{X}'$, as above, suppose that $\tilde{f}: 
\wt{X} \to \wt{X}'$ is a diffeomorphism of manifolds with corners which preserves the fibration
structures at the boundaries. Suppose furthermore that $\wh{X}$ and $\wh{X}'$ are the minimal
blowdowns of $\wt{X}$, $\wt{X}'$ in the sense of Remark~\ref{nonuniqueness}. 
Then there exists some choice of control data on the blown down
spaces and a stratified isomorphism $\hat{f}: \wh{X} \to \wh{X}'$ such that $\hat{f}\circ \beta=\beta' \circ \tilde{f}$.
\label{pr:diffeosss}
\end{proposition}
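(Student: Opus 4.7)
The plan is to argue by induction on the depth $k$ of $\widetilde X$ (equivalently, on the maximum codimension of its corners). The base case $k=0$ is trivial: $\widetilde X = \widehat X$ is a closed manifold, and we may take $\hat f = \tilde f$. For the inductive step, I would first fix a set of control data $\{\tilde T_H,\tilde\pi_H,\tilde\rho_H\}$ on $\widetilde X$ as in Definition~\ref{de:controldata}, and \emph{define} the control data on $\widetilde X'$ to be its pushforward under the diffeomorphism $\tilde f$ (this is legitimate because $\tilde f$ preserves all fibrations, and any other choice on $\widetilde X'$ is equivalent to this one by Proposition~\ref{pr:eqcd}). With these choices, $\tilde f$ intertwines the two pre-control structures at the level of resolutions tautologically.

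Next I would let $Y$ be a stratum of maximal depth $k$ in $\widehat X$ and $\widetilde X_Y$ the corresponding boundary hypersurface, supplied by Proposition~\ref{prop:BlowUp}. By the minimality hypothesis of Remark~\ref{nonuniqueness}, connected components of boundary hypersurfaces of $\widetilde X$ correspond bijectively to strata of $\widehat X$, and similarly for $\widetilde X'$. Since $\tilde f$ is a diffeomorphism of manifolds with corners that respects the boundary fibrations, it carries $\widetilde X_Y$ to some connected hypersurface $\widetilde X'_{Y'}$ and descends to a diffeomorphism of their bases $\widetilde Y \to \widetilde Y'$; because $Y$ has maximal depth the base is closed, so we get a diffeomorphism $Y \to Y'$, which I take to be the definition of $\hat f|_Y$. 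Off the closed set of maximal-depth strata, $\beta$ and $\beta'$ are diffeomorphisms, so I define $\hat f = \beta' \circ \tilde f \circ \beta^{-1}$ there. Continuity of the glued map at $Y$ follows from the fact that $\tilde f$ descends to the base of the fibration of $\widetilde X_Y$.

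Now I want to verify the stratified-isomorphism conditions at $Y$ and then invoke the inductive hypothesis for the lower-depth strata. The tubular neighbourhood $T_Y$ is the blowdown of $\tilde T_{\widetilde X_Y} \cong \widetilde X_Y \times [0,2)$ along the fibres of the boundary fibration, and the control data $(\pi_Y,\rho_Y)$ descend directly from $(\tilde\pi_{\widetilde X_Y},\tilde\rho_{\widetilde X_Y})$. Since $\tilde f$ was arranged to intertwine these control data on the resolutions, the descended map $\hat f$ satisfies $\pi'_{Y'}\circ\hat f = \hat f \circ \pi_Y$ and $\rho'_{Y'}\circ\hat f = \rho_Y$ on $T_Y$, which is exactly the compatibility axiom for a stratified isomorphism at $Y$. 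Finally, each fibre $\widetilde L_Y \subset \widetilde X_Y$ is itself a manifold with corners with iterated fibration structure of depth $k-1$, whose blowdown is the link $L_Y$; the restriction of $\tilde f$ to any such fibre is a fibration-preserving diffeomorphism onto $\widetilde L_{Y'}$, so the inductive hypothesis produces a stratified isomorphism $L_Y \to L_{Y'}$ on each link. These link isomorphisms are precisely what is needed to verify the remaining conditions of Definition~2.1 at $Y$, and they patch together smoothly in $y \in Y$ because the original map on $\widetilde X_Y$ was a single diffeomorphism.

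The main obstacle, and the place where some care is needed, is the bookkeeping at corners $\widetilde X_Y \cap \widetilde X_Z$ where a deeper stratum $Y$ meets a shallower stratum $Z$: one must check that the induced stratified isomorphism of links $L_Y \to L_{Y'}$ is itself compatible with the control data coming from $Z$, so that the compatibility axioms of the control data (notably axiom iii) on $\widehat X$) are respected by $\hat f$ globally, not just stratum by stratum. This follows from condition (c) of the iterated fibration structure together with the fact that $\tilde f$ simultaneously preserves all boundary fibrations, but spelling it out requires unwinding the local product coordinates from Lemma~\ref{le:ifs} at each corner and tracing through the blowdown; the minimal-blowdown hypothesis is what guarantees that no distinct components get identified in a way that would spoil this compatibility.
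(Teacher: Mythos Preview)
Your proposal is correct and shares the paper's key idea: fix control data on $\wt{X}$, push it forward to $\wt{X}'$ via $\tilde f$, and observe that $\hat f$ then intertwines the induced control data on the blowdowns by construction. The paper's proof is considerably terser than yours: rather than running an explicit induction on depth and verifying the stratified-isomorphism axioms stratum by stratum (and on links), it simply asserts that any control data on a manifold with corners with iterated fibration structure pushes down to control data on its blowdown, so that once $\tilde f$ intertwines control data upstairs, $\hat f$ automatically intertwines the pushed-down control data downstairs. Your induction effectively unpacks what that one-line pushdown claim means, which is arguably more informative; the paper's version is shorter but leans more heavily on the blowdown construction of Proposition~\ref{pr:blowdown} already encoding all the necessary compatibilities.
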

\begin{proof}
As above, $\hat{f}$ is uniquely determined over the principal dense open stratum
of $\wh{X}$. The fact that $\tilde{f}$ preserves the fibration structures means that
$\hat{f}$ extends to a continuous map $\wh{X} \to \wh{X}'$. However, this extension is not
a stratified isomorphism unless we use the correct choices of control data on all these spaces.
Thus fix control data on $\wt{X}$; this may be pushed forward to control data on $\wt{X}'$ via 
$\tilde{f}$.  Any set of control data for a manifold with corners with iterated fibration
structure  can be pushed down to a set of control data on its blowdown.
Therefore we have now induced control data on $\wh{X}$ and $\wh{X}'$, and it follows
from this construction that the induced map $\hat{f}$ intertwines these sets of control data,
as required.
\end{proof}

Combined with Proposition~\ref{pr:eqcd}, this gives another proof of the result from \cite{BHS} that 
any two sets of control data on a smoothly stratified space $\wh{X}$ are equivalent by a smoothly stratified
isomorphism.

This discussion motivates the following
\begin{definition}\label{correct-smooth}
A smoothly stratified map $\hat{f}$ between smoothly stratified spaces $\widehat{X}$
and $\widehat{X}^\prime$ is a continuous map $f:\widehat{X}\to \widehat{X}^\prime$ 
sending the open strata of $\widehat{X}$ smoothly into the open strata of $\widehat{X}^\prime$
and for which there exists a lift $\tilde{f}: \widetilde{X}\to \widetilde{X}^\prime$,  
$\hat{f}\circ \beta=\beta' \circ \tilde{f}$, which is a $b$-map of manifolds with corners 
preserving the iterated fibration structures.
\end{definition}
This definition has the advantage that it is not inductive (even though many of the arguments 
behind it are), and it provides a clear notion of the regularity of these isomorphisms on approach 
to the singular set. 

\section{Iterated edge metrics. Witt spaces.} \label{section:iterated}

In this section we first introduce the class of Riemannian metric on smoothly stratified spaces with which we shall
work throughout this paper. These metrics are only defined on $\mbox{reg}\,(\wh{X})$,
but the main point is their behaviour near the singular strata. These metrics were also considered
by Brasselet-Legrand \cite{BL}; closely related metrics had been considered by Cheeger \cite{Ch}; they are most easily
described using adapted coordinate charts (see pp.\, 224-5 of \cite{BL}) or equivalently, on
the resolution $\wt{X}$. In the second part of the section we introduce the Witt condition and
recall the fundamental theorem of Cheeger, asserting the isomorphism between intersection cohomology
and Hodge cohomology on these spaces.
In the following, we freely use notation from the last section.

\subsection{Existence of iterated edge metrics.} $\,$\newline
We begin by constructing an open covering of $\mbox{reg}\,(\wh{X})$ by sets with an iterated conic structure. 
Let $Y_1$ be any stratum. By definition, for each $q_1 \in Y_1$ there exists a neighbourhood $\calU_1$ and a 
trivialization $\pi_{Y_1}^{-1}(\calU_1) \cong \calU_1 \times C(L_{Y_1})$. Now fix any stratum $Y_2 \subset L_{Y_1}$,
and a point $q_2 \in Y_2$. As before, there is a neighbourhood $\calU_2 \subset Y_2$ and a trivialization 
$\pi_{Y_2}^{-1}(\calU_2) \cong \calU_2 \times C(L_{Y_2})$. Continuing on in this way, the process must stop in
no more than $d = \mbox{depth}\,(Y_1)$ steps when $q_s$ lies in a stratum $Y_s$ of depth $0$ in $L_{Y_{s-1}}$ (
which must, in particular, occur when $L_{Y_{s-1}}$ itself has depth $0$). We obtain in this way an open set of the form 
\begin{equation}\label{eq:chart}
	\calU_1 \times C\big( \calU_{2} \times C( \calU_{3} \times \ldots \times C(\calU_s)\,) \cdots \big),
\end{equation}
where $s \leq d$, which we denote  by $\calW = \calW_{q_1, \ldots, q_s}$. Choose a local coordinate system 
$y^{(j)}$ on  $\calU_j$, and let $r_j$ be the radial coordinate in the cone $C(L_{Y_j})$. Thus $(y^{(1)},r_1, y^{(2)},r_{2}, \ldots, y^{(s)})$ 
is a full set of coordinates in $\calW$. Clearly we may cover all of $\wh{X}$ by a finite number of sets of this form.
We next describe the class of admissible Riemannian metrics on $\mbox{reg}\,(\wh{X})$ by giving their
structure on each set of this type. 

\begin{definition}\label{def:metric}
We say that a Riemannian metric $g$ defined on $\mbox{reg}\,(\wh{X})$ is an iterated edge metric if there is a covering 
by the interiors of sets of the form $\calW_{q_1, \ldots, q_s}$ so that in each such set, 
\[
g= h_{1} +  d r_1^2 + r_{1}^2 ( h_{2} + d r_{2}^2 +  r_{2}^2 ( h_{3} +  d r_{3}^2 +  r_{3}^2 (  h_{4} +\ldots + r_{s-1}^2 h_s) ) ),
\]
with $ 0< r_j < \epsilon$ for some $\epsilon >0$ and every $j$, and where $h_j$ is a metric on $\calU_j$.  We also assume that 
for every $j = 1, \ldots, s$,  $h_j$ depends only on $y^{(1)}, r_1, y^{(2)} , r_{2}, \ldots, y^{(j)}, r_{j}$. 

If each $h_j$ is independent of the radial coordinates $r_{1}, \ldots, r_j$,  then we call $g$ a rigid iterated edge metric.
Note that this requires the choice of a horizontal lift of the tangent space of each stratum $Y$ as a subbundle of
the cone bundle $T_Y$ which is invariant under the scaling action of the radial variable on each conic fibre.
\end{definition}

\begin{proposition} \label{thm:metric} 
Let $\wh{X}$ be a smoothly stratified pseudomanifold. Then there exists a rigid iterated edge metric $g$ on 
$\mbox{reg}\,(\wh{X})$. 
\end{proposition}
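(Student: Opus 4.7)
The plan is to induct on the depth $d$ of $\wh{X}$. The base case $d=0$ is immediate since $\wh{X}$ is then a smooth closed manifold, on which any Riemannian metric is trivially a rigid iterated edge metric. Assume the result for all smoothly stratified pseudomanifolds of depth strictly less than $k$, and let $\wh{X}$ have depth $k$.

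For the inductive step, I would first handle the maximal depth strata $Y_1,\ldots,Y_N$, each of which is a closed smooth manifold by axiom i). Each link $L_{Y_i}$ is a smoothly stratified pseudomanifold of depth at most $k-1$, so by induction it carries a rigid iterated edge metric $h_i$ on its regular part. Fix a smooth metric $k_i$ on $Y_i$ and a finite open cover $\{\calU_\alpha\}$ of $Y_i$ such that $\pi_{Y_i}$ is trivialized over each $\calU_\alpha$ by a diffeomorphism $\phi_\alpha:\pi_{Y_i}^{-1}(\calU_\alpha)\to\calU_\alpha\times C(L_{Y_i})$ (axiom v)). Choose a partition of unity $\{\chi_\alpha(y)\}$ on $Y_i$ subordinate to this cover and, on $T_{Y_i}\setminus Y_i$, set
\begin{equation*}
	g_i := k_i + d\rho_{Y_i}^{2} + \rho_{Y_i}^{2}\sum_\alpha \chi_\alpha(y)\, \phi_\alpha^{*}h_i,
\end{equation*}
where $h_i$ is viewed as a metric on $\calU_\alpha\times L_{Y_i}$ through the second factor. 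Because the transition maps in axiom v) preserve the radial variable and act on the second factor as stratified isomorphisms of $L_{Y_i}$, each $\phi_\alpha^{*}h_i$ is a rigid iterated edge metric on the cone fibre, and the weights $\chi_\alpha(y)$ depend only on the base variable; hence the inner sum is a rigid iterated edge metric on the link with smooth $y$-dependence, and $g_i$ is a rigid iterated edge metric on $T_{Y_i}\setminus Y_i$.

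To globalize, note that by axiom iv) the tubular neighborhoods $T_{Y_i}$ for distinct maximal-depth strata are automatically disjoint (since none of the $Y_i$ is contained in the closure of another), so the $g_i$ are defined on disjoint opens. The remaining task is to extend over $\wh{X}\setminus\bigsqcup_{i} \rho_{Y_i}^{-1}([0,1/2])$ and patch in the transition annulus $\rho_{Y_i}\in(1/2,1)$. The main obstacle is precisely this patching: a cutoff that depends on $\rho_{Y_i}$ would destroy rigidity in the transition region. I would overcome this by strengthening the inductive hypothesis to the statement that a rigid iterated edge metric can be prescribed to take a specified conic form on a collar of any prescribed collection of maximal-depth strata. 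Applying this strengthened hypothesis to the depth-$(k-1)$ smoothly stratified space $\wh{X}\setminus\bigsqcup_{i} Y_i$, while prescribing the metric near each of the new boundary faces (those arising as the blowups of the $Y_i$) to coincide with $k_i+d\rho_{Y_i}^{2}+\rho_{Y_i}^{2}h_i$, yields a rigid iterated edge metric $g'$ that already agrees with $g_i$ in the transition region. Any convex combination $\eta(\rho_{Y_i})g_i+(1-\eta(\rho_{Y_i}))g'$ then equals this common conic form and is therefore rigid; outside the transition region one uses simply $g_i$ or $g'$. The compatibility needed to propagate this strengthened hypothesis through the induction is furnished by the axiomatic compatibility of control data (axioms ii), iii)), which guarantees that the conic form associated to a stratum $Y_i$ is consistent with the stratification structure of $\wh{X}\setminus\bigsqcup Y_i$ near $Y_i$; verifying this consistency at each inductive step is the routine but essential bookkeeping underlying the construction.
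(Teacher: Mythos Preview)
Your inductive scheme is close in spirit to the paper's, but you misidentify the main obstacle. A cutoff $\eta(\rho_{Y_i})$ does \emph{not} destroy rigidity in the transition region. By axiom iii) of the control data, for $p\in T_{Y_i}\cap T_Z$ with $Z$ a shallower stratum one has $\rho_{Y_i}(\pi_Z(p))=\rho_{Y_i}(p)$; that is, $\rho_{Y_i}$ is constant along the fibres of $\pi_Z$. In any chart based at a point of $Z$ this says $\rho_{Y_i}$ is a function of the base coordinate on $Z$ alone, so $\eta(\rho_{Y_i})\,g_i+(1-\eta(\rho_{Y_i}))\,g'$ is a convex combination with weights depending only on those base coordinates. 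Since both $g_i$ and $g'$ have the form $h_1+d\rho_Z^2+\rho_Z^2(\cdots)$ with the same $\rho_Z$ (it is part of the fixed control data), the combination has the same form with $h_j$'s still independent of the radial variables. The straightforward gluing therefore works, and your strengthened inductive hypothesis is unnecessary for rigidity.

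There is, however, a genuine issue you do not quite resolve: producing $g'$ on the complement. The space $\wh{X}\setminus\bigsqcup_i Y_i$ is non-compact, so the inductive hypothesis does not apply to it as stated; and the ``new boundary faces'' you invoke are not strata of that space, so a hypothesis phrased in terms of prescribing the metric near maximal-depth \emph{strata} does not match the situation. The paper sidesteps this via the doubling construction of \S2.3: the space $\wt{X}_1'$, obtained by doubling $\wh{X}$ across the depth-$k$ stratum $Y$ and replacing the doubled tube by the cylinder $S_Y\times(-2,2)$, is a \emph{compact} smoothly stratified pseudomanifold of depth $k-1$, to which induction applies directly. One arranges the resulting metric $g_1$ to be $dt^2+g_{S_Y}$ on the cylinder, scales the fibres of $S_Y\to Y$ by $r^2$ to obtain the conic metric $g_{T_Y}=dr^2+g_{S_Y}^r$ on $T_Y$, takes a rigid metric $g_C$ on the complement of $\{r<1/2\}$, and then glues with a partition of unity in $r$---precisely the step you flagged as problematic.
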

\begin{proof} 
We prove this by induction. For spaces of depth $0$, there is nothing to prove, so suppose that $\wh{X}$ is a 
smoothly stratified space of depth $k \geq 1$, and assume that the result is true for all spaces with depth less than $k$.

Let $Y$ be the union of  strata  of depth $k$, each component of which is necessarily a closed manifold; for convenience 
we assume that $Y$ is connected. Consider the space $\wt{X}_1'$ obtained in the first step of the resolution
process in \S 2.3 by adjoining two copies of $\wh{X}$ along $Y$ and replacing the double of the neighbourhood
$T_Y$ by a cylinder $S_Y \times (-2,2)$. This is a space of depth $k-1$, and hence possesses a rigid
iterated edge metric $g_1$. We may in fact assume that in the cylindrical region $S_Y \times (-2,2)$,
$g_1$ has the form $dt^2 + g_{S_Y}$, where $g_{S_Y}$ is a (rigid) iterated edge metric on $S_Y$ which is
independent of $t$. Recalling that $S_Y$ is the total space of a fibration with fibre $L_Y$, we can define
a family of metrics $g_{S_Y}^r$ on $S_Y$ by scaling the metric on each fibre by the factor $r^2$. This leads
to a rigid iterated edge metric $g_{T_Y} := dr^2 + g_{S_Y}^r$ on the tubular neighborhood $T_Y\subset \wh{X}$ around $Y$,
which by construction is also rigid. Now use the induction hypothesis to choose a rigid iterated edge metric $g_C$ on
the complement $C$ of the region $\{r < 1/2\} \subset T_Y$.  Finally, choose a smooth partition of unity $\{\phi(r), \psi(r)\}$
relative to the open cover $[0,2/3) \cup (1/3,\infty)$ of $\RR^+$; the metric $\phi g_{T_Y} + \psi g_{C}$ on $T_Y$ extends
to $g_{C}$ outside $T_Y$, and satisfies our requirement.
\end{proof}
\begin{proposition}\label{prop:homotopymet} 1) Any two  iterated edge metrics on $\wh{X}$ are homotopic 
within the class of  iterated edge metrics. 2) Any two rigid iterated edge metrics on $\wh{X}$ are homotopic 
within the class of rigid iterated edge metrics. 
\end{proposition}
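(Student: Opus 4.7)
The natural approach is to take the straight-line homotopy $g_t = (1-t)g_0 + t g_1$ for $t \in [0,1]$ and verify that it remains in the specified class of metrics. So the entire task reduces to checking that each of the two classes (iterated edge metrics, and rigid iterated edge metrics) is convex.

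The first step is to reconcile the coverings. Both $g_0$ and $g_1$ come with some covering of $\mathrm{reg}(\wh{X})$ by sets of the form \eqref{eq:chart} exhibiting the nested conic expression of Definition \ref{def:metric}. The key observation is that these coverings are built out of the control data of the stratification (local trivialisations of the $\pi_Y$'s, radial coordinates $\rho_Y$, horizontal lifts of the strata) rather than out of the metric itself, and the prescribed local form of the metric is preserved under restriction to any smaller chart of the same type. Passing to a common refinement therefore produces a finite covering $\{\calW_{q_1,\ldots,q_s}\}$ witnessing the iterated edge condition for both $g_0$ and $g_1$ simultaneously.

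On any such chart, write
\[
g_\ell = h_1^{(\ell)} + dr_1^2 + r_1^2\bigl(h_2^{(\ell)} + dr_2^2 + r_2^2(\cdots + r_{s-1}^2 h_s^{(\ell)})\cdots\bigr), \quad \ell=0,1.
\]
Since the scalar coefficients $1$ and $r_1^2\cdots r_{j-1}^2$ in front of $dr_j^2$ and $h_j$ do not depend on the metric, and since the cross terms between the various ``levels'' of the cone structure vanish for both $g_0$ and $g_1$, the convex combination $g_t$ automatically admits the same nested form with $h_j^{(t)} := (1-t)h_j^{(0)} + t h_j^{(1)}$. Positive definiteness, the required dependence of $h_j^{(t)}$ only on $(y^{(1)},r_1,\ldots,y^{(j)},r_j)$, and in part 2) the independence of $h_j^{(t)}$ from the radial variables, are all linear conditions and hence preserved under convex combination. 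This yields the homotopies claimed in 1) and 2).

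The only nontrivial point is the covering-compatibility step above; once it is settled, the rest is a direct verification. An alternative, slightly more elaborate route would be an induction on depth, working inside a tubular neighbourhood of a maximal-depth stratum and gluing via a partition of unity in the radial variable to a homotopy constructed on the complement (using the induction hypothesis applied to the doubled resolution $\wt X_1'$, as in the proof of Proposition \ref{thm:metric}). This inductive route would be needed if the straight-line homotopy failed to preserve some additional constraint, but for the classes considered here the convexity argument suffices.
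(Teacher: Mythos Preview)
Your convexity argument has a real gap at exactly the point you flag: the local product form in Definition~\ref{def:metric} is \emph{not} invariant under the changes of trivialisation allowed by axiom v). A transition $(y,r,z)\mapsto(y,r,\psi_y(z))$ with $\psi_y$ depending on $y$ introduces $dy\,dz$ cross-terms of size $r^2$, so the chart in which a given metric assumes the product shape is tied to the metric, not merely to the control data; two iterated edge metrics need not share one. For part 2) this is fatal to the straight-line homotopy. With a one-dimensional base and link $S^1$, set $g_0 = dy^2 + dr^2 + r^2\,dz^2$ and $g_1 = dy^2 + dr^2 + r^2(dz + f'(y)\,dy)^2$; both are rigid, the second in the trivialisation $z' = z+f(y)$, i.e.\ with a different horizontal lift---precisely the extra choice noted after Definition~\ref{def:metric}. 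The unique link-coordinate change killing the cross-term in $g_t=(1-t)g_0+tg_1$ is $z'' = z+tf(y)$, giving
\[
g_t = \bigl(1+t(1-t)\,r^2 f'(y)^2\bigr)\,dy^2 + dr^2 + r^2(dz'')^2,
\]
whose $dy^2$ coefficient depends on $r$; hence $g_t$ is not rigid for $0<t<1$. (If $\psi_y$ also varies with $z$, the analogous computation produces a $dy^2$ coefficient depending on $z$, which obstructs even the non-rigid form; so part 1) is not settled by convexity either.)

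The paper does not argue by convexity; it runs exactly the induction on depth that you outline in your last paragraph and then dismiss. Near a maximal-depth stratum each metric is first homotoped (separately) to one independent of the cone radius there; one then homotopes the base metrics directly and the link metrics by the inductive hypothesis, and patches with the complement via the doubled space $\wt X_1'$. That inductive route is what is actually required, certainly for part 2); your final sentence is the mistake.
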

\begin{proof}
We proceed by induction.  The result is obvious when the depth is $0$, so assume
it holds for all spaces of depth strictly less than $k$ and consider a pseudomanifold  of depth $k$ with
two  iterated edge metrics $g$ and $g^\prime$.  

To begin, then, fix a stratum $Y$ which has maximal depth $k$. Then $Y$ is a smooth closed manifold. Recall 
the notation from \S 2.1, and in particular the stratified isomorphism $F_Y$ from the mapping cylinder of 
$(S_Y,\pi_Y)$ to $T_Y$ and the family of local trivializations $\phi:\pi_Y^{-1}(\calU) \to \calU \times C(L_Y)$ 
for suitable $\calU \subset Y$. If $u \in T_Y \cap \pi_Y^{-1}(\calU)$, we write $\phi(u) = (y,z,t)$ where 
$y \in \calU$, $z \in L_Y$ and $t = \rho_Y(u)$; in particular, by axiom v), there is a retraction $R_Y: 
T_Y \setminus Y \to S_Y$, given on any local trivialization by $(y,z,t) \to (y,z,1)$ 
(which is well defined since $t \neq 0$). 

In any of these trivializations, the metric $g$ has the form
\begin{equation*}
	(\phi^{-1})^*g
	= g_{\calU}(y,t) + dt^2 + t^2 g_{L_Y}(t,y,z)
\end{equation*}
and the homotopy 
\begin{equation*}
	s \mapsto 
	g_{\calU}(y,s + (1-s)t) + dt^2 + t^2 g_{L_Y}(s+(1-s)t,y,z)
\end{equation*}
removes the dependence of $g_{\calU}$ and $g_{L_Y}$ on $t$ while remaining in the class of iterated edge metrics.
Since the coordinate $t=\rho_Y(u)$ is part of the control data, this homotopy can be performed consistently across all of the local trivializations $\phi$.

Without loss of generality we may assume that
\begin{equation*}
	(\phi^{-1})^*g
	= g_{\calU}(y) + dt^2 + t^2 g_{L_Y}(y,z), \Mand
	(\phi^{-1})^*g'
	= g_{\calU}'(y) + dt^2 + t^2 g_{L_Y}'(y,z).
\end{equation*}
The metrics $g_{\calU}$ and $g_{\calU}'$ are homotopic and, by inductive hypothesis, so are the metrics $g_{L_Y}$ and $g_{L_Y}'$.
Thus the metrics $(\phi^{-1})^*g$ and $(\phi^{-1})^*g'$ are homotopic within the class of iterated edge metrics on $\calU \times C(L_Y)$.
Using consistency of the trivializations $\phi$ we can patch these homotopies together and see that $g$ and $g'$ are homotopic in a neighborhood of $Y$.

We can thus assume that $g$ and $g'$ coincide in a neighborhood of $Y$ and, in this neighborhood, are independent of $\rho_Y$.
As in the proof of Proposition \ref{prop:BlowUp} we consider the space 
\begin{equation*}
\wt{X}_1' := \left((\wh{X} \setminus Y) \times \{-1\}\right) \sqcup \left((\wh{X} \setminus Y) \times \{+1\}\right)
\sqcup \big(S_Y \times (-2,2) \big) / \sim
\end{equation*}
Define the lift $\wt g$ of $g$ to $\wt X_1'$ by $g$ on each copy of $\wh X \setminus Y$ and 
\begin{equation*}
	g_{\calU}(y)  +  g_{L_Y}(y,z) + dt^2
\end{equation*}
on each neighborhood of $S_Y \times (-2,2)$ corresponding to the trivialization $\phi$ as above, and define $\wt g'$ similarly.
Then $\wt g$ and $\wt g'$ are iterated edge metric on a space of depth $k-1$ so by inductive hypothesis are homotopic.
Moreover since they coincide in $S_Y \times (-2,2)$, the homotopy can be taken to be constant in a neighborhood of $S_Y$, and hence the homotopy descends to a homotopy of $g$ and $g'$.

If $g$ and $g'$ are rigid, the homotopies above preserve this.
\end{proof}

Cheeger also defines \cite{Cheeger-symp} (p.\ 127) a class of admissible metrics $g$ on the regular part of 
a smoothly stratified pseudomanifold $\wh{X}$. He uses a slightly different decomposition of $\wh{X}$ and 
assumes that on each `handle' of the form  $(0,1)^{n-i} \times C(N^{i-1})$, $g$ induces a metric  quasi-isometric
to one of the form
\[
(d y_1)^2 + \ldots + (d y_{n-i})^2 + (d r)^2 + r^2 g_{N^{i-1}};
\]
see \cite{Cheeger-symp} for the details. Using the proof of Proposition \ref{thm:metric} as well as 
\cite{Cheeger-symp} (page 127), we obtain the following 

\begin{proposition} \label{prop:adm}
{\item 1)} Any rigid iterated edge  metric as in Definition \ref{def:metric}) is admissible in the sense of Cheeger.
{\item 2)} Any two admissible metrics are quasi-isometric.
\end{proposition}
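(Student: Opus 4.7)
The plan is to reduce both statements to local computations on the trivializing coordinate charts $\calW = \calW_{q_1,\ldots,q_s}$ from \eqref{eq:chart}. Since both admissibility (in Cheeger's sense) and quasi-isometry of Riemannian metrics are preserved under finite unions with bounded overlaps, and $\wh{X}$ may be covered by finitely many such charts together with ordinary coordinate charts in the open dense stratum, it will be enough to verify everything on a single chart.

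For part 1), I would begin with a rigid iterated edge metric written on $\calW$ in the form
\[
g = h_{1} + dr_{1}^{2} + r_{1}^{2}\bigl( h_{2} + dr_{2}^{2} + r_{2}^{2}(h_{3} + \ldots + r_{s-1}^{2} h_{s})\bigr),
\]
with each $h_j$ a metric on $\calU_j$ that, by rigidity, is independent of the radial variables $r_1,\ldots,r_j$. After shrinking each $\calU_j$ so that $h_j$ is quasi-isometric to a flat Euclidean metric on a cube, I would identify subregions of $\calW$ with Cheeger's handles $(0,1)^{n-i}\times C(N^{i-1})$ by induction on depth. Near a point whose closest singular stratum has depth $i$, the innermost smooth coordinates $y^{(j)}$ provide the cube factor $(0,1)^{n-i}$, and the truncated iterated cone on the remaining variables provides the conic factor $C(N^{i-1})$, whose link $N$ carries by inductive hypothesis its own rigid iterated edge metric $g_{N}$. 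The rigidity hypothesis is what makes this work: the dependence of $g$ on the radial variables $r_j$ is only the explicit power-law prefactor, and no additional $r_j$-dependence hides inside the $h_j$.

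For part 2), part 1) together with Cheeger's definition immediately yields that on each handle both admissible metrics are quasi-isometric to the \emph{same} model metric $dy_1^{2}+\ldots+dy_{n-i}^{2}+dr^{2}+r^{2}g_{N^{i-1}}$. Composing one such quasi-isometry with the inverse of the other shows that the two admissible metrics are quasi-isometric on that handle, and the global quasi-isometry constant is then the maximum over a finite cover of $\wh{X}$ by handles of the handle-wise constants.

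The main obstacle I expect is in part 1): reconciling the nested warping $r_{1}^{2}(h_{2}+\ldots+r_{s-1}^{2}h_{s})$ in the rigid iterated edge metric with Cheeger's single warping $r^{2}g_{N}$. The reconciliation is recursive: the link $N$ on a given handle is itself a compactification of an iterated cone, and after factoring out the leading $r_1^{2}$ the remaining expression is exactly an iterated edge metric on that link, which by induction is admissible in Cheeger's sense. Once this recursive identification is set up cleanly, the quasi-isometry bounds reduce to the uniform comparability of each $h_j$ with its flat model over the fixed compact parameter ranges, which is standard on a compact manifold.
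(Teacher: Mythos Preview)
Your approach is correct and is essentially the same inductive, chart-by-chart argument the paper has in mind; the paper's own proof is just a one-line pointer to the construction in Proposition~\ref{thm:metric} and to page~127 of \cite{Cheeger-symp}, so your plan is in fact more explicit than what appears there.

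One small point in part~2) deserves a sentence of care. You assert that on each handle both admissible metrics are quasi-isometric to \emph{the same} model $dy_1^2+\cdots+dy_{n-i}^2+dr^2+r^2 g_{N^{i-1}}$. Cheeger's definition only guarantees that each is quasi-isometric to \emph{some} model of this form, and the two link metrics $g_N$, $g_N'$ need not coincide a priori. You close this by one more appeal to the induction on depth that you already set up in part~1): the link $N$ has strictly smaller depth, so by the inductive hypothesis any two admissible metrics on it are quasi-isometric, whence $r^2 g_N$ and $r^2 g_N'$ are uniformly comparable and the two models agree up to quasi-isometry. With that line added, the argument is complete.
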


Recall the manifold with corners with iterated fibration structure $\wt{X}$, which is the resolution of $\wh{X}$.
Its interior is canonically identified with $\mbox{reg}\,(\wh{X})$, and we identify these without comment.
Let $x_\alpha$ be a global defining function for the boundary hypersurface $H_\alpha$ of $\wt{X}$ (so $H_\alpha 
= \{x_\alpha = 0\}$);  the total boundary defining function of $\wt{X}$ is, by definition,
\[
\rho = \prod_{\alpha \in A} x_\alpha.
\]
If $g$ is an iterated edge metric on $\mbox{reg}\,(\wh{X})$, then set
\begin{equation}
\wt{g} = \rho^{-2} g.
\label{def:completemet}
\end{equation}
It is not hard to check that this metric is complete. 

\subsection{The Witt condition. Cheeger's Hodge theorem on Witt spaces.} $\;$\newline
In this paper, we consider only orientable Witt spaces, which are defined as follows.
\begin{definition} A pseudomanifold $\wh{X}$ is a Witt space if, for some (and hence any) stratification, all links of even dimension have vanishing lower middle perversity intersection homology in middle degree, i.e.,
\begin{equation*}
	Y \in {\mathfrak S}, \quad
	\dim L_Y = f_Y \text{ even}
	\implies I\! H_{\underline{m}}^{f_Y/2} ( L_Y)= 0.
\end{equation*}
\end{definition}

It is a theorem that on a Witt space $\widehat{X}$ the lower and upper middle perversity 
intersection homology
groups are equal up to isomorphism: $I\! H_{\underline{m}}^{*} ( \widehat{X}) = I\! H_{\overline{m}}^{*} ( \widehat{X})$.

A famous result concerning the $L^2$ cohomology of Witt spaces is due to Cheeger. 
\begin{theorem}\label{theo:cheeger}(Cheeger) Let $\widehat{X}$ be a Witt space endowed with
an iterated edge metric $g$. 
Denote  by $H_{(2)}^{*} (\widehat{X}) $ 
the cohomology of the $L^2$ de Rham complex with maximal domain; 
denote by $\calH_{(2)}^{*} ( \widehat{X})$ the $L^2$ maximal  Hodge cohomology.
Then 
\begin{equation}\label{eq:cheeger-0}
H_{(2)}^{*} (\widehat{X} )= \calH_{(2)}^{*} ( \widehat{X})=I\! H_m^* (\widehat{X},\bbC).
\end{equation}
with $m$ denoting either the upper or lower middle perversity.\\
In particular, if $Y$ is a stratum with link $L_Y$, and $f_Y = \dim L_Y$ is even, then 
\begin{equation}\label{eq:cheeger}
H_{(2)}^{f_Y/2} ( L_Y)=\calH_{(2)}^{f_Y/2} ( L_Y) = 0.
\end{equation}
\end{theorem}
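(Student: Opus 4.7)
My plan is to prove the three identifications inductively on the depth of the stratification, with the induction anchored on ordinary smooth (closed) manifolds where Hodge theory and the de Rham theorem are classical. Fix an iterated edge metric $g$ as constructed in Proposition \ref{thm:metric}. The local model one must understand near the deepest stratum $Y$ is the truncated cone bundle $\calU \times C_{[0,1)}(L_Y)$ with metric $h_\calU + dr^2 + r^2 g_{L_Y}$. By a standard Mayer--Vietoris argument (in both $L^2$ de Rham cohomology and intersection cohomology) applied to an open cover of $\wh{X}$ by the regular part and tubular neighbourhoods of the singular strata, it suffices to compute, for each stratum, the local $L^2$ cohomology of a model cone and identify it with the corresponding local intersection cohomology.

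The central local computation is the following: for $N$ a closed oriented Riemannian manifold of dimension $n$, the maximal domain $L^2$ de Rham cohomology of the metric cone $(C(N), dr^2+r^2 g_N)$ satisfies
\begin{equation*}
H^{k}_{(2)}(C(N)) \cong
\begin{cases} H^{k}(N) & \text{if } k < (n+1)/2, \\ 0 & \text{if } k \geq (n+1)/2,\end{cases}
\end{equation*}
when $n$ is odd, and an analogous formula when $n$ is even, with a critical degree $k=n/2$ that depends on whether one uses the minimal or maximal domain. This is proved by separation of variables: an $L^2$ form decomposes spectrally according to the eigenforms of the Hodge Laplacian on $N$, and the radial ODE for each mode can be solved explicitly in terms of powers of $r$, with the $L^2$ condition at $r=0$ selecting exactly the correct ``truncated'' harmonic forms. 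For $L_Y$ itself stratified, one first invokes the inductive hypothesis to identify $H^*_{(2)}(L_Y)$ with $I\! H^*_m(L_Y,\bbC)$, so the same separation of variables applies once one knows that the Hodge Laplacian on $L_Y$ is essentially self-adjoint with discrete spectrum --- which is again guaranteed inductively by the Witt hypothesis on links of $L_Y$.

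The Witt condition now enters precisely to reconcile the two possible answers at the critical degree $k=f_Y/2$ when $f_Y=\dim L_Y$ is even: the hypothesis $I\! H^{f_Y/2}_m(L_Y,\bbC)=0$, combined with the inductive identification $I\! H^{f_Y/2}_m(L_Y,\bbC)\cong \calH^{f_Y/2}_{(2)}(L_Y)$, forces the vanishing \eqref{eq:cheeger}. This vanishing is exactly what makes the minimal and maximal domains of the radial operator at the critical mode coincide, so that the local $L^2$ cohomology matches the local middle perversity intersection cohomology $I\! H^{*}_m(C(L_Y),\bbC)$ (truncation at $f_Y/2$) without ambiguity. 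Splicing these local computations via Mayer--Vietoris yields the second equality $H^*_{(2)}(\wh{X})=I\! H^*_m(\wh{X},\bbC)$.

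It remains to identify the $L^2$ de Rham cohomology with the Hodge cohomology. The key analytic step, and the main obstacle, is to show that the Witt condition forces the operator $d+\delta$ with its maximal domain to have closed range, equivalently to admit a strong Kodaira decomposition
\begin{equation*}
L^2 \Lambda^*(\wh{X})=\overline{\Image(d_{\max})}\oplus \overline{\Image(\delta_{\max})}\oplus \calH^*_{(2)}(\wh{X}).
\end{equation*}
This is equivalent to essential self-adjointness of $d+\delta$ together with discreteness of its spectrum near $0$. Inductively, once one has established this for $\eth_{\sign,L_Y}$ on all links (using the Witt condition via the vanishing \eqref{eq:cheeger}), the indicial roots of the radial ODE near $r=0$ avoid the critical value $-1/2$ for forms of all degrees, so the local operator is essentially self-adjoint on $C_c^\infty$. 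Gluing these local parametrices via a partition of unity subordinate to an iterated edge cover, together with the discrete spectrum on compact smooth pieces, yields the global closed range statement and hence the first equality $H^*_{(2)}(\wh{X})=\calH^*_{(2)}(\wh{X})$. This final analytic step is the heart of the matter and is precisely what the parametrix construction of later sections will re-establish by geometric microlocal methods.
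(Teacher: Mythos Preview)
The paper does not give its own proof of this theorem: it is stated as a result of Cheeger, with references to \cite{Ch-PNAS}, \cite{Cheeger-symp}, \cite{Ch}, and is used as background input for the subsequent analysis. So there is no ``paper's proof'' to compare against; your proposal is effectively a sketch of Cheeger's original argument.

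That said, your outline is broadly faithful to Cheeger's method: induction on depth, the explicit computation of $L^2$ cohomology on a metric cone via separation of variables and radial ODEs, the role of the Witt hypothesis in eliminating the ambiguity at the critical degree $f_Y/2$, and the passage from local to global via Mayer--Vietoris. One point to be careful about is circularity: in your final paragraph you invoke essential self-adjointness and discrete spectrum of $\eth_{\dR}$ on the links as part of the inductive hypothesis, and then say this ``is precisely what the parametrix construction of later sections will re-establish.'' But in the logical structure of the present paper, Theorem~\ref{theo:cheeger} is quoted as prior input (specifically, the vanishing \eqref{eq:cheeger} is used in condition \eqref{Ass2}(b) to run the inductive analysis of \S5), so you should not appeal forward to those sections when proving it. Cheeger's own proofs supply the needed self-adjointness and spectral discreteness via heat-kernel methods rather than the microlocal parametrix approach developed here.
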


\section{Iterated edge vector fields and operators} \label{Operators}

On a closed manifold, $L^2$ and Sobolev spaces are defined using a Riemannian metric but the spaces
themselves are metric-independent. A differential operator induces a bounded map between suitable ones of
these spaces, and ellipticity guarantees that this map is Fredholm. All of this fails when the manifold is not 
closed, and in this section we describe some of what is true for iterated edge metrics.

The space $X := \mbox{reg}\,(\wh{X})$ with complete metric $\wt{g}$ is an example of what is called a Riemannian manifold 
with bounded geometry. There are natural classes of $L^2$ and Sobolev spaces on any such space, as well as
a class of `uniform' differential operators, which induce bounded maps between these function spaces. There is also
a calculus of uniform pseudo-differential operators which contains parametrices of uniform elliptic operators, and which 
can be used to prove certain uniform elliptic regularity results. Using that $X$ compactifies to $\wh{X}$, we can also
define weighted $L^2$ and Sobolev spaces in this setting, and the uniform calculus gives some results for operators
mapping between these as well. This uniform calculus does not establish that these mappings are Fredholm, and indeed,
that requires more delicate arguments. 

In this section we describe these ideas and explain how they can be applied to the de Rham operator of the edge iterated  metric $g$.
The uniform pseudodifferential calculus can also be used to obtain a parametrix even after twisting by a bundle of  
projective finitely generated modules over a $C^*$-algebra. 

\subsection{Edge vector fields on $X$}\label{complete-metric} $ $\newline
Associated to the complete metric $\wt g$ on $X$ is the space of `iterated edge' vector fields
\begin{equation}\label{FirstDescVie}
	\cV_{\ie}
	= \{ V \in C^\infty (\wt X, T\wt X): X \ni q \mapsto \wt g_q(V,V) \in \bbR^+ \text{ is bounded} \}.
\end{equation}
In the notation of \S 3, on a neighbourhood of the form $\calW_{q_1, \ldots, q_s}$, this is locally spanned 
over $\calC^\infty(\wt X)$ by vector fields of the form 
\[
r_1 \ldots r_{s-1} \del_{r_1}, r_1 \ldots r_{s-1} \del_{y^{(1)}}, r_1 \ldots r_{s-2} \del_{r_2}, r_1 \ldots r_{s-2} \del_{y^{(2)}}, 
\ldots, \del_{y^{(s)}}.
\]
It is easy to see that $\cV_{\ie}$ forms a locally finitely generated, locally free Lie algebra with respect to the usual bracket
on vector fields; furthermore, Swan's theorem shows that there is a vector bundle ${}^{\ie}TX$ over $\wt X$ whose space of sections is $\cV_{\ie}$,
\begin{equation}\label{DefIeTX}
	\CI (\wt X, ^{\ie}T X ) = \cV_{\ie}.
\end{equation}
This bundle ${}^{\ie}TX$ coincides with the usual tangent bundle $TX$ over the interior of $\wt X$ and is isomorphic to $T\wt X$, 
though there is no canonical isomorphism. It is easy to see that $\wt g$ defines a metric on ${}^{\ie}TX$. 

\begin{proposition} $(X, \widetilde{g})$ is a complete Riemannian manifold of bounded geometry.
\end{proposition}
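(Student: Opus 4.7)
The plan is to establish the two defining properties of bounded geometry --- metric completeness and uniform bounds on the curvature tensor together with all of its covariant derivatives, plus a positive uniform lower bound on the injectivity radius --- by exploiting the fact that the resolution $\wt X$ is a compact manifold with corners carrying the iterated fibration structure of \S 2, and that $g$ has the recursive local form of Definition \ref{def:metric}.

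For completeness I would show that any divergent sequence $q_n \in X$ fails to be Cauchy for $\wt g$. After passing to a subsequence, such $q_n$ converges in $\wt X$ to a point of some boundary hypersurface $H_\alpha$, and in a neighbourhood of that point $\rho^{-2}$ is bounded below by a positive multiple of $x_\alpha^{-2}$. Consequently any $\wt g$-rectifiable curve from $q_m$ to $q_n$ has $\wt g$-length bounded below by a constant times $|\log x_\alpha(q_m) - \log x_\alpha(q_n)|$, which diverges as $q_n \to H_\alpha$. Hopf--Rinow then yields completeness.

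For the bounded-geometry statement I would use the identification \eqref{DefIeTX} of $\cV_\ie$ with the smooth sections of the vector bundle $\Ie TX$ over $\wt X$, and check that $\wt g = \rho^{-2} g$ extends to a smooth, fibrewise positive definite metric on $\Ie TX$. Once this is granted, pick a local $\wt g$-orthonormal frame $\{V_i\}$ for $\Ie TX$ in a neighbourhood of an arbitrary boundary point of $\wt X$. The structure constants in $[V_i,V_j] = \sum_k c_{ij}^k V_k$, together with all of their iterated $V_\ell$-derivatives, are then smooth functions on the compact space $\wt X$, and hence uniformly bounded. The Koszul formula expresses the Christoffel symbols ---  and thence the components of the Riemann tensor of $\wt g$ --- in the frame $\{V_i\}$ as polynomial combinations of these bounded quantities, yielding uniform bounds on the full curvature tensor and on all of its covariant derivatives. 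The positive lower bound on the injectivity radius then follows from the standard argument combining these curvature bounds with the fact that exponentiation along $\{V_i\}$ produces normal charts of uniform size on which $\wt g$ is uniformly close to the Euclidean metric.

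The nontrivial point, and where I expect the bulk of the work to lie, is verifying the smoothness and nondegeneracy of $\wt g$ as a fibre metric on $\Ie TX$ uniformly across corners of $\wt X$ where several boundary hypersurfaces meet. This amounts to a compatibility check involving the factorization of the total boundary defining function $\rho$ in local adapted coordinates, the nested form of iterated edge metrics in Definition \ref{def:metric}, and the fibration compatibilities imposed by the iterated fibration structure of \S 2.2. I would carry it out by induction on the depth of $\wh X$, taking the simple-edge case as the base and using that the restriction of the iterated-edge data to a boundary hypersurface of $\wt X$ furnishes iterated-edge data of strictly smaller depth on the total space of a fibration whose base and fibre are again of this same type.
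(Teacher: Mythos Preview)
Your proposal is correct and follows essentially the same strategy as the paper: both arguments rest on the compactness of $\wt X$ and the fact that $\wt g$ extends to a smooth nondegenerate fibre metric on $\Ie TX$, from which the curvature bounds follow, with the injectivity radius handled by a standard argument (the paper simply cites \cite{ALN}). The one genuine difference is the completeness step: the paper invokes the Gordon--de Rham--Borel criterion, exhibiting $-\log\rho$ as a nonnegative smooth proper function with bounded $\wt g$-gradient, whereas you argue directly that divergent sequences cannot be Cauchy via the lower bound $|\log x_\alpha(q_m)-\log x_\alpha(q_n)|$. These are equivalent in content---your length estimate is exactly what makes $-\log\rho$ proper with bounded gradient---so the paper's route is a bit slicker but yours is more self-contained.
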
 
\begin{proof}
Recall the theorem of Gordon-de Rham-Borel, which states that a manifold is complete if and only if  it admits a 
nonnegative, smooth, proper function with bounded gradient. For this metric $\wt g$, such a function is $-\log (\rho)$,
where $\rho$ is the total boundary defining function. To prove that $g$ has bounded geometry one must check that 
the curvature tensor of $\wt g$, and its covariant derivatives, are bounded and that the injectivity radius of $\wt g$ 
has a positive lower bound. The former follows from the compactness of $\wt X$, and the latter can be shown as in \cite{ALN}.
 \end{proof}

The set of $\ie$-differential operators is the enveloping algebra of $\cV_{\ie}$; i.e., it consists of linear combinations (over $\CI(\widetilde{X})$) 
of finite products of elements of $\cV_{\ie}$. We denote by $\Diff_{\ie}^k(X)$ the subset of differential operators that have local descriptions involving products of at most $k$ elements of $\cV_{\ie}$.
If $E$ and $F$ are vector bundles over $\wt X$, then the space of $\ie$-differential operators acting between sections of $E$ and sections of $F$ is defined similarly, by taking linear combinations over $\CI(\wt X, \Hom(E,F) )$.

We define Sobolev spaces for $\ie$ metrics by
\begin{gather*}
	H^0_{\ie}(X) = L^2_{\ie}(X) = L^2(X, \dvol(\wt g)) \\
	H^k_{\ie}(X)
	= \{ u \in L^2_{\ie}(X) : Au \in L^2_{\ie}(X), \Mforevery A \in \Diff_{\ie}^k(X) \}, \; k \in \bbN
\end{gather*}
then define $H^t_{\ie}(X)$ using Calder\'on interpolation for $t \in \bbR^+$ and duality for $t \in \bbR^-$.
Sobolev spaces for sections of bundles over $ \widetilde{X}$ are defined similarly.

We will also allow for operators to act between sections of certain bundles of projective finitely generated modules over a $C^*$-algebra; see \cite{ST} for the basic definitions.
We assume that we have a continuous map $r_0: X \to B\Gamma$ which extends continuously to
\begin{equation*}
 r: \widehat{X}\to B\Gamma
	\end{equation*}
 where $\Gamma$ is a countable, finitely generated, finitely presented group. This determines a $\Gamma$-covering, $ \widehat{X}^\prime \to 
\widehat{X}$;  and we will denote by $\wt{C^*_r}\Gamma$ the corresponding bundle, over $\widehat{X}$, of free left 
$C^*_r\Gamma$-modules of rank one: 
\begin{equation}\label{cEDef}
	\wt{C^*_r}\Gamma: = C^*_r\Gamma \times_\Gamma \widehat{X}^\prime.
\end{equation} 
Observe that this bundle induces, after pull back by the blowdown map $\wt X \rightarrow \widehat{X}$,
a bundle on $\wt X$ (for which we keep the same notation).
Given vector bundles $E$ and $F$ over $\wt X$ of rank $k$ and $\ell$, we define  bundles $\cE$ and $\cF$ over $\wt{X}$ 
by tensoring $E$ and $F$ by $\wt{C^*_r}\Gamma$; we obtain in this way 
bundles of projective finitely generated $C^*_r \Gamma$-modules   of rank $k$ and $\ell$ . 
We shall briefly refer to  $\cE$ and $\cF$
as $C^*_r\Gamma$-bundles.
An iterated edge differential operator acting between sections of $\cE$ and $\cF$ is defined as above, but allowing the coefficients to be 
$C^*_r\Gamma$-linear. The space of such operators will be denoted
\begin{equation*}
	\Diff_{\ie,\Gamma}^*(X; \cE, \cF).
\end{equation*}
Finally, we denote by  $H^t_{\ie,\Gamma}(X;\cE) $  the corresponding Sobolev $C^*_r \Gamma$-module, see \cite{Mich-Fomenko}.

\subsection{Uniform pseudodifferential operators} \label{sec:uniform} $ $\newline
We showed above that $\ie$ metrics have bounded geometry. This allows us to use the calculus of 
uniform pseudo-differential operators as described in the work of Meladze-Shubin (see
\cite{Meladze-Shubin} and \cite{Kordyukov}).

We single out the space $\cB C^\infty (X)$ of functions which are uniformly bounded with uniformly bounded derivatives of
all orders. Smooth functions on $\wt X$ are in $\cB \CI (X)$, but the latter space is larger since general elements are not
smooth at the boundary faces of $\wt X$. A vector bundle over $X$  
is said to be a {\em bundle of bounded geometry} if it has trivializations whose transition functions are (matrices with entries) in $\cB\CI(X)$. 
Vector bundles that extend smoothly to $\wt X$ have bounded geometry. 

The spaces of operators $\Diff^*_{\cB} (X;E,F)$ and, more generally, $\Diff^*_{\cB,\Gamma} (X;\cE,\cF)$, are defined by
requiring the coefficients to be in $\cB C^\infty$. These spaces contain $\Diff_{\ie}^*(X;E,F)$ and $\Diff_{\ie,\Gamma}^*(X;\cE, \cF)$,
respectively. 

Next, the bounded geometry of $(X,\wt g)$ implies that it is possible to find a countable cover of $X$ by open sets,
each of which are normal coordinate charts for the \emph{complete} metric $\wt{g}$ and which all have fixed
radius $\eps > 0$.  Calling these charts $\cU_{\eps}(\zeta_i)$, then it is also possible to arrange that $\cU_{2\eps}(\zeta_i)$ 
has uniformly bounded, finite multiplicity as a cover of $X$. We can then choose partitions of unity $\wt \phi_i$, $\phi_i$ 
subordinate to $\{ \cU_{2\eps}(\zeta_i) \}$ and $\{ \cU_{\eps}(\zeta_i) \}$ respectively such that 
$\wt \phi_i$, $\phi_i$ have bounded derivatives uniformly in $i$, and such that $\wt{\phi}_i = 1$ on $\mathrm{supp}\, \phi_i$. 
These functions can be used to transplant constructions from $\bbR^n$ to $X$. 

We next recall how to transfer pseudodifferential operators from $\bbR^n$. Let $E$ and $F$ be vector bundles over $\wt X$,
and denote by $d = d_{\wt{g}}$ the distance function associated to the complete metric $\wt g.$ 
An operator $A:\CIc(X;E) \to \CIc(X;F)$ is called a {\bf uniform pseudodifferential operator} of order $s \in \bbR$, 
\begin{equation*}
	A \in \Psi^s_{\cB}(X;E,F), 
\end{equation*}
if its Schwartz kernel $\cK_A \in \CmI(X^2; \Hom(E,F))$ satisfies the following properties. 
\begin{itemize}
\item[i)] For some $C_A > 0$, 
\begin{equation*}
	\cK_A(\zeta, \zeta')=0 \Mif d(\zeta, \zeta')>C_A,\\
\end{equation*}
\item[ii)] For every $\delta>0$, and any multi-indices $\alpha$, $\beta$ there is a constant $C_{\alpha\beta\delta}>0$ such that
\begin{equation*}
	|D^\alpha_\zeta D^\beta_{\zeta'} \cK_A(\zeta, \zeta')| \leq C_{\alpha\beta\delta}, 
	\text{ whenever } d(\zeta, \zeta') >\delta. 
\end{equation*} 
\item[iii)] For any $i$, and using the normal coordinate chart to identify $\cU_{2\eps}(\zeta_i)$ with $B_{2\eps}(0)$ in $\RR^n$, 
$\wt \phi_i A \phi_i$ is a pseudodifferential operator of order $s$ in $B_{2\eps}(0)$, whose full symbol $\sigma$ satisfies the 
usual symbol estimates uniformly in $i$,
\begin{equation*}
	\abs{ D_\zeta^\alpha D_\xi^\gamma \sigma( \wt \phi_i A \phi_i )(\zeta, \xi) }
	\leq C_{\alpha\beta\gamma} (1+|\xi|_{{\wt g}}^2)^{\frac12(s-|\gamma|)};
\end{equation*}
here $ |\xi|_{{\wt g}}$ is the norm of $\xi \in T^*_\zeta X$ with respect to $\wt g.$
\end{itemize}

We always assume that the symbols are (one-step) polyhomogeneous. Uniform pseudo-differential operators form an algebra. 
There is a well defined principal  symbol map, with values in $\cB C^\infty (S^* X, \hom (\pi^* E,\pi^* F))$. Ellipticity is defined
in a natural way (one requires the principal symbol to be uniformly invertible, i.e. invertible with inverse in $\cB C^\infty $).  
The principal symbol $\sigma(P)$ of a uniform pseudodifferential operator $P$ is a section of $\Ie T^*X$ (the bundle 
dual to $\Ie T X$) restricted to $X$. In general, $\sigma(P)$ does not extend to be a smooth section of $\Ie T^*X \rightarrow \wt X.$

For a bundle of bounded geometry $E$ and $s \in \RR$, define the $\cB$-Sobolev space
\begin{multline}\label{SobSpaces}
	H^s_{\cB}(X;E) \\
	= \{ u \in \CmI(X;E) : \phi_i u \in H^s(\bbR^n;E) \text{ with norm bounded uniformly in } i \}.
\end{multline}
The same definition holds for $C^*_r\Gamma$-bundles and we denote by $H^s_{\cB,\Gamma} (X;\cE)$, the
corresponding $C^*_r \Gamma$-module. Uniform pseudodifferential operators extend to bounded operators 
between $\cB$-Sobolev spaces. 

If a map $r:\widehat{X}\to B\Gamma$ is given, then we can define uniform pseudo-differential operators between 
sections of $\cE$ and sections of $\cF$ by combining the above definition and the classic construction 
of Mishchenko and Fomenko; we denote  by  $\Psi^*_{\cB, \Gamma}(X;\cE, \cF)$ the corresponding algebra. 
Notice that the principal symbol is in this case a $C^*_r\Gamma$-linear map between the lifts of $\cE$ and $\cF$ to the cotangent bundle.

The intersection over $s\in \bbR$ of the $\Psi_{\cB,\Gamma}^s(X;\cE,\cF)$ is denoted $\Psi_{\cB,\Gamma}^{-\infty}(X;\cE,\cF)$ and 
consists of smoothing operators whose integral kernel in $X\times X$ is in $\cB C^\infty $.

Elements of the uniform calculus also define bounded maps between {\it weighted} $C^*_r\Gamma$-Sobolev spaces. Let $\rho$
be the total boundary defining function for $\wt{X}$. 
\begin{lemma}
If $A \in \Psi_{\cB,\Gamma}^s(X; \cE, \cF)$, then for any $a, t \in \bbR$, $A$ induces a bounded operator
\begin{equation*}
	A: \rho^a H^t_{\ie,\Gamma}(X;\cE) \to \rho^a H^{t-s}_{\ie,\Gamma}(X;\cF).
\end{equation*}
\end{lemma}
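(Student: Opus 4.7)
The plan is to reduce the statement to the known mapping properties of the uniform calculus $\Psi^*_{\cB,\Gamma}$ on the complete bounded-geometry manifold $(X,\wt g)$. Since $\rho^a H^t_{\ie,\Gamma}(X;\cE)$ is by definition the image of $H^t_{\ie,\Gamma}(X;\cE)$ under multiplication by $\rho^a$, the claim is equivalent to boundedness of the conjugated operator
\[
\rho^{-a}\, A\, \rho^a : H^t_{\ie,\Gamma}(X;\cE) \longrightarrow H^{t-s}_{\ie,\Gamma}(X;\cF).
\]
Accordingly, it suffices to establish: (a) that the iterated edge Sobolev spaces coincide with the uniform Sobolev spaces of (\ref{SobSpaces}) with equivalent norms, and (b) that conjugation by $\rho^a$ sends $\Psi^s_{\cB,\Gamma}(X;\cE,\cF)$ into itself. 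The already-established boundedness of elements of $\Psi^s_{\cB,\Gamma}$ between $\cB$-Sobolev spaces then closes the argument.

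For (a), the local generators of $\cV_{\ie}$ form, at every point of $X$, a frame for $TX$ which is orthonormal for $\wt g$ up to uniformly bounded factors in any bounded geometry chart, with uniformly bounded derivatives. Since the coefficients in $\Diff_{\ie,\Gamma}^k$ lie in $\CI(\wt X)\subset\cB\CI(X)$ by compactness of $\wt X$, one has $\Diff_{\ie,\Gamma}^k\subset\Diff_{\cB,\Gamma}^k$; conversely, in each bounded geometry normal chart, any uniform differential operator of order $k$ can be rewritten in the local $\ie$-frame with coefficients in $\cB\CI$. Hence $H^k_{\ie,\Gamma}(X;\cE)=H^k_{\cB,\Gamma}(X;\cE)$ as Hilbert $C^*_r\Gamma$-modules with equivalent norms for each $k\in\bbN$, and Calder\'on interpolation together with duality extend the identification to all $t\in\bbR$.

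For (b), the essential geometric input is that $d\log\rho$ is $\wt g$-bounded, together with all its iterated $\wt g$-covariant derivatives. Writing $\rho=\prod_\alpha x_\alpha$ and using the local form of $\wt g=\rho^{-2}g$ in iterated edge coordinates, each $d\log x_\alpha$ can be expressed in the local $\ie$-coframe with $\cB\CI$ coefficients, which gives the required estimate. Consequently, in each chart $\cU_{2\eps}(\zeta_i)$ the function
\[
\psi_i(\zeta):=\log\rho(\zeta)-\log\rho(\zeta_i)
\]
belongs to $\cB\CI$ with bounds uniform in $i$, and so $e^{\pm a\psi_i}\in\cB\CI$ uniformly as well. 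The constant factor $\rho(\zeta_i)^{\pm a}$ cancels inside the chart, leaving
\[
\wt\phi_i\,(\rho^{-a}A\rho^a)\,\phi_i
= e^{-a\psi_i}\,\bigl(\wt\phi_i A \phi_i\bigr)\,e^{a\psi_i},
\]
which is obtained from the uniform pseudodifferential operator $\wt\phi_i A\phi_i$ by multiplication on either side by a function in $\cB\CI$; the symbol estimates of condition (iii) are thus preserved with uniform constants. Conditions (i) and (ii) persist as well, because the Lipschitz property of $\log\rho$ with respect to $\wt g$ forces the weight $(\rho(\zeta)/\rho(\zeta'))^{\pm a}$ to remain bounded on the set $\{d_{\wt g}(\zeta,\zeta')\leq C_A\}$ where the Schwartz kernel of $A$ is supported. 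Therefore $\rho^{-a}A\rho^a\in\Psi^s_{\cB,\Gamma}(X;\cE,\cF)$, and combining with (a) yields the lemma.

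The principal obstacle is precisely the geometric estimate on $\log\rho$: one must check that $|d\log\rho|_{\wt g}$ and all its higher iterated $\wt g$-covariant derivatives remain uniformly bounded on all of $X$, in particular at the codimension-$\geq 2$ corners of $\wt X$ where several boundary hypersurfaces meet. Once this bounded geometry estimate is in hand, both (a) and (b) follow in a routine fashion from the definitions of the iterated edge and uniform pseudodifferential calculi.
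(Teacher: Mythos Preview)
Your proposal is correct and follows essentially the same route as the paper: both reduce to showing that $\rho^{-a}A\rho^a\in\Psi^s_{\cB,\Gamma}$, which the paper does in one line by observing that the Schwartz kernel of the conjugated operator is $(\rho(\zeta)/\rho(\zeta'))^a\cK_A(\zeta,\zeta')$ and that this weight is a bounded smooth function on $\supp\cK_A$. Your argument is a more detailed unpacking of exactly this point, verifying conditions (i)--(iii) of the uniform calculus explicitly; note also that the ``principal obstacle'' you flag, the $\wt g$-boundedness of $d\log\rho$, is precisely the fact already used in the paper's proof that $(X,\wt g)$ is complete (via the Gordon--de Rham--Borel criterion), and the higher covariant derivatives follow from smoothness on the compact space $\wt X$ just as for the curvature bounds.
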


\begin{proof}
It is enough to check that $\rho^{-a} A \rho^{a} \in A \in \Psi_{\cB,\Gamma}^s(X; \cE, \cF)$ for any $a$. 
The integral kernel of $\rho^{-a} A \rho^{a}$ is 
\begin{equation*}
	\lrpar{\frac{\rho(\zeta)}{\rho(\zeta')}}^a \cK_A(\zeta, \zeta')
\end{equation*}
and the lemma follows by noting that if $\lrpar{\frac{\rho(\zeta)}{\rho(\zeta')}}^a$ is a bounded smooth function on the support of $\cK_A$.
\end{proof}

An important property of the uniform pseudodifferential calculus is that it has a symbolic calculus.
By standard constructions, this implies that any elliptic element in $\Diff_{\cB,\Gamma}^k(X; \cE, \cF)$ has a symbolic parametrix, 
i.e.\ an inverse modulo smoothing operators. In particular, using the above Proposition, we see that an elliptic $\ie$ operator 
$A \in \Diff_{\ie,\Gamma}^k(X; \cE, \cF)$ has a {\em symbolic parametrix}
\begin{equation*}
	Q \in \Psi_{\cB,\Gamma}^{-k}(X; \cF, \cE) \Mst \\
	\Id_{\cE} - QP \in \Psi_{\cB,\Gamma}^{-\infty}(X;\cE), \quad
	\Id_{\cF} - PQ \in \Psi_{\cB,\Gamma}^{-\infty}(X;\cF). 
\end{equation*}

The symbolic calculus also yields the standard characterization of Sobolev spaces. 
For instance, if $N \in \bbN$, then 
\begin{multline*}
	H^N_{\cB}(X)
	= \{ u \in \CmI(X) : Au \in L^2(X) \text{ for all } A \in \Diff_{\cB}^N(X) \} \\
	= \{ u \in \CmI(X) : Au \in L^2(X) \text{ for some uniformly elliptic } A \in \Diff_{\cB}^N(X) \};
\end{multline*}
in fact, if $A \in \Diff_{\cB}^N(X)$ is uniformly elliptic, then $H^N_{\cB}(X)$ equals the maximal domain of $A$ as an unbounded 
operator on $L^2(X)$. This characterization, applied to an elliptic operator $A \in \Diff_{\ie}^N(X)$, shows that 
$H^N_{\ie}(X) = H^N_{\cB}(X)$. Using Calderon interpolation and duality, we see that $H^t_{\ie}(X) = H^t_{\cB}(X)$ for all $t \in \bbR$, 
and the same is true for sections of bundles of bounded geometry and the corresponding $C^*_r\Gamma$-bundles.

\subsection{Incomplete iterated edge operators} $ $\newline
The set of incomplete iterated edge differential operators, $\Diff_{\iie,\Gamma}^*(X;\cE,\cF)$ is defined in terms of $\Diff_{\ie,\Gamma}^*(X;\cE,\cF)$ by
\begin{equation*}
	\Diff_{\iie,\Gamma}^k(X;\cE,\cF) = \rho^{-k} \Diff_{\ie,\Gamma}^k(X;\cE,\cF),
\end{equation*}
where $\rho = x_0\cdots x_{m-1}$. 
As an operator between weighted $L^2$ spaces  with appropriate {\em different} weights, an operator $A \in \Diff_{\iie,\Gamma}^k(X;\cE,\cF)$ is unitarily equivalent to an iterated edge operator.
Thus, for instance, for any $a \in \bbR$, $A$ defines an unbounded operator
\begin{equation*}
	A: \rho^a L^2_{\ie,\Gamma}(X;\cE) \to \rho^{a-k} L^2_{\ie,\Gamma}(X;\cF)
\end{equation*}
which has a unique closed extension whose domain is $\rho^{\alpha}H^k_{\ie,\Gamma}(X;\cE)$; moreover, $A$ defines bounded operators
\begin{equation*}
	\rho^{a} H^t_{\ie,\Gamma}(X;\cE) \to \rho^{a-k} H^{t-k}_{\ie,\Gamma}(X;\cF)
\end{equation*}
for every $a$ and $t \in \bbR$.
However, it is the more complicated behavior of $A$ as an unbounded operator
\begin{equation}\label{Aop}
	A: \rho^a L^2_{\ie,\Gamma}(X;\cE) \to \rho^{a} L^2_{\ie,\Gamma}(X;\cF)
\end{equation}
that we will be concerned with.
We point out that the operator \eqref{Aop} is unitarily equivalent to the unbounded operator
\begin{equation*}
	\wt A = \rho^{k/2} A \rho^{k/2}: 
	\rho^{a-k/2} L^2_{\ie,\Gamma}(X;\cE) \to \rho^{a+k/2} L^2_{\ie,\Gamma}(X;\cF),
\end{equation*}
Since $\wt A\in \Diff_{\ie,\Gamma}^*(X;\cE,\cF)$, this  shows that
the study of incomplete iterated edge operators acting on a fixed Hilbert space is the same as the study of complete $\ie$-operators acting between {\em different} Hilbert spaces.

We point out that the $L^2$ spaces of the incomplete iterated edge metric $g$ and the associated complete $\ie$ metric $\wt g = \rho^{-2 }g$ are related by
\begin{equation*}
	L^2_{\ie,\Gamma}(X, \cE) = \rho^{n/2} L^2_{\iie,\Gamma}(X, \cE)
\end{equation*}
with $n$ equal to the dimension of $X$, 
so switching between them only involves a shift of the weight. 
Similarly, we introduce the spaces $H^t_{\iie, \Gamma}(X; \cE)$ for $t \in \bbR$ by
\begin{equation*}
	H^t_{\ie, \Gamma}(X; \cE) = \rho^{n/2} H^t_{\iie, \Gamma}(X; \cE).
\end{equation*}
Thus, for instance, if $N \in \bbN$ then $H^N_{\iie,\Gamma}(X, \cE)$ is the set of elements $ u \in L^2_{\iie,\Gamma}(X, \cE)$ such 
 that for any vector fields $V_1, \ldots, V_p \in \mathcal{V}_{\ie}$ where $p \leq N,$ we have 
 $V_1 \ldots V_p u \in L^2_{\iie,\Gamma}(X, \cE).$

We say that $A\in \Diff_{\iie,\Gamma}^k(X; \cE, \cF)$ is elliptic if $\wt A = \rho^k A$ is an elliptic $\ie$ operator.
Elliptic $\ie$ operators always have a symbolic parametrix (see \S\ref{sec:uniform}).
A symbolic parametrix $\wt Q$ for $\wt A$ yields a symbolic parametrix $Q = \rho^{k/2}\wt Q \rho^{k/2}$ for $A$. 
Recall that a continuous adjointable $C^*_r\Gamma$-linear operator $K$ is called $C^*_r\Gamma$-compact 
if both $K$ and $K^*$ are uniform limits of sequences of $C^*_r\Gamma$-linear operators whose ranges are 
finitely generated $C^*_r \Gamma-$modules.
As is well-known, since smoothing operators are not necessarily $C^*_r\Gamma$-compact, a symbolic parametrix 
is generally not enough to determine when an operator is $C^*_r\Gamma$-Fredholm, so one also needs to know about the behavior at 
the boundary.
  
However, the uniform calculus does establish elliptic regularity in the sense that, whenever $B \in \Diff_{\ie,\Gamma}^k(X; \cE, \cF)$ 
is elliptic and $a \in \bbR,$ we have 
 \begin{equation}\label{EllReg}
	u \in \rho^a L^2_{\iie,\Gamma}(X, \cE), \quad
	Bu \in \rho^a L^2_{\iie,\Gamma}(X, \cF)
	\implies u \in \rho^a H^N_{\iie,\Gamma}(X, \cE).
\end{equation}

\subsection{The de Rham operator} $ $\newline
We are interested in analyzing the de Rham operator of an $\iie$ metric,
\begin{equation*}
	\eth_{\dR} = d + \delta: \Omega^*X \to \Omega^*X.
\end{equation*}
As with the tangent bundle, it is convenient to replace the bundle of forms $\Omega^*(X)= \CI (X,\Lambda^* (T^*X))$ with the bundle of $\iie$-forms, 
\begin{equation*}
	\Iie \Omega^*(X) = \CI (X, \Lambda^* (\Iie T^*X) ),
\end{equation*}
where $\Iie T^*X \rightarrow \wt X $ is the rescaled bundle (cf. \cite[Chapter 8]{APS Book}) defined by  
\begin{equation*}
	\CI (\wt X,\Iie T^*X) = \rho \CI (\wt X,\Ie T^*X).
\end{equation*}
We set $\Iie \Lambda^*X = \Lambda^* (\Iie T^*X),$ and  we have
\begin{equation*}
	\eth_{\dR} \in \Diff_{\iie}^1(X; \Iie\Lambda^*(X), \Iie\Lambda^*(X))
\end{equation*}
as we now explain.

First note that whether or not $\eth_{\dR}$ is an element of $\Diff_{\iie}^1(X; \Iie\Lambda^*(X), \Iie\Lambda^*(X))$ can be checked 
locally in coordinate charts. There is nothing to check in the interior of the manifold. Then, with the notations of 
\S 3, we consider a distinguished neighborhood  $W$ of a point of a stratum $Y.$ Thus 
$W$ is diffeomorphic to $B \times C(Z)$ where $B$ is an open subset of $Y$ which is diffeomorphic to a vector space and 
$C(Z)$ is the cone whose base $Z$ is a stratified space. 
The `radial' coordinate of the cone will be denoted by $x$. 

As in \S\ref{section:iterated}, the fibration over $B$ extends to $W$, 
\begin{equation*}
	Z \times [0,1)_x - W \xrightarrow{\wt\phi} B,
\end{equation*}
and using $x$ and a choice of connection for this fibration we can write
\begin{equation*}
	T^*X\rest{W} = \ang{dx} \oplus T^*Y \oplus T^*Z.
\end{equation*}
With respect to this splitting the metric $g$ restricted to $W$ has the form
\begin{equation*}
	g = dx^2 + \wt\phi^*g_Y + x^2 g_Z
\end{equation*}
and the differential forms on $X$ can be decomposed as
\begin{equation}\label{SplitForms}
\begin{gathered}
	\Lambda^*X = 
	(\Lambda^*Y \wedge \Lambda^* Z)
	\oplus dx \wedge
	(\Lambda^*Y \wedge \Lambda^* Z) \\
	\Iie \Lambda^*X = 
	(\Lambda^*Y \wedge x^{\bN} \Lambda^* Z)
	\oplus dx \wedge
	(\Lambda^*Y \wedge x^{\bN} \Lambda^* Z)
\end{gathered}
\end{equation}
where $\bN$ is the `vertical number operator', i.e., the map given by multiplication by $k$ when restricted to forms of vertical degree $k$.
This allows us to split the exterior derivative into
\begin{equation*}
	d = 
	\df e_{dx} \pa_x
	\oplus d^Y
	\oplus d^Z
\end{equation*}
where $\df e_{dx}$ denotes the exterior product by $ d x$ and correspondingly
\begin{equation*}
	\delta =
	\star^{-1} \df e_{dx} \pa_x \star
	\oplus 
	\star^{-1} d_Y \star
	\oplus 
	\star^{-1} d_Z \star
	= 
	\star^{-1} \df e_{dx} \pa_x \star
	\oplus \delta^Y_x
	\oplus \delta^Z_x
\end{equation*}
where the $x$-dependence in $\delta^Y_x$ and $\delta^Z_x$ comes from the $x$-dependence of the Hodge star operator, $\star$.
A straightforward computation shows that with respect to the splitting \eqref{SplitForms} of $\Iie\Lambda^*X$, (and with $f = \dim Z$), 
\begin{equation}\label{PreDNearBdy}
	\eth_{\dR}
	= \begin{pmatrix}
	\frac1x( d^Z + \delta^Z_x) + d^Y + \delta^Y_x & - \star^{-1}\pa_x \star - \tfrac1x(f-\bN) \\
	\pa_x + \tfrac1x \bN & -\frac1x( d^Z + \delta^Z_x) - d^Y - \delta^Y_x 
	\end{pmatrix}.
\end{equation}
As in \cite[(19)]{Hunsicker-Mazzeo} one can write this in terms of operators related to the fibration, 
however for our purposes it is more important to point out that the leading order term with respect to $x$ (as an $\iie$ operator) is given by
\begin{equation}\label{DNearBdy}
	\eth_{\dR} \sim
	 \begin{pmatrix}
	\tfrac1x \eth^Z_{\dR} + \eth^Y_{\dR} & - \pa_x - \tfrac1x(f-\bN) \\
	\pa_x + \tfrac1x \bN & -\tfrac1x \eth^Z_{\dR}  - \eth^Y_{\dR} 
	\end{pmatrix}.
\end{equation}
where $f$ denotes the dimension of $Z$, $\eth^Y_{\dR}$ and $\eth^Z_{\dR}$ are the de Rham operators of $\wt\phi^*g_Y\rest{x=0}$ and $g^Z\rest{x=0}$, respectively. 
In effect, because of the weighting of the vertical forms, the Hodge star operator is asymptotically acting like the Hodge star operator of the product metric at $\{x=0\}$.

By induction on the depth of the stratification and using \eqref{DNearBdy} one proves without difficulties the following:

\begin{lemma}  \label{lem:iie} The operator $\eth_{\dR} $ is  in $\Diff_{\iie}^1$, i.e.,
 $\rho \eth_{\dR} $ is  in $\Diff_{\ie}^1$
\end{lemma}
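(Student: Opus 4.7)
The plan is to argue by induction on the depth $k$ of $\wh{X}$. The base case $k=0$ is immediate: then $\wt{X} = \wh{X}$ is a closed smooth manifold, $\rho \equiv 1$, and $\eth_{\dR}$ is a first-order differential operator with smooth coefficients, hence lies in $\Diff^1(X) = \Diff^1_{\ie}(X)$. For the inductive step, assume the conclusion is known for every stratified pseudomanifold of depth strictly less than $k$, and let $\wh{X}$ have depth $k \geq 1$. Since the assertion is local and $\rho$ is bounded away from zero on the interior of $\wt{X}$, it suffices to verify $\rho\eth_{\dR} \in \Diff^1_{\ie}(X)$ near each boundary hypersurface $H$ of $\wt{X}$.

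Fix such an $H$, corresponding under the blowdown map to a stratum $Y \subset \wh{X}$, which I may take to have maximal depth. In a distinguished product neighborhood $W = B \times [0,\epsilon)_x \times \wt{Z}$ of a point of $H$, the link $Z$ has depth at most $k-1$, and its resolution contributes a total boundary defining function $\rho_{\wt{Z}}$ with $\rho = x\rho_{\wt{Z}}$ on $W$. Directly from \eqref{FirstDescVie} and the product form $g = \wt\phi^* g_Y + dx^2 + x^2 g_Z$, one checks that $\rho\pa_x$ and $\rho\pa_{y^{(1)}}$ lie in $\cV_{\ie}(W)$, and that iterated edge vector fields on $\wt{Z}$ lift via the projection $W \to \wt{Z}$ to iterated edge vector fields on $W$. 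I then multiply the exact formula \eqref{PreDNearBdy} for $\eth_{\dR}$ through by $\rho$ and examine each term. The $\tfrac1x(d^Z + \delta^Z_x)$ pieces yield $\rho_{\wt{Z}}(d^Z + \delta^Z_x)$, a smooth $x$-family of operators on $\wt{Z}$ which at $x=0$ equals $\rho_{\wt{Z}}\eth^Z_{\dR} \in \Diff^1_{\ie}(\wt{Z})$ by the inductive hypothesis, the smooth $x$-dependence of $\delta^Z_x$ preserving this. The $d^Y + \delta^Y_x$ pieces produce smooth-coefficient combinations of $\rho\pa_{y^{(1)}} \in \cV_{\ie}(W)$. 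The $\pa_x$ and $\star^{-1}\pa_x\star$ pieces produce $\rho\pa_x \in \cV_{\ie}(W)$, up to smooth bundle endomorphisms arising from the smooth $x$-dependence of $\star$. Finally, the zeroth-order pieces $\tfrac1x\bN$ and $\tfrac1x(f-\bN)$ become $\rho_{\wt{Z}}\bN$ and $\rho_{\wt{Z}}(f-\bN)$, smooth bundle endomorphisms belonging to $\Diff^0_{\ie}(W)$. Summing, $\rho\eth_{\dR}\rest{W} \in \Diff^1_{\ie}(W)$, which completes the induction.

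The main subtlety lies in transporting the inductive hypothesis on the link $Z$ correctly to the ambient space. Two points require care: first, the embedding $\Diff^*_{\ie}(\wt{Z}) \hookrightarrow \Diff^*_{\ie}(W)$, which rests on the precise shape of $g$ together with the factorization $\rho = x\rho_{\wt{Z}}$; and second, the compatibility of the rescaled bundle $\Iie\Lambda^* X = \Lambda^* Y \wedge x^{\bN}\Lambda^* Z$ of \eqref{SplitForms} with the analogous $\Iie$-rescaling on $\wt{Z}$, so that the block of $\eth_{\dR}$ identified with $\tfrac1x\eth^Z_{\dR}$ is genuinely the incomplete iterated edge de Rham operator of $Z$ to which the inductive hypothesis applies. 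Once these identifications are pinned down, the rest of the argument is a direct bookkeeping of the terms in \eqref{PreDNearBdy}.
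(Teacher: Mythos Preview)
Your proof is correct and follows exactly the approach the paper indicates: induction on the depth of the stratification, using the explicit local form \eqref{PreDNearBdy} (the paper cites its leading-order version \eqref{DNearBdy}) together with the factorization $\rho = x\rho_{\wt Z}$ to reduce to the inductive hypothesis on the link. In fact you supply considerably more detail than the paper, which treats the lemma as routine and gives only a one-line indication of the proof.
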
 
 
We are also interested in the behaviour of $\eth_{\dR}$ after twisting to get $C^*$-algebra coefficients. Thus we assume, as before, that we have a continuous  map
\begin{equation*}
	r:  \widehat{X} \to B\Gamma
\end{equation*}
We compose $r$ with the blow-down map $\beta$ and we pull-back the universal bundle $E\Gamma$
to $\widetilde{X}$ using $f\circ \beta$. We obtain a Galois $\Gamma$-covering $\widetilde{X}^\prime$
over   $\widetilde{X}$ and 
the  associated bundle 
$\wt C^*_r\Gamma \to \widetilde{X}$, with 
$$\wt C^*_r\Gamma:= C^*_r \Gamma \times_\Gamma \widetilde{X}^\prime\,.$$
 We restrict $\wt C^*_r\Gamma$ to $X$.
Endowing $C^*_r\Gamma\times \widetilde{X}^\prime$, as a trivial bundle over $\widetilde{X}^\prime$, with the trivial connection induces a (non-trivial) flat connection on the bundle $\wt C^*_r\Gamma \rightarrow \widetilde{X}$;
we also obtain a flat connection on the restriction of   $\wt C^*_r\Gamma$ to $X$
(and it is obvious that this connection will automatically extend to $\widetilde{X}$). 
Using the latter connection we can define directly $\wt \eth_{\dR}$, the twisted de Rham operator on the sections of the vector bundle 
\begin{equation*}
	\Iie\Lambda_\Gamma^*(X)  = \Iie\Lambda^*X \otimes \wt C^*_r\Gamma.
\end{equation*}
By construction  $\wt \eth_{\dR}\in \Diff^*_{\iie,\Gamma}$, i.e. $\rho \wt \eth_{\dR}$ is an element in $\Diff^*_{\ie,\Gamma}$.

\section{Inductive analysis of the signature operator}

In this section we analyze the behavior of the de Rham operator near the singular part of $\widehat{X}$. This is
done inductively. The base case is that of a closed manifold, which is classical. Stratifications of depth one are 
analyzed in the work of Hunsicker and the third author \cite{Hunsicker-Mazzeo}, where the relationship between 
intersection cohomology and Hodge cohomology is treated in detail. Our results for depth one stratifications 
is implicitly contained in \cite{Hunsicker-Mazzeo}. However, the treatment in  \cite{Hunsicker-Mazzeo} relies 
heavily on the edge calculus \cite{Mazzeo:edge} which allows refined results, such as finding conormal representatives 
of cohomology classes. Though we cannot use the edge calculus directly, we proceed by adapting certain arguments from 
\cite{Mazzeo:edge} to our context. More precisely, we define a model for this operator at each point of a singular stratum
and then we establish that these model operators are invertible when acting on the appropriate Sobolev spaces. 
Taken together, ellipticity and this asymptotic invertibility are enough to establish the Fredholm properties we seek.

The main advantage of the de Rham operator over an arbitrary $\iie$ operator lies in \eqref{DNearBdy}. Indeed this 
shows that, at a given point $q$ on the boundary, the leading order behavior of $\eth_{\dR}$ involves the fibre $Z$ over 
$q$ only through its de Rham operator $\eth_{\dR}^Z$.  To take advantage of this structure we multiply this operator
by a (symmetrically distributed) power of the radial distance $x$ to the highest depth stratum $Y$. Since this is closely
related to the de Rham operator for the metric $x^{-2}g$, which we regard as a `partial completion' of $g$ (i.e.\ we
have made it complete near $Y$, but the link $Z$ of the associated cone bundle with its induced metric remains incomplete.
This allows us to set up an inductive scheme. 

\subsection{The partial completion of the de Rham operator}$\;$\newline
Recall that  \eqref{DNearBdy} was written in a  distinguished neighborhood  $W$ of a point of a stratum $Y$  and that
$W$ is diffeomorphic to $B \times C(Z)$ where $B$ is an open subset of $Y$ diffeomorphic to a vector space and 
$C(Z)$ is the cone with smoothly stratified link $Z$. The `radial' coordinate of the cone is still 
denoted $x$, which we identify with one the boundary defining functions $x_j$ and thereby extend globally 
to $\wt X.$ To take advantage of the structure of the de Rham operator in $W$, as it 
appears in \eqref{DNearBdy}, we define the `partial conformal completion' of the signature operator
\begin{equation*}
	D_0 = x^{1/2} \eth_{\dR} x^{1/2}.
\end{equation*}

The advantage of using $x^{1/2} \eth_{\dR} x^{1/2}$ over, say, $x\eth_{\dR}$ is that the former
is symmetric as an operator
\begin{equation*}
	x^{-1/2} L^2_{\iie,\Gamma}(X, \Iie\Lambda_\Gamma^*(X) ) 
	\to x^{1/2} L^2_{\iie,\Gamma}(X, \Iie\Lambda_\Gamma^*(X) )
\end{equation*}
(with respect to the natural pairing between the spaces on the right and left here), since $\eth_{\dR}$ is a symmetric 
operator on $L^2_{\iie,\Gamma}(X; \Iie\Lambda_\Gamma^*(X) )$ with core domain  $C^\infty_c$.

To analyze $\eth_{\dR}$ it is useful to consider the operator it induces on various weighted $L^2$ spaces.
For later use we point out first that $\eth_{\dR}$ satisfies  
\begin{equation}\label{DavsD0}
	\eth_{\dR}(x^a v) 
	= [\eth_{\dR}, x^a]v 
	+ x^a \eth_{\dR} v
	= x^a [ a \df e(\tfrac{dx}{x}) - a \df i(\tfrac{dx}{x}) + \eth_{\dR}] v,
\end{equation}
where $\df e$ and $\df i$ denote exterior and interior product respectively, and, second, that we have a unitary equivalence of 
unbounded operators \footnote{Note that in \cite{Hunsicker-Mazzeo}, for a stratification of depth one, $D_a$ denotes the 
de Rham operator of the complex $(x^aL^2_{\iie}, d)$ while here $D_a$ denotes the de Rham operator of the complex 
$(L^2_{\iie}, d)$ as an operator on $x^aL^2_{\iie}$.}
\begin{equation*}
\begin{gathered}
	\eth_{\dR} : x^a L^2_{\iie,\Gamma}(X, \Iie\Lambda_\Gamma^*(X) ) \to x^a L^2_{\iie,\Gamma}(X, \Iie\Lambda_\Gamma^*(X) ) \\
	\leftrightarrow
	D_a= x^{1/2-a} \eth_{\dR} x_0^{1/2+a} : 
	x^{-1/2} L^2_{\iie,\Gamma}(X, \Iie\Lambda_\Gamma^*(X) ) 
	\to x^{1/2} L^2_{\iie,\Gamma}(X, \Iie\Lambda_\Gamma^*(X) ).
\end{gathered}
\end{equation*}
 
In order to adapt arguments from \cite{Mazzeo:edge} it is more natural to work with the operator $x^{1/2-a} \eth_{\dR} x_0^{1/2+a}$ as an unbounded operator from the space 
$x^{-1/2} L^2_{\iie,\Gamma}(X, \Iie\Lambda_\Gamma^*(X) )$ {\em to itself.} Thought of in this way, we denote it as $P_a$,
\begin{equation}\label{PMap}
	P_a:
	x^{-1/2} L^2_{\iie,\Gamma}(X, \Iie\Lambda_\Gamma^*(X) ) 
	\to x^{-1/2} L^2_{\iie,\Gamma}(X, \Iie\Lambda_\Gamma^*(X) ).
\end{equation}

Our analysis of $\eth_{\dR}$ will proceed in two steps: in the first step we will analyze the behavior of $P_a$  
by adapting two model operators from \cite{Mazzeo:edge} -- the normal operator and the indicial family.
Then, in the second step, we will use the information gleaned about $P_a$ to analyze $\eth_{\dR}$.

\medskip
 \noindent
{\bf Remark.} 
{\em These two steps can be thought of in the following way. We  first analyze $x^{1/2} \eth_{\dR} x^{1/2}$ as a {\em partially complete} edge operator on $W$; complete in the
$(x,y)$ variables with values in $\iie$-operators on $Z$. Then, as a second step, we analyze it as an {\em incomplete} edge operator in the  $(x,y)$ variables with values, again, in $\iie$-operators on $Z$.}

\subsection{The normal operator of $P_a$} $ $\newline 
Recall that every point $q \in Y$ has a neighborhood $W$ which we identify with the product of $\cU\times C(Z)$, where
$\cU$ is a neighborhood of the origin in $\bbR^b \cong T_qY$. If this neighborhood is small enough that 
$\Iie\Lambda^*(X)\rest{W}$ can be identified with the pull-back of some vector bundle over $Z$ and similarly 
for $\Iielaga(X)|_{W}$, then we call $W$  a {\bf basic neighborhood}. In such a $W$, let us fix smooth nonnegative 
cutoff functions $\chi$ and $\wt\chi$, both independent of the $Z$ variables, with supports in $W$ and equaling
one in a neighborhood of $q$, and such that $\wt\chi\chi = \chi$. We  refer to $W$, $\psi$, $\chi$, $\wt\chi$ as a 
{\bf basic setup} at $q \in Y$.

We can identify a basic neighborhood $W$ with a subset of the product of $Z$ with $T_qY^+ \cong \bbR^{+}_s \times  \bbR^{b}_u$ 
and use this identification to model the operator $P_a$ near $q$ by an operator on $Z \times T_qY^+$, the {\bf normal operator} 
of $P_a$ at $q \in Y$. 
Notice that the bundles $\Iie\Lambda^*(X)\rest{W}$, $\Iielaga(X)|_{W}$ as pull-backs of bundles over $Z$, extend naturally to $Z \times T_qY^+$, and that the dilation maps $R_t: T_qY^+ \to T_qY^+$ for any $t >0$ preserve the space of sections of these bundles.

\begin{definition}
The {\bf normal operator} 
$N_q(P_a)$ is the operator whose action on any $u \in \CIc( Z \times T_qY^+, \Iielaga (Z \times T_qY^+))$ is given by 
\begin{equation*}
	N_q(P_a)u = \lim_{r\to 0} R_r^* \; (\psi^{-1})^* \; \wt\chi \; P_a \; \psi^* \; \chi R^*_{1/r} u.
\end{equation*}
\end{definition}

Thus in local coordinates $(s,y,z)$ the action of the normal operator of $P_a$ on a section $u$ is obtained by evaluating $u$ at $(s/r, y/r, z)$, applying $P_a$, dilating back by a factor of $r$, and then letting $r \to 0$. 
It is easy to see that this procedure takes a vector field of the form $a(s, y, z) (s\pa_s) + b(s, y, z)(s\pa_y)$
to the vector field $a(0,0,z)(s\pa_s) + b(0,0,z)(s\pa_y)$, while for a vertical vector field $V$, this procedure returns $V\rest{s=0, y=0}$.
In fact, it is easy to see that this procedure replaces the metric 
\begin{equation*}
	g\rest{W} = g_{\cU}(x,y) + dx^2 + x^2 g_Z(x,y,z)
\end{equation*}
which is a submersion metric with respect to the projection $\cU \times C(Z) \to \cU$, with the product of an $\iie$ metric on $C(Z)$ and the flat metric on $\cU$,
\begin{equation*}
	g_{Z \times T_qY^+} = g_{\cU}(0,0) + ds^2 + s^2 g_Z(0,0,z).
\end{equation*}
It follows that any natural operator associated to $g_{\iie}$ is taken by this procedure to the corresponding natural operator of $g_{Z \times T_qY^+}$ --in particular this is true for $\eth_{\dR}$.

\begin{lemma}
The normal operator of $P_a$ at $q \in Y$ is equal to $s^{1/2-a}\eth_{\dR}s^{1/2+a}$ where $\eth_{\dR}$ is the de Rham operator of the metric $g_{Z \times T_qY^+}$. Thus in local coordinates,
\begin{equation}\label{NormalPa}
	N_q(P_a) 
	= \begin{pmatrix}
		\eth_{\dR}^{Z} + s\eth_{\dR}^{\bbR^{b}}  & -s\pa_s - (f_0-\bN+a+1/2) \\
		s\pa_s + \bN + a +1/2 & - \eth_{\dR}^{Z} - s\eth_{\dR}^{\bbR^{b}}.
	\end{pmatrix}
\end{equation}
\end{lemma}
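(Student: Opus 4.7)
The plan is to reduce this to a direct calculation starting from the formula \eqref{DNearBdy} for $\eth_{\dR}$ in a basic neighborhood, and then to apply the two operations used to define $P_a$ and $N_q$: multiplication by $x^{1/2\pm a}$ on the two sides, and the dilation-limit $R_r^* (\cdot) R_{1/r}^*$ as $r \to 0$.

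First I would observe that, as already noted in the text preceding the lemma, the dilation procedure defining $N_q$ replaces the submersion metric $g|_W = g_\cU(x,y) + dx^2 + x^2 g_Z(x,y,z)$ by the product metric $g_{Z\times T_qY^+} = g_\cU(0,0) + ds^2 + s^2 g_Z(0,0,z)$, and takes any natural geometric operator associated with $g$ to the corresponding natural operator of $g_{Z\times T_qY^+}$. Applied to $\eth_{\dR}$, this says $N_q(\eth_{\dR})$ is the de Rham operator of the product metric on $Z \times T_qY^+$, which splits as $\eth_{\dR}^{C(Z)} \!+\! \eth_{\dR}^{\mathbb R^b}$. In the notation of \eqref{DNearBdy} this means that in a basic neighborhood, with $x$ replaced by the rescaled radial coordinate $s$, the coefficients $d^Y, \delta_x^Y, d^Z, \delta_x^Z$ are frozen at $(x,y)=(0,0)$, so that $d^Y \!+\! \delta_x^Y$ becomes $\eth_{\dR}^{\mathbb R^b}$ and $d^Z \!+\! \delta_x^Z$ becomes $\eth_{\dR}^Z$. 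Multiplying the dilation $N_q(\eth_{\dR})$ by the homogeneous weights commutes with taking the limit, so
\begin{equation*}
N_q(P_a) \;=\; s^{1/2-a}\, N_q(\eth_{\dR})\, s^{1/2+a}.
\end{equation*}

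Next I would carry out the conjugation by $s^{1/2+a}$. Using \eqref{DavsD0} with $\eth_{\dR}$ replaced by $N_q(\eth_{\dR})$ and $a$ replaced by $1/2+a$, we have
\begin{equation*}
N_q(\eth_{\dR})\,s^{1/2+a}v \;=\; s^{1/2+a}\!\left[N_q(\eth_{\dR}) + (\tfrac12+a)\bigl(\mathfrak e(\tfrac{ds}{s}) - \mathfrak i(\tfrac{ds}{s})\bigr)\right]\!v,
\end{equation*}
so $N_q(P_a) = s\,N_q(\eth_{\dR}) + (\tfrac12+a)\bigl(\mathfrak e(\tfrac{ds}{s}) - \mathfrak i(\tfrac{ds}{s})\bigr)$. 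With respect to the splitting \eqref{SplitForms}, $\mathfrak e(ds/s)$ is the off-diagonal block sending the first summand to the second and $\mathfrak i(ds/s)$ is the off-diagonal block in the opposite direction. Thus the weight conjugation simply adds $(\tfrac12+a)$ to the diagonal entries $\bN$ and $-(f_0-\bN)$ already present in \eqref{DNearBdy} (after the identifications above). Inserting $s$ times \eqref{DNearBdy} (with the frozen coefficients) and collecting terms gives precisely
\begin{equation*}
N_q(P_a)
= \begin{pmatrix}
\eth_{\dR}^{Z} + s\,\eth_{\dR}^{\mathbb R^{b}} & -s\pa_s - (f_0-\bN+a+\tfrac12) \\
s\pa_s + \bN + a+\tfrac12 & -\eth_{\dR}^{Z} - s\,\eth_{\dR}^{\mathbb R^{b}}
\end{pmatrix},
\end{equation*}
which is the claimed formula.

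The main obstacle I anticipate is the careful bookkeeping between \eqref{DNearBdy} and the splitting \eqref{SplitForms}: one has to be sure that the homogeneity weight $s^{1/2+a}$ is applied to $\mathrm{iie}$-forms (so that the rescaling $x^\bN$ built into the splitting does not interfere with the conjugation) and that the off-diagonal terms $-\star^{-1}\pa_x\star - \tfrac1x(f-\bN)$ and $\pa_x + \tfrac1x\bN$ in \eqref{DNearBdy} acquire precisely the shift $a+\tfrac12$ rather than something more complicated. Everything else — the scaling limits of $d^Y, d^Z$, the vanishing of the lower-order terms in $x$, the identification of the limiting coefficients with the de Rham operators of $Z$ and $\mathbb R^b$ — follows directly from the naturality of the construction under the dilations $R_r$.
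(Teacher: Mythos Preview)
Your proposal is correct and follows exactly the approach the paper indicates: the paper's proof is merely the remark that the formula ``follows by naturality of the de Rham operator'' or ``alternately, one can compute \eqref{NormalPa} directly from \eqref{DNearBdy},'' and you have carried out precisely that naturality-plus-direct-computation argument in detail. The one slightly loose phrase is ``multiplying by the homogeneous weights commutes with taking the limit''; this is justified by writing $P_a = x\eth_{\dR} + (\tfrac12+a)(\mathfrak e(dx)-\mathfrak i(dx))$ as an $\ie$ operator and taking the normal operator of each summand, which is exactly what your subsequent matrix computation does.
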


{\bf Remark.} {\em As explained above, this expression follows by naturality of the de Rham operator. Alternately, one can compute \eqref{NormalPa} directly from \eqref{DNearBdy}.}

\subsection{Localizing the maximal domain} $ $\newline 
The following lemma will allow us  to  ``localize the maximal domain" of $\eth_{\dR}$ near the singular locus.
\begin{lemma}\label{localize}
Let $W$, $\psi$, $\chi$, $\wt\chi$ be a basic setup at $q \in Y$.\\
Let  $u \in x^{-1/2} L^2_{\iie,\Gamma}(X;\Iielaga (X))$ be such that
 $P_a u  \in x^{1/2} L^2_{\iie,\Gamma}(X;\Iielaga (X)).$ Then  $\chi u \in x^{-1/2} L^2_{\iie,\Gamma}(X;\Iielaga (X))$ and $P_a(\chi u) 
  \in x^{1/2} L^2_{\iie,\Gamma}(X;\Iielaga (X))$.
\end{lemma}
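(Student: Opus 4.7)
The plan is to show the lemma by a direct commutator computation, with the point being that multiplication by $\chi$ commutes with multiplication by the weights $x^{1/2\pm a}$, so everything reduces to analysing $[\eth_{\dR},\chi]$.

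First, the inclusion $\chi u \in x^{-1/2}L^2_{\iie,\Gamma}(X;\Iielaga(X))$ is immediate, since $\chi$ is a bounded smooth function on $\wt X$ (in fact compactly supported in $W$) and multiplication by a bounded function preserves all the weighted $L^2$ spaces. The substance of the lemma is the second assertion. Write
\[
P_a(\chi u) \;=\; \chi\, P_a u \;+\; [P_a,\chi]\,u.
\]
The first term on the right lies in $x^{1/2}L^2_{\iie,\Gamma}$ by hypothesis together with the boundedness of $\chi$.

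For the commutator term, recall that $P_a = x^{1/2-a}\,\eth_{\dR}\,x^{1/2+a}$ and that $\chi$ is a multiplication operator, so it commutes with multiplication by $x^{1/2\pm a}$. Hence
\[
[P_a,\chi] \;=\; x^{1/2-a}\,[\eth_{\dR},\chi]\,x^{1/2+a}.
\]
Now $[\eth_{\dR},\chi]$ is a zeroth order operator, given at each point by Clifford multiplication by $d\chi$ (i.e.\ $d\chi\wedge\,\cdot\;-\;\iota_{\nabla\chi}\,\cdot$). Because $\chi$ is chosen to be independent of the cone variable $z$, one has $d\chi = \partial_x\chi\,dx + \sum \partial_{y^{(i)}}\chi\,dy^{(i)}$; this is a smooth section of $\Iie T^*X$ over $\wt X$, bounded in the iie metric since $\chi$ is smooth on the compact manifold with corners $\wt X$. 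Consequently $[\eth_{\dR},\chi]$ defines a bounded operator from $L^2_{\iie,\Gamma}(X;\Iielaga(X))$ to itself, and, by the same argument applied to any weight, from $x^{a}L^2_{\iie,\Gamma}$ to itself for every $a\in\bbR$.

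Chaining the maps, if $u\in x^{-1/2}L^2_{\iie,\Gamma}$ then $x^{1/2+a}u\in x^{a}L^2_{\iie,\Gamma}$, so $[\eth_{\dR},\chi]\,x^{1/2+a}u\in x^{a}L^2_{\iie,\Gamma}$, and finally $x^{1/2-a}[\eth_{\dR},\chi]\,x^{1/2+a}u \in x^{1/2}L^2_{\iie,\Gamma}(X;\Iielaga(X))$. This shows $[P_a,\chi]u\in x^{1/2}L^2_{\iie,\Gamma}$ and completes the proof. The only step that requires any care is the boundedness of $[\eth_{\dR},\chi]$, which is the main (mild) obstacle: it is not automatic for general iie differential operators that the commutator with a cutoff is bounded on the natural $L^2$ spaces, but here it reduces to the observation that $\chi$ was chosen independent of the vertical variables, so $d\chi$ has no $dz$-component and is uniformly bounded in the iie metric.
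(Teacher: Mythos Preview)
Your proof is correct and follows essentially the same approach as the paper: both decompose $P_a(\chi u)=\chi P_a u + [P_a,\chi]u$ and use that $\chi$ is independent of the $Z$-variables to see that the commutator gains a factor of $x$. The paper phrases this as $[P_a,\chi]=\sigma(P_a)(d\chi)=xH$ with $H$ a bounded multiplication operator, while you unpack it via $[P_a,\chi]=x^{1/2-a}[\eth_{\dR},\chi]x^{1/2+a}$ and the boundedness of Clifford multiplication by $d\chi$ in the $\iie$ metric; these are the same computation.
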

\begin{proof}
Clearly $P_a(\chi u) = \chi (P_a u) + [P_a, \chi]u,$ and, since $\chi$ is independent of the $Z$-variables, 
\eqref{DNearBdy} allows us to see that $[P_a, \chi] = \sigma(P_a)(d\chi)= x H$ where $H$ is a multiplication operator 
by smooth bounded functions. Since $u \in x^{-1/2} L^2_{\iie,\Gamma}(X;\Iielaga (X))$ we see that 
$[P_a, \chi]u \in x^{1/2} L^2_{\iie,\Gamma}(X;\Iielaga (X))$, which establishes the lemma.
\end{proof}
\begin{proposition}\label{prop:upshot} Let $u \in x^{-1/2} L^2_{\iie}(X;\Iielaga(X))$ with compact support included in $W$  and such that $\chi =1$ 
on supp $u.$ Then  $P_a u \in x^{1/2} L^2_{\iie}(X;\Iielaga(X))$
 if and only if $N_q (P_a) (u \circ \psi^{-1}) \in s^{1/2} L^2_{\iie}(Z \times T_qY^+, \Iielaga (Z \times T_qY^+))$.
\end{proposition}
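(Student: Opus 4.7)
The plan is to reduce the proposition to the claim that the difference $E := P_a - \psi^*\,N_q(P_a)\,(\psi^{-1})^*$, viewed as a differential operator on $W$, maps compactly supported elements of $x^{-1/2}L^2_{\iie}$ into $x^{1/2}L^2_{\iie}$. Once this is in hand, linearity gives the stated equivalence: $P_a u$ and $N_q(P_a)(u\circ\psi^{-1})$ differ (under the identification $\psi$) by $Eu$, so one lies in the target weighted $L^2$ if and only if the other does.

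First I would compute $P_a = x^{1/2-a}\eth_{\dR} x^{1/2+a}$ in local coordinates $(x,y,z)$ using \eqref{DNearBdy}. A direct conjugation, using that $\eth^Z_{x,y}$ and $\eth^Y_{x,y}$ act in the fibre and base and commute with multiplication by $x$, while $x^{1/2-a}\partial_x x^{1/2+a}=x\partial_x+(1/2+a)$ and $x^{1/2-a}\tfrac{1}{x}\bN x^{1/2+a}=\bN$, produces a $2\times 2$ matrix whose off-diagonal entries $x\partial_x+\bN+1/2+a$ and $-x\partial_x-(f-\bN+1/2-a)$ match those of \eqref{NormalPa} exactly, and whose diagonal entries are $\pm\bigl(\eth^Z_{x,y}+x\eth^Y_{x,y}\bigr)$, together with a remainder $xR$, $R\in\Diff^1_{\ie}$, coming from subleading $x$-corrections in \eqref{PreDNearBdy}. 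The transferred normal operator $\psi^*N_q(P_a)(\psi^{-1})^*$ has the same matrix form but with all coefficients of $g_Z$ and $g_Y$ frozen at $(x,y)=(0,0)$, so that the diagonal entries become $\pm\bigl(\eth^Z_{0,0}+x\eth^{\RR^b}_{\dR}\bigr)$.

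The difference $E$ is therefore diagonal with entries
\[
\pm\Bigl[\bigl(\eth^Z_{x,y}-\eth^Z_{0,0}\bigr)+x\bigl(\eth^Y_{x,y}-\eth^{\RR^b}_{\dR}\bigr)\Bigr]
\]
plus the remainder $xR$. Any summand carrying an explicit $x$ prefactor maps $x^{-1/2}L^2_{\iie}\to x^{1/2}L^2_{\iie}$ at once, since $\ie$-operators preserve the weighted $L^2_{\iie}$ spaces and the extra $x$ supplies exactly the needed weight shift. The main obstacle is $\eth^Z_{x,y}-\eth^Z_{0,0}$ itself: its coefficients, smooth and vanishing at $(x,y)=(0,0)$, have Taylor expansion $xc_1(x,y,z)+\sum_i y_i c_{2,i}(x,y,z)$, and only the $xc_1$ part visibly produces the required weight gain. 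To treat the $y_ic_{2,i}$ part the plan is to invoke $\ie$-elliptic regularity \eqref{EllReg} applied to the underlying $\ie$-operator behind $P_a$, which, given that $P_a u\in x^{1/2}L^2_{\iie}\subset x^{-1/2}L^2_{\iie}$, upgrades $u$ to $x^{-1/2}H^1_{\ie,\loc}$; then, using that vertical vector fields annihilate the base coordinates $y_i$, one commutes the multiplier $y_i$ past the vertical derivative in $\eth^Z_{x,y}-\eth^Z_{0,0}$, exploits the compact support of $u$ in $W$ (so that the $y_i$ factor is bounded by the diameter of the support), and uses Lemma \ref{localize} together with the self-adjointness of $P_a$ on $C^\infty_c$ to absorb the resulting terms into $x^{1/2}L^2_{\iie}$. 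Once $Eu\in x^{1/2}L^2_{\iie}$ is established, the equivalence stated in the proposition follows immediately.
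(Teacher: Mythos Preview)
Your strategy---show that the difference $E = P_a - \psi^*N_q(P_a)(\psi^{-1})^*$ lands in $x^{1/2}L^2_{\iie}$ when applied to a compactly supported $u$ satisfying one of the two hypotheses---is exactly the paper's. The paper is terser: it invokes elliptic regularity for the $\ie$-operator $\tfrac{\rho}{x}P_a$ to place $u$ in $x^{-1/2}H^1_{\iie}$, then asserts that the base-direction pieces $x(d^Y+\delta^Y_x)$ in \eqref{PreDNearBdy} and $s\eth^{\bbR^b}_{\dR}$ in \eqref{NormalPa} each individually send $x^{-1/2}H^1_{\iie}$ into $x^{1/2}L^2_{\iie}$, and that once these are removed the remaining pieces of the two matrices agree modulo $O(x)$. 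It does not expand on the fibre-operator discrepancy at all.

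Two points in your write-up need attention. First, your claim that the $x$-prefixed terms are handled ``at once, since $\ie$-operators preserve the weighted $L^2_{\iie}$ spaces and the extra $x$ supplies the weight shift'' skips a step: $\eth^Y_{x,y}-\eth^{\bbR^b}_{\dR}$ is built from $\pa_y$, which is \emph{not} an $\ie$ vector field (only $\rho\pa_y$ is), so it does not preserve $L^2_{\iie}$. You need the $H^1_{\iie}$ regularity here---which you only invoke later for the $\eth^Z$ term---just as the paper does.

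Second, and this is the genuine gap: your treatment of the $y_ic_{2,i}$-portion of $\eth^Z_{x,y}-\eth^Z_{0,0}$ does not produce any $x$-gain. This term is a first-order $\iie$ operator in the link $Z$ whose coefficients vanish at $y=0$ but \emph{not} at $x=0$. Commuting $y_i$ past a vertical derivative is vacuous (they already commute), and bounding $|y_i|$ by the diameter of $\supp u$ gives only a uniform constant, not an extra power of $x$. Neither Lemma~\ref{localize} nor the symmetry of $P_a$ on $\CIc$ manufactures a weight shift. You have correctly flagged this as the delicate point---the paper's own proof passes over it in the phrase ``so using formulas \eqref{PreDNearBdy} and \eqref{NormalPa}''---but what you outline is not a resolution. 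A workable route is to use the viewpoint in the Remark at the end of \S5.1: regard $P_a$ as an edge operator in the $(x,y)$-variables with coefficients that are $\iie$ operators on $Z$, and extract from the equation $P_au\in x^{1/2}L^2_{\iie}$ \emph{separate} control of $x\pa_x u$, $x\pa_y u$, and $\eth^Z u$ in $x^{-1/2}L^2_{\iie}$, rather than relying on the coarser $H^1_{\iie}$ regularity alone.
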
 

\begin{proof} 
We prove only one implication, the other one is similar. Since we work in the distinguished chart $W$, we may identify $u$ with $u \circ  \psi^{-1}.$ 

Let $\rho$ denote a total boundary defining function. The operator $\frac{\rho}{x} P_a$ is an elliptic $\ie$ differential operator, so elliptic regularity \eqref{EllReg} yields $u \in x^{-1/2} H^1_{\iie}(X;\Iielaga(X)).$

We observe that, in the expression \eqref{PreDNearBdy},  $x(d^Y + \delta^Y_x)$ sends 
$x^{-1/2} H^1_{\iie}(X;\Iielaga(X))$ into $x^{1/2} L^2_{\iie}(X;\Iielaga(X)) $
and 
a similar observation is 
true for $s \eth_{\dR}^{\mathbb{R}^{b_0}}$, so using formulas \eqref{PreDNearBdy} and \eqref{NormalPa}, 
we get $P_a u - N_q (P_a) (u \circ \psi^{-1}) \in s^{1/2} L^2_{\iie},$ which proves the lemma.
\end{proof}

\subsection{Injectivity of $N_q(P_a)$} \label{sec:MapProp} $ $\newline 
We take as an inductive hypothesis that the signature operator on $Z$ is self adjoint with discrete spectrum.
We make two further assumptions: 
\begin{equation}\label{Ass2}
	\boxed{ 
	\begin{array}{rcl}
	a)& \ &\Spec(\eth_{\dR}^{Z}) \cap (-1,1) \subseteq \{0\},  \\
	b)& \ &\mbox{If}\  k = \frac{f_0}2 \Mthen \mathcal{H}^k_{(2)}(Z)=0. 
	\end{array} }
\end{equation} 
By Theorem \ref{theo:cheeger}, b) is a topological condition on $Z$.  
\begin{proposition} \label{prop:rescaling} 
{\item 1)} There exists a  (rigid) iterated edge metric (cf Theorem \ref{thm:metric}) such that 
condition a) is satisfied on all links in $\wh{X}$. Such a metric will be called adapted (rigid) iterated edge.
{\item 2)} Any two adapted (rigid) iterated edge metrics are homotopic within the class of adapted  (rigid) iterated edge metrics.
\end{proposition}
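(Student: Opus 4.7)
The plan is to leverage the scaling behavior of the signature operator on each link: rescaling a link metric by $g_{L_Y}\mapsto \epsilon^2 g_{L_Y}$ conjugates $\eth^{L_Y}$ to $\epsilon^{-1}\eth^{L_Y}$ via the natural unitary (multiplication by $\epsilon^{-(\dim L_Y - 2k)/2}$ on $k$-forms), so every nonzero eigenvalue of the signature operator is multiplied by $\epsilon^{-1}$. Taking $\epsilon$ small enough pushes the nonzero spectrum outside $(-1,1)$. Condition (b) in \eqref{Ass2} is automatic from Cheeger's Theorem \ref{theo:cheeger}: on the Witt space $L_Y$, the Hodge cohomology $\mathcal{H}^{f_Y/2}_{(2)}(L_Y)$ is identified with $I\! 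H^{f_Y/2}_m(L_Y,\bbC)$, which vanishes by definition of a Witt space. Hence only condition (a) needs to be achieved by an appropriate choice of metric.

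For part 1 I would induct on the depth of $\wh{X}$; the base case is vacuous. For the inductive step, fix a rigid iterated edge metric $g$ as provided by Proposition \ref{thm:metric}, written locally as in Definition \ref{def:metric} with link-metric data $h_j$ ($j\geq 2$) appearing as independent parameters. Applying the inductive hypothesis to each link $L_Y$, which has strictly smaller depth than $\wh{X}$ and is again a Witt space, I may replace each link metric by an adapted one, so that condition (a) holds at every sublink of every link in $\wh{X}$. To upgrade to (a) on $L_Y$ itself, replace the link-metric parameter $g_{L_Y}$ by $\epsilon_Y^2 g_{L_Y}$ for sufficiently small $\epsilon_Y>0$; by the scaling formula this achieves the required spectral gap on $\eth^{L_Y}$, and since the rescaling at $L_Y$ uniformly magnifies eigenvalues everywhere inside $L_Y$, it preserves (and strengthens) the adaptedness of all sublinks. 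Since there are only finitely many strata and link metrics enter Definition \ref{def:metric} independently, all such rescalings can be performed simultaneously, yielding a global rigid iterated edge metric.

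For part 2, given two adapted rigid iterated edge metrics $g_0, g_1$, I first observe that the one-parameter family $g_i^t$ obtained by uniformly rescaling all link metrics in $g_i$ by $t^2$ ($t\in[\epsilon,1]$) is a path of adapted rigid iterated edge metrics from $g_i$ to $g_i^\epsilon$: rescaling by $t\leq 1$ only multiplies the spectral gaps by $t^{-1}\geq 1$. Next, the convex combinations
\begin{equation*}
    g_s^\epsilon = (1-s)\,g_0^\epsilon + s\,g_1^\epsilon, \qquad s\in[0,1],
\end{equation*}
are again rigid iterated edge metrics by the linearity of Definition \ref{def:metric} in the parameters $h_j$. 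On each link $L_Y$ the family $s\mapsto g_s^\epsilon|_{L_Y}$ is a compact path of uniformly quasi-isometric iterated edge metrics, so by continuous dependence of the discrete spectrum of $\eth^{L_Y}$ on the metric parameter, $\min_{s,Y}\{|\lambda| : \lambda \in \mathrm{Spec}(\eth^{L_Y}_{g_s^\epsilon}),\ \lambda \neq 0\}$ is strictly positive; taking $\epsilon$ small enough that this minimum exceeds $1$, the concatenated path $g_0\leadsto g_0^\epsilon\leadsto g_1^\epsilon\leadsto g_1$ lies entirely in the class of adapted rigid iterated edge metrics.

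The main obstacle is justifying continuity of the spectrum along the convex-combination path. Unlike on a compact smooth manifold, this is not a direct consequence of analytic perturbation theory, since it requires uniform control over domains and self-adjoint extensions of the signature operator on the stratified link as the iterated edge metric varies. The natural remedy is to fold this stability into the induction: in parallel with existence and homotopy of adapted metrics at depth $k-1$, one carries along a resolvent-continuity statement for $\eth_{\dR}^{L_Y}$ in the link metric, exploiting Cheeger's discreteness theorem and its inductive analogues developed in the preceding sections. With that in hand, the concatenated homotopy above goes through without further difficulty.
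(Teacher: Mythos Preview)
Your argument for part 1) matches the paper's: both use that rescaling the link metric $g_{L_Y}\mapsto \epsilon^2 g_{L_Y}$ multiplies the nonzero spectrum of $\eth^{L_Y}$ by $\epsilon^{-1}$, and both carry this through the inductive construction of Proposition~\ref{thm:metric}, scaling at the deepest stratum first so as not to disturb adaptedness already achieved at lower depth.

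For part 2) the paper takes a different route. Rather than a convex combination plus spectral-continuity argument, it simply retraces the inductive proof of Proposition~\ref{prop:homotopymet}: reduce from depth $k$ to depth $k-1$ via the doubling construction, apply the inductive hypothesis there, and at each stage rescale the link metric (as in part 1) to keep the homotopy inside the adapted class. This sidesteps the spectral-continuity issue entirely, because the homotopy is built stratum by stratum from the start rather than imposed globally and then corrected. Your approach is more direct in spirit but, as you correctly flag, requires continuous dependence of the discrete spectrum of $\eth^{L_Y}$ on an iterated-edge metric parameter, which is not immediate on a singular link (domains of self-adjoint extensions could in principle jump). Folding that into the induction is reasonable and would work, but it is extra analytic input; the paper's method is cheaper precisely because it recycles the already-proved path-connectedness of rigid iterated edge metrics. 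One further point of care in your version: ``uniformly rescaling all link metrics by $t^2$'' needs to be made precise in the iterated setting, since scaling $g_{L_Y}$ globally also rescales the internal cone variables $dr_j$, and you should check that the result is still of the form required by Definition~\ref{def:metric} with respect to the fixed control data.
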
 
\begin{proof} 1). 
Observe that condition $a)$ can be arranged to hold along a given stratum by scaling the metric on $Z$. To check that this can be done 
coherently for all links in the Witt space $\wh{X}$, one must retrace the proof of Theorem \ref{thm:metric} concerning the existence of 
rigid iterated edge metrics. Following the inductive step there, we see that we can scale the metric on the link of the highest depth stratum 
so that a) is satisfied without disturbing the corresponding property for all the links of lower depth strata. 

\noindent 2). Retrace the proof of Proposition \ref{prop:homotopymet} along the lines of the previous proof.
\end{proof}
\begin{lemma}\label{lem:Bessel}
Let $a \in (0,1)$ and assume the conditions \eqref{Ass2} and that Theorem \ref{MT1} has been proven for $Z$. Then $N(P_a)$ acting  on
$$
s^{-1/2} L^2_{\iie}(Z \times T_qY^+, \Iielaga (Z \times T_qY^+) )
$$ 
is injective on its maximal domain.
\end{lemma}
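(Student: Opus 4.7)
The plan is to follow the Mazzeo edge-calculus strategy \cite{Mazzeo:edge} for analyzing the normal operator of an edge operator, adapted to the fact that the fibre $Z$ is itself a Witt space rather than a closed manifold. First I would Fourier-transform in the flat variable $u\in\bbR^{b}\simeq T_qY$; by \eqref{NormalPa}, $N_q(P_a)$ depends on $u$ only through $s\,\eth_{\dR}^{\bbR^{b}}$, so this reduces injectivity to the injectivity, for almost every $\eta\in\bbR^{b}$, of the fibre-wise operator
\begin{equation*}
\widehat N_\eta = \begin{pmatrix} \eth_{\dR}^{Z} + s\,\widehat{\eth}_{\dR}^{\bbR^{b}}(\eta) & -s\pa_s - (f_0-\bN+a+\tfrac12) \\ s\pa_s + \bN + a +\tfrac12 & - \eth_{\dR}^{Z} - s\,\widehat{\eth}_{\dR}^{\bbR^{b}}(\eta) \end{pmatrix}
\end{equation*}
on $s^{-1/2}L^2_{\iie}(Z\times\bbR^{+}_{s};\,\Iielaga(Z\times\bbR^{+}))$. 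The operator is homogeneous of degree zero under the joint dilation $(s,\eta)\mapsto(s/t,t\eta)$, so after conjugation by the natural unitary only the two cases $\eta=0$ and $|\eta|=1$ need be treated.

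For $|\eta|=1$ I would square $\widehat N_\eta$: the off-diagonal product yields $s^{2}|\eta|^{2}$ plus lower-order terms, and the remaining diagonal pieces are $(\eth_{\dR}^{Z})^{2}$ plus a nonnegative radial Bessel part. Thus $\widehat N_\eta^{2}$ is coercive as $s\to\infty$, bounded below by $s^{2}$ on the identity. A weighted energy estimate with respect to the measure $s^{f_0}\,ds\,\mathrm{dvol}_{g_Z}$, combined with the inductive hypothesis that $\eth_{\dR}^{Z}$ is essentially self-adjoint with discrete spectrum (Theorem~\ref{MT1} on $Z$) and elliptic regularity, forces super-polynomial decay at infinity of any kernel element; the Bessel structure at $s=0$ then rules out nontrivial $L^{2}$ solutions on the radial half-line, yielding the vanishing.

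For $\eta=0$ I would separate variables using the spectral decomposition of $\eth_{\dR}^{Z}$, available inductively. On a joint eigenspace for eigenvalue $\lambda$ and vertical degree $k$, $\widehat N_0$ reduces to a $2\times 2$ Euler system whose indicial roots are the solutions $\gamma$ of
\begin{equation*}
(\gamma+\alpha_k)(\gamma+\beta_k)=\lambda^{2},\qquad \alpha_k=f_0-k+a+\tfrac12,\ \ \beta_k=k+a+\tfrac12.
\end{equation*}
When $\lambda\neq 0$, assumption \eqref{Ass2}(a) gives $\lambda^{2}\geq 1$, so the two roots are real and widely separated: exactly one lies above the $L^{2}$-threshold $\gamma>-1-f_0/2$ at $s=0$ while the other lies above the symmetric $L^{2}$-threshold at $s=\infty$, and they cannot both be satisfied by the same solution; hence no $L^{2}$ kernel in that sector.

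The main obstacle is the harmonic sector $\lambda=0$, where the indicial roots collapse to $\gamma=-\alpha_k$ and $\gamma=-\beta_k$. In the middle vertical degree $k=f_0/2$ both roots could fall in the $L^{2}$ window, but the Witt assumption \eqref{Ass2}(b) makes this sector empty, so the question does not arise. In every other degree, integrality forces $|k-f_0/2|\geq\tfrac12$, and a direct comparison with the window $a-\tfrac12 < k-f_0/2 < \tfrac12-a$ (using $a\in(0,1)$) shows that exactly one of $-\alpha_k$, $-\beta_k$ fails the $L^{2}$ threshold at $0$ while the other fails it at $\infty$, so no joint $L^{2}$ solution exists. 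Combining the three cases gives injectivity of $N_q(P_a)$ on its maximal domain.
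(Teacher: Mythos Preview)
Your broad strategy---Fourier transform in $u$, rescale, square, reduce to Bessel-type equations---is exactly what the paper does, but the execution has two concrete gaps.

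First, squaring $\widehat N_\eta$ does \emph{not} give a block-diagonal operator up to lower order. After the conjugation the paper uses, one obtains
\[
\begin{pmatrix} \cK_1 & -2d^Z \\ -2\delta^Z & \cK_{-1} \end{pmatrix},
\qquad
\cK_\ell = \Delta^Z + s^2\Delta^{\bbR^b} - (s\pa_s)^2 + \bigl(\bN - \tfrac{f_0}{2} + \tfrac{\ell}{2}\bigr)^2,
\]
and the off-diagonal entries $-2d^Z$, $-2\delta^Z$ are zeroth order in $s$ and of the same differential order on $Z$ as the diagonal. They do not go away as ``lower-order terms.'' Relatedly, $\eth_{\dR}^Z$ and $\bN$ do not commute, so there is no joint eigenbasis of the kind you invoke. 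The paper decouples the system using the \emph{strong Kodaira decomposition} $L^2\Omega^*(Z)=\cH\oplus\operatorname{Im} d^Z\oplus\operatorname{Im}\delta^Z$, which is available precisely because of the inductive hypothesis (closed range of $\eth_{\dR}^Z$). Writing $\alpha=d^Z\alpha_1+\delta^Z\alpha_2+\alpha_3$ and similarly for $\beta$ separates the coupled system into individual Bessel problems for the components. This step is essential and is missing from your plan.

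Second, you have placed the hypotheses \eqref{Ass2}(a),(b) in the wrong case. For $\eta=0$ the solutions are pure powers $s^{\gamma}$, and \emph{no} power lies in $s^{-1/2}L^2(\bbR^+,s^{f_0}\,ds)$ simultaneously near $0$ and near $\infty$; injectivity there is automatic and uses nothing. The content is entirely in $\eta\neq 0$: the solution decaying at infinity is $K_\nu(s|\eta|)$ with $\nu=\sqrt{\lambda+(\bN-\tfrac{f_0}{2}+\tfrac{\ell}{2})^2}$, and $K_\nu(t)\sim t^{-\nu}$ as $t\to 0$. Excluding $K_\nu$ from the weighted $L^2$ near $0$ forces $\nu+a\geq 1$, i.e.\ $\lambda+(\bN-\tfrac{f_0}{2}+\tfrac{\ell}{2})^2\geq 1$ for all $a>0$. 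This is where (a) (for $\lambda\neq 0$, giving $\lambda\geq 1$) and (b) (for $\lambda=0$, ruling out $\bN=f_0/2$) actually enter. Your sentence ``the Bessel structure at $s=0$ then rules out nontrivial $L^2$ solutions'' is precisely the step that needs this argument spelled out, with the Kodaira decomposition already in hand to identify the correct $\nu$'s.
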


\begin{proof}
Define $R = s^{-1/2} N_q(P_0) s^{-1/2}$ (this is effectively $N_q(\eth_{\dR})$), so that
\begin{equation*}
	R =
	 \begin{pmatrix}
	\tfrac1s \eth^Z_{\dR} + \eth^{\bbR^b}_{\dR} & - \pa_s - \tfrac1s(f_0-\bN) \\
	\pa_s + \tfrac1s \bN & -\tfrac1s \eth^Z_{\dR}  - \eth^{\bbR^b}_{\dR} 
	\end{pmatrix}
\end{equation*}
Since $N_q(P_a) = s^{1/2-a} R s^{1/2 +a},$ if
$u$ solves $N_q(P_a)u =0$ then 
\begin{equation*}
	v = s^{\frac {f_0}2 + a}u
\end{equation*}
solves $R s^{-\tfrac{f_0-1}2}v=0.$ 
Clearly  $u \in s^{-1/2}L^2_{\iie}(T_q)$ precisely when we have $v \in s^{\frac{f_0-1}2 +a}L^2_{\iie}(T_q),$ so it suffices to solve
\begin{equation*}
	R s^{-\tfrac{f_0-1}2}v=0, \quad 
	v \in 
	s^{\frac{f_0-1}2 +a}L^2_{\iie}(s^f \; ds dy dz, \Iie\Omega)
	= s^{-\frac{1}2 +a}L^2_{\iie}(ds dy dz, \Iie\Omega).
\end{equation*}
The advantage of this formulation is that $v$ is also in the null space of
\begin{equation}\label{Nov.6.2010.1}
\begin{gathered}
	s^{\tfrac{f_0-1}2} (sR) s^{-\tfrac{f_0-1}2}
	= \begin{pmatrix}
	\eth_{\dR}^Z + s \eth_{\dR}^{\bbR^b} & -s\pa_s + \bN - \tfrac {f_0}2 -\tfrac12 \\
	s\pa_s + \bN - \tfrac {f_0}2 + \tfrac12 & - \eth_{\dR}^Z - s \eth_{\dR}^{\bbR^b}
	\end{pmatrix} \\
	\text{ and }
	s^{\tfrac{f_0-1}2} (s^2R^2) s^{-\tfrac{f_0-1}2}
	= \begin{pmatrix}
	\cK_1 & -2d^Z \\ -2\delta^Z & \cK_{-1}
	\end{pmatrix}, \\
	\Mwhere \cK_\ell =  \Delta^Z + s^2\Delta^{\bbR^b} - (s\pa_s)^2 
		+ (\bN -\tfrac {f_0}2 + \tfrac{\ell}2)^2.
\end{gathered}
\end{equation}

To analyze these systems, we point out that $L^2$ forms on $Z$ satisfy a strong Kodaira decomposition, i.e., every $L^2$ form on $Z$ can be written in a unique way as a sum of a form in the image of $d^{Z}$, a form in the image of $\delta^{Z}$ and a form in the joint kernel of $d^{Z}$ and $\delta^{Z}$. 
As explained in \cite[\S2]{Hunsicker-Mazzeo} {\em weak} Kodaira decompositions are a general feature of Hilbert complexes.
Inductively, we are assuming that $d+\delta$ is essentially self-adjoint and that its closed extension
has  closed range;  this implies, see \cite[Proposition 4.6]{Hunsicker-Mazzeo}, that $d$ has a unique  
closed extension and that this extension
 has closed range (for instance, because $d$ coincides with $d+\delta$ on $(\ker d)^\perp$).
Hence the weak Kodaira decomposition is a strong Kodaira decomposition.

The upshot is that if $v = (\alpha, \beta),$ then we can write 
\begin{equation*}
	\alpha = d^Z\alpha_1 + \delta^Z \alpha_2 + \alpha_3,
	\ \alpha_1 \in (\ker d^Z)^\perp, \alpha_2 \in (\ker \delta^Z)^\perp, \ \alpha_3 \in \ker d^Z \cap \ker \delta^Z
\end{equation*}
and similarly for $\beta.$ 

Inserting this decomposition into 
$s^{\frac{f_0-1}2}(sR)s^{-\frac{f_0-1}2}v=0$ 
and using
\begin{equation*}
	d^Z \bN = (\bN - 1) d^Z, \quad
	\delta^Z \bN = (\bN + 1) \delta^Z
\end{equation*}
yields
\begin{equation*}
\begin{gathered}
	d^Z( \delta^Z \alpha_2 + s \eth_{\dR}^{\bbR^b}\alpha_1 
	- s\pa_s\beta_1 + (\bN-\tfrac {f_0}2 + \tfrac12) \beta_1) \\
	+ \delta^Z( d^Z \alpha_1 + s\eth_{\dR}^{\bbR^b}\alpha_2
	-s\pa_s\beta_2 + (\bN-\tfrac {f_0}2 - \tfrac32)\beta_2) \\
	+s\eth_{\dR}^{\bbR^b}\alpha_3 - s\pa_s \beta_3 
	+ (\bN -\tfrac{f_0}2-\tfrac12)\beta_3 =0
	\\
	d^Z(-\delta^Z\beta_2 - s\pa_{\dR}^{\bbR^b}\beta_1 
	+ s\pa_s \alpha_1 + (\bN - \tfrac {f_0}2 +\tfrac32)\alpha_1) \\
	+ \delta^Z( - d^Z\beta_1 - s\eth_{\dR}^{\bbR^b}\beta_2
	+s\pa_s \alpha_2 + (\bN - \tfrac {f_0}2 - \tfrac12)\alpha_2) \\
	-s\eth_{\dR}^{\bbR^b}\beta_3 + s\pa_s\alpha_3 
	+ (\bN - \tfrac {f_0}2 + \tfrac12)\alpha_3 =0
\end{gathered}
\end{equation*}
and hence another application of the Kodaira decomposition shows that
\begin{gather}
\label{FromR1}
\begin{cases}
	\delta^Z \alpha_2 + s \eth_{\dR}^{\bbR^b}\alpha_1 
	- s\pa_s\beta_1 + (\bN-\tfrac {f_0}2 + \tfrac12) \beta_1=0 \\
	 d^Z \alpha_1 + s\eth_{\dR}^{\bbR^b}\alpha_2
	-s\pa_s\beta_2 + (\bN-\tfrac {f_0}2 - \tfrac32)\beta_2 = 0\\
	-\delta^Z\beta_2 - s\pa_{\dR}^{\bbR^b}\beta_1 
	+ s\pa_s \alpha_1 + (\bN - \tfrac {f_0}2 +\tfrac32)\alpha_1=0 \\
	- d^Z\beta_1 - s\eth_{\dR}^{\bbR^b}\beta_2
	+s\pa_s \alpha_2 + (\bN - \tfrac {f_0}2 - \tfrac12)\alpha_2 = 0
\end{cases} \\
\label{FromR1'}
\begin{cases}
	s\eth_{\dR}^{\bbR^b}\alpha_3 - s\pa_s \beta_3 
	+ (\bN - \tfrac {f_0}2 - \tfrac12) \beta_3 =0 \\
	-s\eth_{\dR}^{\bbR^b}\beta_3 + s\pa_s\alpha_3 
	+ (\bN - \tfrac {f_0}2 + \tfrac12)\alpha_3 =0.
\end{cases}
\end{gather}

We also insert the Kodaira decomposition of $v$ into 
$s^{\frac{f_0-1}2}(s^2R^2)s^{-\frac{f_0-1}2}v,$ and since
$d^Z \cK_{\ell}  = \cK_{\ell-2} d^Z,$ $\delta^Z \cK_{\ell} = \cK_{\ell+ 2} \delta^Z,$ this yields
\begin{gather*}
	d^Z(\cK_3\alpha_1 - 2 \delta^Z\beta_2) + \delta^Z(\cK_{-1}\alpha_2) 
		+ \cK_1\alpha_3 =0, \\
	d^Z(\cK_1\beta_1) + \delta^Z(\cK_{-3}\beta_2 - 2d^Z\alpha_1)
		\cK_{-1}\beta_3 = 0.
\end{gather*}
Once again another application of the Kodaira decomposition shows that
\begin{gather}
\label{FromR2a}
	\cK_3 \alpha_1 = 2 \delta^Z\beta_2 \\
\label{FromR2b}
	2d^Z \alpha_1 = \cK_{-3}\beta_2 \\
	\cK_{-1}\alpha_2 
	= 
	\cK_1\alpha_3 
	=
	\cK_1\beta_1 
	=
	\cK_{-1}\beta_3 = 0.
\label{FromR2}\end{gather}
We are looking for solutions of \eqref{FromR1}-\eqref{FromR2} in 
$s^{-\frac{1}2 +a}L^2_{\iie}(ds dy dz, \Iie\Omega).$

Let us find the null space of $\cK_{\ell}.$
Conjugating by the Fourier transform in $\bbR^b$ (with dual variable $\eta$ to $y$) and introducing the new variables $t = s|\eta|,$ $\hat \eta = \frac{\eta}{|\eta|},$ takes $\cK_{\ell}$ to
\begin{equation*}
	\hat\cK_{\ell} = \Delta^Z + t^2 - (t\pa_t)^2 +(\bN-\tfrac {f_0}2 + \tfrac \ell2)^2. 
\end{equation*}
By assumption $\Delta^Z$ has discrete spectrum
and, since $\Delta^Z$ commutes with $\hat \cK_{\ell},$ we can restrict to the $\lambda$ eigenspace of $\Delta^Z,$
\begin{equation*}
	\hat\cK_{\ell, \lambda} = \lambda + t^2 - (t\pa_t)^2 + (\bN - \tfrac {f_0}2 + \tfrac\ell2)^2.
\end{equation*}
The null space of this operator can be described directly in terms of Bessel functions of an imaginary argument 
\begin{equation*}
	A \, I_{\nu}(t)  +  B \, K_\nu(t), \quad \nu = \sqrt{ \lambda + (\bN - \tfrac {f_0}2 + \tfrac\ell2)^2},\ 
	t \in \bbR^+
\end{equation*}
The functions $I_\nu$ increase exponentially with $t,$ so to stay in a (polynomially weighted) $L^2$ space, we must have $A=0.$
The functions $K_{\nu}$ decrease exponentially with $t$ as $t \to \infty,$ while
\begin{equation*}
	K_{\nu}(t) \sim \begin{cases} t^{-|\nu|} & \Mif \nu \neq 0 \\
	-\log t & \Mif \nu =0 \end{cases}
	\quad \text{ as } t \to 0.
\end{equation*}
We are interested in avoiding $K_\nu \in t^{a-\tfrac12} L^2(\; dt),$ which means we need to have
\begin{equation*}
\begin{gathered}
	1 \leq |\nu|+a = a +\sqrt{\lambda + (\bN - \tfrac {f_0}2 + \tfrac\ell2)^2},
	\text{ for all } a>0 \\
	\text{ hence } 
	1 \leq \lambda + (\bN - \tfrac {f_0}2 + \tfrac\ell2)^2
\end{gathered}
\end{equation*}

If $\lambda \neq 0,$ then our assumption is that $\lambda \geq 1,$ so this is automatic.
If $\lambda=0$ then we are looking for elements in the null space of $\cK_{\ell}$ that are also in the null space of $\Delta^Z,$ so this corresponds to $\alpha_3$ and $\beta_3.$
From \eqref{FromR1'} we see that $\alpha_3 =0$ if and only if $\beta_3 =0:$ indeed if $\alpha_3 =0$ then $\beta_3 = s^{\bN - \tfrac {f_0}2 - \tfrac12}F$ with $F$ independent of $s,$ but this is never in a polynomially weighted $L^2$ in $s,$ and similarly if $\beta_3=0.$ The same reasoning shows that $\eth^{\bbR^b}_{\dR}\alpha_3 = 0$ if and only if $\beta_3 =0$ and viceversa.
Thus, since $\alpha_3$ is in the null space of $\cK_1$ and $\beta_3$ is in the null space of $\cK_{-1},$ to avoid elements of the null space with $\lambda =0$ we need to have either
\begin{equation*}
	1 \leq |\bN - \tfrac {f_0}2 + \tfrac12|
	\Mor
	1 \leq |\bN - \tfrac {f_0}2 - \tfrac12|.
\end{equation*}
This is automatic unless $\bN = \tfrac {f_0}2,$ but this case does not happen since by assumption there are no middle degree harmonic forms on $Z.$

This implies, from \eqref{FromR2}, that $\alpha_2,$ $\alpha_3,$ $\beta_1,$ and $\beta_3$ do not contribute to the null space of $N_q(P_a)$ for $a>0,$ and we only need
to rule out $\alpha_1$ and $\beta_2.$
First note that if $\alpha_1 =0,$ then from \eqref{FromR2b} $\cK_{-3}\beta_2=0,$ but since $\cK_{-3}$ does not have non-zero null space in $s^{-\frac{1}2 +a}L^2_{\iie}(ds dy dz, \Iie\Omega),$ this implies $\beta_2 =0.$ Similarly $\beta_2=0$ implies $\alpha_1 =0.$

Next, substituting \eqref{FromR2b} into the second equation of \eqref{FromR1} we have
\begin{equation*}
	\cK_{-3}\beta_2 + 2s\eth_{\dR}^{\bbR^b}\alpha_2 -2s\pa_s\beta_2
	+ 2(\bN - \tfrac {f_0}2 - \tfrac32)\beta_2 = 0
\end{equation*}
Applying $\cK_{-1}s^{-1}$ kills the second term by \eqref{FromR2}, so
\begin{equation*}
	\cK_{-1}s^{-1}(\cK_{-3} -2s\pa_s + 2(\bN-\tfrac {f_0}2 -\tfrac32) )\beta_2 =0,
\end{equation*}
but 
\begin{equation*}
\begin{gathered}
	\cK_{-3} -2s\pa_s + 2(\bN-\tfrac {f_0}2 -\tfrac32) \\
	= \Delta^Z + s^2\Delta^{\bbR^b} -(s\pa_s)^2 + (\bN-\tfrac {f_0}2 - \tfrac32)^2
	-2s\pa_s + 2(\bN-\tfrac {f_0}2 -\tfrac32)
	= s^{-1} \cK_{-1} s,
\end{gathered}
\end{equation*}
so this says $\cK_{-1} (s^{-2} \cK_{-1} s)\beta_2 =0.$
Since we know that $\cK_{-1}$ does not have non-zero null space in
$s^{-\frac{1}2 +a}L^2_{\iie}(ds dy dz, \Iie\Omega),$ we must have
\begin{equation*}
	s^{-2} \cK_{-1} s \beta_2 =0.
\end{equation*}
Similarly substituting \eqref{FromR2a} into the third equation of \eqref{FromR1} and then applying $\cK_1s^{-1}$ yields $ \cK_1( s^{-2} \cK_1 s \alpha_1)=0$ and hence
\begin{equation*}
	s^{-2} \cK_1 s \alpha_1 = 0.
\end{equation*}

By the reasoning above, the projection onto the $\lambda$ eigenspace of $\Delta^Z$ of $\beta_2$ is (after changing variables to $t$ and $\hat\eta$) of the form 
\begin{equation}
	\cP_{\lambda}\hat \beta_2 
	= C \frac{|\eta|}t K_{\sqrt{\lambda + (\bN-\tfrac {f_0}2 - \tfrac 12)^2}}(t)
\label{Temp1}\end{equation}
and the corresponding projection of $\alpha_1$ is of the form
\begin{equation}
	\cP_{\lambda}\hat \alpha_1 
	= C' \frac{|\eta|}t K_{\sqrt{\lambda + (\bN-\tfrac {f_0}2 + \tfrac 12)^2}}(t).
\label{Temp2}\end{equation}
Thus to avoid elements of the null space we need to have either
\begin{equation*}
	1 \leq 1+a+\sqrt{\lambda + (\bN-\tfrac {f_0}2 + \tfrac 12)^2}
	\Mor
	1 \leq 1+a+\sqrt{\lambda + (\bN-\tfrac {f_0}2 - \tfrac 12)^2}
\end{equation*}
and these are automatic for all $a \geq -1.$

\end{proof}

\subsection{Indicial roots}$ $\newline
Another model operator of $P_a$ is its {\bf indicial family}, defined using the action of $P_a$ on polyhomogeneous expansions.
The indicial family is a one parameter family of operators on $Y$, $I(P_a;\zeta)$ defined by
\begin{equation*}
	P_a ( x^\zeta f) = x^\zeta I(P_a; \zeta)f\rest{x=0} + \cO(x^{\zeta+1}).
\end{equation*}
The base variables at the boundary enter into the indicial family as parameters, so we can speak of the indicial family at the point $q \in Y$ by restricting not just to $x =0$ but to the fibre over $q$. This refinement of the indicial family is denoted by $I_q(P_a;\zeta)$; from \eqref{DNearBdy} it is given by
\begin{equation*}
	I_q(P_a; \zeta) 
	= \begin{pmatrix}
		\eth_{\dR}^{Z}   & -\zeta - (f_0-\bN+a+1/2) \\
		\zeta + \bN + a +1/2 & - \eth_{\dR}^{Z}
	\end{pmatrix}, 
\end{equation*}
which coincides with the indicial family of the normal operator at $q \in Y$.
The values of $\zeta$ for which $I_q(P_a;\zeta)$ fails to be invertible (on $L^2_{\iie}(Z)$) are known as the {\bf indicial roots} of $P_a$ at $q$, or the 
boundary spectrum of $P_a$ at $q$, 
\begin{equation*}
	\spec_b(N_q( P_a )).
\end{equation*}
As we show below, this set depends on $\spec \eth_{\dR}^Z$, and hence relies on the inductive hypothesis on $Z$. 

An equivalent model of $P_a$ is the {\bf indicial operator}: 
\begin{equation*}
	I_q(P_a) 
	= \begin{pmatrix}
		\eth_{\dR}^{Z}   & -t\pa_t - (f_0-\bN+a+1/2) \\
		t\pa_t + \bN + a +1/2 & - \eth_{\dR}^{Z}.
	\end{pmatrix}
\end{equation*}
It is related to the indicial family by the Mellin transform,
\begin{equation*}
	\cM( I_q(P_a) u)(\zeta)
	= I_q(P_a; -i\zeta) \cM(u)(\zeta).
\end{equation*}
Recall that this transform is defined, e.g., for $u \in \CIc( \bbR^+ )$ by
\begin{equation}\label{DefMellin}
	\cM u(\zeta) = \int_0^\infty u(x) x^{i\zeta - 1} \; dx,
\end{equation}
and extends to an isomorphism between weighted spaces
\begin{equation}\label{MellinIso}
	x^\alpha L^2 \left(\bbR^+, \frac{dx}x \right) \xrightarrow{\cong} L^2\lrpar{ \{ \eta = \alpha \}; d\xi }
\end{equation}
where $\eta = \Im\zeta$ and $\xi = \Re\zeta$.
The inverse of the Mellin transform as a map \eqref{MellinIso} is given by
\begin{equation*}
	\cM^{-1}(v)(x) = \frac1{2\pi} \int_{\eta = \alpha} v(\zeta) x^{-i\zeta} \; d\xi.
\end{equation*}

\begin{lemma}\label{lem:Indicial}
The indicial roots of $P_a$ are contained in the union of
\begin{equation}\label{IndSets}\tag{6.16}
\begin{gathered}
	\bigcup_{k \neq \tfrac {f_0}2}
	 \lrbrac{ - \tfrac {f_0}2 - a \pm \abs{ k -\tfrac {f_0}2 \pm \tfrac 12 } }, 
	\bigcup_{\lambda_k \neq 0}
	\lrbrac{ -  \tfrac {f_0}2 -a \pm \sqrt{\lambda_k+(k-\tfrac {f_0}2 + \tfrac {\ell}2)^2 } }, \\
	\text{ and }
	\bigcup_{\lambda_k \neq 0}
	\lrbrac{ 1 -  \tfrac {f_0}2 -a \pm \sqrt{\lambda_k+(k-\tfrac {f_0}2 + \tfrac {\ell'}2)^2 } } 
\end{gathered}
\end{equation}
where $k \in \{0, \ldots, f_0 \},$  $\lambda_k$ is in the spectrum of $\Delta^Z$ acting on $k$-forms, $\ell \in \{\pm1, \pm3\}$ and $\ell' \in \{ \pm 1\}.$


The indicial operator of $P_a$ has a bounded inverse on the space $t^{-1/2}L^2_{\iie}(Z \times \bbR^+_t)$ for all $a \in (0,1)$. 
\end{lemma}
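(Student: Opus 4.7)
My plan is to combine the Kodaira decomposition on $Z$ with the Mellin transform. First, I would reduce the equation $I_q(P_a;\zeta)v = 0$ to spectral conditions on each $\Delta^Z$-eigenspace, extracting an explicit list of indicial roots. Then I would apply the Mellin transform to translate the desired bounded invertibility of $I_q(P_a)$ into the statement that this list is disjoint from a specific vertical line in the $\zeta$-plane.

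For the first part, write $v = (\alpha, \beta)$ and expand $I_q(P_a;\zeta)v = 0$ as
\begin{equation*}
	\eth^Z_{\dR}\alpha = A(\zeta)\beta, \qquad B(\zeta)\alpha = \eth^Z_{\dR}\beta,
\end{equation*}
with $A(\zeta) = \zeta + f_0 - \bN + a + 1/2$ and $B(\zeta) = \zeta + \bN + a + 1/2$ scalar on each vertical form-degree subspace. The inductive hypothesis Theorem \ref{MT1} applied to $Z$ yields essential self-adjointness of $\eth^Z_{\dR}$ together with closedness of range, hence a strong Kodaira decomposition of $L^2$-forms on $Z$ into exact, coexact, and harmonic parts. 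Applied to both $\alpha$ and $\beta$, the system splits into three independent pieces: a harmonic piece requiring $A_k(\zeta) = 0$ or $B_k(\zeta) = 0$ for some $k$ with $\cH^k(Z) \neq 0$, which after using the Witt hypothesis via Theorem \ref{theo:cheeger} to exclude $k = f_0/2$ yields the first set of \eqref{IndSets}; and two coupled pairs relating an exact $k$-form to a coexact $(k \pm 1)$-form. Each such pair is reduced, by applying $d^Z$ or $\delta^Z$ to either equation, to the scalar spectral condition $A_{k\pm 1}(\zeta) B_k(\zeta) = \lambda_k$ with $\lambda_k$ a nonzero eigenvalue of $\Delta^Z$; solving the resulting quadratic equation in $\zeta$ produces the remaining two sets in \eqref{IndSets}.

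For the invertibility of the indicial operator, the identity $\cM(I_q(P_a) u)(\zeta) = I_q(P_a; -i\zeta) \cM(u)(\zeta)$ combined with the Mellin isomorphism \eqref{MellinIso} translates invertibility of $I_q(P_a)$ on $t^{-1/2} L^2_{\iie}(Z \times \bbR^+_t)$ into the uniform invertibility of $I_q(P_a;\zeta)$ along a specific vertical line in the $\zeta$-plane, namely the one whose real part is fixed by the Mellin weight of $t^{-1/2}L^2_{\iie}$. Since all indicial roots are real (the discriminants in the spectral quadratics are nonnegative by self-adjointness of $\eth^Z_{\dR}$), it suffices to show that this vertical line avoids the real set \eqref{IndSets}. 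Using assumption \eqref{Ass2}(a), which forces every nonzero eigenvalue of $\Delta^Z$ to satisfy $\lambda_k \geq 1$, combined with the open range $a \in (0,1)$, this reduces to elementary inequalities. The main obstacle is a careful case analysis of the potential near-misses, coming from harmonic forms in degrees adjacent to $f_0/2$ and from eigenvalues $\lambda_k$ that saturate the spectral gap; the Witt condition and the strict strict inequality $a < 1$ conspire to provide just enough room. Finally, the uniform control on $I_q(P_a;\zeta)^{-1}$ as $|\Im \zeta| \to \infty$, needed for a genuine bounded inverse, follows from the ellipticity of $\eth^Z_{\dR}$.
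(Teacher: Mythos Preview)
Your approach is essentially correct and close in spirit to the paper's, but the two diverge on one point worth flagging.

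For the identification of the indicial roots, you and the paper do the same thing: a Kodaira decomposition on $Z$ (justified by the inductive hypothesis that $\eth^Z_{\dR}$ is essentially self-adjoint with closed range) breaks the equation $I_q(P_a;\zeta)v=0$ into harmonic and exact/coexact pieces, and the latter reduce to quadratics in $\zeta$ indexed by the eigenvalues of $\Delta^Z$. The paper presents this by reference to the Bessel-function computation of Lemma~\ref{lem:Bessel}, replacing $I_\nu,K_\nu$ by $t^{\pm\nu}$; you redo it directly for the indicial family. Same content.

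Where the paper does something you do not: to pass from ``$\zeta$ is not an indicial root'' to ``$I_q(P_a;\zeta)$ is invertible with bounded inverse,'' you implicitly rely on the spectral decomposition reducing the problem to finite-dimensional blocks, and then wave at ellipticity for the uniform bound. The paper instead writes $I_q(P_a;\zeta) = A + B$ with $A = \eth^Z_{\dR}\begin{psmallmatrix}\Id&0\\0&-\Id\end{psmallmatrix}$ and $B$ bounded, invokes the inductive hypothesis to say $\cD(A)$ embeds compactly in $L^2_{\iie}(Z)$ so that $B$ is $A$-relatively compact, and then computes the adjoint explicitly:
\[
I_q(P_a;\zeta)^* = I_q\bigl(P_a; -(\zeta+f_0+2a+1)\bigr).
\]
Since the set \eqref{IndSets} is invariant under $\zeta\mapsto -(\zeta+f_0+2a+1)$, injectivity of $I_q(P_a;\zeta)$ away from the indicial roots immediately gives injectivity of its adjoint as well, hence genuine invertibility with inverse $Q(\zeta):L^2_{\iie}(Z)\to\cD(A)$. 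This is cleaner than your route because it does not require tracking operator norms across infinitely many eigenspaces, and it makes the role of the compact domain embedding (part of the inductive package) explicit. Your approach can be made rigorous, but the adjoint symmetry is the idea you are missing and is worth knowing.
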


\begin{proof}
An analysis similar to -- but simpler than -- that above applies to the equation $I_q(P_a)u=0$. Indeed, it suffices to replace $(t\pa_t)^2 - t^2$ in the `equations to solve' by $(t\pa_t)^2$. 
Since the solutions to $(t\pa_t)^2u = \nu^2u$ are linear combinations of $t^{\nu}$ and $t^{-\nu}$, the solutions of $I_q(P_a)u=0$ are obtained from the solutions to $N_q(P_a)u =0$ by replacing each $I_v(t)$ by $t^\nu$ and each $K_\nu(t)$ by $t^{-\nu}$.
Both of these contribute indicial roots, since for the indicial family we do not impose growth restrictions.

For the indicial operator, we are imposing growth restrictions and, as before, asking for solutions to be in $t^{-1/2}L^2(t^f \; dt)$ excludes 
those involving $t^\nu$, hence conditions ($a$) and ($b$) show that there are no solutions involving $t^{-\nu}$ for $a>0$.
Thus the proof of Lemma \ref{lem:Bessel} shows that the indicial operator $I_q(P_a)$ is injective on $t^{-1/2}L^2(Z \times \bbR^+)$ as long as $a>0$.

Similarly, the proof of Lemma \ref{lem:Bessel} shows that if there is a non-zero solution to
$I_q(P_a;\zeta)u=0$ then $\zeta$ must be in one of the sets in \eqref{IndSets}.
An advantage of the indicial family is that we can bring to bear our inductive hypotheses about $\eth_{dR}^Z$.
Indeed, decompose $I_q(P_a)$ as 
\begin{multline*}
	I_q( P_a )(\zeta)
	= \begin{pmatrix}
		\eth_{\dR}^{Z}   & -\zeta - (f_0-\bN+a+1/2) \\
		\zeta + \bN + a+1/2 & - \eth_{\dR}^{Z}
	\end{pmatrix} \\
	= \eth_{\dR}^{Z}
	\begin{pmatrix} \Id & 0 \\ 
	0 & - \Id \end{pmatrix}
	+ \begin{pmatrix}
		0   & -\zeta - (f_0-\bN+a+1/2) \\
		\zeta + \bN + a+1/2 & 0
	\end{pmatrix}
	= A + B.
\end{multline*}
Inductively we know that $A$ is essentially self-adjoint, has closed range, and its domain, $\cD(A)$, includes compactly into $L^2_{\iie}(Z)$.
It follows that the operator $B : \cD(A) \to L^2_{\iie}(Z)$ (where $\cD(A)$ is endowed with the graph norm) is compact, i.e., $B$ is relatively compact with respect to $A$, and so
$I_q(P_a;\zeta)$ has a unique closed extension, has closed range, and its domain is also $\cD(A)$.

Since $\eth_{\dR}^Z$ is essentially self-adjoint, the adjoint of $I_q( P_a )(\zeta)$ on $L^2(Z)$ is
\begin{equation*}
\begin{split}
	I_q( P_a; \zeta)^*
	&= \begin{pmatrix}
		\eth_{\dR}^{Z}   & \zeta + \bN + a+1/2 \\
		-\zeta - f + \bN -a-1/2) & - \eth_{\dR}^{Z}
	\end{pmatrix} \\
	&= I_q(P_a; -(\zeta + f + 2a+1) )
\end{split}
\end{equation*}
Notice that $\zeta$ is in one of the sets in \eqref{IndSets} if and only if $-(\zeta + f + 2a +1)$ is.
Thus we see that if $\zeta$ is not in one of these sets, then $I_q(P_a; \zeta)$ is in fact invertible with bounded inverse. In fact, since the domain of $I_q(P_a; \zeta)$ is $\cD(A)$, its inverse is a compact operator.
This proves that \eqref{IndSets} contains the indicial roots of $N_q(P_a)$.
Denote the inverse of $I_q(P_a; \zeta)$ by 
\begin{equation*}
	Q(\zeta) : L^2_{\iie}(Z) \to \cD(A) \hookrightarrow L^2_{\iie}(Z).
\end{equation*}
We obtain an inverse for $I_q(P_a)$ as an operator on $t^{-1/2}L^2_{\iie}(R^+ \times Z)$ by applying the inverse Mellin transform to $Q(\zeta)$ along the line $\eta = -\frac f2 -1$, which we can do as long as $-\frac f2 -1$ is not an indicial root.
If ($a$) and ($b$) hold, then this is true for all $a \in (0,1)$.
\end{proof}

\subsection{Bijectivity of $N_q( P_a)$} $ $\newline 
We now show that the normal operator $N_q( P_a)$ is a bijection between its maximal domain in $s^{-1/2}L^2_{\iie}(Z \times T_q Y^+)$
and $s^{-1/2}L^2_{\iie}(Z \times T_q Y^+)$ when $0 < a < 1$. 

Observe first that by Lemma~\ref{lem:Bessel}, assuming conditions $a)$ and $b)$, 
this mapping is injective for these values of $a$. Therefore, a simple 
duality argument shows that it suffices to show that it has closed range.  Indeed, 
since $\eth_{\dR}$ is symmetric on $L^2_{\iie}(X; \Iie\Lambda^* X)$, the operator
\begin{equation*}
	D_0 : x^{-1/2}L^2_{\iie}(X;\Iie\Lambda^*X) \to x^{1/2}L^2_{\iie}(X; \Iie\Lambda^*X)
\end{equation*}
coincides with its formal adjoint. It is then straightforward that the formal adjoint of $P_0$ is
\begin{equation*}
	(P_0)^* = x^{-1/2} \eth_{\dR} x^{3/2} : 
	x^{-1/2}L^2_{\iie} (X;\Iie\Lambda^*X) 
	\to 
	x^{-1/2}L^2_{\iie} (X;\Iie\Lambda^*X), 
\end{equation*}
and similarly, 
\begin{equation}\label{PaTranspose}
	(P_a)^*
	= (x^{1/2-a} \eth_{\dR} x^{1/2+a})^*
	= x^{-1/2+a} \eth_{\dR} x^{3/2-a}
	=P_{1-a}.
\end{equation}

\begin{lemma} \label{lem:NClosedRange}
The normal operator $N_q(P_a)$ is bijective as an operator on $s^{-1/2}L^2_{\iie}(Z \times T_qY^+)$ 
acting on its maximal domain, for all $a \in (0,1)$.
\end{lemma}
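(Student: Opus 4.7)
The strategy is to combine the injectivity already established in Lemma~\ref{lem:Bessel} with a duality argument and an explicit construction of the inverse via Fourier and Bessel calculus. Injectivity gives us half of bijectivity; what remains is surjectivity, which we obtain by first showing the range of $N_q(P_a)$ is dense and then showing it is closed.

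Step 1 (density of the range). By \eqref{PaTranspose}, the formal $L^2$-adjoint of $P_a$ on $x^{-1/2}L^2_{\iie}$ is $P_{1-a}$, and this identity descends to the normal operators at $q \in Y$: $N_q(P_a)^* = N_q(P_{1-a})$. Since $a \in (0,1)$ if and only if $1-a \in (0,1)$, Lemma~\ref{lem:Bessel} applies to both exponents, so both $N_q(P_a)$ and its adjoint are injective on their maximal domains in $s^{-1/2}L^2_{\iie}(Z \times T_qY^+)$. By standard functional analysis, the range of $N_q(P_a)$ is therefore dense.

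Step 2 (closed range via Fourier and Bessel). Because $N_q(P_a)$ is translation invariant in the tangent variables $u \in \bbR^b$ to $Y$, apply the partial Fourier transform in $u$ to obtain a family $\hat N_q(P_a)(\eta)$ of operators on $\bbR^+_s \times Z$. For $\eta \neq 0$ rescale $t = s|\eta|$ to get a smooth family of operators on $\bbR^+_t \times Z$ parametrized by $\hat\eta = \eta/|\eta| \in S^{b-1}$; at $|\eta|=0$ this family degenerates to the indicial operator $I_q(P_a)$. Proceeding exactly as in the proof of Lemma~\ref{lem:Bessel}, we decompose a section of $\Iie\Lambda^*$ using the strong Kodaira decomposition on $Z$ (available by the inductive hypothesis), restrict to each $\lambda$-eigenspace of $\Delta^Z$, and explicitly solve the resulting second-order ODE in $t$ by modified Bessel functions $I_\nu$ and $K_\nu$ of imaginary argument, where $\nu$ is determined by $\lambda$ and the vertical form degree. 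Variation of parameters (using the bounded $K_\nu$ at infinity and the bounded $I_\nu$ at the origin) gives an explicit mode-wise Green's function and hence an inverse $G_a(\hat\eta)$; reassembling across modes and over $\hat\eta$ via the inverse Fourier transform produces a candidate right inverse $G_a$.

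Step 3 (boundedness; main obstacle). Closing the range reduces, via Plancherel, to a uniform (in $\hat\eta$) boundedness estimate for these mode-wise Bessel Green's functions from $t^{-1/2}L^2_{\iie}$ to the maximal domain. The main obstacle is controlling the behavior at $t \to 0$: a mode with $|\nu| < 1-a$ would produce a Green's function containing a $K_\nu(t) \sim t^{-|\nu|}$ contribution failing to lie in $s^{-1/2}L^2_{\iie}$, destroying boundedness. Assumption (a), $\spec(\eth_{\dR}^Z) \cap (-1,1) \subseteq \{0\}$, forces $\lambda_k \geq 1$ for all nonzero modes and so gives $|\nu|+a > 1$ there; assumption (b) (vanishing of middle-degree $L^2$-harmonic forms on $Z$) excludes the delicate remaining case $\lambda=0$, $\bN = f_0/2$, exactly the borderline indicial roots listed in \eqref{IndSets}. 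With these exclusions, the reassembled operator $G_a$ is bounded and has image in the maximal domain, which combined with Step~1 and the injectivity of Lemma~\ref{lem:Bessel} yields that $N_q(P_a)$ is bijective for all $a \in (0,1)$.
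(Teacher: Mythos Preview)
Your Step~1 and the Fourier-transform/rescaling reduction at the start of Step~2 match the paper exactly: density of the range follows from the duality $N_q(P_a)^* = N_q(P_{1-a})$ and Lemma~\ref{lem:Bessel}, and one then passes to the family $\wt N_q(P_a,\hat\eta)$ on $\bbR^+_t \times Z$ parametrized by $\hat\eta \in S^{b-1}$. The divergence is in how closed range is established. The paper does \emph{not} build the inverse mode by mode via Bessel Green's functions. Instead it writes $\wt N_q(P_a,\hat\eta) = I_q(P_a) + tA(\hat\eta)$ with $A$ bounded, and patches two local right parametrices: for small $t$ it uses the bounded inverse $H_0 = I_q(P_a)^{-1}$ of the indicial operator already supplied by Lemma~\ref{lem:Indicial}, so that $\wt N_q \circ H_0 = \Id + tA H_0$ has compact remainder on forms of bounded $t$-support; for large $t$ it uses a symbolic parametrix built from the partial symbol $\Delta^Z + t^2 + t^2\tau^2$. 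A partition-of-unity splice gives a global right parametrix with compact error, hence $\wt N_q(P_a,\hat\eta)$ is Fredholm, hence has closed range; with injectivity and dense range this gives bijectivity.

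Your Step~3 has a genuine gap. The ``main obstacle'' you isolate---whether $K_\nu(t)\sim t^{-|\nu|}$ lies in $t^{-1/2}L^2$---is the condition for the \emph{homogeneous} solution to belong to the weighted space; this is precisely the injectivity analysis of Lemma~\ref{lem:Bessel} recycled, not a bound on the Green's operator. To show your reassembled $G_a$ is bounded you would need an operator-norm estimate on the variation-of-parameters kernel built from $I_\nu,K_\nu$ on $t^{-1/2}L^2(\bbR^+_t)$, uniformly in the spectral parameter $\lambda$ of $\Delta^Z$ as $\lambda\to\infty$, so that summing over the Kodaira/eigenspace decomposition of $L^2(Z)$ yields a bounded operator. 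You mention uniformity in $\hat\eta$ (which is free by compactness of $S^{b-1}$) but never in $\lambda$, and nothing in assumptions \eqref{Ass2} speaks to large $\lambda$. The paper's route sidesteps this entirely: the indicial inverse $H_0$ already acts on all of $L^2_{\iie}(Z)$ at once with no mode decomposition, and the large-$t$ piece is handled symbolically.
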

\begin{proof}
For the duration of this section we write $L^2_{\iie}$ simply as $L^2$ and also omit the bundle $\Iie\Lambda^* (Z \times T_qY^+)$ 
to simplify notation. 

Following the proof of Lemma \eqref{lem:Bessel}, we pass to the Fourier transform in the 
horizontal variables, introducing the variable $\eta$ dual to $y$, and then rescale by setting $t = s|\eta|$, 
$\hat \eta = \eta/ |\eta|$. This leads to the family of operators 
\begin{equation*}
	\wt N_q(P_a, \hat \eta)
	= \begin{pmatrix}
		\eth_{\dR}^{Z} + t\cl{\hat\eta}   & -t\pa_t - (f_0-\bN+a+1/2) \\
		t\pa_t + \bN + a+1/2 & - \eth_{\dR}^{Z} - t\cl{\hat\eta}
	\end{pmatrix}
\end{equation*}
where $ \cl{\hat\eta} = \df i_{\hat{\eta}} + \df e_{\hat{\eta}}$ is Clifford multiplication and $\hat\eta$ lies in the unit sphere $S^{b-1}$.  
Notice that
\begin{equation}
\wt{N}_q(P_a, \hat\eta) = I(P_a) + t A(\hat\eta)
\label{relindnorm}
\end{equation}
where $A$ is a bounded matrix.  These operations are all reversible, so it enough to study this simper family of operators, and in 
particular to show that it is a bijection from its maximal domain in $t^{-1/2}L^2$ to $t^{-1/2}L^2$.  We have already shown in 
Lemma~\ref{lem:Bessel} that this operator is injective, and by duality, i.e.\ using injectivity for $\wt{N}_q(P_a, \hat\eta)^* = 
\wt{N}_q(P_{1-a}, -\hat\eta)$, it also has dense range. Thus it suffices to show that it has closed range, and to prove this we 
follow a standard procedure by constructing local parametrices for $\wt{N}_q(P_a,\hat\eta)$ in the two regions $(0,2T) 
\times Z$ and $(T,\infty) \times Z$ for any fixed $T$. Notice that we only need to construct a right parametrix
for $\wt{N}_q(P_a, \hat\eta)$, since a left parametrix is obtained as the dual of a right parametrix for $\wt{N}_q(P_{1-a}, -\hat\eta)$. 

First consider the region $t < 2T$.  We have indicated in \S 5.5 that $I_q(P_a)$ has an inverse $H_0 = I_q(P_a)^{-1}$ on $t^{-1/2}L^2$, and hence
\[
\wt{N}_q(P_a, \hat\eta) \circ H_0 = \mbox{Id} + t A(\hat\eta) H_0
\]
Since $H_0$ maps into the domain of $I_q(P_a)$, and the restriction of this domain to forms with bounded
support in $t$ includes compactly in $t^{-1/2}L^2$, we see that the second term on the right is a compact operator
on this subspace. 

For forms supported in $t > T$, as in  \cite[Lemma 5.5]{Mazzeo:edge}, consider the partial symbol 
\begin{equation*}
	\wt\sigma(\wt N_q(P_a, \hat\eta)^2)
	=
	\begin{pmatrix}
	\Delta^{Z} + t^2 + t^2\tau^2 & 0 \\ 0 & -(\Delta^{Z} + t^2 + t^2\tau^2)
	\end{pmatrix}
\end{equation*}
where $\tau$ is the variable dual to $\pa_t$. Clearly, 
\begin{equation*}
	|\ang{ \wt \sigma(N_q^2)u, u }| \geq t^2(1+\tau^2)\norm{u};
\end{equation*}
the inner product and norm are those of of $t^{-1/2}L^2$. 
The operator norm of $\wt \sigma(N_q^2)^{-1}$ is bounded by $t^{-2}(1+\tau)^{-2}$, so that 
\begin{equation*}
H_{\infty}(u) = \int e^{it\tau} \wt\sigma(N_q^2)^{-1} \hat u \; d\tau
\end{equation*}
defines a parametrix for $N_q^2(\hat \eta)$ in the large region. As before, $\wt N_q^2 \circ H_\infty = \wt N_q \circ (\wt N_q \circ H_\infty) 
= \mbox{Id} + B$ where $B$ is compact, hence $\wt N_q(P_a, \hat\eta) \circ H_\infty$ is the parametrix we seek. 

Now choose a partition of unity $\{\chi_0, \chi_\infty\}$ relative to the open cover $(0,2T) \cup (T,\infty)$, and fix smooth
functions $\tilde{\chi}_j$ such that $\tilde{\chi}_j = 1$ on the support of $\chi_j$ and which vanish outside a slightly larger
neighbourhood.  The right parametrix is then given by
\[
\wt{H} = \tilde{\chi}_0 H_0 \chi_0 + \tilde{\chi}_\infty (\wt{N}_q(P_a,\hat\eta) \circ H_\infty) \chi_\infty.
\]

The last thing we need to check is that $\wt{N}_q(P_a,\hat \eta) \circ \wt{H} = \mbox{Id} - Q$, where $Q$ is
compact. However, 
\[
Q = [ \wt{N}_q(P_a, \hat\eta), \tilde\chi_0] H_0 \chi_0 + [ \wt{N}_q(P_a, \hat\eta), \tilde\chi_\infty] (\wt{N}_q(P_a,\hat\eta \circ H_\infty)\chi_\infty 
+ \tilde{\chi}_\infty B \chi_\infty. 
\]
The two commutator terms are operators of order $0$, i.e.\ multiplication operators, with compact support, and using
the mapping properties of these two parametrices, we conclude that $Q$ is compact, as claimed. 

This proves that $\wt{N}_q(P_a,\hat\eta)$ is Fredholm, which completes the argument. 
\end{proof}

\subsection{Integration by parts identity for $N_q(P_a)$} $ $\newline
In computing the indicial roots of $P_a$, we have made strong use of the symmetries of the normal operator of $P_a$, namely the translation invariance along horizontal directions (i.e., those tangent to $Y$) and dilation invariance in $T_qY^+$. 
In this section we exploit this invariance to establish an integration by parts identity, which will ultimately allow us to show that any `extra' vanishing of $N_q(P_a)u$ at $x=0$ translates to some degree of vanishing of $u$ at $x=0$, the latter degree bounded by the indicial roots of $N_q(P_a)$.

We will need the Sobolev spaces on $Z \times T_q Y^+$ analogous to those on $X$.
\begin{definition} \label{def:partial} Let $N \in \mathbb{N}.$  We define 
$H_{\pie}^{N}(Z \times T_qY^+;\Iie\Lambda^* )$ to be the set of 
$u \in L^2_{\iie}(Z \times T_qY^+;\Iie\Lambda^* )$ such that 
for any positive integer $p \leq N, $
$$
X_1 \ldots X_p u \in L^2_{\iie}(Z \times T_qY^+;\Iie\Lambda^* )
$$ where the $X_j$ are vector fields which are either of the form $s \partial_s, s\partial_{u_j} \, ( 1\leq j \leq b_0)$ 
or of the form $X(z,s,u)=X(z)$ for each $(z,s,u) \in Z \times T_qY^+,$ where $X(z)$ is an edge vector field of the fibre $Z=Z_{q}.$ Notice that these vectors fields $s \partial_s, s\partial_{u_j} \,X(z)$ generate 
a Lie algebra.
\end{definition}

As we have already used in \S\ref{sec:MapProp},
if a function in $L^2_{\iie}(X)$ is $\cO(x^\gamma)$ near $x=0$ then we must have $2\gamma+f_0 >-1$. As the $L^2$ cut-off will be very important below we introduce the function
\begin{equation}\label{Defdelta}
	\delta_0(\gamma) = \gamma - \frac{f_0+1}2,
\end{equation}
thus a function in $\cO(x^{\gamma})$ is in $x^aL^2(x^{f_0 }\; dx )$ precisely when $\gamma>\delta_0(a)$. 

Briefly, let us abbreviate $L^2_{\iie}( Z \times T_qY^+, \Iie \Lambda^*( Z \times T_qY^+))$ by $L^2_{\iie}(q)$.
Let $C$ be a fixed number in $[-1/2,1]$ and $\eps \in (0,1).$  Let now $R$ be the unbounded operator induced by $N_q(P_0)$ on $s^{C +\eps}L^2_{\iie}( q )$ with domain $C^\infty_c$; with a small abuse of notation
we denote also by $R$ the operator induced by $N_q(P_0)$ on $s^{C -\eps}L^2_{\iie}( q )$ (acting distributionally).
We consider the natural pairing 
$\ang{\cdot, \cdot}: s^{C +\eps}L^2_{\iie}( q )\times s^{C -\eps}L^2_{\iie}( q )\to \bbC$
between these two spaces
\footnote{Recall that this pairing is given by $\langle u,v \rangle:= (u',v')_{s^C L^2}$ if $u=s^\epsilon u'$ and $v=s^{-\epsilon} v'$.}.
Let $R^t$ be the formal transpose of $R$ with respect to this pairing. $R^t$ is a differential operator and  we let it act, distributionally, 
on $s^{C-\eps} L^2_{\iie}( q )$.

We will establish that, if
\begin{equation*}
	u \in s^{C}L^2_{\iie}( q ), \quad 
	v \in s^{C-\eps}L^2_{\iie}( q ), \quad
	\Mand Ru, \;R^t v\; \in \; s^{C+\eps}L^2_{\iie}( q )
\end{equation*}
then, with respect to the natural pairing $\ang{\cdot, \cdot}$ above, we have 
$$\ang{v, Ru} = \ang{u, R^t v}.$$
Notice that, although both pairings make sense, this is not an instance of the definition of $R^t$, since both $u$ and $v$ are thought of as elements of $s^{C-\eps}L^2_{\iie}( q )$.

Assume inductively that we have shown $\cD_{\max}(\eth_{\dR}) = \cD_{\min}(\eth_{\dR})$ for stratifications of depth at most $m-1$ so that in particular
\begin{equation*}
	\ang{ \eth_{\dR}^{Z}u, v } = \ang{u, \eth_{\dR}^{Z}v}
\end{equation*}
for any two elements of $\cD_{\max}(\eth_{\dR}^{Z})$.

On the one hand we know that, for $u, v \in s^{C}L^2_{\iie}( q )$, the natural inner product is given by
\begin{equation*}
	\ang{u,v} = \int s^{-2C} u \wedge * v
\end{equation*}
and, on the other, the normal operator is given by
\begin{equation*}
	N_q = N_q(P_a) 
	= \begin{pmatrix}
		\eth_{\dR}^{Z} + s\eth_{\dR}^{\bbR^b}  & -s\pa_s +\bN - f- (a+1/2) \\
		s\pa_s + \bN + (a+1/2) & - \eth_{\dR}^{Z} - s\eth_{\dR}^{\bbR^b}
	\end{pmatrix},
\end{equation*}
so as anticipated we only have to justify integrating by parts the $s\pa_s$ and $s \eth_{\dR}^{\bbR^b}$.

We can  assume that we are working with sections compactly supported in a basic neighborhood $W.$

Our main tool is the Mellin transform \eqref{DefMellin}.
Using the inclusions $x^aL^2 \subset x^bL^2$ whenever $b<a$ it follows that the Mellin transform of a function in
$x^aL^2(\bbR^+, dx)$ is holomorphic in the half-plane $\{ \eta < a-1/2 \}$.
The Mellin transform is very useful for studying asymptotics. For instance, if $u$ is polyhomogeneous then $\cM u$ extends to a meromorphic function on the whole complex plane with poles at locations determined by the exponents occuring in the expansion of $u$.
Switching from $L^2(\bbR^+)$ to $L^2_{\iie}(X)$, assume that $\omega$ is supported in a basic neighborhood $W$ of $q \in Y$, then we have
\begin{equation*}
	\omega \in s^\alpha L^2_{\iie}(X)
	\iff \cM \omega \in L^2\lrpar{ \{ \eta = \delta_0(\alpha_0) \}, d\xi; L^2 (dy \; \dvol_{Z}) }
\end{equation*}
where $\cM$ denotes Mellin transform in $s$ (in the usual coordinates),  $d y$ denotes the Lebesgue 
measure of $\mathbb{R}^{b_0}$, and $\dvol_{Z}$ denote the volume form  associated to 
the edge iterated  metric of $Z.$
Notice that $\cM\omega$ extends to a holomorphic function on the half-plane $\{ \eta < \delta_0(\alpha_0) \}$ with values in $L^2(dy \; \dvol_{Z})$.

Elliptic regularity (via the symbolic calculus) tells us that elements in the null space of an elliptic edge-operator are in $H^\infty_{\iie }(X;\Iie \Lambda^*)$, and hence smooth in the interior of the manifold. However, the derivatives of elements in $H^{\infty}_{\iie}(X;\cE)$ will typically blow-up at the boundary, which is just to say that knowing $\rho \pa_y u \in L^2_{\iie}(X; \Iie \Lambda^*)$ tells us that $\pa_y u \in \rho^{-1} L^2_{\iie}(X;\Iie \Lambda^*)$.
Using the Mellin transform we can turn this around: 
if $u$ is in the null space of an elliptic $\ie$-operator, $A$, as a map
\begin{equation*}
	A: \rho^{\alpha}L^2_{\iie}(X; \Iie \Lambda^*) \to
	\rho^{\alpha}L^2_{\iie}(X; \Iie \Lambda^*)
\end{equation*}
then, in the absence of indicial roots, we can view $u$ as an element of a space with a stronger weight at the cost of giving up tangential regularity at the boundary. We shall concentrate directly on the normal operator of $P_a$, even though
much of what we prove could be extended to more general differential operators.

\begin{lemma} \label{lem:integration}
Let $W$ be a basic neighborhood for the point $q \in Y$. Set $R= N_q(P_a)$ and assume 
 that, for some $\alpha \in \bbR$ and $\eps \in(0 , 1)$,
\begin{equation}\label{SymReq}
	\{ \Re \zeta + \tfrac f2 + \tfrac12: \zeta \in \spec_b(  R ) \} \cap [\alpha-\eps, \alpha+ \eps] \subseteq \{ \alpha \}.
\end{equation}

\begin{enumerate}
\item \label{lem:Asympt}
Assume $v \in s^\alpha L^2_{\iie}(Z \times T_qY^+ ; \Iie \Lambda^*)$ is supported in $W$ and
$R v \in s^{\alpha+\eps}L^2_{\iie}(Z \times T_qY^+ ; \Iie \Lambda^*)$
then
\begin{equation*}
\begin{split}
	v &\in s^{\alpha+\eps} L^2( s^{f_0} \;d s\;dvol_{Z},  H^{-1}(dy) \otimes \Iie \Lambda^*)  \\
	&= \{ s^{\alpha+\eps} u: u \in \Diff^{1}(Y) L_{\iie}^2(W ; \Iie \Lambda^*) \}
\end{split}
\end{equation*}
Moreover, as a map into $L^2( \dvol_Z, H^{-1}(dy) \otimes \Iie \Lambda^* )$ the Mellin transform of $v$ is holomorphic in the half-plane 
$\{\eta < \delta_0( \alpha+\eps) \}$.

\item \label{lem:IntByParts}
Assume that 
$u \in s^\alpha L^2_{\iie}(Z \times T_qY^+ ; \Iie \Lambda^*)$ and 
$w \in s^{\alpha-\eps}H^2_{\pie}(Z \times T_qY^+ ; \Iie \Lambda^*)$ (cf Definition \ref{def:partial}) are such that
\begin{equation*}
\begin{gathered}
	\supp u \subseteq W \\
Ru, R^tw \in s^{\alpha+\eps}L^2_{\iie}(Z\times T_qY^+ ; \Iie \Lambda^*),	
\end{gathered}
\end{equation*}
then with respect to the natural pairing
\begin{equation*}
	\ang{\cdot, \cdot}: s^{\alpha-\eps}L^2_{\iie}(Z\times T_qY^+ ; \Iie \Lambda^*) \times s^{\alpha+ \eps} L^2_{\iie}(Z \times T_qY^+ ; \Iie \Lambda^*) \to \bbC
\end{equation*}
we have $\ang{ w, Ru } = \ang{ u, R^t w }$.
\end{enumerate}
\end{lemma}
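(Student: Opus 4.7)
The plan is to derive both parts by exploiting the $s$-dilation and $y$-translation invariance of $N_q(P_a)$ via the Mellin transform in $s$, in the spirit of the edge calculus arguments of \cite{Mazzeo:edge}. For part \ref{lem:Asympt}, I would decompose $R = I_q(P_a) + sE$, where the indicial operator $I_q(P_a)$ carries all the $s\partial_s$ and pure-vertical pieces while $E$ gathers the horizontal Dirac-type piece $\eth_{\dR}^{\bbR^b}$ together with a bounded zeroth-order term. Taking the Mellin transform in $s$ turns the equation $Rv = g$ (with $g = Rv \in s^{\alpha+\eps}L^2_{\iie}$) into
\begin{equation*}
	I_q(P_a; -i\zeta)\,\cM_s v(\zeta) = \cM_s g(\zeta) - \cM_s(sEv)(\zeta),
\end{equation*}
in which the inhomogeneous term couples $\cM_s v$ at a translated Mellin variable through the first-order horizontal operator $E$. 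The datum $v\in s^\alpha L^2_{\iie}$ makes $\cM_s v$ holomorphic-$L^2$ on $\{\eta = \delta_0(\alpha)\}$ and $g\in s^{\alpha+\eps}L^2_{\iie}$ does the same for $\cM_s g$ on $\{\eta = \delta_0(\alpha+\eps)\}$. By Lemma \ref{lem:Indicial} combined with hypothesis \eqref{SymReq}, $I_q(P_a;-i\zeta)^{-1}$ is holomorphic and uniformly bounded on $L^2_{\iie}(Z)$ throughout the closed strip $\delta_0(\alpha)\le\eta\le\delta_0(\alpha+\eps)$, so I can solve for $\cM_s v$ and shift the contour downward without crossing any indicial root. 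Since no residues appear, the inverse Mellin transform returns $v$ in the improved weight $s^{\alpha+\eps}$; the single lost tangential derivative, matching the target space $H^{-1}(dy)$, is precisely the order of the horizontal operator $E$ applied to $\cM_s v$, and the stated holomorphic extension of $\cM v$ in the half-plane follows from the same inversion.

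For part \ref{lem:IntByParts}, I would apply part \ref{lem:Asympt} to $u$, upgrading it to $s^{\alpha+\eps}L^2(ds\,\dvol_Z;\,H^{-1}(dy))$, which pairs compatibly with $w\in s^{\alpha-\eps}H^2_{\pie}$ so that both $\langle w, Ru\rangle$ and $\langle u, R^t w\rangle$ are well-defined. Then truncate by a radial cutoff $\chi_\delta(s)$ equal to zero for $s<\delta$ and one for $s>2\delta$: on the support of $\chi_\delta u$, standard integration by parts in $s$ and $y$, together with the inductive essential self-adjointness of $\eth_{\dR}^Z$ on the link, yields
\begin{equation*}
	\langle w, R(\chi_\delta u)\rangle = \langle R^t w, \chi_\delta u\rangle.
\end{equation*}
Letting $\delta\downarrow 0$, the bulk pairings converge by dominated convergence thanks to the complementarity of the weights $\alpha\pm\eps$, while the commutator $[R,\chi_\delta] = s\chi_\delta'(s)A$ with $A$ bounded is supported in the shell $\{\delta<s<2\delta\}$; its contribution to the pairing is controlled by the product $\|u\|_{s^{\alpha+\eps}L^2}\,\|w\|_{s^{\alpha-\eps}L^2}$ restricted to this shell and vanishes in the limit by the improved radial decay from part \ref{lem:Asympt}.

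The principal obstacle is the uniform inversion of the indicial family in part \ref{lem:Asympt}, together with the fact that the shift term $\cM_s(sEv)$ formally couples distinct contours; because $sE$ carries a full factor of $s$ (a full $i$-translation in the Mellin variable, larger than the strip width $\eps<1$), the term $\cM_s(sEv)$ is evaluated on the known contour $\{\eta = \delta_0(\alpha)\}$ and may be transferred to the right-hand side, so the bootstrap closes in a single step rather than requiring iteration. A secondary subtlety appears in part \ref{lem:IntByParts}: the asymmetric weights $\alpha\pm\eps$ on $u$ and $w$ are needed precisely so that the single tangential derivative lost by $u$ in part \ref{lem:Asympt} is absorbed by the extra $H^2_{\pie}$ regularity imposed on $w$, and the inductive essential self-adjointness on $Z$ is what lets the vertical integration by parts be performed without residual boundary contributions from the lower-depth strata.
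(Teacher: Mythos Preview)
Your Part~(1) is essentially the paper's argument: both decompose $R$ into the indicial operator plus a remainder carrying an extra factor of $s$ and a horizontal derivative, take the Mellin transform, and invert $I_q(P_a;\zeta)$ to push the contour. One point you gloss over: hypothesis \eqref{SymReq} only says the shifted indicial roots in $[\alpha-\eps,\alpha+\eps]$ lie in $\{\alpha\}$, not that this set is empty, so $I_q(P_a;-i\zeta)^{-1}$ need not be holomorphic on the whole closed strip. The paper observes that the right-hand side of the inverted equation is a priori meromorphic with only a possible pole on the line $\eta=\delta_0(\alpha)$, but since $\cM v$ is already $L^2$ on that line (from $v\in s^\alpha L^2_{\iie}$), no pole can occur there. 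This small addition closes the argument.

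Your Part~(2) is a genuinely different route from the paper, and as written it has a gap in the commutator estimate. You claim the boundary term $\langle w,[R,\chi_\delta]u\rangle$ is controlled by $\|u\|_{s^{\alpha+\eps}L^2(\text{shell})}\,\|w\|_{s^{\alpha-\eps}L^2(\text{shell})}$, but Part~(1) only delivers $u\in s^{\alpha+\eps}L^2(H^{-1}(dy))$, not $s^{\alpha+\eps}L^2$. To use that improvement you would need $w\in s^{\alpha-\eps}L^2(H^1(dy))$ on the shell; however $w\in s^{\alpha-\eps}H^2_{\pie}$ gives only $s\partial_y w\in s^{\alpha-\eps}L^2$, i.e.\ $\partial_y w\in s^{\alpha-1-\eps}L^2$, so on the shell $\{s\sim\delta\}$ the $H^1(dy)$-norm of $w$ at weight $s^{\alpha-\eps}$ blows up like $\delta^{-1}$, and the product $\delta^{-1}\cdot o(1)$ need not vanish. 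No choice of Cauchy--Schwarz split with fixed weights avoids this mismatch: the tangential regularity of $w$ improves only at the cost of the $s$-weight, and the same is true (in reverse) for $u$.

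The paper handles exactly this trade-off by working entirely on the Mellin side. Using Parseval, both pairings become integrals over horizontal lines in the $\zeta$-plane, $\langle u,R^tw\rangle$ on $\{\eta=\eps\}$ and $\langle w,Ru\rangle$ on $\{\eta=-\eps\}$ (after normalizing the measure). Part~(1) extends $\cM u(-\zeta)$ holomorphically across the strip into $H^{-\theta}(dy)$ as $\eta$ decreases, while interpolation from $w\in s^{\alpha-\eps}H^2_{\pie}$ shows $\cM(R^tw)(\zeta)\in H^{\eps-\eta}(dy)$ on the same strip; these exactly compensate, the integrand stays holomorphic, and the contour can be moved from $\eta=\eps$ to $\eta=-\eps$ without residues. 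After the shift, integration by parts in the Mellin picture (where $R$ becomes multiplication by the indicial symbol plus bounded terms) gives the identity. The point is that the exchange of $s$-decay for $y$-regularity is continuous in the Mellin variable, which is what makes the matching work; your real-space cutoff sees only discrete endpoints of this interpolation and cannot close the estimate.
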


\begin{proof}
{\bf (1):}
Since $v$ is supported in a normal neighborhood of $q \in Y$, we can write 
\begin{equation*}
	I_q( R ) v = H v + h
\end{equation*}
where $I_q(R)$ is the indicial operator of $R$ and $H$ contains all of the `higher order terms' at the boundary, e.g., $s^2\pa_s$, $s\pa_u$. 

Passing to the Mellin transform, and using that $I_q(R;\zeta)$ depends polynomially on $\zeta$, we have an equality
\begin{multline}\label{Hol1}
	\cM v (\zeta)= I(R; i\zeta)^{-1} \lrpar{\cM (H v + h) (\zeta)} \\
	\text{ as meromorphic functions }
	\{ \eta < \delta_0(\alpha) \} \to L^2(dy \; \dvol_{Z} ;  \Lambda^*),
\end{multline}
of course since the left hand side is holomorphic on this half-plane so is the right hand side.
On the other hand, $\cM(h)$ is a holomorphic function into this space on the half plane $\{ \eta < \delta_0(\alpha +\eps) \}$, and, reasoning as in \cite{Mazzeo:edge},
$\cM( H v)$ extends holomorphically to this half plane but we have to give up tangential regularity,
\begin{equation*}
	\cM( H v) :
	\{ \eta < \delta_0(\alpha+\eps) \} \to L^2(\dvol_{Z}; H^{-1}(dy) \otimes  \Lambda^*) 
	\text{ holomorphically}.
\end{equation*}
This gives us an extension of \eqref{Hol1} to
\begin{multline}\label{Hol2}
	\cM v (\zeta)= I( R; i\zeta)^{-1} \lrpar{\cM ( H v + h) (\zeta)} \\
	\text{ as meromorphic functions }
	\{ \eta < \delta_0(\alpha+\eps) \} \to L^2(\dvol_{Z}; H^{-1}(dy) \otimes  \Lambda^*).
\end{multline}
The possible poles occur at indicial roots of $R$, so the first possibility would occur at $\zeta = \delta_0(\alpha)$, and by hypothesis this is the only potential indicial root with real part less than or equal to $\delta_0(\alpha+\eps)$. However we know that
\begin{equation*}
	v(s,y,z) = \frac1{2\pi} \int_{\eta = \delta_0(\alpha)} \cM v(\xi,y,z) s^{-i\zeta} \; d\xi
\end{equation*}
so in particular (as $1/\xi^2$ is not integrable) $\cM v$ does not have any poles on this line. Hence
\begin{multline*}
	\cM v (\zeta)= I ( R ; i \zeta )^{-1} \lrpar{\cM ( H v + h) (\zeta)} \\
	\text{ as holomorphic functions }
	\{ \eta < \delta_0(\alpha+\eps) \} \to L^2( \dvol_{Z};H^{-1}(dy) \otimes  \Lambda^* )
\end{multline*}
and we conclude that
\begin{equation*}
	v \in s^{\eps} L^2(s^{f_0} ds \dvol_{Z}; H^{-1}(dy) \otimes  \Lambda^*).
\end{equation*}

{\bf (2):}
This follows as in \cite[Corollary 7.19]{Mazzeo:edge} by analyzing the Mellin transform. 
Without loss of generality we can arrange, by conjugating $R$ with an appropriate power of $s$,  to work with the measure $\frac1s(ds dy \dvol_Z )$. We will assume, for the duration of the proof, that this has been done without reflecting it in the notation.
This has the advantage that the Parseval formula for the Mellin transform has the form\footnote{
For the measure $s^{f_0} \; ds$ the Parseval formula for the Mellin transform takes the form
\begin{equation*}
	\int_0^\infty g_1(s) g_2(s)  s^{f_0} \; ds 
	= \int_{ \eta = C } \cM g_1(\zeta) \;\; \cM g_2( -(f_0+1)i -\zeta ) \; d\xi
\end{equation*} }
\begin{equation*}
	\int_0^\infty g_1(s) g_2(s) \; \frac{ds}s 
	= \int_{ \eta = C } \cM g_1(\zeta) \;\; \cM g_2(-\zeta ) \; d\xi
\end{equation*}
with $C$ chosen so that the integral on the right makes sense.

Notice that from knowing $u,w \in s^{-\eps}L^2_{\iie}$ and $R(u), R^t(w) \in s^\eps L^2_{\iie}$ the respective Mellin transforms are defined on the half-planes
\begin{gather*}
	\cM(w)(\zeta) \text{ on } \{ \eta \leq -\eps \},
	\quad
	\cM(Ru)( - \zeta ) \text{ on } \{ \eta \geq -\eps \} \\
	\cM(R^tw)(\zeta) \text{ on } \{ \eta \leq \eps \},
	\quad
	\cM(u)( - \zeta ) \text{ on } \{\eta \geq \eps \}
\end{gather*}
so that {\em a priori} there is in each case only one choice for the constant $C$ appearing in Parseval's formula. 
More precisely, $C=-\eps$ for the first pair and $C=\eps$ for the second pair.

Using part 1) of this Lemma  we know we can extend 
\begin{equation*}
	\cM(u)( - \zeta) \text{ to } \{ \eta \geq -\eps \}
\end{equation*}
albeit with a loss in tangential regularity.
Fortunately this loss in tangential regularity is compensated by a gain in tangential regularity in $\cM(R^t w)$ in this same region. 
Indeed, since $w \in s^{-\eps}H^2_{\pie}$,
we know that $R^t w \in s^{\eps}H^1_{\pie}$ hence we have $\pa_y R^t w \in s^{-1+\eps}L^2_{\iie}$. It follows that the Mellin transform of $\pa_y R^t w$ is a holomorphic map from $\{ \eta < -1+\eps \}$ into $L^2(dy\; \dvol_Z ; \Lambda^*)$ and hence on this same half-plane $\cM(R^t w)$ maps holomorphically into $L^2(\dvol_Z, H^1(dy) \otimes  \Lambda^*)$.
Again applying Calderon's complex interpolation method, we conclude that
\begin{equation}\label{InterpolatePW}
	\cM(R^t w)(\zeta) \in L^2(dz, H^{\eps - \eta} )
	\text{ for } \eps -1 \leq \eta \leq \eps.
\end{equation}
The same reasoning applies to $w$.

Thus if we start out with $\ang { u, R^t w }$ which we can write as
\begin{equation*}
	\int \int_{\eta =\eps } \cM(R^t w)(\zeta) \;\; \cM(u)(-\zeta) \; d\xi \; dy \; \dvol_Z,
\end{equation*}
we can deform the contour from $\{\eta =\eps\}$ to $\{\eta =-\eps\}$ and throughout this deformation the integrand stays holomorphic with the loss in tangential regularity of $\cM(u)$ exactly compensated by a gain in regularity by $\cM(R^t w)$, i.e. the integrand makes sense as a pairing throughout the deformation. 
Moreover the integrand is holomorphic in this region and so the value of the integral does not change during the deformation.
Hence we can write $\ang{ u, R^t w }$ as
\begin{equation*}
	\int \int_{\eta =-\eps } \cM(R^t w)(\zeta) \;\; \cM(u)(-\zeta) \; d\xi \; dy \; \dvol_Z.
\end{equation*}
Now integrating each term by parts we write this as
\begin{equation*}
	\int \int_{\eta =-\eps } \cM(w)(\zeta) \;\; \cM(Ru)(-\zeta) \; d\xi \; dy \; \dvol_Z,
\end{equation*}
which by another application of Parseval's formula we recognize as $\ang{ w, Ru }$.
\end{proof}

\subsection{End of induction: $\eth_{\dR}$ is essentially self-adjoint and Fredholm } $ $\newline
Our next task is to use the information gleaned in the previous section to show that elements of the maximal domain of $\eth_{\dR}$ 
as an operator on $L^2_{\iie}(X; \Iie\Lambda^*X)$ are automatically in $\rho^{\eps}L^2_{\iie,\Gamma}(X; \Iie\Lambda^*X)$.

\begin{proposition}\label{DomVanBdy} Up to rescaling suitably the metric, the following is true.
{\item 1)} Let $u$ be in the maximal domain of $\eth_{\dR}$ as an operator on $L^2_{\iie}(X;\Iie\Lambda^*X)$ then 
for any $\eps\in (0,1)$, $u \in \rho^{\eps}H^1_{\iie}(X;\Iie\Lambda^*X)$. 
{\item 2)} The maximal domain $\cD_{\max}(\eth_{\dR})  $ is compactly embedded in $L^2_{\iie}.$  
\end{proposition}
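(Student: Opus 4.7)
The plan is to proceed by induction on the depth $k$ of the stratification, with the base case $k=0$ being classical elliptic regularity on a closed manifold. Assume both parts of the proposition and Theorem \ref{MT1} for Witt spaces of depth less than $k$, and consider $\wh X$ of depth $k$. By Proposition \ref{prop:rescaling}, replace $g$ by an adapted iterated edge metric so that assumptions \eqref{Ass2} hold at every link $Z$ of a maximum-depth stratum $Y$: condition (a) (the spectral gap $\Spec(\eth_{\dR}^Z) \cap (-1,1) \subseteq \{0\}$) comes from the rescaling, while condition (b) (vanishing of middle-degree $L^2$-harmonic forms on $Z$) follows from the Witt hypothesis via Theorem \ref{theo:cheeger}. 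For $u \in \cD_{\max}(\eth_{\dR})$, split $u$ by a partition of unity into pieces either supported away from every maximum-depth stratum (handled directly by the inductive hypothesis on a subset of depth $< k$) or supported in a basic neighborhood $W_q$ of some $q \in Y$; we focus on the latter case.

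In $W_q$, set $v = x^{-1/2}u$, so $v \in x^{-1/2}L^2_{\iie}$, and observe that $P_0 v = x^{1/2}\eth_{\dR} u$; the commutator identity $[P_0, \chi] = xH$ exploited in Lemma \ref{localize} together with $\eth_{\dR} u \in L^2_{\iie}$ gives $P_0 v \in x^{1/2}L^2_{\iie}$. Proposition \ref{prop:upshot} transfers this to the normal model: the push-forward $v'$ lies in $s^{-1/2}L^2_{\iie}(Z \times T_qY^+)$ and satisfies $N_q(P_0)v' \in s^{1/2}L^2_{\iie}$. The heart of the argument is Lemma \ref{lem:integration}(1) applied to $R = N_q(P_0)$ with $\alpha = -1/2$: provided no indicial root $\zeta$ of $N_q(P_0)$ has $\Re\zeta + f_0/2 + 1/2$ in the strip $[\alpha - \eps, \alpha + \eps]\setminus\{\alpha\}$, we upgrade $v'$ to $s^{-1/2 + \eps}L^2_{\iie}$ modulo a loss of tangential regularity. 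Inspecting the three families in Lemma \ref{lem:Indicial}, the spectral gap \eqref{Ass2}(a) gives $\lambda_k \geq 1$ for every nonzero eigenvalue, so the shifted values from the second and third families all lie outside a neighborhood of $-1/2$; the Witt condition \eqref{Ass2}(b) eliminates the middle-degree term $k=f_0/2$, so the shifted values from the first family occupy a lattice avoiding an interval about $-1/2$. Hence the upgrade holds for sufficiently small $\eps > 0$. A bootstrap that recovers tangential regularity via uniform elliptic regularity \eqref{EllReg} applied to the $\ie$-elliptic operator $\rho\,\eth_{\dR} \in \Diff^1_{\ie}(X)$, and that iterates past the discrete shifted indicial roots, upgrades the conclusion to $u \in x^{\eps}H^1_{\iie}$ for every $\eps \in (0,1)$. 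Patching the local pieces yields $u \in \rho^\eps H^1_{\iie}$ together with the quantitative estimate
\[
	\|u\|_{\rho^\eps H^1_{\iie}} \leq C_\eps\bigl(\|u\|_{L^2_{\iie}} + \|\eth_{\dR}u\|_{L^2_{\iie}}\bigr).
\]

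Part 2) then follows by a Rellich-type argument: any sequence bounded in the graph norm of $\cD_{\max}(\eth_{\dR})$ is bounded in $\rho^\eps H^1_{\iie}$ by the estimate above, and the inclusion $\rho^\eps H^1_{\iie} \hookrightarrow L^2_{\iie}$ with $\eps > 0$ is compact -- interior Rellich on any compact exhaustion of $\reg{\wh X}$ supplies convergence away from the singular locus, while the factor $\rho^\eps$ forces equi-integrability near it. The main obstacle throughout is the indicial root bookkeeping in part (1): one must verify explicitly that, under the hypotheses \eqref{Ass2}, no indicial root of $N_q(P_0)$ meets the critical strip abutting $\alpha = -1/2$, and that the bootstrap can step past the discrete shifted roots (at $0, \tfrac12, 1, \ldots$) arising from the first family in Lemma \ref{lem:Indicial}. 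Once this combinatorial check is in place, the elliptic-regularity bootstrap and Rellich compactness are standard.
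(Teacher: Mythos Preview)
Your overall strategy (induction on depth, localize near a maximal-depth stratum, pass to the normal operator via Proposition~\ref{prop:upshot}, then bootstrap the weight) matches the paper, but the central step of the weight improvement has a genuine gap.

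You propose to apply Lemma~\ref{lem:integration}(1) with $R=N_q(P_0)$ and $\alpha=-1/2$ to upgrade $v'\in s^{-1/2}L^2_{\iie}$ to $s^{-1/2+\eps}$, acknowledging the loss of one tangential $y$-derivative, and then to ``recover tangential regularity via uniform elliptic regularity~\eqref{EllReg}.''  But~\eqref{EllReg} says only that $u\in\rho^a L^2_{\iie}$ together with $(\rho\eth_{\dR})u\in\rho^a L^2_{\iie}$ implies $u\in\rho^a H^1_{\iie}$; it needs as \emph{input} that $u$ already lies in $\rho^a L^2_{\iie}$ at the improved weight $a$.  After Lemma~\ref{lem:integration}(1) you only have $v'\in s^{-1/2+\eps}L^2(s^{f_0}\,ds\,\dvol_Z;\,H^{-1}(dy))$, not $s^{-1/2+\eps}L^2_{\iie}$, so the hypothesis of~\eqref{EllReg} is not met and the argument is circular.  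Trading the $H^1_{\iie}$ regularity you do have at the \emph{old} weight against the $H^{-1}$ loss at the \emph{new} weight would require a parametrix in a genuine iterated-edge pseudodifferential calculus with control on spaces of mixed tangential order --- machinery the paper explicitly avoids.

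The paper closes this gap by a different mechanism, which never incurs the tangential loss.  It uses the \emph{bijectivity} of $N_q(P_a)$ on $s^{-1/2}L^2_{\iie}$ for $a\in(0,1)$ (Lemmas~\ref{lem:Bessel} and~\ref{lem:NClosedRange}), equivalently of $R=N_q(P_0)$ on $s^{-1/2+\eps_0}L^2_{\iie}$.  Duality then makes $R^t$ surjective on $s^{-1/2-\eps_0}L^2_{\iie}$, with bounded generalized inverse $G$.  For $u\in s^{-1/2}L^2_{\iie}$ with $Ru\in s^{1/2}L^2_{\iie}$ and any test element $\phi$, set $v=G\phi$; Lemma~\ref{lem:integration}(2) (the integration-by-parts identity, not part~(1)) gives $\langle Ru,v\rangle=\langle R^tv,u\rangle$, and unwinding this yields $u=G^tRu\in s^{-1/2+\eps_0}L^2_{\iie}$ directly, with no loss in $y$.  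Iterating past the discrete shifted indicial roots then reaches any $\eps\in(0,1)$.  Thus the key ingredient you are missing is the role of Lemma~\ref{lem:NClosedRange} (closed range/bijectivity of the normal operator) feeding into the duality argument of Proposition~\ref{prop:intermediate}; Lemma~\ref{lem:integration}(1) is used in the paper only inside the proof of part~(2), not as the engine for the weight gain itself.  Your treatment of part~2) via compactness of $\rho^\eps H^1_{\iie}\hookrightarrow L^2_{\iie}$ is fine and agrees with the paper.
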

\begin{proof}
We can immediately localize and assume that $u$ has support in a locally trivialized 
neighborhood $\calU \times C(Z)$ of the highest depth stratum. 

We begin with the following intermediate result.
\begin{proposition} \label{prop:intermediate}
Let $u$ have compact support in $\calU \times C(Z)$ and lie in the maximal domain of $\eth_{\dR}$ as an operator on 
$L^2_{\iie}(X;\Iie\Lambda^*X).$ Then, for any $\eps\in (0,1)$, $u \in x^{\eps}L^2_{\iie}(X;\Iie\Lambda^*X)$.
\end{proposition}
\begin{proof} 
Fix $\eps_0 \in (0,1)$ small enough that 
\begin{equation*}
	\{ \Re \zeta + \tfrac f2 + \tfrac12: \zeta \in \spec_b(  R ) \} \cap [-1/2-\eps_0, -1/2+ \eps_0] \subseteq \{ -1/2 \}.
\end{equation*}
Let $u \in s^{-1/2}L^2_{\iie}(Z \times T_qY^+;\Iie\Lambda^*)$ satisfy $N_q(P_0)(u) \in s^{1/2}L^2_{\iie}(Z \times T_qY^+;\Iie\Lambda^*)$; 
by Proposition \ref{prop:upshot}, we only need check that $u \in s^{-1/2+\eps_0}L^2_{\iie}(Z \times T_qY^+;\Iie\Lambda^*)$. 
Fix such a $u$, and write $L^2_{\iie}(Z_ \times T_qY^+;\Iie\Lambda^*)$ in place of $L^2(q)$.

Applying Lemmas \ref{lem:Bessel} and \ref{lem:NClosedRange}, we know that $R=N_q(P_0)$ is injective and has closed range 
as a map from $s^{-1/2 + \eps_0}L^2(q)$ to itself (on its maximal domain). It follows that $R^t$ is surjective from 
$s^{-1/2-\eps_0}L^2(q)$ to itself (on its minimal domain) . 

Let $G$ be the bounded generalized inverse of $R^t$; $G$ is a bounded map from $s^{-1/2-\eps_0}L^2(q)$ to itself, with 
image contained in the domain of $R^t$, and satisfies
\begin{equation*}
	R^t G = \Id_{s^{-1/2-\eps_0}L^2(q)}.
\end{equation*}

Let $\phi$ be any element of $s^{-1/2+\eps_0}H_{\pie}^{1}(Z \times T_qY^+;\Iie\Lambda^* )$. Then $v = G\phi$ satisfies
\begin{equation*}
	v \in s^{-1/2-\eps_0} L^2(a), \quad
	R^t v = R^t G\phi = \phi  \in s^{-1/2+\eps_0} L^2(q),
\end{equation*} 
the latter statement and elliptic regularity allows us to strengthen the former to $v \in s^{-1/2-\eps_0} H_{\pie}^{2}(Z \times T_qY^+;\Iie\Lambda^* )$.
On the other hand, we know that $R u \in s^{1/2} L^2(q)\subset s^{-1/2+\eps_0} L^2(q)$, 
so by part 2) of Lemma \ref{lem:integration} (with $\alpha=-1/2$) we conclude that
\begin{equation*}
	\ang{R u,v} = \ang{R^t v, u}.
\end{equation*}

But then we also have
\begin{equation} \label{eq:1}
	\ang{R u, v} = \ang{ R u, G \phi} = \ang{G^t R u, \phi}
	\end{equation} 
	where we recall that $R u\in s^{1/2}L^2(q)  \subset s^{-1/2+\eps_0}L^2(q)$, $G\phi\in s^{-1/2-\eps_0}L^2(q)$
and where $G^t$ denotes the functional analytic transpose of the {\it bounded} operator $G$;
$G^t$ acts continuously on $s^{-1/2+\eps_0}L^2(q)$, so in fact $G^t R u \in s^{-1/2+\eps_0}L^2(q).$ 

Moreover, we have:
\begin{equation} \label{eq:2}
\ang{R u, v} =\ang{R^t v, u} = \ang{ R^t G \phi, u} = \ang{ \phi , u}.
\end{equation}	
	By comparing the last terms of  
	\eqref{eq:1}  and  \eqref{eq:2} we see that $\ang{u-G^t R u, \phi}=0$ and since $\phi$
	was arbitrary we finally get: $u= G^t R u.$
Therefore $u\in s^{-1/2+\eps_0}L^2(q)$.

Next, taking $\eps_1 \in (0,1)$ small enough that
\begin{equation*}
	\{ \Re \zeta + \tfrac f2 + \tfrac12: \zeta \in \spec_b(  R ) \} \cap [-1/2+\eps_0-\eps_1, -1/2+\eps_0+ \eps_1] = \emptyset \subseteq \{ -1/2+\eps_0 \},
\end{equation*}
we can repeat the argument above and conclude $u \in s^{-1/2 + \eps_0 + \eps_1}L^2(q)$; continuing in this way we conclude that
$u \in s^{-1/2 + \eps}L^2(q)$ for any $\eps \in (0,1)$ as required. \end{proof}

\noindent {\it Proof of Proposition \ref{DomVanBdy}.}
1) Proceed by induction on depth.  For depth zero, there is nothing to prove. Let $k > 0$ and assume that the result is true for 
any Witt space of depth less than $k$.  If $u \in \cD_{\max}(\eth_{\dR})$ has support in a locally trivialized neighbourhood
$\calU \times C(Z)$ at the highest depth stratum, then Proposition \ref{prop:intermediate} gives the stated decay and regularity 
in the final radial variable. Since the link $Z$ has depth $k-1$, we already know the result for it.


2) This follows since $\rho^{\eps}H^1_{\iie}(X;\Iie\Lambda^*X)$ is compactly embedded in $L^2_{\iie}$
\end{proof}

We now know that elements of the maximal domain have some `extra' degree of vanishing, and we can then 
apply an argument of Gil-Mendoza \cite{Gil-Mendoza}.

\begin{proposition}[Gil-Mendoza] \label{GilMendoza}
If $\cD_{\max}(\eth_{\dR}) \subseteq \rho^C L^2_{\iie}(X;\Iie\Lambda^*X)$ for some $C >0$, then, as an operator on $L^2_{\iie}(X;\Iie\Lambda^*X)$,
\begin{equation*}
	\cD_{\max}(\eth_{\dR}) \cap \bigcap_{\eps>0} \rho^{1-\eps} L^2_{\iie}(X; \Iie\Lambda^*X) \subseteq \cD_{\min}(\eth_{\dR})
\end{equation*}
\end{proposition}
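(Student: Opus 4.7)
The plan is to establish $u \in \cD_{\min}(\eth_{\dR})$ by constructing, for each admissible $u$, a sequence $\{u_k\} \subset \CIc(X;\Iie\Lambda^*X)$ converging to $u$ in the graph norm of $\eth_{\dR}$. The construction proceeds in two stages: a radial cutoff near the singular locus at a carefully chosen scale, followed by Friedrichs mollification in the smooth interior.

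Let $\chi \in \CI(\bbR;[0,1])$ satisfy $\chi \equiv 0$ on $(-\infty,1]$ and $\chi \equiv 1$ on $[2,\infty)$, and for each $\rho_0 \in (0,1)$ set $\chi_{\rho_0}(\rho) := \chi(\rho/\rho_0)$, viewed as a smooth function on $\wt X$ via the total boundary defining function $\rho$. The key algebraic observation is that, because $\eth_{\dR} \in \Diff_{\iie}^1$ factors as $\rho^{-1}(\rho\eth_{\dR})$ with $\rho\eth_{\dR} \in \Diff_{\ie}^1$, and because iterated edge vector fields $V$ satisfy $V\rho = O(\rho)$, the commutator
\begin{equation*}
	[\eth_{\dR},\chi_{\rho_0}] = \rho^{-1}(\rho\eth_{\dR})(\chi_{\rho_0})
\end{equation*}
reduces to a bounded zeroth-order multiplication operator obeying $|[\eth_{\dR},\chi_{\rho_0}]| \leq C|\chi_{\rho_0}'(\rho)| \leq C'/\rho_0$, supported in the dyadic shell $\{\rho_0 \leq \rho \leq 2\rho_0\}$.

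For each integer $n \geq 1$, set $v_n := \rho^{-(1-1/n)}u$, which lies in $L^2_{\iie}$ by the hypothesis at $\eps = 1/n$. Exploiting the whole family $\{v_\eps\}_{\eps>0}$, one selects scales $\rho_n \to 0$ (decreasing in $n$) so that the cutoffs $\chi_n := \chi_{\rho_n}$ satisfy
\begin{equation*}
	\int_{\{\rho \leq 2\rho_n\}}|v_n|^2\,\dvol_g \;\leq\; \frac{\rho_n^{2/n}}{n^2}
	\qquad\text{and}\qquad \rho_n \leq 1/n.
\end{equation*}
Using the pointwise inequality $|u|^2 \leq (2\rho_n)^{2(1-1/n)}|v_n|^2$ on $\{\rho \leq 2\rho_n\}$, one then obtains
\begin{equation*}
	\|u-\chi_n u\|_{L^2_{\iie}}^2 \leq \frac{4\rho_n^2}{n^2} \to 0,
	\qquad
	\|[\eth_{\dR},\chi_n]u\|_{L^2_{\iie}}^2 \leq \frac{C}{\rho_n^{2/n}} \cdot \frac{\rho_n^{2/n}}{n^2} = \frac{C}{n^2} \to 0.
\end{equation*}
Combined with dominated convergence applied to $(\chi_n-1)\eth_{\dR}u$ (justified by $\eth_{\dR}u \in L^2_{\iie}$), this gives $\chi_n u \to u$ in the graph norm. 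Each $\chi_n u$ is compactly supported in the smooth manifold $X = \mathrm{reg}(\wh X)$, so standard Friedrichs mollification produces $u_{n,j} \in \CIc(X;\Iie\Lambda^*X)$ with $u_{n,j} \to \chi_n u$ and $\eth_{\dR}u_{n,j} \to \eth_{\dR}(\chi_n u)$ in $L^2_{\iie}$; a diagonal extraction then yields the desired approximating sequence.

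The main obstacle lies in the choice of scales $\rho_n$: for a single fixed weight $\eps > 0$, a naive dyadic cutoff at scale $\rho_n \to 0$ produces a commutator bound of order $\rho_n^{-2\eps}\int_{\{\rho \leq 2\rho_n\}}|v_\eps|^2\,\dvol_g$ that generically fails to vanish, since the blow-up factor $\rho_n^{-2\eps}$ need not be absorbed by the $L^2$ mass of a generic element of $L^2_{\iie}$ on shrinking dyadic shells. The resolution, which is enabled precisely by the hypothesis that $u$ lies in the \emph{intersection} $\bigcap_{\eps > 0} \rho^{1-\eps}L^2_{\iie}$ rather than in a single $\rho^{1-\eps}L^2_{\iie}$, is to allow the weight $\eps = 1/n$ to shrink with $n$ and to select $\rho_n$ a posteriori, exploiting the additional regularity $u \in \cD_{\max}(\eth_{\dR}) \subseteq \rho^C L^2_{\iie}$ and the elliptic estimates of the preceding sections to ensure the local $L^2$ mass of $v_n$ decays quantitatively faster than $\rho_n^{2/n}$. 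It is at exactly this quantitative selection step that the standing hypothesis $\cD_{\max}(\eth_{\dR}) \subseteq \rho^C L^2_{\iie}$ enters the argument.
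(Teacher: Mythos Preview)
Your approach attempts a direct graph-norm approximation by compactly supported forms, which is genuinely different from the paper's argument. However, the selection step for $\rho_n$ is not justified, and in fact cannot be justified from the hypotheses as you use them. For each fixed $n$ you need $\rho_n$ with $G_n(2\rho_n) \leq \rho_n^{2/n}/n^2$, where $G_n(r)=\int_{\{\rho\leq r\}}|v_n|^2$. But the hypothesis $u\in\bigcap_{\eps>0}\rho^{1-\eps}L^2_{\iie}$ only yields, for any $\delta\in(0,1/n)$, the bound $G_n(r)\leq r^{2\delta}\|v_{1/n-\delta}\|^2$; since $\|v_\eps\|$ is monotone increasing as $\eps\downarrow 0$ and is not quantitatively controlled, this never beats the required rate $r^{2/n}$. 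A concrete obstruction: on a half-line model with measure $d\rho$, the choice $|u(\rho)|^2=\rho$ gives $v_\eps\in L^2$ for every $\eps>0$, yet $G_n(r)=(n/2)r^{2/n}$, so $G_n(2\rho_n)/\rho_n^{2/n}=(n/2)2^{2/n}$ is bounded below uniformly in $\rho_n$ and the inequality fails for every $n$ and every $\rho_n$. You also misidentify where the hypothesis $\cD_{\max}\subseteq\rho^C L^2_{\iie}$ enters: since $u$ is already assumed to lie in the much smaller space $\bigcap_{\eps>0}\rho^{1-\eps}L^2_{\iie}$, this hypothesis gives no additional information about $u$, and invoking it ``for the selection step'' is a red herring.

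The paper circumvents graph-norm approximation entirely. It first records the duality characterization $u\in\cD_{\min}\iff(\eth_{\dR}u,v)=(u,\eth_{\dR}v)$ for all $v\in\cD_{\max}$, and then sets $u_n=\rho^{1/n}u\in\rho H^1_{\iie}\subseteq\cD_{\min}$. The point is that one only needs $\eth_{\dR}u_n\to\eth_{\dR}u$ in the \emph{weaker} space $\rho^{-\eps}L^2_{\iie}$ (for a fixed small $\eps$), because the test function $v$ lies in $\rho^{\eps}L^2_{\iie}$ by the standing hypothesis---this is precisely where $\cD_{\max}\subseteq\rho^C L^2_{\iie}$ is used. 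In that weighted norm the commutator term $[\eth_{\dR},\rho^{1/n}]u$ is bounded by $(C/n)\|\rho^{-(1-\eps)}u\|$ with $\eps$ \emph{fixed}, hence tends to zero. The identity $(\eth_{\dR}u_n,v)=(u_n,\eth_{\dR}v)$ then passes to the limit, giving $u\in\cD_{\min}$ without ever establishing convergence in the graph norm.
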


{\bf Remark.}
Since we have actually shown not only that 
\begin{equation*}
	\cD_{\max}(\eth_{\dR}) \subseteq \rho^C H^1_{\iie}(X;\Iie\Lambda^*X)
\end{equation*}
but in fact
\begin{equation*}
	\cD_{\max}(\eth_{\dR}) \subseteq \bigcap_{\eps>0} \rho^{1-\eps} H^1_{\iie}(X;\Iie\Lambda^*X),
\end{equation*}
this proposition implies $\cD_{\max}(\eth_{\dR}) = \cD_{\min}(\eth_{\dR})$.

\begin{proof}
We point out the following simple consequence of the formal self-adjointness of $\eth_{\dR}$ and the definitions of the minimal/maximal domains and weak derivatives:
\begin{lemma}
An element $u \in \cD_{\max}(\eth_{\dR})$ is in $\cD_{\min}(\eth_{\dR})$ if and only if
\begin{equation}\label{MinDomChar}
	(\eth_{\dR}u, v) = (u, \eth_{\dR}v), \Mforevery v \in \cD_{\max}(\eth_{\dR}).
\end{equation}
\end{lemma}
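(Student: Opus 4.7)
The plan is to deduce this lemma from the general duality $(\eth_{\dR,\max})^* = \eth_{\dR,\min}$, which holds for any formally self-adjoint differential operator on $L^2_{\iie}$; the statement of the lemma is essentially a reformulation of this duality in terms of the pairing on $\cD_{\max}$. The key background fact that I will invoke is that the Hilbert-space adjoint of $\eth_{\dR,\max}$ on $L^2_{\iie}(X;\Iie\Lambda^*X)$ equals $\eth_{\dR,\min}$; this holds for any formally symmetric differential operator acting on a dense core of compactly supported smooth sections.

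For the forward implication I would unfold the definition of $\cD_{\min}(\eth_{\dR})$ as the graph-closure of $C_c^\infty(X;\Iie\Lambda^*X)$. Given $u \in \cD_{\min}(\eth_{\dR})$, pick a sequence $u_n \in C_c^\infty(X;\Iie\Lambda^*X)$ with $u_n \to u$ and $\eth_{\dR} u_n \to \eth_{\dR} u$ in $L^2_{\iie}$. For any $v \in \cD_{\max}(\eth_{\dR})$, the identity $(\eth_{\dR} u_n, v) = (u_n, \eth_{\dR} v)$ holds, since this is precisely the distributional definition of $\eth_{\dR} v \in L^2_{\iie}$ paired against the compactly supported smooth test section $u_n$, combined with formal self-adjointness of $\eth_{\dR}$ on $C_c^\infty$. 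Passing to the limit on both sides in the two factors yields $(\eth_{\dR} u, v) = (u, \eth_{\dR} v)$.

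For the converse, suppose $u \in \cD_{\max}(\eth_{\dR})$ satisfies the identity for every $v \in \cD_{\max}(\eth_{\dR})$. Then the antilinear functional $v \mapsto (u, \eth_{\dR} v)$ on $\cD_{\max}$ agrees with $v \mapsto (\eth_{\dR} u, v)$, and is therefore bounded in the $L^2_{\iie}$-norm of $v$ with bound $\|\eth_{\dR} u\|_{L^2_{\iie}}$. This is exactly the criterion for $u$ to belong to the domain of the Hilbert-space adjoint of $\eth_{\dR,\max}$, and by the duality above that domain is $\cD_{\min}(\eth_{\dR})$.

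The only non-automatic step -- and thus the main, if mild, obstacle -- is the identification $(\eth_{\dR,\max})^* = \eth_{\dR,\min}$. This is a general fact, whose proof combines formal self-adjointness of $\eth_{\dR}$ on $C_c^\infty$ with the observation that $C_c^\infty$ is a core for $\eth_{\dR,\min}$ by definition; it can be established directly by a cutoff and mollification argument, or simply quoted from standard references on unbounded operators on Hilbert spaces.
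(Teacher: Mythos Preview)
Your proof is correct and takes essentially the same approach as the paper: both rely on the standard duality $\cD_{\min}(\eth_{\dR}) = \cD\bigl((\eth_{\dR,\max})^*\bigr)$ for a formally self-adjoint operator, and then unwind the definition of the Hilbert-space adjoint. The paper states this equivalence in one line, while you spell out the forward direction via approximation by $C_c^\infty$ sections and the converse via boundedness of the functional $v\mapsto (u,\eth_{\dR}v)$; these are just two levels of detail for the same argument.
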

\begin{proof}
 For any operator $D$ with formal adjoint $D^*$ one has, 
\begin{gather*}
	u \in \cD(D_{\min}) \iff
	u \in \cD\lrpar{ \lrpar{ (D^*)_{\max} }^* } \\
	\iff
	\ang{Du, v} = \ang{u, D^*v} \text{ for every } v \in \cD( (D^*)_{\max} )
\end{gather*}
If $D$ is symmetric so that $D^* = D$, then this is \eqref{MinDomChar}. 
\end{proof} 
Let $u \in \cD_{\max}(\eth_{\dR}) \cap \bigcap_{\eps>0} \rho^{1-\eps} L^2_{\iie}(X; \Iie\Lambda^*X),$ so 
\begin{equation*}
	u \in  \bigcap_{\eps>0} \rho^{1-\eps} H^1_{\iie}(X; \Iie\Lambda^*X).
\end{equation*}
Set $u_n = \rho^{1/n}u$ for $n \in \bbN$, so that for each $n,$ 
$u_n \subseteq \rho H_{\iie}^1(X;\Iie\Lambda^*),$ and, for every $\eps \in (0,1)$,
\begin{equation}\label{FirstConv}
	u_n \to u \text{ in } \rho^{1-\eps}H_{\iie}^1(X;\Iie\Lambda^*)
	\Mand
	\eth_{\dR} u_n \to \eth_{\dR} u \text{ in } \rho^{-\eps} L^2_{\iie}(X;\Iie\Lambda^*).
\end{equation}
Let $\eps \in (0,1)$  so that $\cD_{\max}(\eth_{\dR}) \subseteq \rho^\eps H_{iie}^1(X;\Iie\Lambda^*)$. 
Then, for any  $v \in \cD_{\max}(\eth_{\dR})$, \eqref{FirstConv} implies
\begin{gather*}
	(\eth_{\dR}u_n, v)_{L^2} 
	= (\rho^\eps \eth_{\dR} u_n, \rho^{-\eps}v)_{L^2}
	\to (\rho^\eps \eth_{\dR} u, \rho^{-\eps}v)_{L^2}
	= (\eth_{\dR}u, v)_{L^2}, \\
	\Mand 
	(u_n, \eth_{\dR}v) \to (u, \eth_{\dR}v).
\end{gather*}
Moreover, by the previous Lemma,  $u_n \in \cD_{\min}(\eth_{\dR})$ implies $(\eth_{\dR}u_n, v) = (u_n, \eth_{\dR}v)$.

It follows that $(\eth_{\dR}u, v) = (u, \eth_{\dR}v)$ for every $v \in \cD_{\max}(\eth_{\dR})$ and hence $u \in \cD_{\min}(\eth_{\dR})$.
\end{proof}

Altogether, we have now proved Theorem~\ref{MT1}. We summarize for the benefit of the reader.


\begin{proof}  Parts 1) and 2) are direct consequences of the last Proposition. Let us show that $\eth_{\dR}$ is self-adjoint 
on its maximal domain. Denote by  $\eth_{\dR,{\rm max}}$ the operator $\eth_{\dR}$ on its maximal domain. 
If $v$ is in the domain of  $\eth_{\dR,{\rm max}}$ then integration by parts, which is allowed because of the extra vanishing, implies 
that $v$ is in the domain of  $(\eth_{\dR,{\rm max}})^*$ and that  $\eth_{\dR,{\rm max}}v=   (\eth_{\dR,{\rm max}})^* v$.
Conversely, let $v$ lie in the domain of  $(\eth_{\dR,{\rm max}})^*$. Observe that $\forall u\in C^\infty_c$, 
$\langle \eth_{\dR} u,v \rangle = \langle u,\eth_{\dR} v \rangle$, with $\eth_{\dR}$ acting as a distribution on $v$. 
From the definition of adjointness we also know that $\langle \eth_{\dR} u,v \rangle = \langle u, (\eth_{\dR,{\rm max}})^*v \rangle$ 
and since this is true for all $u\in C^\infty_c$ we infer that $\eth_{\dR} v$ is in $L^2_{\iie}$. Indeed, by definition,
$(\eth_{\dR,{\rm max}})^*v\in L^2_{\iie}$. Thus $v$ is in the domain of $\eth_{\dR,{\rm max}}$ and $\eth_{\dR,{\rm max}} v= (\eth_{\dR,{\rm max}})^*v$. 
This proves that $\eth_{\dR,{\rm max}}$ is self-adjoint. 

To prove 3), since $\eth_{\dR}$ is self-adjoint, $(i \,{\rm Id} + \eth_{\dR})$ is invertible. Since 
$\cD_{\max}(\eth_{\dR})$ is compactly  embedded into $L^2_{\iie}(X;\Iie\Lambda^*X)$, $(i\, {\rm Id} + \eth_{\dR})^{-1}$  
defines a parametrix for $\eth_{\dR}$ acting on $\cD_{\max}(\eth_{\dR})$  with compact reminder.

Finally, for 4), since $\eth_{\dR}$ is Fredholm, there exists $\epsilon >0$ such that 
$(\epsilon\,{\rm Id} + \eth_{\dR})$ is invertible. Since the maximal domain is compactly embedded in 
$L^2_{\iie}$, $(\epsilon\,{\rm Id} + \eth_{\dR})^{-1}$  is compact and self-adjoint. Thus, 
the spectrum of $(\epsilon\,{\rm Id} + \eth_{\dR})^{-1}$ is discrete with finite multiplicity. Therefore, the spectrum 
of $\eth_{\dR}$ is discrete and has finite multiplicity.
\end{proof}

\section{The signature operator on Witt spaces} \label{sec:Sign} $\;$

We now turn from the de Rham operator to the signature operator, first on forms with scalar 
coefficients and then with $C^*$-algebra coefficients. We show first that these are Fredholm 
operators, but more importantly, that they define classes in the groups $K_*(\hat X)$ 
and $K_*(C^*\Gamma)$, respectively. The index of these operators is independent of the choice of
metric and defines a topological invariant.  We will show later that this class enjoys even stronger
properties: it is 
a Witt bordism invariant, a stratified homotopy invariant and  it is equal, rationally,
to a topologically defined invariant, the symmetric signature. 

\subsection{The signature operator $\eth_{\sign}$}  $ $\newline
If $X$ is even-dimensional, the Hodge star induces a natural involution on the differential forms on $X$,
\begin{equation*}
	\cI: \Omega^*(X) \to \Omega^*(X), \quad \cI^2 = \Id
\end{equation*}
whose $+1$, $-1$ eigenspaces are known as the set of self-dual, respectively anti-self dual, forms and are denoted $\Omega^*_+$, $\Omega^*_-$.
The involution $\cI$ extends naturally to $\Iie \Omega^*(X)$ and with respect to the splitting 
$\Iie\Omega^*(X) = \Iie \Omega^*_+ \oplus \Iie \Omega^*_-$, the de Rham operator decomposes
$$
\eth_{\dR} 
	= \begin{pmatrix}
	0&  \eth_{\sign}^-  \\
	 \eth_{\sign}^+  & 0
	\end{pmatrix}
$$
where
$$
\eth_{\sign}^+=d+\delta: \Iie \Omega^*_+(X) \to \Iie\Omega^*_-(X),\;  \eth_{\sign}^-=( \eth_{\sign}^+)^*.
$$
If instead the manifold $X$ is odd-dimensional, the signature operator of an (adapted) edge iterated  metric is
\begin{equation*}
	\eth_{\sign} = -i(d \cI + \cI d)
	= -i\cI (d - \delta) = -i (d - \delta) \cI.
\end{equation*}
We point out for later use that in either case, given a continuous map $r: \hat X \to B\Gamma$, 
we also obtain a twisted Mishchenko-Fomenko 
signature operator $ \wt \eth_{\sign}$ acting on sections of the bundle $\Iie\Lambda_\Gamma^*(X)$.

\begin{theorem}  \label{thm:Fred}
Up to rescaling suitably the metric the following is true. If $\hat{X}$ satisfies \eqref{Ass2} for all strata, then 
the  iterated  incomplete edge signature operator $\eth_{\sign}$ is essentially self-adjoint  with maximal domain contained in 
$$
\bigcap_{\eps>0} \rho^{1-\eps} H^1_{\iie}(X;\Iie\Lambda^*X).
$$ 
Its  unique self-adjoint extension is Fredholm on its maximal domain endowed with the 
graph-norm; moreover it has discrete $L^2$-spectrum of finite multiplicity.
\end{theorem}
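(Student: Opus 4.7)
The plan is to reduce this theorem directly to Theorem \ref{MT1}, the corresponding statement for the de Rham operator $\eth_{\dR}$, by exploiting the compatibility of the chirality involution $\cI$ with the iterated edge structure. In even dimensions, the splitting $\Iie\Lambda^*(X) = \Iie\Lambda^*_+(X) \oplus \Iie\Lambda^*_-(X)$ into $\pm 1$ eigenspaces of $\cI$ gives
\begin{equation*}
	\eth_{\dR} = \begin{pmatrix} 0 & \eth_{\sign}^- \\ \eth_{\sign}^+ & 0 \end{pmatrix},
\end{equation*}
so the first step is to verify that $\cI$ extends from $\Iie\Lambda^*(X)$ to a bounded involution on $L^2_{\iie}(X;\Iie\Lambda^*X)$ which preserves the scale of weighted iterated edge Sobolev spaces $\rho^a H^k_{\iie}$. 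This is purely algebraic: with respect to an orthonormal frame for the $\iie$ cotangent bundle, $\cI$ is a matrix of constants (times $\pm i$ according to degree), and the $\iie$ Sobolev norms are defined using derivatives that act componentwise in such a frame. Consequently $\cI$ preserves the maximal domain $\cD_{\max}(\eth_{\dR})$ identified in Theorem \ref{MT1}, and the decomposition $L^2_{\iie}(X;\Iie\Lambda^*X) = L^2_{\iie}(X;\Iie\Lambda^*_+X) \oplus L^2_{\iie}(X;\Iie\Lambda^*_-X)$ diagonalizes $\eth_{\dR}^2$.

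Next, I would transfer each of the four conclusions of Theorem \ref{MT1} to $\eth_{\sign}$. Since $\cI$ commutes with the $L^2$ inner product and preserves $\cD_{\max}(\eth_{\dR})$, and since $\eth_{\dR}$ anticommutes with $\cI$, the maximal domain of $\eth_{\sign}^+$ is exactly the intersection of $\cD_{\max}(\eth_{\dR})$ with $L^2_{\iie}(X;\Iie\Lambda^*_+X)$, and similarly for $\eth_{\sign}^-$. Essential self-adjointness of $\eth_{\sign} := \eth_{\sign}^+ \oplus \eth_{\sign}^-$ (viewed as an operator on the full $L^2_{\iie}$ space) follows, because a core domain for $\eth_{\dR}$ that is $\cI$-invariant (take $C^\infty_c(X;\Iie\Lambda^*X)$, and split via $\cI$) is a core for $\eth_{\sign}$. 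The domain inclusion
\begin{equation*}
	\cD_{\max}(\eth_{\sign}) \subseteq \bigcap_{\eps>0} \rho^{1-\eps} H^1_{\iie}(X;\Iie\Lambda^*X)
\end{equation*}
is then inherited directly from part 2) of Theorem \ref{MT1}, and the Fredholm property together with discreteness of the spectrum of finite multiplicity follow from parts 3) and 4), using that the compact embedding of the maximal domain into $L^2_{\iie}$ is preserved under passage to $\cI$-eigenspaces.

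For the odd-dimensional case one writes $\eth_{\sign} = -i\cI(d-\delta) = -i(d-\delta)\cI$, and one observes that $d-\delta$ differs from $\eth_{\dR} = d+\delta$ only by a sign on the odd-degree component, so it has identical domain, Fredholm, and spectral properties to $\eth_{\dR}$. Since $\cI$ is a bounded involution preserving $\rho^a H^k_{\iie}$, conjugating or pre-multiplying by $\cI$ preserves all the analytic conclusions, giving the same statements for $\eth_{\sign}$. The only point requiring any care in either parity is that the rescaling of the metric used to enforce condition \eqref{Ass2} on all strata, as in Proposition \ref{prop:rescaling}, is performed simultaneously for both $\eth_{\dR}$ and $\eth_{\sign}$, which is automatic because both operators are built from the same metric.

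The main obstacle, such as it is, lies in verifying that $\cI$ interacts compatibly with the weighted iterated edge Sobolev calculus; in particular, one must check that $\cI$ preserves $\rho^{1-\eps}H^1_{\iie}(X;\Iie\Lambda^*X)$, which reduces to the statement that the Hodge star of an $\iie$ metric, written in a local $\iie$ orthonormal frame, has coefficients that are bounded along with all their $\cV_{\ie}$-derivatives. This is a direct consequence of the normal form for $\iie$ metrics in Definition \ref{def:metric}, since in such coordinates the Hodge star is (up to constants) a permutation of basis elements of $\Lambda^*(\Iie T^*X)$. Once this compatibility is in hand, the signature theorem is essentially a corollary of Theorem \ref{MT1}, and no new inductive analysis is required.
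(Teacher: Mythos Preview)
Your even-dimensional argument is essentially identical to the paper's: both observe that the $\pm 1$ eigenspaces of $\cI$ split $\cD_{\max}(\eth_{\dR})$ and $\cD_{\min}(\eth_{\dR})$, so all conclusions transfer directly from Theorem~\ref{MT1}. Your remarks about $\cI$ preserving the weighted $\iie$ Sobolev scale are correct and this is indeed all that is needed.

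In the odd-dimensional case your strategy is sound but the specific claim is not quite right. The operator $d-\delta$ does \emph{not} differ from $d+\delta$ ``only by a sign on the odd-degree component'': conjugating by the grading $(-1)^{\bN}$ sends $d+\delta$ to $-(d+\delta)$, not to $d-\delta$. What is true is that conjugation by the unitary $U=i^{\bN}$ gives $U(d+\delta)U^{-1}=i(d-\delta)$, and since $U$ is a degree-wise phase it preserves every space $\rho^{a}H^{k}_{\iie}$; this repairs your argument and yields the conclusions for $i(d-\delta)$, hence for $\eth_{\sign}=-i\cI(d-\delta)$ after composing with the bounded involution $\cI$. The paper takes a different route here: it uses the strong Kodaira decomposition (available because $\eth_{\dR}$ has been shown to be essentially self-adjoint with closed range) to deduce that $d$ and $\delta$ separately have unique closed extensions with closed range, and from this reads off that $\cD_{\max}(i(d-\delta))=\cD_{\max}(\eth_{\dR})$, then reapplies the Gil--Mendoza argument. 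Your corrected unitary-equivalence argument is shorter; the paper's argument has the side benefit of establishing the individual properties of $d$ and $\delta$, which are used elsewhere.
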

  
\begin{proof}
If $X$ is even-dimensional, it is immediate to see that 
\begin{gather*}
	\cD_{\min}(\eth_{\sign}^+) = \cD_{\min}(\eth_{\dR}) \cap L^2_{\iie}(X;\Iie \Lambda^*_+(X)), \\
	\cD_{\max}(\eth_{\sign}^+) = \cD_{\max}(\eth_{\dR}) \cap L^2_{\iie}(X;\Iie \Lambda^*_+(X))
\end{gather*}
so the result follows from the corresponding results for $\eth_{\dR}$.

For $X$ odd-dimensional, we point out that one can characterize the maximal domain of $d-\delta$ through the same analysis used for $d+\delta$.
Alternately, we can use the result for $d+\delta$ to deduce it for $d-\delta$ as follows.
As explained above, a byproduct of our results is the existence of a strong Kodaira decomposition
\begin{equation*}
	L^2_{\Iie} \Omega^* = 
	L^2\cH \oplus \Image d \oplus \Image \delta
\end{equation*}
where $L^2\cH$ is the intersection of the null spaces of $d$ and $\delta$.
The de Rham operator $d+\delta$ decomposes into
\begin{equation*}
	\lrpar{ d: \Image \delta \to \Image d } \oplus
	\lrpar{ \delta: \Image d \to \Image \delta },
\end{equation*}
hence $d$ and $\delta$ individually have closed range and 
\begin{gather*}
	\cD_{\max}(\eth_{\dR}) \cap \Image \delta = \cD_{\max}(d) \cap \Image \delta \\
	\cD_{\max}(\eth_{\dR}) \cap \Image d = \cD_{\max}(\delta) \cap \Image d 
\end{gather*}
hence $i(d-\delta)$ has closed range with domain contained in (hence, by symmetry, equal to) $\cD_{\max}(\eth_{\dR})$.
Applying Proposition \ref{GilMendoza} to $i(d-\delta)$ then shows that it too is essentially self-adjoint.
\end{proof}

\subsection{The K-homology class $ [\eth_{\sign}]\in K_* (\hat{X})$} $ $\newline
The results proved so far for the signature operator $\eth_{\sign}$ on a Witt space $\widehat{X}$ allow one to define the 
K-homology class  $[\eth_{\sign}]\in K_* (\widehat{X})=KK_* (C(\widehat{X}),\bbC)$. The K-homology signature class
already appears in the work of Moscovici-Wu \cite{Mosc-Wu-Witt}; the definition there is based
on the results of Cheeger.

Recall that an {\it even} unbounded Fredholm module for the $C^*$-algebra $C(\widehat{X})$ is a pair $(H,D)$
such that:
\begin{itemize}
\item $H$ is a Hilbert space endowed with a unitary $*$-representation of $C(\widehat{X})$; $D$ is a self-adjoint unbounded linear operator on $H$;
\item there is a dense $*$-subalgebra $\mathcal A\subset C(\widehat{X})$ such that $\forall a \in \mathcal A$ the domain
of $D$ is invariant by $a$ and $[D,a]$ extends to a bounded operator on $H$;
\item $(1+D^2)^{-1}$ is a compact operator on $H$;
\item $H$ is equipped with a grading $\tau=\tau^*$, $\tau^2=I$, such that $\tau f = f \tau$ and $\tau D= -D \tau$.
\end{itemize}
An {\it odd} unbounded Fredholm module is defined omitting the last condition.

An unbounded Fredhom module defines a Kasparov $(C(\widehat{X}),\bbC)$-bimodule and thus
an element in  $KK_* (C(\widehat{X}),\bbC)$. We refer to  \cite{Baaj-Julg} \cite{Blackadar} for more
on this foundational material. 

Recall that adapted edge iterated metrics were defined in Proposition \ref{prop:rescaling}.
The following Theorem already appears in \cite{Mosc-Wu-Witt}, where it is proved using
Cheeger's results. Here we give a proof using our approach.

\begin{theorem}\label{theo:k-homology}
The signature operator $\eth_{\sign}$ associated to a Witt space 
$\widehat{X}$ endowed with an adapted edge iterated  metric $g$ defines an unbounded Fredholm module
for $C(\widehat{X})$ and thus a class $[\eth_{\sign}]\in KK_* (C(\widehat{X}),\bbC)$, $* \equiv\dim X \,{\rm mod} \,2$.
Moreover, the class  $[\eth_{\sign}]$ does not depend on the choice of the adapted edge iterated metric on $\widehat{X}$.
\end{theorem}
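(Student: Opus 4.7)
\emph{Proof plan.} The plan splits into two parts: (a) verification of the four axioms of an unbounded Fredholm module for $C(\widehat X)$, and (b) a homotopy argument showing independence of the adapted metric.

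For (a) I would take $H = L^2_{\iie}(X;\Iie\Lambda^*X)$ with $C(\widehat X)$ acting by multiplication (continuous functions on $\widehat X$ restrict to bounded continuous functions on $X=\mathrm{reg}(\widehat X)$), and $D = \eth_{\sign}$. Essential self-adjointness and compactness of $(1+D^2)^{-1}$ follow directly from Theorem~\ref{thm:Fred}: the maximal domain of $\eth_{\sign}$ is contained in $\rho^{1-\eps}H^1_{\iie}$, and this inclusion is compact in $L^2_{\iie}$. In the even-dimensional case the $\bbZ/2$-grading is the Hodge involution $\cI$, which commutes with multiplication operators and anticommutes with $\eth_{\sign}$. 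The remaining axiom requires a dense $*$-subalgebra $\mathcal A\subset C(\widehat X)$ preserving $\cD(\eth_{\sign})$ with bounded commutators. For this I take
\[
\mathcal A = \{\, f\in C(\widehat X) : f\circ\beta \in C^\infty(\widetilde X)\,\},
\]
which is dense in $C(\widehat X)$ by Stone--Weierstrass (the algebra contains constants and separates points, using smooth cutoffs on $\widetilde X$ together with the defining functions $\rho_Y$ of the control data). Continuity of $f$ on $\widehat X$ forces $f\circ\beta$ to be constant on every boundary fibre of the iterated fibration structure of $\widetilde X$, since these fibres collapse to points under $\beta$. A local computation in the coordinates of Section~\ref{section:iterated} then shows that $df$, viewed on the regular part, is a bounded section of $\Iie T^*X$ with respect to $g$. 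Consequently $[\eth_{\sign},f] = c(df)$ (Clifford multiplication by $df$) extends to a bounded operator on $H$, and the Leibniz identity $\eth_{\sign}(fu) = f\,\eth_{\sign}u + c(df)u$ shows that $\cD(\eth_{\sign})$ is $\mathcal A$-invariant.

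For (b), given two adapted metrics $g_0,g_1$, Proposition~\ref{prop:rescaling} produces a path $\{g_t\}_{t\in[0,1]}$ of adapted iterated edge metrics joining them. Each $g_t$ yields an unbounded Fredholm module $(H_t,D_t)$ as in (a). Trivialising the Hilbert spaces by rescaling with the smooth strictly positive ratio of volume forms lets one view $\{D_t\}$ as a family of unbounded operators on a single Hilbert space. I would then pass to the bounded transform $F_t = D_t(1+D_t^2)^{-1/2}$ and verify that $t\mapsto F_t$ is norm continuous. This uses the fact that the inductive parametrix construction of the previous section depends continuously on the metric coefficients, so the resulting a priori estimates are uniform in $t$. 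Operator homotopy invariance of Kasparov bimodules then gives $[D_0] = [D_1]$ in $KK_*(C(\widehat X),\bbC)$.

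\emph{Main obstacle.} The principal technical point is the identification of the correct dense subalgebra $\mathcal A$ in part (a). The tension is genuine: cotangent directions tangent to the link fibres in a boundary neighbourhood have unbounded $g$-norm as one approaches the stratum, so $\mathcal A$ cannot consist of arbitrary smooth functions on $\widetilde X$; yet it must be dense in $C(\widehat X)$. Requiring $f\circ\beta$ to be constant on boundary fibres (equivalently, $f$ to descend from a continuous function on $\widehat X$) is precisely what kills the unbounded components of $df$ while preserving density. Part (b) is then largely routine, the only subtle point being that $\cD(D_t)$ genuinely depends on $t$, which forces the use of the bounded transform rather than a direct operator homotopy at the unbounded level.
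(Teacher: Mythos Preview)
Your argument is correct and follows the same overall architecture as the paper's proof. The one genuine difference is the choice of dense subalgebra: the paper takes $\mathcal A$ to be the Lipschitz functions on $(\widehat X,g)$, noting that this class is independent of the adapted metric (any two are quasi-isometric) and that for Lipschitz $f$ the commutator $[\eth_{\sign},f]=c(df)$ is bounded simply because $df\in L^\infty$ almost everywhere. Your choice of smooth functions on $\widetilde X$ that are constant on boundary fibres is equally valid, but trades the one-line Rademacher-type observation for the local Taylor-expansion argument you sketch; the payoff is that your $\mathcal A$ consists of honestly smooth functions, which can be convenient for other purposes. For metric independence the paper invokes Hilsum's framework of unbounded operatorial homotopies on Lipschitz manifolds directly rather than passing explicitly to the bounded transform; your remark that $\cD(D_t)$ varies with $t$ is exactly the reason Hilsum's machinery (or, equivalently, the bounded picture) is needed, so the two routes coincide in substance.
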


\begin{proof}
We take $H=L^2_{\iie}(X;\Iie\Lambda^*X)$, endowed with the natural representation of $C(\widehat{X})$
by multiplication operators. We take $D$ as the unique closed self-adjoint extension of $\eth_{\sign}$. 
These data depend of course on the choice of the adapted edge iterated  metric. We take
$\mathcal{A}$ equal to the space of Lipschitz functions on $\widehat{X}$ with respect to $g$;
$\mathcal{A}$ does not depend on the choice of $g$, since two adapted edge iterated  metrics are quasi-isometric.
Finally, in the even dimensional case we take the involution defined by $\mathcal{I}$.
All the conditions defining an unbounded Kasparov module are easily proved using the results
of the previous section: indeed, if $f$ is Lipschitz then it is elementary to see that 
multiplication by $f$ sends the maximal domain of $\eth_{\sign}$
into itself; moreover $[f,\eth_{\sign}]$ is Clifford multiplication by $df$ which exists almost everywhere and is an element
in $L^\infty (\widehat{X})$; in particular $[f,\eth_{\sign}]$ 
extends to a bounded operator on $H$; finally we know that $(1+D^2)^{-1}$ is a compact operator (indeed, we proved 
this is true for $(\pm i+D)^{-1}$).
Thus there is a well defined class $ KK_* (C(\widehat{X}),\bbC)$ which we denote simply by $[\eth_{\sign}]$;
this class depends a priori on the choice of the metric $g$. 
Recall however  that two adapted edge iterated  metric $g_0$ and $g_1$ are joined by a path of adapted edge iterated  metrics
$g_t$.
Let $\eth^0_{\sign}$ and $\eth^1_{\sign}$ the corresponding signature operators, with domains in $H_0$
and $H_1$.
Proceeding as in the work of Hilsum on Lipschitz manifolds \cite{Hilsum-LNM} one can prove that the 1-parameter family
$(H_t, \eth^t_{\sign})$ defines an {\it unbounded} operatorial  homotopy; using the homotopy invariance of
$KK$-theory one obtains
  $$[\eth^0_{\sign}]=[\eth^1_{\sign}] \;\,\text{ in }\;\;  KK_* (C(\widehat{X}),\bbC)\,.$$
  We omit the details since they are a repetition of the ones given in  \cite{Hilsum-LNM}. 
\end{proof}

\subsection{The index class of  the twisted signature operator $ \wt \eth_{\sign}$} $ $\newline
Let $\widehat{X}$ be a Witt space endowed with an adapted edge iterated  metric. 
Assume now that we are also given a continuous map $r:\widehat{X}\to B\Gamma$ and let
$\Gamma \to \widehat{X}^\prime \to \widehat{X}$ the Galois $\Gamma$-cover induced by $E\Gamma\to B\Gamma$. 
We consider the Mishchenko bundle
\begin{equation*}
	\wt{C^*_r}\Gamma: = C^*_r\Gamma \btimes_\Gamma \widehat{X}^\prime.
\end{equation*} 
and the signature operator with values in the restriction of $\wt{C^*_r}\Gamma$ to $X$, which we denote
by $ \wt \eth_{\sign}$. 

\begin{proposition}  \label{prop:esa-gamma}
The twisted signature operator $ \wt \eth_{\sign}$ is essentially self-adjoint as an operator on 
$L^2_{\iie, \Gamma}(X;\Iie\Lambda_\Gamma^*X)$, with maximal domain contained 
in \\$\cap_{\eps>0} \rho^{1-\eps} H^1_{\iie,\Gamma}(X;\Iie\Lambda_\Gamma^*X)$ which is in turn 
$C^*_r\Gamma$-compactly 
included in the Hilbert $C^*_r\Gamma$-module $L^2_{\iie, \Gamma}(X;\Iie\Lambda_\Gamma^*X)$. 
\end{proposition}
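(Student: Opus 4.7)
The plan is to repeat, in the $C^*_r\Gamma$-equivariant setting, the inductive analysis that was carried out for the untwisted signature operator in Sections 5--7, taking advantage of the fact that the Mishchenko bundle $\wt{C^*_r}\Gamma$ is flat and that the uniform pseudodifferential calculus of \S 5 was set up in the first place to accept $C^*_r\Gamma$-coefficients. The argument is inductive on the depth of $\widehat{X}$, and the induction step reduces, as in the scalar case, to studying the partial-completion $P_a^\Gamma := x^{1/2-a}\wt\eth_{\sign}\,x^{1/2+a}$ near each singular stratum $Y$ through its normal operator and indicial family. Because $\wt{C^*_r}\Gamma$ carries a canonical flat connection (built from the trivial connection on $C^*_r\Gamma\times\widetilde{X}'$), the local expressions \eqref{PreDNearBdy}--\eqref{DNearBdy} and the resulting formulas for $N_q(P_a^\Gamma)$ and $I_q(P_a^\Gamma;\zeta)$ are identical to the scalar ones, but with all operators now $C^*_r\Gamma$-linear and the fibrewise de Rham operator $\eth_{\dR}^Z$ replaced by its Mishchenko-twisted version $\wt\eth_{\dR}^Z$.

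First I would verify the inductive hypothesis for $\wt\eth_{\dR}^Z$: by induction on depth, $\wt\eth_{\dR}^Z$ is essentially self-adjoint on the Hilbert $C^*_r\Gamma$-module $L^2_{\iie,\Gamma}(Z)$, has $C^*_r\Gamma$-compact resolvent, admits a strong $C^*_r\Gamma$-Kodaira decomposition, and the Witt hypothesis continues to rule out middle-degree harmonics (this uses only that the flat structure preserves the Hodge decomposition fibrewise). With this in hand, the Bessel-function computation of Lemma \ref{lem:Bessel} goes through verbatim: after Fourier transform in the edge variables, the system \eqref{FromR1}--\eqref{FromR2} becomes a system with $C^*_r\Gamma$-valued coefficients, and one solves it by applying the spectral decomposition of $\wt\eth_{\dR}^Z$ on the Hilbert $C^*_r\Gamma$-module $L^2_{\iie,\Gamma}(Z)$; the pointwise inequalities on indices of Bessel functions become $C^*_r\Gamma$-module statements, and the same exponent restrictions force the candidate null-space elements to lie outside $s^{-1/2+a}L^2_{\iie,\Gamma}$ for $a\in(0,1)$. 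Exactly the same reasoning handles Lemma \ref{lem:Indicial} (indicial roots) and Lemma \ref{lem:NClosedRange} (closed range via the left/right parametrix built out of the indicial-operator inverse near $t=0$ and the partial-symbol inverse for $t$ large), so $N_q(P_a^\Gamma)$ is $C^*_r\Gamma$-bijective on its maximal domain for $a\in(0,1)$.

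Next I would import the Mellin/integration-by-parts machinery of Lemma \ref{lem:integration}: the holomorphic extension argument, Parseval, and the interpolation \eqref{InterpolatePW} are functional-analytic statements that make sense verbatim for Hilbert $C^*_r\Gamma$-module valued functions (in the sense of Mishchenko--Fomenko), and the pairing used only requires the existence of a bounded generalized inverse for the formal transpose, which we have just established. Combining this with the Gil--Mendoza argument from Proposition \ref{GilMendoza} gives, after the two-step bootstrap in Proposition \ref{prop:intermediate} and Proposition \ref{DomVanBdy}, both the inclusion
\[
\cD_{\max}(\wt\eth_{\sign})\subseteq\bigcap_{\eps>0}\rho^{1-\eps}H^1_{\iie,\Gamma}(X;\Iie\Lambda_\Gamma^*X)
\]
and the identity $\cD_{\max}(\wt\eth_{\sign})=\cD_{\min}(\wt\eth_{\sign})$, i.e.\ essential self-adjointness. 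The passage from $d+\delta$ to $\wt\eth_{\sign}$ is handled as in Theorem \ref{thm:Fred}, using the strong $C^*_r\Gamma$-Kodaira decomposition to reduce the odd-dimensional case to the even-dimensional one.

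The step I expect to be the main obstacle is the $C^*_r\Gamma$-compact inclusion of $\rho^{1-\eps}H^1_{\iie,\Gamma}(X;\Iie\Lambda_\Gamma^*X)$ into $L^2_{\iie,\Gamma}(X;\Iie\Lambda_\Gamma^*X)$; recall that in the $C^*$-algebra setting, smoothing operators are not automatically $C^*_r\Gamma$-compact, so the scalar Rellich argument is not enough. The plan here is to prove a Rellich-type lemma for Hilbert $C^*_r\Gamma$-modules in the uniform calculus: writing the inclusion as a composition with a parametrix for a positive elliptic $\ie$-operator and multiplication by $\rho^{1-\eps}$, we want to show that this composition can be uniformly approximated (in the operator norm of adjointable $C^*_r\Gamma$-linear operators) by operators whose range is a finitely generated projective $C^*_r\Gamma$-module. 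The key point is that the extra weight $\rho^{1-\eps}$ together with the uniform regularity gained from $H^1_{\iie,\Gamma}$ allow one to cut off the support to a compact set in $\wt X$ with controlled error; on a compact piece, covariance under $\Gamma$ on the cover plus the standard Mishchenko--Fomenko construction of $C^*_r\Gamma$-Fredholm operators from compactly supported smoothing kernels (as in \cite{Mich-Fomenko}) provides the desired approximation by operators with finitely generated projective range. This is the only place where one really has to go beyond a mechanical re-reading of the scalar argument, and once it is established, items 3) and 4) of the $C^*_r\Gamma$-analogue of Theorem \ref{MT1} follow by the same resolvent argument used at the end of Section 6.
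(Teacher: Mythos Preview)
Your proposal misses the key simplification that makes the paper's proof short, and the route you take instead has a genuine gap.

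The crucial observation in the paper is that the classifying map $r$ extends continuously to all of $\widehat{X}$. Since a distinguished neighbourhood $W\cong \mathbb{R}^b\times C(Z)$ is contractible (it retracts onto a point of $Y$), the induced $\Gamma$-covering over $W$, and hence over the link $Z$, is \emph{trivial}. Consequently the Mishchenko bundle restricted to $W$ is the trivial bundle $W\times C^*_r\Gamma$, and the normal operator satisfies
\[
N_q(\wt\eth_{\dR}) \;\cong\; N_q(\eth_{\dR})\otimes \Id_{C^*_r\Gamma}.
\]
With this in hand, the injectivity of Lemma~\ref{lem:Bessel}, the indicial analysis of Lemma~\ref{lem:Indicial}, the bijectivity of Lemma~\ref{lem:NClosedRange}, and the integration-by-parts Lemma~\ref{lem:integration} all follow \emph{immediately} from the scalar case by tensoring with $\Id_{C^*_r\Gamma}$; nothing needs to be redone. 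Propositions~\ref{prop:upshot}, \ref{prop:intermediate} and \ref{DomVanBdy} then carry over with only notational changes, and the Gil--Mendoza argument and Theorem~\ref{thm:Fred} finish the job exactly as in the untwisted case.

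By contrast, your plan replaces $\eth_{\dR}^Z$ by a genuinely twisted $\wt\eth_{\dR}^Z$ and then invokes ``the spectral decomposition of $\wt\eth_{\dR}^Z$ on the Hilbert $C^*_r\Gamma$-module $L^2_{\iie,\Gamma}(Z)$'' to run the Bessel computation eigenvalue by eigenvalue. This step does not work: a self-adjoint regular operator on a Hilbert $C^*$-module, even one with $C^*_r\Gamma$-compact resolvent, need \emph{not} have discrete spectrum or an eigenspace decomposition. The entire proof of Lemma~\ref{lem:Bessel} hinges on restricting to the $\lambda$-eigenspace of $\Delta^Z$ to reduce to an ODE, and there is no substitute for this in the general Hilbert-module setting. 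The same objection applies to your proposed ``strong $C^*_r\Gamma$-Kodaira decomposition'' on $Z$: closed range and complementability of submodules are far more delicate over $C^*$-algebras than over $\bbC$. All of these difficulties evaporate once you notice the local triviality of the covering.

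Your treatment of the $C^*_r\Gamma$-compact inclusion is reasonable and matches the paper's reference to \cite{LP-SMF}; that part is fine.
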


\begin{proof}
We briefly point out how the proof given for $ \eth_{\dR}$  and $\eth_{\sign}$ extends to the case of  $\wt \eth_{\dR}$
and $\wt \eth_{\sign}$.
Recall that a $C^*_r \Gamma-$distribution on $X= \mbox{reg}\,(\widehat{X})$ is a $\mathbb{C}-$linear form 
$$
T: C^\infty_c ( \mbox{reg}\,(\widehat{X}), \Iie\Lambda_\Gamma^*X) \rightarrow C^*_r \Gamma
$$ satisfying the following 
property. For any compact $K \subset \mbox{reg}\,(\widehat{X}),$ there exists a finite set $S$
of elements of ${\rm Diff}^*_{ie, \Gamma}$ such that:
$$
\forall u \in C^\infty_K ( \mbox{reg}\,(\widehat{X}), \Iie\Lambda_\Gamma^*X), \;
\lVert \langle  T ; u \rangle \rVert_{C^*_r \Gamma} \leq \sup_{  Q \in S}  \lVert (Q u) \rVert_{L^2_{\iie,\Gamma}}.
$$ Of course, any element of  $L^2_{\iie, \Gamma}(X;\Iie\Lambda_\Gamma^*X)$ defines
a $C^*_r \Gamma-$distribution on $\mbox{reg}\,(\widehat{X}).$
It is clear that $\wt  \eth_{\dR}$ sends $L^2_{\iie, \Gamma}(X;\Iie\Lambda_\Gamma^*X)$ into
the space of $C^*_r \Gamma-$distributions. Therefore, the notion of maximal domain 
for $\wt  \eth_{\dR}$ is  defined. The notion of minimal domain is also well defined (this is simply the closure
of $C^\infty_c$ with respect to the norm $\lVert u \rVert + \lVert \wt  \eth_{\dR} u \rVert$ ). Notice that these two extensions
are closed.
Our first task is to show that these two extensions coincide. 
To this end we shall make use of the fundamental hypothesis that the reference map $r: X \rightarrow B \Gamma$ extends continously 
to the whole singular  space $\widehat X$. Therefore, for any distinguished neighborhood $W \simeq \bbR^b \times C(Z)$, 
the induced  $\Gamma$-coverings over  $W$ and  over $Z$ are trivial. 
This implies  that for any $q \in Y,$ 
$N_q(\wt  \eth_{\dR} ) $ is conjugate to  $N_q(\eth_{\dR}) \otimes {\rm Id}_{\wt C^*_r \Gamma}.$
Once this has been observed we have, immediately, that  Proposition \ref{prop:upshot} and Lemma \ref{lem:integration} extend 
to the case of $\wt\eth _{\dR}.$  Then, Proposition \ref{prop:intermediate} also extends easily to the present case 
 showing that the maximal domain of  $ \wt \eth_{\sign}$ is included in
$\cap_{\eps>0} \rho^{1-\eps} H^1_{\iie,\Gamma}(X;\Iie\Lambda_\Gamma^*X)$. 
Once the extra vanishing is obtained, we can apply the argument give in the proof of
Theorem \ref{MT1} in order to show that the maximal extension is in fact self-adjoint.
The argument of Gil-Mendoza can also be extended, showing the equality of the maximal and the minimal domain. 
The details of all this are easy and for the sake of brevity we omit them. Finally, proceeding as in \cite{LP-SMF},
one can prove that $\rho^\eps H^1_{\iie,\Gamma}(X;\Iie\Lambda_\Gamma^*X)$ is $C^*_r \Gamma$-compactly included into 
$ L^2_{\iie,\Gamma}(X;\Iie\Lambda_\Gamma^*X)$. The Proposition is proved for $\wt \eth_{\dR}$. The extra step needed
for the signature operator is proved as in Theorem \ref{thm:Fred}.
\end{proof}

From now on we shall only consider the closed unbounded self-adjoint  $C^*_r\Gamma$-operator  of Proposition \ref{prop:esa-gamma}
and with common abuse of notation we continue to denote it by $\wt \eth_{\sign}.$

We now proceed to show the following fundamental 

\begin{proposition}\label{prop:regularity}
The operator   $ \wt \eth_{\sign}$  is a {\it regular} operator. Consequently 
 $(i\pm \wt \eth_{\sign})$ and  $(1+\wt \eth_{\sign}^2)$ are  invertible.
\end{proposition}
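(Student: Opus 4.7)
The plan is to reduce regularity to the invertibility of $(i\pm\wt\eth_{\sign})$ as adjointable operators on the Hilbert $C^*_r\Gamma$-module $L^2_{\iie,\Gamma}(X;\Iie\Lambda_\Gamma^*X)$. Recall that a densely defined, closed, symmetric operator $T$ on a Hilbert $C^*$-module is regular and self-adjoint precisely when $(T\pm i)$ are both surjective with bounded adjointable inverse, and this then forces $(1+T^2)=(i+T)(-i+T)$ to be invertible as well. Essential self-adjointness and the equality $\cD_{\min}(\wt\eth_{\sign})=\cD_{\max}(\wt\eth_{\sign})$ are already provided by Proposition \ref{prop:esa-gamma}, together with the fact that this common domain is $C^*_r\Gamma$-compactly included in $L^2_{\iie,\Gamma}$.

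First, I would observe that the analytic results of Section 6 transfer verbatim to the twisted operator. Because the reference map $r:\widehat X\to B\Gamma$ extends continuously across the singular strata, on any basic neighborhood $W\cong\calU\times C(Z)$ the induced $\Gamma$-cover is trivial, and the same is true on the link $Z$; hence $\wt{C^*_r}\Gamma\rest{W}$ is trivial. Consequently the normal operator and indicial family of $\wt\eth_{\sign}$ (after the partial completion) are canonically identified with those of $\eth_{\sign}$ tensored with $\Id_{C^*_r\Gamma}$, so the invertibility statements of Lemmas \ref{lem:Bessel}, \ref{lem:Indicial} and \ref{lem:NClosedRange} persist as invertibility of $C^*_r\Gamma$-linear model operators.

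Next, using the uniform pseudodifferential calculus $\Psi^*_{\cB,\Gamma}$ of \S 5.2 together with these twisted model inverses, I would assemble in the usual way a right parametrix $Q$ for $(i+\wt\eth_{\sign})$: an interior symbolic parametrix from the uniform calculus, corrected near each singular stratum by the $C^*_r\Gamma$-linear inverses of the normal and indicial operators. The result is a bounded adjointable operator $Q:L^2_{\iie,\Gamma}\to\cD_{\max}(\wt\eth_{\sign})$ with
\[
(i+\wt\eth_{\sign})\,Q = \Id + K,
\]
where the remainder $K$ maps $L^2_{\iie,\Gamma}$ into $\rho^{1-\eps}H^1_{\iie,\Gamma}(X;\Iie\Lambda_\Gamma^*X)$ for some $\eps>0$. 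By Proposition \ref{prop:esa-gamma}, the latter space is $C^*_r\Gamma$-compactly included in $L^2_{\iie,\Gamma}$, so $K$ is a $C^*_r\Gamma$-compact operator. A symmetric argument produces a left parametrix, and the same construction applies to $(-i+\wt\eth_{\sign})$.

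Finally, the symmetry of $\wt\eth_{\sign}$ and the equality $\cD_{\min}=\cD_{\max}$ imply that $(i\pm\wt\eth_{\sign})$ have dense range and are injective; the parametrix then upgrades dense range to closed range, hence to surjectivity, yielding bounded adjointable inverses $(i\pm\wt\eth_{\sign})^{-1}$. This is exactly regularity plus self-adjointness of $\wt\eth_{\sign}$, and the invertibility of $1+\wt\eth_{\sign}^2$ follows by composition. The main obstacle in executing this plan is ensuring that the parametrix remainders are not merely smoothing (as in the closed case) but actually $C^*_r\Gamma$-compact: one must carry out the inductive assembly of Section 6 in the twisted setting while keeping the errors inside a weighted Sobolev space that compactly embeds into $L^2_{\iie,\Gamma}$, which is precisely where the stability of the extra vanishing $\rho^{1-\eps}$ under $C^*$-coefficients, indicated in the proof of Proposition \ref{prop:esa-gamma}, enters in an essential way.
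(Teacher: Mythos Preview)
Your approach is genuinely different from the paper's, and it has a real gap at the decisive step.

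The paper does not build a parametrix for $(i+\wt\eth_{\sign})$ at all. Instead it invokes the $\mathcal D$-connection machinery of Skandalis, as developed by Rosenberg--Weinberger: one forms the unbounded Kasparov $(C(\widehat X)\otimes C^*_r\Gamma,\,C^*_r\Gamma)$-bimodule $(\mathcal E,\mathcal D)$ by tensoring $(L^2_{\iie},\eth_{\sign})$ with $C^*_r\Gamma$, notes that the Mishchenko bundle gives a finitely generated projective module $P$ over $A=C(\widehat X)\otimes C^*_r\Gamma$, and checks that $\wt\eth_{\sign}$ on $P\hat\otimes_A\mathcal E\cong L^2_{\iie,\Gamma}$ is a $\mathcal D$-connection (the commutators $\wt\eth_{\sign}\,T_\xi-T_\xi\,\mathcal D$ are bounded because locally $\wt\eth_{\sign}-\mathcal D$ is a zeroth-order operator). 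A general theorem then says every $\mathcal D$-connection is self-adjoint and regular; one finishes by observing $\mathcal P\otimes_{\mathcal A}\Dom(\mathcal D)\subset\cD_{\max}(\wt\eth_{\sign})$, so the dense range already obtained persists on the maximal domain. The argument is $KK$-theoretic and never touches the model operators of Section~6.

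Your direct analytic route fails at the sentence ``symmetry and $\cD_{\min}=\cD_{\max}$ imply that $(i\pm\wt\eth_{\sign})$ have dense range''. In a Hilbert $C^*$-module this is \emph{precisely} the content of regularity and cannot be deduced from self-adjointness alone: $\ker((i+T)^*)=0$ yields only $\overline{\mathrm{Ran}(i+T)}^{\perp}=0$, which does not force density (the paper itself cites Hilsum's example of a self-adjoint $D$ with $(i\pm D)$ not invertible). So you are assuming what is to be proved, and the subsequent ``upgrade dense range to closed range via the parametrix'' is moot. There is a second, more structural issue: the parametrix you describe --- a symbolic interior piece corrected near each stratum by inverses of the normal and indicial operators --- is not something Section~6 produces. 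That section establishes only a priori estimates (the extra $\rho^{1-\eps}$ vanishing), and the paper explicitly remarks that a full iterated-edge pseudodifferential calculus, which such a patching construction would require, is not yet available. Absent that calculus, there is no mechanism in the paper to manufacture a bounded $Q$ with $(i+\wt\eth_{\sign})Q=\Id+K$ and $K$ genuinely $C^*_r\Gamma$-compact.
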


\begin{proof}
Recall that a closed unbounded self-adjoint operator $D$
on a Hilbert $C^*_r\Gamma$-module is
said to be regular if $1+D^2$ is surjective. One can show, see \cite{Lance}, that $D$ is regular if and only if
$1+D^2$ has dense image if and only if $(i\pm D)$ has dense image if and only if $(i\pm D)$ is surjective.
Moreover, if $D$ is regular then both  $(i\pm D)$  and $1+D^2$ have an inverse.
For a simple example of an unbounded self-ajoint operator on a Hilbert module  such that  $(i+ D)$ and  $(i- D)$ are
not invertible see \cite[page 415]{Hilsum-K}.

We shall prove that our operator is regular by employing unpublished ideas of George Skandalis, explained in detail
in work of Rosenberg-Weinberger \cite{ros-weinberger}. We have seen in the previous subsection that $\eth_{\sign}$ defines an
unbounded Kasparov $(C(\widehat{X}),\bbC)$-bimodule and thus a  class
$[\eth_{\sign}]\in KK_* (C(\widehat{X}),\bbC)$. Consider now 
$$
\mathcal{E}:= L^2_{\iie}(X;\Iie\Lambda^*X)\otimes_{\bbC} C^*_r \Gamma \,;
$$
tensoring $\eth_{\sign}$ with $\Id_{C^*_r\Gamma}$ we obtain in an obvious way an unbounded 
Kasparov $(C(\widehat{X})\otimes C^*_r\Gamma, C^*_r\Gamma)$-bimodule that we will denote  by $(\mathcal{E},\mathcal{D})$.
For later use we denote the corresponding KK-class as 
\begin{equation}\label{tensor-class}
[[ \eth_{\sign} ]]\in KK_*(C(\widehat{X})\otimes C^*_r\Gamma, C^*_r\Gamma).
\end{equation}
Consider $A:= C(\widehat{X})\otimes C^*_r\Gamma$ and set
$$
\mathcal{A}:=\{a\in A : a(\Dom \mathcal{D})\subset \Dom \mathcal{D} \text{ and } [a,\mathcal{D}] \ 
\text{extends to an element of }\; \mathcal{L}( \mathcal{E}) \}.
$$
It is a non-trivial result (\cite{ros-weinberger}) that $\mathcal{A}$ is a dense *-subalgebra of $A$ stable under holomorphic functional calculus.
Consider now the Mishchenko bundle $\wt{C^*_r}\Gamma$ and its continuous sections
$C^0 ( \widehat{X}; \wt{C^*_r}\Gamma)=:P$. It is obvious that $P$ is a finitely generated projective right $A$-module. 
The result cited above, together with Karoubi density theorem (\cite{Karoubi} exercice II.6.5), implies that there exists a finitely generated projective 
right $\mathcal{A}$-module $\mathcal{P}$ such that $P=\mathcal{P}\otimes_\mathcal{A} A$. Consider for $\xi\in P$ the operator 
$T_\xi: \mathcal{E} \to P\hat{\otimes}_A \mathcal{E}$ defined by $T_\xi (\eta):= \xi\otimes \eta$. $T_\xi$ is a bounded
operator of $C^*_r\Gamma$  Hilbert modules with adjoint $T_\xi^*$. Recall now, 
see \cite{ros-weinberger},  that a 
$\mathcal{D}$-connection in the present context is a  symmetric  $C^*_r\Gamma$-linear operator $\widetilde{\mathcal{D}}$
$$\widetilde{\mathcal{D}}: \mathcal{P}\otimes_\mathcal{A} \Dom (\mathcal{D}) \longrightarrow P\hat{\otimes}_A \mathcal{E}$$
such that $\forall \xi\in \mathcal{P}$ the following commutator, defined initially on $(\Dom (\mathcal{D}) )\oplus 
\mathcal{P}\otimes_\mathcal{A} \Dom (\mathcal{D}) $,  extends to a bounded operator on $ \mathcal{E}\oplus 
P\hat{\otimes}_A \mathcal{E}$: 
$$\left[
 \begin{pmatrix}
	\mathcal{D} &  0 \\
	0  & \widetilde{\mathcal{D}}
	\end{pmatrix},\begin{pmatrix}
	0 &  T_\xi^* \\
	T_\xi & 0
	\end{pmatrix} \right]
$$
Rosenberg and Weinberger have proved \cite{ros-weinberger} that every $\mathcal{D}$-connection is a 
self-adjoint regular operator. We can end the proof of the present Proposition as follows:
first we  observe that as  $C^*_r\Gamma$ Hilbert
modules  $P\hat{\otimes}_A \mathcal{E}= L^2_{\iie,\Gamma}(X;\Iie\Lambda_\Gamma^*X)$; next we consider 
$\wt \eth_{\sign}$ and prove the following.
\begin{lemma} The operator $\wt \eth_{\sign}$ defines a $\mathcal{D}$-connection. 
\end{lemma}
\begin{proof} 
It will suffice to prove the following. Let $U$ be an open
subset of ${X}$ over which $\widetilde{C^*_r\Gamma}$ is trivial. Then the restriction 
of $\xi \in \mathcal{P}$ to $U$ is a finite sum of terms of the form $\theta \otimes u$ where 
$\theta$ is a flat section and $u$ is a $C^1-$function. So we shall assume that 
$\xi= \theta \otimes u.$
Then for any $\eta \in L^2_{\iie}(U;\Iie\Lambda^*X_{| U}) \otimes C^*_r\Gamma $, one has:
$$
( \wt \eth_{\sign} \circ  T_\xi -T_\xi \circ \mathcal{D}) (\eta)=
\theta \otimes c(d u) \eta + \theta  \otimes u (\wt \eth_{\sign} - \mathcal{D})(\eta),
$$  where $\cl{d u}$ is Clifford multiplication. Recall that the restrictions to $U$ of $\wt \eth_{\sign}$ and   $\mathcal{D}$ 
are differential operators of order one having the same principal symbol. 
Therefore, $( \wt \eth_{\sign} \circ  T_\xi -T_\xi \circ \mathcal{D}) $ is bounded 
on $ L^2_{\iie}(U;\Iie\Lambda^*X_{| U}) \otimes C^*_r\Gamma. $
One then gets immediately the Lemma by using a partition of unity.
\end{proof}
Finally, we check easily that 
$\mathcal{P}\otimes_\mathcal{A} \Dom (\mathcal{D})\subset \Dom_{{\rm max}} (\wt \eth_{\sign})$. Since 
$(i+\wt \eth_{\sign})$ has dense image  with domain  $\mathcal{P}\otimes_\mathcal{A} \Dom (\mathcal{D})$,
we see that, a fortiori,
the image of $(i+\wt \eth_{\sign})$ with domain $ \Dom_{{\rm max}} (\wt \eth_{\sign})$ must also be dense.
\end{proof}

These two Propositions yield at once the following 

\begin{theorem}\label{theo:kk}
The twisted signature operator $\wt \eth_{\sign}$ and the $C^*_r\Gamma$-Hilbert
module $L^2_{\iie,\Gamma}(X;\Iie\Lambda_\Gamma^*X)$ define an unbounded 
Kasparov $(\bbC,C^*_r\Gamma)$-bimodule and thus a class in $KK_* (\bbC, C^*_r \Gamma)=K_* (C^*_r\Gamma)$.
We call this the index class associated to $\wt \eth_{\sign}$ and denote it by $\Ind (\wt \eth_{\sign})\in K_* (C^*_r\Gamma)$.\\
Moreover, if  as in \eqref{tensor-class} we denote by $[[\eth_{\sign}]]\in KK_*(C(\widehat{X})\otimes C^*_r\Gamma, C^*_r\Gamma)$ 
the class obtained from $[\eth_{\sign}]\in KK_*(C(\widehat{X}),\bbC)$ by tensoring with  $C^*_r\Gamma$, then
$\Ind (\wt \eth_{\sign})$ is equal to the Kasparov product of the class defined by Mishchenko bundle 
$[\widetilde{C^*_r}\Gamma]\in  KK_0(\bbC,C(\widehat{X})\otimes C^*_r\Gamma)$ with  $[[\eth_{\sign}]]$:
\begin{equation}\label{tensor}
\Ind (\wt \eth_{\sign})= [\widetilde{C^*_r}\Gamma]\otimes [[\eth_{\sign}]]
\end{equation}
In particular, the index class $\Ind (\wt \eth_{\sign})$ does not depend on the choice of the adapted edge iterated  metric.
\end{theorem}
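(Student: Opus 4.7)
The plan is to combine the analytic results of Propositions \ref{prop:esa-gamma} and \ref{prop:regularity} with the internal product in $KK$-theory for unbounded bimodules, in the spirit of Connes--Skandalis as refined by Kucerovsky. First I would verify the three defining conditions for $\lrpar{L^2_{\iie,\Gamma}(X;\Iie\Lambda_\Gamma^*X),\wt\eth_{\sign}}$ to be an unbounded Kasparov $(\bbC,C^*_r\Gamma)$-bimodule: self-adjointness and regularity are recorded in Propositions \ref{prop:esa-gamma} and \ref{prop:regularity}; the $C^*_r\Gamma$-compactness of $(i+\wt\eth_{\sign})^{-1}$ follows because this resolvent takes values in the maximal domain of $\wt\eth_{\sign}$, which by Proposition \ref{prop:esa-gamma} is $C^*_r\Gamma$-compactly included in $L^2_{\iie,\Gamma}$; and in the even-dimensional case the required $\bbZ/2$-grading is provided by the Hodge involution $\cI$ acting diagonally on forms valued in $\wt{C^*_r}\Gamma$. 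This produces the class $\Ind(\wt\eth_{\sign})\in K_*(C^*_r\Gamma)$.

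Next I would identify this class with the internal Kasparov product $[\wt{C^*_r}\Gamma]\otimes [[\eth_{\sign}]]$. All the structural ingredients are already in place from the proof of Proposition \ref{prop:regularity}: the unbounded module $(\mathcal{E},\mathcal{D})$ representing $[[\eth_{\sign}]]$, the dense holomorphically closed subalgebra $\mathcal{A}\subset C(\widehat{X})\otimes C^*_r\Gamma$, the finitely generated projective right $\mathcal{A}$-module $\mathcal{P}$ with $\mathcal{P}\otimes_{\mathcal{A}} A = C^0(\widehat{X};\wt{C^*_r}\Gamma)$, and the lemma establishing that $\wt\eth_{\sign}$ is a $\mathcal{D}$-connection with $\mathcal{P}\otimes_\mathcal{A}\Dom(\mathcal{D})\subset \Dom(\wt\eth_{\sign})$. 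Kucerovsky's sufficient criterion reduces the product identification to the $\mathcal{D}$-connection property (already verified), the domain inclusion (already noted), and a graded semiboundedness condition on the commutator $[\wt\eth_{\sign}\oplus\mathcal{D},\,T_\xi+T_\xi^*]$ for $\xi\in\mathcal{P}$; the latter follows because the local computation in that same lemma shows the commutator is genuinely bounded, not merely semibounded. This yields the product formula \eqref{tensor1}.

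Metric independence in the last assertion is then immediate. The class $[\wt{C^*_r}\Gamma]$ is purely topological and involves no metric. Theorem \ref{theo:k-homology} establishes that $[\eth_{\sign}]\in KK_*(C(\widehat{X}),\bbC)$ does not depend on the choice of adapted edge iterated metric on $\widehat{X}$; exterior tensoring with $C^*_r\Gamma$ preserves this, hence $[[\eth_{\sign}]]$ is also metric-independent, and so is the Kasparov product $\Ind(\wt\eth_{\sign})$.

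The main obstacle is the verification of Kucerovsky's semiboundedness hypothesis in the stratified setting, since the Mishchenko bundle is only continuous on $\widehat{X}$ while the trivializations used to compute commutators live over basic neighborhoods of the strata. The key saving fact, which is already exploited in the proof of Proposition \ref{prop:regularity}, is that the reference map $r\colon\widehat{X}\to B\Gamma$ extends continuously to the whole singular space, so over any basic neighborhood the induced $\Gamma$-cover is trivial and the difference $\wt\eth_{\sign}\circ T_\xi - T_\xi\circ\mathcal{D}$ reduces to Clifford multiplication by a smooth, bounded one-form; a partition-of-unity argument then globalizes the bound.
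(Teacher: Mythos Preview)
Your approach is essentially the same as the paper's: both rest on the fact, established in the proof of Proposition~\ref{prop:regularity}, that $\wt\eth_{\sign}$ is a $\mathcal{D}$-connection for the pair $(\mathcal{P},\mathcal{D})$. The paper simply cites the Skandalis theorem (as recorded in \cite{ros-weinberger}) which asserts in one stroke that any $\mathcal{D}$-connection is regular \emph{and} represents the Kasparov product $[\widetilde{C^*_r}\Gamma]\otimes[[\eth_{\sign}]]$; you instead route this through Kucerovsky's later criterion, which is a legitimate and equivalent packaging of the same idea.

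One small correction to your statement of Kucerovsky's hypotheses: the semiboundedness condition is \emph{not} a condition on the commutator $[\wt\eth_{\sign}\oplus\mathcal{D},\,T_\xi+T_\xi^*]$ (that is already the connection condition, which you list separately), but rather the lower bound $\langle (D_1\otimes 1)x,\wt\eth_{\sign}x\rangle+\langle\wt\eth_{\sign}x,(D_1\otimes 1)x\rangle\geq\kappa\langle x,x\rangle$, where $D_1$ is the operator in the first factor. Here the first factor is the Mishchenko bundle class $[\widetilde{C^*_r}\Gamma]\in KK_0(\bbC,C(\widehat X)\otimes C^*_r\Gamma)$, represented by the projective module $P$ with \emph{zero} operator, so $D_1=0$ and the positivity condition is vacuous. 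Thus your argument goes through, but the ``main obstacle'' you flag is not actually where the work lies; the only nontrivial verification is the connection condition, and that is exactly the lemma already proved inside Proposition~\ref{prop:regularity}.
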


\begin{proof}
We already know that $\wt \eth_{\sign}$ is self-adjoint regular and $\bbZ_2$-graded in the even dimensional case.
It remains to show that the inverse of $(1+\wt \eth_{\sign}^2)$ is a $C^*_r \Gamma$-compact operator.
However, the domain of $\wt \eth_{\sign}$ is compactly included in  $L^2_{\iie,\Gamma}(X;\Iie\Lambda_\Gamma^*X)$;
thus $(i+ \wt \eth_{\sign})^{-1}$ and $(-i+\wt \eth_{\sign})^{-1}$ are both compacts. It follows that  $(1+\wt \eth_{\sign}^2)^{-1}$ is
compact. Thus  $(\wt \eth_{\sign}, L^2_{\iie,\Gamma}(X;\Iie\Lambda_\Gamma^*X))$ defines an unbounded 
Kasparov $(\bbC,C^*_r\Gamma)$-bimodule as required. The equality $\Ind (\wt \eth_{\sign})= [\widetilde{C^*_r}\Gamma]\otimes [[\eth_{\sign}]]$
is in fact part of the theorem, attributed to  Skandalis in \cite{ros-weinberger}, on $\mathcal{D}$-connections. Finally, since we have proved that 
$ [\eth_{\sign}]$, and thus $ [[\eth_{\sign}]]$, is metric independent, and since $ [\widetilde{C^*_r}\Gamma]$
is obviously metric independent, we conclude that $\Ind (\wt \eth_{\sign})$ has this property too. The Theorem is proved.
\end{proof}

\begin{corollary}\label{cor:assembly}
Let $\beta:K_* (B\Gamma)\to K_* (C^*_r\Gamma)$ be the assembly map; let $r_* [\eth_{\sign}]\in K_* (B\Gamma)$
the push-forward of the signature K-homology class. Then 
\begin{equation}\label{assembly}
\beta(r_* [\eth_{\sign}])=\Ind (\wt \eth_{\sign}) \text{ in }  K_* (C^*_r\Gamma)
\end{equation}
\end{corollary}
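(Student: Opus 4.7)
The strategy is to identify both sides of \eqref{assembly} as Kasparov products and then invoke the already-proved identity \eqref{tensor}. Recall that the assembly map may be described in $KK$-theoretic terms as a Kasparov product with the Mishchenko bundle class
\[
[\cL_{B\Gamma}]\in KK_0(C(B\Gamma),C^*_r\Gamma)
\]
(using direct limits over compact subsets of $B\Gamma$ if $B\Gamma$ is not compact, in the standard way): for any $[D]\in K_*(B\Gamma)=KK_*(C(B\Gamma),\bbC)$, one has
\[
\beta([D])=[D]\otimes_{C(B\Gamma)}[\cL_{B\Gamma}]\in KK_*(\bbC,C^*_r\Gamma)=K_*(C^*_r\Gamma).
\]
The pushforward $r_*:K_*(\widehat X)\to K_*(B\Gamma)$ is induced by the $*$-homomorphism $r^*:C(B\Gamma)\to C(\widehat X)$, so applying $\beta$ to $r_*[\eth_{\sign}]$ we may use the functoriality/associativity of the Kasparov product to rewrite
\[
\beta(r_*[\eth_{\sign}])=(r_*[\eth_{\sign}])\otimes_{C(B\Gamma)}[\cL_{B\Gamma}]=[\eth_{\sign}]\otimes_{C(\widehat X)}\bigl(r^*[\cL_{B\Gamma}]\bigr).
\]

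The next step is to identify $r^*[\cL_{B\Gamma}]\in KK_0(C(\widehat X),C^*_r\Gamma)$ with the class of the Mishchenko bundle on $\widehat X$. By the very construction of $\widetilde{C^*_r\Gamma}=C^*_r\Gamma\times_\Gamma \widehat X'$ using the Galois covering $\widehat X'\to\widehat X$ classified by $r$, the bundle $\widetilde{C^*_r\Gamma}$ is precisely the pullback along $r$ of the universal Mishchenko bundle over $B\Gamma$. Consequently $r^*[\cL_{B\Gamma}]$ is represented by the $C^*_r\Gamma$-module of continuous sections of $\widetilde{C^*_r\Gamma}\to\widehat X$ with zero operator, which is exactly the class $[\widetilde{C^*_r\Gamma}]$ appearing in Theorem \ref{theo:kk} (after applying the standard identification $KK_0(C(\widehat X),C^*_r\Gamma)\cong KK_0(\bbC,C(\widehat X)\otimes C^*_r\Gamma)$ together with the dual pairing used there).

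Combining these observations yields
\[
\beta(r_*[\eth_{\sign}])=[\eth_{\sign}]\otimes_{C(\widehat X)}[\widetilde{C^*_r\Gamma}]=[\widetilde{C^*_r\Gamma}]\otimes_{C(\widehat X)\otimes C^*_r\Gamma}[[\eth_{\sign}]],
\]
where the last equality is just the rewriting of the Kasparov product in the form of \eqref{tensor}. By Theorem \ref{theo:kk} this last expression is precisely $\Ind(\wt\eth_{\sign})$, which proves \eqref{assembly}.

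The main (mild) obstacle I anticipate is the bookkeeping in the Kasparov calculus when $B\Gamma$ is not a compact manifold: one has to interpret $[\cL_{B\Gamma}]$ and the assembly map through an exhaustion of $B\Gamma$ by compact subsets $K$ with $r(\widehat X)\subset K$, and check that the pullback identification $r^*[\cL_{B\Gamma}]=[\widetilde{C^*_r\Gamma}]$ and the associativity of the Kasparov product are compatible with this limit. All the required regularity for the operator—self-adjointness, compactness of $(1+\wt\eth_{\sign}^2)^{-1}$ as a $C^*_r\Gamma$-operator, existence of the $\mathcal D$-connection structure—has already been established in Proposition \ref{prop:esa-gamma}, Proposition \ref{prop:regularity}, and Theorem \ref{theo:kk}, so no further analytic input is needed.
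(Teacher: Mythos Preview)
Your proposal is correct and follows the same approach as the paper: both rely on the already-established identity \eqref{tensor} from Theorem \ref{theo:kk} and observe that the right-hand side is precisely the $KK$-theoretic description of the assembly map applied to $r_*[\eth_{\sign}]$. The paper's proof is just a one-line invocation of this fact, while you have unpacked the Kasparov-product description of $\beta$ and the pullback identification $r^*[\cL_{B\Gamma}]=[\widetilde{C^*_r\Gamma}]$ in more detail.
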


\begin{proof}
Since $\Ind (\wt \eth_{\sign})= [\widetilde{C^*_r}\Gamma]\otimes [[\eth_{\sign}]]$, this follows immediately
from the very definition of the assembly map. See  \cite{Kasparov-inventiones} 
\cite{Kasparov-contemporary}.
\end{proof}

\section{Witt bordism invariance}

Let $\hat Y$ be an oriented odd dimensional Witt space  with boundary $\partial \hat Y= \hat X.$  
We assume that $\hat  Y$ is a smoothly stratified space having a product structure 
near its boundary.
We endow $\hat Y$ with an edge iterated metric having a product structure near $\partial \hat Y= \hat X$  
and inducing an adapted edge iterated metric metric $g$ (Proposition \ref{prop:rescaling}) on $ \hat X.$ Consider 
a reference map $r: \hat Y \rightarrow B \Gamma,$ its restriction to $\hat X$ and $g$ induce a  $C^*_r \Gamma-$linear signature operator on 
$\hat X$. In this section only we shall be very precise and denote this operator by $\wt \eth_{\sign}( \hat X)$.

\begin{theorem} \label{thm:bordism} We have $ \Ind \wt \eth_{\sign} (\hat X) =0 $ in $K_0 (C^*_r \Gamma) \otimes_\bbZ \bbQ.$
\end{theorem}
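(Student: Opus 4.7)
The plan is to adapt the classical KK-theoretic proof of signature-bordism invariance to the present iterated edge setting, following the strategy used by Hilsum--Skandalis in the smooth closed case and Leichtnam--Lott--Piazza in the smooth manifold-with-boundary case.

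First I would set up the geometry: by using an iterated edge metric on $\hat Y$ whose restriction to a collar of $\hat X$ has the product form $dt^2 + g_{\hat X}$, one attaches an infinite cylindrical end $\hat X \times [0,\infty)$ to obtain a complete non-compact Witt space $(\hat Y_\infty, g_\infty)$. Since all singular strata of $\hat Y_\infty$ lie in the compact piece $\hat Y$, the inductive analysis of Sections \ref{Operators}--\ref{sec:Sign} combined with standard cylindrical-end (Melrose $b$-calculus) techniques near infinity shows that $\wt\eth_{\sign}(\hat Y_\infty)$ is essentially self-adjoint and $C^*_r\Gamma$-regular on the Hilbert $C^*_r\Gamma$-module $L^2_{\iie,\Gamma}(\hat Y_\infty; \Iielaga \hat Y_\infty)$, and that its inverse (away from a $C^*_r\Gamma$-compact piece) has the product structure coming from $\partial_t + \wt\eth_{\sign}(\hat X)$ on the end.

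Second, because $\hat Y$ is odd-dimensional, the unbounded pair $(L^2_{\iie,\Gamma}(\hat Y_\infty;\Iielaga \hat Y_\infty), \wt\eth_{\sign}(\hat Y_\infty))$ defines a Kasparov $(\bbC, C^*_r\Gamma)$-bimodule representing a class in $KK_1(\bbC, C^*_r\Gamma) = K_1(C^*_r\Gamma)$. Using the product form of the operator on the cylindrical end together with a standard KK-cobordism / higher-spectral-flow argument (cf. the treatments in \cite{LLP}, \cite{LPFOURIER}), one then identifies the class on $\hat X$ with a boundary map applied to this $K_1$-class, so that the compact, smoothly stratified geometry of $\hat Y$ provides an explicit operatorial null-homotopy forcing $\Ind(\wt\eth_{\sign}(\hat X))$ to vanish rationally in $K_0(C^*_r\Gamma)$.

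The main obstacle I expect is that $\wt\eth_{\sign}(\hat X)$ may have a non-trivial $C^*_r\Gamma$-kernel which need not be a complemented submodule of $L^2_{\iie,\Gamma}(\hat X;\Iielaga \hat X)$; this is precisely the obstruction to Fredholmness of the APS-type boundary value problem in the $C^*_r\Gamma$-setting. I would address it by a spectral-section perturbation in the sense of Melrose--Piazza, modifying $\wt\eth_{\sign}(\hat X)$ by a self-adjoint $C^*_r\Gamma$-linear smoothing operator to make it $C^*_r\Gamma$-invertible; the resulting correction term is a torsion class, so it vanishes upon tensoring with $\bbQ$, which is exactly what the statement requires. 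Once this perturbation is in place, the KK-cobordism argument goes through and yields the desired vanishing.
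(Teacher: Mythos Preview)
Your proposal takes a genuinely different route from the paper's proof. The paper does \emph{not} attach a cylindrical end, does not invoke the $b$-calculus on $\hat Y_\infty$, and does not use spectral sections. Instead it stays entirely inside bivariant $KK$-theory, following Higson: the signature operator on the Witt space with boundary $\hat Y$ is used to produce a class $[\wt\eth_{\sign}]\in KK^1(C_{\partial\hat Y}(\hat Y),C^*_r\Gamma)$, and one applies the long exact sequence in $KK(\,\cdot\,,C^*_r\Gamma)$ associated to $0\to C_{\partial\hat Y}(\hat Y)\to C(\hat Y)\to C(\partial\hat Y)\to 0$. The key computation, carried out on a collar $W\cong[0,1)\times\hat X$ via the Kasparov product with the elementary operator $-i\,d/dx$ on $[0,1]$, gives $\delta[\wt\eth_{\sign}]=2[\wt\eth_{\sign}(\hat X)]$. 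Since $\pi_*^{\partial\hat Y}=\pi_*^{\hat Y}\circ i_*$ and $i_*\circ\delta=0$ by exactness, one obtains $2\,\Ind\wt\eth_{\sign}(\hat X)=0$, hence the rational vanishing.

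Your analytic strategy is in principle viable and is indeed the one used in \cite{LLP} for smooth manifolds with boundary, but it is considerably heavier here: you would have to redo the inductive iterated-edge parametrix construction on a non-compact space with a cylindrical end, establish regularity of the resulting $C^*_r\Gamma$-operator, and prove existence of spectral sections in the Witt setting. Note also that your explanation of why the result is only rational is not quite right: the correction coming from a choice of spectral section is \emph{not} in general a torsion class (differences of spectral sections can realize arbitrary $K$-theory elements). The factor of $2$ that forces one into $K_0\otimes\bbQ$ comes instead from the specific structure of the signature operator --- in the paper's language, from the identity $\delta[\wt\eth_{\sign}]=2[\wt\eth_{\sign}(\hat X)]$, and in your framework it would emerge from the symmetry of the boundary operator under the grading involution. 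The paper's $KK$-theoretic argument sidesteps all of the extra analysis your approach would demand.
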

\begin{proof} We follow \cite[Section 4.3]{LP-Cut} and Higson \cite[Theorem 5.1]{Higson}. 
Denote by ${\hat Y}^\prime \rightarrow \hat Y$ and ${ \hat X}^\prime \rightarrow \hat X$ the two $\Gamma-$coverings
associated to the reference map $r: \hat Y \rightarrow B \Gamma.$

The analysis of  Section 7
shows that the operator $\wt \eth_{\sign}( \hat X) $ induces a class $[\wt \eth_{\sign}( \hat X) ]$ in the  
Kasparov group $KK^0 (C_0( \partial {\hat Y}), C_r^* \Gamma).$ 
In terms of the constant map $\pi^{\partial {\hat Y}}: \partial \hat Y \rightarrow \{ {\rm pt }\},$ one has:
$$
{ \Ind} \, \wt \eth_{\sign} (\hat X ) = \pi^{\partial \hat Y}_*( [\wt \eth_{\sign} ( \hat X )])  \in 
KK^0( \CC , C_r^* \Gamma) \simeq K_0( C^*_r \Gamma).
$$
Now let $C_{ {\partial \hat Y}}({\hat Y}) \subset C({\hat Y})$ denote 
the ideal of continuous functions on $\hat Y$ vanishing on the boundary. Let $i: \partial {\hat Y} \rightarrow {\hat Y}$
denote the inclusion and consider the long exact sequence in $KK (\cdot , C_r^* \Gamma)$ associated 
to the semisplit short exact sequence:

\begin{equation}
0 \rightarrow C_{ \partial {\hat Y}}({\hat Y}) \overset{j}{\rightarrow} C({\hat Y}) \overset{q}{\rightarrow} C(\partial {\hat Y}) \rightarrow 0
\end{equation}
(see Blackadar \cite[page 197]{Blackadar}). In particular, we have the exactness of
$$
KK^1 (C_{ \partial {\hat Y}}({\hat Y}) , C^*_r \Gamma) \overset{\delta}{\rightarrow} KK^0 ( C(\partial {\hat Y}), C^*_r \Gamma) \overset{i_*}{\rightarrow} KK^0 ( C( { \hat Y}), C^*_r \Gamma)
$$ and thus $i_* \circ \delta=0.$
Recall that the conic iterated metric on $\hat Y$ (with product structure near $\partial \hat Y= \hat X$) allows us 
to define a $C^*_r \Gamma-$linear twisted signature operator $\wt \eth_{\sign}$ on $\hat Y$ with 
coefficients in the bundle ${\hat Y}^\prime \times_\Gamma C^*_r \Gamma \rightarrow \hat Y.$ This twisted signature operator
allows us to define a class $[\wt \eth_{\sign}] \in KK^1(C_{\partial {\hat Y}}({\hat Y}), C_r^*\Gamma).$
\begin{lemma} One has $\delta  [\wt \eth_{\sign}]= [2 \wt \eth_{\sign} (\hat X ) ].$
\end{lemma}
\begin{proof} We are using  the proof
of Theorem 5.1 of Higson \cite{Higson}. We can replace ${\hat Y}$ by a 
 collar neighborhood $W$ ($\simeq [0,1[ \times \partial {\hat Y}$).
Consider the differential operator $d:$
$$
d=\begin{pmatrix}
	0&  - i \frac{d}{d x }  \\
	- i \frac{d}{d x }  & 0
	\end{pmatrix}
$$ acting on $[0,1].$ It defines a class in $KK^1 ( C_0 (0,1) , C^*_r \Gamma).$  Recall that the Kasparov product
$[d]\otimes \cdot $ induces an isomorphism:
$$
[d]\otimes \cdot : KK^0(C(\partial {\hat Y}), C^*_r \Gamma) \rightarrow 
KK^1(C_{\partial {\hat Y}}(W), C^*_r \Gamma).
$$
As in \cite{Higson}, the connecting map $\delta:$
$$
KK^1(C_{\partial {\hat Y}}(W), C^*_r \Gamma) \overset{\delta}{\rightarrow} KK^0(C(\partial {\hat Y}), C^*_r \Gamma)
$$ is given by the inverse of $[d]\otimes \cdot $. Denote by $\mathcal{D}_{W}$ the restriction 
of $\wt \eth_{\sign}$ to $W$ and recall that $\hat X= \partial \hat Y$ is even dimensional. Then one checks (using \cite{Higson} and \cite[page 296]{PRW})   that the $KK-$class 
$[\mathcal{D}_{W}]$ is equal to $[d]\otimes 2 [\wt \eth_{\sign} (\hat X) ],$ and one finds that 
$ \delta [\mathcal{D}_{W} ] = 2 [\wt \eth_{\sign} (\hat X) ]$ which proves the result.
\end{proof}
Let $\pi^{ \hat Y}: \hat Y \rightarrow \{ {\rm pt }\}$ denote the constant map.  By functoriality, one has:
$$
\pi_*^{\partial \hat Y} = \pi_*^{\hat Y}  \circ i_*.
$$ Since $i_* \circ \delta=0$, the previous Lemma implies that:
$$
2 \Ind \wt \eth_{\sign} (\hat X)= \pi_*^{\partial {\hat Y} }(  [2 \wt \eth_{\sign} (\hat X) ] )=
\pi_*^{ \partial \hat Y}  \circ \delta ( [\mathcal{D}_{W} ]  )= 
\pi_*^{\hat Y} \circ  i_* \circ \delta ( [\mathcal{D}_{W} ]  )=0.
$$
Therefore, Theorem \ref{thm:bordism} is proved.
\end{proof}

\bigskip

We shall denote by $\Omega^{{\rm Witt, s}}_* (B\Gamma)$ the bordism group in the category of 
{\it smoothly} stratified oriented Witt spaces. This group is
generated by the elements  of the form
$[\hat X,r: \hat X\to B\Gamma]$ where $[\hat X,r: \hat X\to B\Gamma]$ is equivalent to the zero element  
if $\hat X$ is the boundary of a smoothly stratified Witt oriented space $\hat Y$  (as in Theorem \ref{thm:bordism}) such that the map $r$
extends continuously to $\hat Y.$
It follows that the index map
\begin{equation}\label{bordsim-inv}
\Omega^{{\rm Witt,s}}_* (B\Gamma)\to K_*(C^*_r \Gamma)\otimes \bbQ,
\end{equation}
sending $[\hat X,r: \hat X\to B\Gamma]\in \Omega^{{\rm Witt,s}}_* (B\Gamma)$ to the higher index class 
$\Ind (\wt \eth_{\sign})$ (for the twisting bundle $ r^* E\Gamma\times_\Gamma C^*_r \Gamma$),
is well defined. As in the closed case, see \cite{ros-weinberget-at-2}, it might be possible to refine this result 
and show that the index map actually defines a group homomorphism $\Omega^{{\rm Witt,s}}_* (B\Gamma)\to K_*(C^*_r \Gamma)$

Recall that Siegel's Witt-bordism groups $\Omega^{{\rm Witt}}_* (B\Gamma)$ are given in terms of equivalence classes
of pairs $(\hat X,u: \hat X\to B\Gamma)$, 
with $\hat X$ a Witt space  which is not necessarily {\it smoothly} stratified.

We also 
recall that, working with PL spaces, Sullivan \cite{Sullivan} has defined 
the notion of connected KO-Homology $ko_*$ (see also \cite[page 1069]{Se}).
Siegel \cite[Chapter 4]{Se}, building on work of Sullivan and Conner-Floyd,
 has shown that the natural map 
$\Omega^{{\rm SO}}_*(B \Gamma) \otimes_\bbZ \bbQ \rightarrow  
\Omega^{{\rm Witt}}_* (B\Gamma) 
\otimes_\bbZ \bbQ $ is surjective by showing that the natural map $\Omega^{{\rm SO}}_* (B \Gamma) \otimes_\bbZ \bbQ \rightarrow ko_* (B \Gamma) \otimes_\bbZ \bbQ$ is surjective and the natural map (\cite{Se})  $\Omega^{{\rm Witt}}_* (B\Gamma) \otimes_\bbZ \bbQ  
\rightarrow ko_* (B \Gamma) \otimes_\bbZ \bbQ$ is an isomorphism.
We need to extend these results  for the corresponding groups associated with the category of smoothly stratified spaces. 
\begin{proposition} \label{prop:ko}
The natural map $\Omega^{{\rm SO}}_* (B \Gamma) \otimes_\bbZ \bbQ \rightarrow \Omega^{{\rm Witt,s}}_* (B\Gamma) 
\otimes_\bbZ \bbQ$ is surjective.
\end{proposition}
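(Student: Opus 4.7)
Plan: The strategy is to parallel Siegel's proof in the PL category via an interpolating homomorphism. I would first construct the natural forgetful map
\[
\iota : \Omega^{{\rm Witt,s}}_*(B\Gamma) \longrightarrow \Omega^{{\rm Witt}}_*(B\Gamma),
\]
obtained by endowing any smoothly stratified Witt space with its canonical PL structure: the resolution $\widetilde X$ is a smooth manifold with corners, hence carries a canonical PL structure by Whitehead's theorem, and by Proposition \ref{pr:blowdown} this descends to a PL structure on $\hat X$ compatible with the stratification; applied to smoothly stratified Witt bordisms, the same argument shows $\iota$ is well defined. Combined with Siegel's surjection $\Omega^{{\rm SO}}_*(B\Gamma)\otimes\bbQ \twoheadrightarrow \Omega^{{\rm Witt}}_*(B\Gamma)\otimes\bbQ$, the result would follow once I establish that $\iota\otimes\bbQ$ is injective. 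Indeed, given $[\hat X,r] \in \Omega^{{\rm Witt,s}}_*(B\Gamma)\otimes\bbQ$, Siegel produces a closed oriented manifold $(M,r')$ with $\iota_\bbQ[\hat X,r] = [M,r']$ in $\Omega^{{\rm Witt}}_*(B\Gamma)\otimes\bbQ$; since $M$ is a depth-zero smoothly stratified space, injectivity of $\iota_\bbQ$ then yields $[\hat X,r] = [M,r']$ in $\Omega^{{\rm Witt,s}}_*(B\Gamma)\otimes\bbQ$.

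The crux is the injectivity statement: if a smoothly stratified Witt space $\hat X$ with reference map $r$ is the boundary of a PL Witt space $\hat Y$ equipped with a continuous extension $R : \hat Y \to B\Gamma$, then some positive integer multiple of $[\hat X,r]$ bounds in the smoothly stratified category. My plan is to endow $\hat Y$ itself with a smoothly stratified structure agreeing with the given one on $\partial \hat Y = \hat X$. Following \S 2.3, I would pass to the resolution $\widetilde Y$ as a PL manifold with corners with iterated fibration structure, arranged so that the boundary hypersurface over $\hat X$ coincides with the already-smooth resolution $\widetilde X$. I would then smooth $\widetilde Y$ relative to $\widetilde X$ by applying the Cairns--Hirsch--Munkres smoothing theorem for PL manifolds with corners, and blow down via Proposition \ref{pr:blowdown} to produce the sought smoothly stratified bordism. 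The reference map $R$ can be adjusted within its homotopy class to respect the stratification (using cellular approximation on the resolution), leaving the bordism class unchanged.

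The main obstacle is the relative smoothing step. The obstructions to extending a given smoothing of $\widetilde X$ across $\widetilde Y$ lie in $H^i(\widetilde Y,\widetilde X;\pi_i({\rm PL}/{\rm O}))$, and although they need not vanish integrally, each group $\pi_i({\rm PL}/{\rm O})$ is finite (by Kervaire--Milnor together with Cairns--Hirsch), so every obstruction has finite order. Replacing $\hat Y$ by a suitable finite disjoint union of copies of itself annihilates these obstructions, producing the desired smoothing after tensoring with $\bbQ$. A secondary technical point is that the smoothing must be compatible with the iterated fibration structure on $\widetilde Y$: this is handled by smoothing inductively over the depth of the stratification, straightening the fibration in a collar neighbourhood of each boundary hypersurface using the inductively obtained smoothings of its base and fibres, before applying the smoothing theorem in the interior. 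Once the relative smoothing is produced, blowing down yields a smoothly stratified Witt space with boundary $k \hat X$ for some positive integer $k$, giving the rational injectivity of $\iota$ and completing the argument.
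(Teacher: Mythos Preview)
Your strategy diverges from the paper's. The paper does not pass through the PL category at all: it argues that Siegel's geometric constructions (specifically his Theorem 4.4, which shows that an irreducible even-dimensional Witt space with vanishing Witt class bounds) can be carried out \emph{within} the smoothly stratified category, so that Siegel's map $\Omega^{{\rm Witt,s}}_*(B\Gamma)\otimes\bbQ \to ko_*(B\Gamma)\otimes\bbQ$ is already an isomorphism; surjectivity then follows from the known surjectivity of $\Omega^{{\rm SO}}_*(B\Gamma)\otimes\bbQ \to ko_*(B\Gamma)\otimes\bbQ$. Your route via a comparison map $\iota$ and relative smoothing theory is conceptually interesting and, if it worked, would avoid re-examining Siegel's bordism constructions. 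However, it has a genuine gap.

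The error is in the sentence ``Replacing $\hat Y$ by a suitable finite disjoint union of copies of itself annihilates these obstructions.'' This is false. If the smoothing obstruction for $(\widetilde Y,\widetilde X)$ is a class $o\in H^i(\widetilde Y,\widetilde X;\pi_i(\mathrm{PL}/\mathrm{O}))$, then for $k$ disjoint copies the obstruction group is $\bigoplus_{1}^{k} H^i(\widetilde Y,\widetilde X;\pi_i(\mathrm{PL}/\mathrm{O}))$ and the obstruction class is the diagonal element $(o,\ldots,o)$, which vanishes only if $o$ does. Finiteness of the coefficient groups $\pi_i(\mathrm{PL}/\mathrm{O})$ tells you each obstruction class has finite order as a group element, but disjoint union does not implement multiplication by an integer on that class; it merely replicates it. To salvage this line you would need a genuinely different mechanism for killing the obstruction (e.g.\ modifying $\hat Y$ in its interior by some controlled surgery on the resolution, or producing a different PL nullbordism), and you have not supplied one. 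A secondary issue, less fatal but still unaddressed, is that the resolution construction of \S2.3 is developed only for \emph{smoothly} stratified spaces; you assume without argument that an arbitrary PL Witt nullbordism $\hat Y$ admits a resolution to a PL manifold with corners carrying an iterated fibration structure compatible with that of $\widetilde X$ on its boundary.
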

\begin{proof} 
Theorem 4.4 of \cite{Se} is still valid (by inspection) if one works in the category 
of smoothly stratified oriented Witt spaces. Namely, if $\hat X$ is an irreducible smoothly stratified Witt space 
of even dimension $>0$ such that  $w(\hat X)=0$, with $w(\hat X)\in W(\bbQ)$,
 then $\hat X$ is Witt cobordant to 
zero in the category of smoothly stratified Witt spaces. The arguments of \cite[Chapter 4]{Se} 
show that Siegel's map:
$$
\Omega^{{\rm Witt,s}}_* (B\Gamma) \otimes_\bbZ \bbQ  
\rightarrow ko_* (B \Gamma) \otimes_\bbZ \bbQ
$$ is an isomorphism and, using the surjectivity of the  map
$$\Omega^{{\rm SO}}_* (B \Gamma) \otimes_\bbZ \bbQ \rightarrow ko_* (B\Gamma) 
\otimes_\bbZ \bbQ,$$
 one gets the Proposition.
\end{proof}

\section{The homology L-class of a Witt space. Higher signatures.} \label{sec:Lclass}

The homology $L$-class $L_*(\hat X)\in H_* (\hat X,\bbQ)$ of a Witt space
$\hat X$ was defined independently by Goresky and MacPherson \cite{GM},
following ideas of Thom \cite{Thom}, and by Cheeger \cite{Ch}. See also Siegel \cite{Se}.
In this paper we shall adopt the approach of Goresky and
MacPherson.
%
We briefly recall the definition: if $\hat X$ has dimension
$n$, $k\in \bbN$ is such that $2k-1>n$, and $\cN$ denotes the `north pole' of $\bbS^k$, one can show that the map
$\sigma: \pi^k (\hat X)\to \bbZ$ that associates to $[f:\hat X\to S^k]$ the Witt-signature 
of $f^{-1} (\cN)$ is well defined and a group homomorphism. Now, by Serre's theorem,
the Hurewicz map $\pi^k (\hat X)\otimes \bbQ \to H^k (\hat X,\bbQ)$ is an isomorphism 
for $2k-1>n$ and we can thus view the above homomorphism, $\sigma\otimes \Id_{\bbQ}$, as a linear functional in
$\mathrm{Hom} (H^k (\hat X),\bbQ)\simeq H_k (\hat X,\bbQ)$. This defines $L_k (\hat X)\in 
H_k (\hat X,\bbQ)$. The restriction $2k-1>n$ can be removed by crossing with a high dimensional
sphere in the following way. Choose a positive integer $\ell$ such that 
$2(k+\ell) -1 > n +\ell$ and $k + \ell > n.$
Then by the above construction, $L_{k + \ell} (\hat X\times S^\ell)$ is well defined in $H_{k +\ell} (\hat X\times S^\ell , \bbQ).$ 
Since  $k + \ell > n,$ the K\"unneth Theorem shows that there is a natural isomorphism $ I : H_{k +\ell} (\hat X\times S^\ell , \bbQ) 
\rightarrow H_{k } (\hat X , \bbQ).$ One then defines:
$L_k (\hat X):= I ( L_{k+l} (\hat X\times S^\ell) )$.

Once we have a homology $L$-class we can define the higher signatures as follows. 

\begin{definition}\label{def:higher-signatures}
Let $\hat X$ be a Witt space and $\Gamma:=\pi_1 (\hat X)$.
Let $r: \hat X\to B\Gamma$ be a classifying map for the universal cover.
The (Witt-Novikov) higher signatures of $\hat X$ are the collection of rational numbers:
\begin{equation}\label{higher-signatures}
\{ \langle \alpha,r_* L_*( \hat X) \rangle \,,\alpha\in H^* (B\Gamma,\bbQ)\}
\end{equation}
We set $\sigma_\alpha (\hat X):= \langle \alpha,r_* L_*( \hat X) \rangle$.
\end{definition}

The Witt-Novikov higher signatures have already been studied, see for example \cite{curran}.
If $X$ is an oriented  closed compact manifold and $r:X\to B\pi_1 (X)$ is the classifying map, it is not difficult
to show that 
$$ \langle \alpha,r_* L_*(X)\rangle = \langle L(X)\cup r^* \alpha,[X]\rangle \equiv \int L(X)\cup r^* \alpha\,.$$
Thus the above definition is consistent with the usual definition of Novikov higher signatures in the closed case.

The Novikov conjecture in the closed case  is the statement
that all the higher signatures $\{ \langle L(X)\cup r^* \alpha,[X] \rangle \,,\,\alpha\in H^* (B\Gamma,\bbQ)\}$ are homotopy invariants.

The Novikov conjecture in the Witt case 
 is the statement that the Witt-Novikov
 higher signatures  $ \{ \langle \alpha,r_* L_*(\hat X) \rangle \,,\,\alpha\in H^* (B\Gamma,\bbQ)\}$
  are {\bf stratified} homotopy
invariants. Notice that intersection homology is not a homotopy invariant theory;
however, it is a stratified homotopy-invariant theory, see \cite{friedman}.

We shall need to relate the homology $L$-class of Goresky-MacPherson to the signature
class $[\eth_{\sign} ] \in K_*(\hat X)$.

\begin{theorem}\label{theo:cmwu} (Cheeger/Moscovici-Wu)
The topological homology L-class $L_*(\hat X)\in H_* (\hat X,\bbQ)$ is the
image, under the rationalized homology Chern character, of  
the signature K-homology class $[\eth_{\sign} ]_{\bbQ} \in K_*(\hat X)\otimes \bbQ$; in formul\ae
\begin{equation}\label{chern}
\ch_* [\eth_{\sign} ]_{\bbQ} = L_* (\hat X) \quad \text{in} \quad H_* (\hat X,\bbQ).
\end{equation}
\end{theorem}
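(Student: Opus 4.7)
The plan is to follow the approach of Moscovici and Wu, adapted to the geometric microlocal framework developed above. The strategy is to characterize both $\ch_*[\eth_{\sign}]_{\bbQ}$ and $L_*(\hat X)$ by their pairings against a generating family of rational cohomology classes, and then verify the equality pairing-by-pairing. Recall from the construction of $L_*(\hat X)$ that, after stabilizing by crossing $\hat X$ with a sphere of sufficiently large dimension, every class in $H^*(\hat X, \bbQ)$ is representable (by Serre's theorem) as $f^*\omega_k$ for some continuous $f : \hat X \to \bbS^k$, with $\omega_k \in H^k(\bbS^k, \bbQ)$ a generator. It therefore suffices to establish
\begin{equation*}
\ang{ f^*\omega_k, \ch_*[\eth_{\sign}]_{\bbQ} } = \ang{ f^*\omega_k, L_*(\hat X) }
\end{equation*}
for every such $f$, and then apply Kronecker duality.

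First I would handle the right-hand side: by the Goresky--MacPherson definition of $L_*(\hat X)$, the pairing $\ang{f^*\omega_k, L_*(\hat X)}$ equals the Witt signature $\sigma_{\mathrm{top}}(f^{-1}(\mathcal{N}))$ of the transverse preimage of a regular value $\mathcal{N} \in \bbS^k$, after a small stratified perturbation making $f$ transverse to $\mathcal{N}$; the preimage inherits a smoothly stratified Witt structure from $\hat X$ by standard stratified transversality. On the left-hand side, let $\beta_k \in \wt K^0(\bbS^k) \otimes \bbQ$ be the reduced Bott generator, so that $\ch \beta_k$ is a (known) rational multiple of $\omega_k$. The standard pairing between $K$-homology and $K$-theory, together with Theorem~\ref{theo:k-homology}, identifies
\begin{equation*}
\ang{ f^*\omega_k, \ch_*[\eth_{\sign}]_{\bbQ} } = \Ind (\eth_{\sign} \otimes f^*\beta_k),
\end{equation*}
where $\eth_{\sign} \otimes f^*\beta_k$ is the signature operator twisted by the pullback virtual bundle, a construction justified in our setting by the analysis of Section~7 applied to finite rank coefficients.

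Next I would use the Witt bordism invariance of the twisted signature index (Theorem~\ref{thm:bordism}, extended with finite rank coefficient bundles as in Proposition~\ref{prop:ko}) to deform $f$ to a map $f'$ transverse to $\mathcal{N}$ and of product form on a tubular neighborhood of $f'^{-1}(\mathcal{N})$. In this neighborhood, $f'^*\beta_k$ coincides with the pullback of the Bott class under projection to an open disk in $\bbS^k$, and the adapted iterated edge metric can be arranged to be a Riemannian product of a flat disk with a neighborhood of $f'^{-1}(\mathcal{N})$ in $\hat X$. A cut-and-paste argument then reduces $\Ind(\eth_{\sign} \otimes f'^*\beta_k)$ to the untwisted signature index on the preimage, that is, to $\sigma_{\mathrm{top}}(f'^{-1}(\mathcal{N}))$, invoking the Witt analogue of the Atiyah--Singer multiplicativity of the signature index under Riemannian products.

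The hard part will be this final multiplicativity / excision step. On closed manifolds it follows from the heat kernel formula or from Bott periodicity, but in our setting one must justify cut-and-paste for the signature index class on a smoothly stratified Witt space. This is where the parametrix construction of Sections~4--6 and the $KK$-theoretic description of $\Ind(\wt \eth_{\sign})$ in Theorem~\ref{theo:kk} become essential: combining the relative index theorem for the incomplete iterated edge calculus with Witt bordism invariance, one excises the complement of a neighborhood of $f'^{-1}(\mathcal{N})$ without changing the index, thereby reducing the computation to a situation where the closed-manifold argument applies verbatim in the transverse directions and leaves the Witt signature of the preimage as the answer. Once the two pairings coincide for every $f$, Kronecker duality yields $\ch_*[\eth_{\sign}]_{\bbQ} = L_*(\hat X)$ in $H_*(\hat X, \bbQ)$.
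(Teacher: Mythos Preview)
The paper does not prove this theorem at all: it simply \emph{cites} the result, attributing it to Cheeger (for piecewise flat conic metrics) and to Moscovici--Wu (for metrics quasi-isometric to those), and then observes that the adapted iterated edge metrics constructed in this paper belong to the class treated by Moscovici--Wu. That one-line observation, together with Proposition~\ref{prop:adm}, is the entire content of the paper's ``proof''.

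Your proposal is therefore far more ambitious than what the paper does --- you attempt to reconstruct the argument rather than quote it. The overall strategy you sketch (pair both sides against pullbacks $f^*\omega_k$, identify the right side with $\sigma_{\mathrm{top}}(f^{-1}(\cN))$ via the Goresky--MacPherson definition, identify the left side with a twisted signature index, then reduce to the preimage) is indeed the shape of the Moscovici--Wu argument. But there is a genuine gap at the step you yourself flag as the hard part. You invoke a ``relative index theorem for the incomplete iterated edge calculus'' and an excision/cut-and-paste principle to localize the twisted index to a product neighborhood of $f'^{-1}(\cN)$. Nothing of this sort is established in Sections~4--7: those sections give essential self-adjointness, discrete spectrum, and the Fredholm property, but not a gluing or relative index theorem on Witt spaces. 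Theorem~\ref{thm:bordism} is bordism invariance of the $C^*_r\Gamma$-valued index class, not a localization tool, and Proposition~\ref{prop:ko} is a statement about surjectivity of a bordism map, not about indices at all. Moscovici--Wu's actual mechanism is different: they work with the \emph{straight Chern character} valued in Alexander--Spanier homology and use heat-kernel localization on the stratified space, which substitutes for the excision step you need. As written, your proof would require developing that localization machinery independently, which is substantial and not supplied by the present paper.
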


This result is due to Cheeger, who proved it for
piecewise flat metric of conic type, and to Moscovici-Wu, who gave an alternative argument
valid also for any metric quasi-isometric to such a metric \cite{Ch}, \cite{Mosc-Wu-Witt}.
It is worth pointing out here that our metrics do belong to the class considered in \cite{Mosc-Wu-Witt}.
Notice that Moscovici-Wu prove that the straight Chern
character of $[\eth_{\sign} ]_{\bbQ} \in K_*(\hat X)\otimes \bbQ$ is equal to $L_*(\hat X)\in H_* (\hat X,\bbQ)$;
the straight Chern character has values in Alexander-Spanier homology; the equality with 
$L_*(\hat X)\in H_* (\hat X,\bbQ)$ is obtained using the isomorphism between Alexander-Spanier and singular homology \cite{Mosc-Wu-Witt}.

\section{Stratified homotopy invariance of the index class: the analytic approach}\label{sect:shi}

One key point in all the index theoretic proofs of the Novikov conjecture for closed oriented
manifolds is the one stating the homotopy invariance of the signature index class in $K_* (C^*_r \Gamma)$.
By this we mean that if $r:X\to B\Gamma$ as above, $f: X^\prime\to X$ is a smooth homotopy equivalence and 
$r^\prime:= r\circ f:X^\prime\to B\Gamma$,
 then the index class, in $K_* (C^*_r\Gamma)$, associated to  $\widetilde{\eth}_{\sign}$ (i.e., associated to
 the signature operator on $X$, $\eth_{\sign}$,
twisted by $r^* E\Gamma\times_\Gamma C^*_r\Gamma$) is equal to the one associated to $\widetilde{\eth}_{\sign}^\prime$
(i.e., associated to
 the signature operator on $X^\prime$, $\eth_{\sign}^\prime$,
twisted by $(r^\prime)^* E\Gamma\times_\Gamma C^*_r\Gamma$).
There are two approaches to this fundamental result:
\begin{enumerate}
\item one proves {\it analytically} that $\Ind (\widetilde{\eth}_{\sign} )=\Ind (\widetilde{\eth}_{\sign}^\prime)$ in $K_* (C^*_r \Gamma)$;
\item one proves that the index class is equal to an a priori homotopy invariant, the Mishchenko ($C^*$-algebraic) symmetric signature.
\end{enumerate}

In this section we pursue the first of these approaches. We shall thus
establish the stratified homotopy invariance of the index class on Witt spaces by following ideas from 
  Hilsum-Skandalis \cite{H-S}, where this property is proved for closed compact manifolds. See also \cite{PS}.

\subsection{Hilsum-Skandalis replacement of $f$} $\;$ \newline
If $X$ and $Y$ are closed Riemannian manifolds, and $f:X \to Y$ is a homotopy equivalence, it need not be the 
case that pull-back by $f$ induces a bounded operator in $L^2$. Indeed, suppose $f$ is an embedding and 
$\phi_\eps$ is a function which equals $1$ on the $\eps$ tubular neighborhood of the image of X.
The $L^2$-norm of $\phi_{\eps}$ is bounded by $C \eps^{\mathrm{codim}_YX}$ and hence tends to zero, while $f^* \phi_{\eps} \equiv 1$ on $X$ and so its $L^2$ norm is constant.
Thus the closure of the graph of $f^*$, say over piecewise constant functions, contains an element of the form $(0,1)$, and is not itself the graph of an operator. 

On the other hand, if $f$ is a submersion, and the metric on $X$ is a submersion metric, then $f^*$ clearly does induce a bounded operator on $L^2$. Since the latter property is a quasi-isometry invariant, and any two metrics on $X$ are quasi-isometric, it follows that pull-back by a submersion always induces a bounded operator in $L^2$.

As one is often presented with a homotopy equivalence $f$ and interested in properties of $L^2$ spaces, it is useful to follow Hilsum and Skandalis \cite{H-S} and replace pull-back by $f$ by an operator that is bounded in $L^2$. We refer to this operator as the Hilsum-Skandalis replacement of $f^*$ and denote it $HS(f)$.

Such a map is constructed as follows. Consider a disk bundle $\pi_Y:\bbD_Y \to Y$ and the associated pulled back bundle  $f^* \bbD_Y$ 
by the map $f: X \rightarrow Y.$ Denote by
 $\pi_X: f^*\bbD_Y \to X$ the induced projection. Then   $f$ admits a natural lift $\bbD(f)$ such that
\begin{equation*}
	\xymatrix{ f^*\bbD_Y \ar[r]^{\bbD(f)} \ar[d] & \bbD_Y \ar[d] \\ X \ar[r]^{f} & Y }
\end{equation*}
commutes. Moreover, we consider  a (smooth) map $e:\bbD_Y \to Y$ such that $p=e \circ \bbD(f): f^*  \bbD_Y \to Y$ is a submersion, and a choice of Thom form $\cT$ for $\pi_X$.
The Hilsum-Skandalis replacement of $f^*$ is then the map
\begin{equation*}
	\xymatrix @R=1pt @C=1pt
	{ HS(f) = HS_{\cT, f^*\bbD_Y, \bbD_Y, e}(f):& \CI(Y; \Lambda^*) \ar[r] & \CI(X; \Lambda^*) \\
	& u \ar@{|->}[r] & (\pi_X)_* (\cT \wedge p^*u) }
\end{equation*}
Notice that $HS(f)$ induces a bounded map in $L^2$ because $p^*=(e\circ \bbD(f) )^*$ does.

For example, as in \cite{H-S}, one can start with an embedding $j: Y \to \bbR^N$ and a tubular neighborhood $\cU$ of $j(Y)$ such that $j(\zeta) + \bbD \subseteq \cU$,
and then take $\bbD_X = X \times \bbD$, $\bbD_Y = Y \times \bbD$, $\bbD(f) = f \times \id$, and $e(\zeta, v) = \tau(\zeta + v)$ where $\tau: \cU \to Y$ is the projection.
Alternately, one can take $\bbD_Y$ to be the unit ball subbundle of $TY$  and $e(\zeta, v) = \exp_{f(\zeta)}(v)$. 
We will extend the latter approach to stratified manifolds.

In any case, one can show that $HS(f)$ is a suitable replacement for $f^*$. 
Significantly, using $HS(f)$ we will see that the K-theory classes induced by the signature operators of homotopic stratified manifolds coincide.
\subsection{Stratified homotopy equivalences} $\;$ \newline
Let $\hat X$ and $\hat Y$ denote stratified spaces, $X$ and $Y$ their regular parts, and $\cS(X)$ and $\cS(Y)$ the corresponding sets of strata.
Following \cite{friedman} and \cite[Def. 4.8.1 ff]{Kirwan-Woolf} we say that a map $f:\hat X \to \hat Y$ is 
{\em stratum preserving} if
\begin{equation*}
	S \in \cS(\hat Y) \implies f^{-1}(S) \text{ is a union of strata of $X$}
\end{equation*}
and {\em codimension preserving} if also
\begin{equation*}
	\mathrm{codim} f^{-1}(S) = \mathrm{codim} S.
\end{equation*}
We will say that a map is {\em strongly stratum preserving} if it is both stratum and codimension preserving.

In these references, a {\em stratum-preserving homotopy equivalence} between stratified spaces is a strongly stratum preserving map $f:\hat X \to \hat Y$ such that there exists another strongly stratum preserving map $g: \hat Y \to \hat X$ with both $f \circ g$ and $g \circ f$ homotopic to the appropriate identity maps through strongly stratum preserving maps. It is shown that stratum-preserving homotopy equivalences induce isomorphisms in intersection cohomology.

Notice that the existence of a homotopy equivalence between {\em closed} manifolds implies that the manifolds have the same dimension, so it is natural to impose a condition like strong stratum preserving on stratified homotopy equivalences. We shall also assume
that $f$ is a {\it smooth} strongly stratified map, see Definition \ref{correct-smooth},
 and that it is a {\it smooth }
strongly stratified homotopy equivalence. (Once again, in the index-theoretic approach
to the Novikov conjecture on closed manifolds, this additional  hypothesis 
of smoothness is also made.)

{\it We shall often omit the reference to the smoothness of $f$, given that our methods
are obviously suited for these kind of maps only.}

A smooth strongly stratified map lifts, 
according to Definition \ref{correct-smooth},
to a smooth map between the resolutions of the stratified spaces $\wt f: \wt X \to \wt Y$ 
preserving the iterated boundary fibration structures.
In particular, $\wt f$ is a $b$-map and the differential of $\wt f$ sends tangent vectors to the boundary fibrations
of $\wt X$ 
to tangent vector  to the boundary fibrations of  $ \wt Y$

%
This implies that there exist linear maps 
\begin{equation*}
	f^*: \CI(Y; \Ie \Lambda^*) \to \CI(X; \Ie \Lambda^*), \Mand
	f^*: \CI(Y; \Iie \Lambda^*) \to \CI(X; \Iie \Lambda^*),
\end{equation*}
though, as on a closed manifold, these do not necessarily induce bounded maps in $L^2$.


\subsection{Hilsum-Skandalis replacement on complete edge manifolds}$\;$ \newline
Suppose ${\wt X}$ and ${\wt Y}$ are both manifolds with boundary and boundary fibrations
\begin{equation*}
	\phi_{\wt X}: \pa {\wt X} \to H_{\wt X}, \quad
	\phi_{\wt Y}: \pa {\wt Y} \to H_{\wt Y}.
\end{equation*}
Let $X$ and $Y$ denote the interiors of $\wt X$ and $\wt Y$ respectively. 

Endow ${\wt Y}$ with a complete edge metric $\tilde{g}=\rho^{-2} g$ \eqref{def:completemet} such that $g$ 
is adapted in the sense of Proposition \ref{prop:rescaling}.
Let $\bbD_{Y} \subseteq {}^eT{Y}$ be the edge vector fields on ${Y}$ with pointwise length bounded by one, and let $\exp: \bbD_{Y} \to {Y}$ be the exponential map on ${Y}$ with respect to the edge metric.
The space $\bbD_{Y}$ is itself an (open) edge manifold with boundary fibration $\phi_{\bbD_{Y}}: \pa \bbD_{Y} \to \pa {\wt Y} \to H_{\wt Y}$.
Notice that $\exp$ extends to a b-map that sends fibers of $\phi_{\bbD_{\wt Y}}$ to fibers of $\phi_{\wt Y}$ and hence induces a map
\begin{equation*}
	\exp_*:{}^eT\bbD_{Y} \to {}^eT{ Y}
\end{equation*}
which is seen to be surjective.

Let $f: {\wt X} \to {\wt Y}$ be a smooth b-map that sends fibers of $\phi_{\wt X}$ to fibers of $\phi_{\wt Y}$.
Pulling-back the bundle $\bbD_{Y} \to Y$ to $X$ gives a commutative diagram
\begin{equation}\label{PullBackDiag}
	\xymatrix{
	f^*\bbD_{Y} \ar[r]^{\bar f} \ar[d]^{\pi_{X}} & \bbD_{Y} \ar[d]^{\pi_{Y}} \\ 
	X \ar[r]^f & Y }
\end{equation}
which we use to construct the Hilsum-Skandalis replacement for pull-back by $f$.
Namely, define  $e = \exp: \bbD_{Y} \to {Y}$, let $\cT$ the pull-back by $\bar f$ of a Thom form for $ \bbD_{Y}$, and let 
\begin{equation}\label{def-of-HS}
	HS(f) = (\pi_{X})_* ( \cT \wedge p^* ): 
	\CI( {Y}; \Ie \Lambda^*) \to \CI({X}; \Ie\Lambda^*)
\end{equation}
with $p = e \circ \bbD(f)$. Observe that $p$ is a proper submersion and hence a fibration.
Then, as above, $HS(f)$ induces a map between the corresponding $L^2$ spaces.

The generalization to manifolds with corners and iterated fibrations structures is straightforward: we just replace the edge tangent bundle with the iterated edge tangent bundle. 
Indeed, it is immediate that if $\bbD_{Y} \subseteq \Ie T Y$ is the set of iterated edge vector fields on $Y$ with pointwise length bounded by one the exponential map $\exp: \bbD_{Y} \to Y$ with respect to a (complete) iterated edge metric induces a map $\exp_*: \Ie T\bbD_{Y} \to \Ie T Y$. That this map is surjective can be checked locally and follows by a simple induction.
Then given a smooth b-map $f: \wt X \to \wt Y$ with the property that, whenever $H \in \cM_1(\wt X)$ is sent to $K \in \cM_1(\wt Y)$, the fibers of the fibration on $H$ are sent to the fibers of the fibration on $K$,
we end up with a map
\begin{equation*}
	HS(f): \CI(\wt Y, \Ie \Lambda^* ) \to \CI(\wt X, \Ie \Lambda^*)
\end{equation*}
that induces a bounded map between the corresponding $L^2_{\ie}$ spaces.

Next, recall that 
\begin{equation*}
	\CI(\wt Y; \Iie \Lambda^1) = \rho_{\wt Y} \CI(\wt Y; \Ie \Lambda^1) 
\end{equation*}
where $\rho_{\wt Y}$ is a total boundary defining function for $\partial \wt Y$.
Hence, if $f: \wt X \to \wt Y$ induces $f^*: \CI(\wt Y; \Ie \Lambda^1) \to \CI(\wt X; \Ie \Lambda^1)$, it will also induce a map
\begin{equation*}
	f^*: \CI(\wt Y; \Iie \Lambda^1) \to \CI(\wt X; \Iie \Lambda^1)
\end{equation*}
if $f^*(\rho_{\wt Y})$ is divisible by $\rho_{\wt X}$. That is, we want $f$ to map the boundary of $\wt X$ to the boundary of $\wt Y$ ({\em a priori}, it could map a boundary face of $\wt X$ onto all of $\wt Y$).
For maps $f$ coming from pre-stratified maps, this condition holds and hence the map $HS(f)$ induces a bounded map between iterated incomplete edge $L^2$ spaces.
Of course, once $f^*$ induces a map on $\Iie \Lambda^1$, it extends to a map on $\Iie \Lambda^*$.

\subsection{Stratified homotopy invariance of the analytic signature class}$\;$ \newline
Suppose we have a stratum-preserving smooth homotopy equivalence between stratified spaces $f: \hat X \to \hat Y$.
Recall that $X$ and $Y$ denote the regular parts of $\hat X$ and $\hat Y$, respectively. Recall the map $r: \hat Y \to B\Gamma$ 
and the 
 flat bundle $\cV '$ of finitely generated $C^*_r\Gamma$-modules over $\hat Y:$ 
 $$
 \cV '\,=\, C^*_r \Gamma \times_\Gamma r^*(E \Gamma).
 $$ Notice that using the blowdown map $\wt Y \rightarrow \hat Y,$  $\cV '$ induces a flat bundle, still denoted $\cV '$ on $\wt Y.$
Consider  $\cV = f^*\cV'$ the corresponding flat bundle over $\hat X$.  We have a flat connection on $\cV'$, $\nabla_{\cV'}$, over $Y$ (and $\wt Y$) and associated differential $d_{\cV'}$, and
 corresponding connection $\nabla_{\cV}$ and differential $d_{\cV}$ on $X$ (and $\wt X$).
It is straightforward to see that the Hilsum-Skandalis replacement of $f$ constructed above extends to 
\begin{equation*}
	HS(f): \CI( Y; \Iie \Lambda^* \otimes \cV') \to \CI( X; \Iie \Lambda^* \otimes \cV)
\end{equation*}
and induces a bounded operator between the corresponding $L^2$ spaces.

We now explain how the rest of the argument of Hilsum-Skandalis extends to this context.

Suppose $(f_t)_{0\leq t \leq 1}: \hat X \to \hat Y$ is a homotopy of stratum-preserving smooth homotopy equivalences, let $\bbD_{Y}$ be as above.
Assume that $(e_s)_{0\leq s \leq 1}: \bbD_{Y} \to Y$ is a homotopy of smooth maps such that, for any $s \in [0,1]$,  $p_s = e_s \circ \bbD(f_s): f_s^*\bbD_{Y} \to Y$ induces a surjective map on $\iie$ vector fields.
Choose a smooth family of bundle isomorphisms (over $X$) $A_s: f_s^* \bbD_{Y} \longrightarrow f_0^* \bbD_{Y},$ 
$( 0 \leq s \leq 1),$
such that $A_0= \Id.$ Set $\cT_s = A_s^* \cT_0$ where $\cT_0$ is a Thom form for
the bundle $f_0^* \bbD_{Y} \rightarrow X.$
Consider $\nabla$ a flat unitary connection on $\cV '$. It induces an exterior derivative $d_{\cV '}$ on the bundle $\wedge^* T^* \wt Y \otimes \cV '.$
Choose a smooth family of $C^*_r \Gamma-$bundle isomorphism $U_s$ from 
the bundle $(p_s \circ A^{-1}_s)^* \cV ' \rightarrow f_0^* \bbD_{Y}$ onto the bundle 
$p_0^*\cV ' \rightarrow f_0^* \bbD_{Y}$ such that $U_0 =Id.$ Implicit in the statement of the next Lemma is the fact that,  for each $s \in [0, 1]$, $p_s \circ A^{-1}_s$ induces a morphism  from the space of sections of the bundle $ \cV ' \rightarrow Y$ on
the space of sections of the bundle $(p_s \circ A^{-1}_s)^* \cV ' \rightarrow f_0^* \bbD_{ Y}.$

\begin{lemma}\label{lem:Htpy}
Under the above hypotheses and notation,
there exists a bounded operator $\Upsilon: L^2_{\iie}(Y; \Iie \Lambda^* \otimes \cV ') \to L^2_{\iie}(f_0^*\bbD_{Y}; \Iie \Lambda^* \otimes p_0^*\cV ')$ such that 
\begin{equation*}
(  {\rm Id} \otimes U_1) \circ  (\,  \cT_0 \wedge (p_1 \circ A_1^{-1})^*\,) - (\cT_0 \wedge p_0^*) = p_0^*( d_{\cV '}) \Upsilon + \Upsilon d_{\cV '}.
\end{equation*}
\end{lemma}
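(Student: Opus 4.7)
The plan is to construct $\Upsilon$ as the standard homotopy operator obtained by integrating over the parameter $s\in[0,1]$, and then verify the chain homotopy identity via Cartan's formula together with the $L^2_{\iie}$ boundedness of the resulting expression. More precisely, consider the map
\[
F: [0,1]\times f_0^*\bbD_{Y}\longrightarrow Y,\qquad F(s,\zeta)=(p_s\circ A_s^{-1})(\zeta),
\]
and the associated bundle isomorphism $U_s$ between the pulled-back fibres of $\cV'$ along $F(s,\cdot)$ and $p_0^*\cV'$. For $u\in \CIc(Y;\Iie\Lambda^*\otimes\cV')$ I would then set
\[
\Upsilon u \;=\; \int_0^1 \cT_0\wedge \iota_{\partial_s}\bigl((\mathrm{Id}\otimes U_s)\circ F^*u\bigr)\,ds,
\]
where $\iota_{\partial_s}$ is contraction with the unit tangent vector to the interval. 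Boundedness $L^2_{\iie}\to L^2_{\iie}$ would be deduced, first, from the fact that each $p_s$ is a fibration by $\iie$-maps (so that $F^*$ is bounded $L^2_{\iie}\to L^2_{\iie}$ uniformly in $s$, as in the argument for $HS(f)$ itself), second, from the uniform boundedness of the unitary connection coefficients of $U_s$, and finally from the fact that wedging against the compactly supported Thom form $\cT_0$ and integrating over the compact interval $[0,1]$ preserve $L^2_{\iie}$.

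For the chain homotopy identity, I would compute $d_{\text{tot}}\bigl((\mathrm{Id}\otimes U_s)F^*u\bigr)$ on $[0,1]\times f_0^*\bbD_{Y}$, where $d_{\text{tot}}=ds\wedge\partial_s+d_{f_0^*\bbD_Y}$ is the total exterior derivative coupled to $p_0^*\cV'$ via the connection transported by $U_s$. Applying Cartan's magic formula in the $s$-direction,
\[
\mathcal L_{\partial_s}=d\,\iota_{\partial_s}+\iota_{\partial_s}\,d,
\]
and integrating from $0$ to $1$, the fundamental theorem of calculus yields the endpoint difference $(\mathrm{Id}\otimes U_1)F_1^*u-F_0^*u$, while the two remaining pieces become $d_{p_0^*\cV'}\circ\Upsilon$ and $\Upsilon\circ d_{\cV'}$ after wedging with the closed form $\cT_0$ (closedness is used to move $d$ past $\cT_0\wedge\cdot$). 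Since $F_0=p_0$ and the flat-connection transport via $U_s$ is engineered precisely so that $(\mathrm{Id}\otimes U_s)F^*$ intertwines $d_{\cV'}$ with $d_{p_0^*\cV'}$, the two differentials in the formula can be replaced by $d_{\cV'}$ and $p_0^*(d_{\cV'})$ respectively, giving exactly the stated identity.

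The main obstacle is the boundedness of $\Upsilon$ on $L^2_{\iie}$: one must check that, even though $p_s\circ A_s^{-1}$ is only a smooth $\iie$-submersion and the bundles $\cV'$ are merely $C^*_r\Gamma$-bundles with a flat connection that need not be bounded in incomplete geometry, the pairing with the Thom form $\cT_0$ (which vanishes to all orders transverse to the zero section and has compact fibrewise support) restores boundedness. The argument here parallels that used to show $HS(f)$ itself is $L^2_{\iie}$-bounded: one reduces to verifying that the push-forward by the proper fibration $\pi_{X}$, applied to $\cT_0\wedge(\cdot)$, maps $L^2_{\iie}$ on the total space to $L^2_{\iie}$ on the base, now with an extra compact parameter $s\in[0,1]$, which contributes only a bounded constant. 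Once boundedness is secured on the smooth core and the chain homotopy identity is verified on $\CIc$, a standard density argument extends $\Upsilon$ and the identity to all of $L^2_{\iie}(Y;\Iie\Lambda^*\otimes\cV')$.
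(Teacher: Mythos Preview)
Your proposal is correct and follows essentially the same approach as the paper: both construct $\Upsilon$ as the standard fibrewise homotopy operator obtained by pulling back along the map $H(s,\zeta)=p_s\circ A_s^{-1}(\zeta)$ from $[0,1]\times f_0^*\bbD_Y$ to $Y$, wedging with $\cT_0$, contracting with $\partial_s$, and integrating in $s$; the paper simply writes down this formula (citing Hilsum--Skandalis) whereas you spell out the Cartan-formula verification and the boundedness argument. One small slip: in your boundedness discussion you invoke push-forward by $\pi_X$, but $\Upsilon$ lands in $L^2_{\iie}(f_0^*\bbD_Y;\cdot)$, not in $L^2_{\iie}(X;\cdot)$, so no push-forward is needed---boundedness follows directly from the uniform $L^2_{\iie}$-boundedness of $F(s,\cdot)^*$ and of wedging with $\cT_0$.
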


\begin{proof}
We follow  Hilsum-Skandalis. Consider the map
$$
H: f_0^* \bbD_{ Y} \times [0,1] \rightarrow Y
$$
$$
(x,s) \mapsto H(x,s)= p_s \circ A_s^{-1}(x).
$$
Then the required map $\Upsilon$ is defined by, $ \forall \omega \in  L^2_{\iie}(Y; \Iie \Lambda^* \otimes \cV '),$
 $$
 \Upsilon (\omega) = \int_0^1 i_{\frac{\partial}{\partial t}} \bigl( \,
  U_t \circ (p_t \circ A^{-1}_t)_F^* \otimes ( \cT_0 \wedge H^* \omega) \, \bigr) d t.
 $$
\end{proof}

We need to see how this construction handles composition.
Recall that given $f:\wt X \to \wt Y$ we are taking $\bbD_{Y}$ to be the $\ie$ vectors over $Y$ with length bounded by one,  $\bbD(f): f^*\bbD_{Y} \to \bbD_{Y}$ the natural map \eqref{PullBackDiag}, $e: \bbD_{Y} \to {Y}$ the exponential map, $p= e\circ \bbD(f)$, and $\cT$ a Thom form on $f^* \bbD_{Y}$, and then
\begin{equation*}
	HS(f)u = (\pi_X)_* (\cT \wedge p^*u).
\end{equation*}

Now suppose $\wt X$, $\wt Y$, and $\wt Z$ are manifolds with corners and iterated fibration structures, and 
\begin{equation*}
	\wt X \xrightarrow{h} \wt  Y \xrightarrow{f} \wt Z
\end{equation*}
are smooth b-maps that send boundary hypersurfaces to boundary hypersurfaces and the fibers of boundary fibrations to the fibers of boundary fibrations. Assume that the map  $r: \hat X \rightarrow B \Gamma$ above is of the form 
$r=r_1 \circ f$ for a suitable map $r_1: \hat Z \rightarrow B \Gamma.$ We then get a flat $C^*_r\Gamma-$bundle 
$\cV ''$ over $\hat Z$ (and $\wt Z$) such that $\cV ' = f^* \cV ''.$
Denoting the various $\pi_{\cdot}$'s by $\tau$'s, we have the following diagram
\begin{equation*}
	\xymatrix{ (f \circ p')^* \bbD_{Z} \ar[d]^{\tau_1} \ar[rd]^{\wt p} \ar@/_2pc/[dd]_{\tau} \ar@/^2pc/[rrdd]^{p''} \\
	h^*\bbD_{Y} \ar[d]^{\tau_2} \ar[rd]^{p'} & f^*\bbD_{Z} \ar[d]^{\tau_0} \ar[rd]^p \\
	X \ar[r]^h & Y \ar[r]^f & Z }
\end{equation*}
where $X,$ $Y,$ $Z$ are the interiors of $\wt X,$ $\wt Y,$ $\wt Z,$ $\wt p(\zeta, \xi, \eta) = (p'(\zeta, \xi), \eta)$, and, with $\cT$ standing for a Thom form, we define
\begin{equation*}
\begin{gathered}
	HS(f): \CI(Z; \Iie \Lambda^1 \otimes \cV '') \to \CI(Y; \Iie \Lambda^1 \otimes \cV '), \\
	HS(h): \CI(Y; \Iie \Lambda^1 \otimes \cV ' ) \to \CI(X; \Iie \Lambda^1 \otimes \cV )\\
	HS(f,h): \CI(Z; \Iie \Lambda^1 \otimes \cV '') \to \CI(X; \Iie \Lambda^1\otimes \cV), \\ 
	HS(f)(u) = (\tau_0)_* (\cT_{\tau_0} \wedge p^*u), \quad HS(h)(u) = (\tau_2)_* (\cT_{\tau_2} \wedge (p')^*u) \\
	HS(f,h)(u) = \tau_* (\cT_{\tau_2} \wedge (\wt p)^* \cT_{\tau_0} \wedge (p'')^*u)
\end{gathered}
\end{equation*}

\begin{lemma}\label{lem:Comp}
 $HS(f,h) = HS(h) \circ HS(f)$ and $HS(f, h) - HS( f \circ h) = d_{\cV}\Upsilon + \Upsilon d_{\cV ''}$ for some bounded operator $\Upsilon$.
\end{lemma}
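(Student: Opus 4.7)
For part (1), the strategy is to exploit that the square
\[
\xymatrix{ (f\circ p')^*\bbD_{Z} \ar[r]^-{\wt p} \ar[d]^{\tau_1} & f^*\bbD_{Z} \ar[d]^{\tau_0} \\ h^*\bbD_{Y} \ar[r]^-{p'} & Y }
\]
is Cartesian, with both vertical maps proper submersions (indeed fibrations). Consequently base change gives the identity $(p')^*(\tau_0)_* = (\tau_1)_* (\wt p)^*$ on $\iie$-forms. Starting from $HS(h)\circ HS(f)(u) = (\tau_2)_*\bigl(\cT_{\tau_2}\wedge (p')^*(\tau_0)_*(\cT_{\tau_0}\wedge p^*u)\bigr)$, I would apply base change to pull $(\tau_0)_*$ across $(p')^*$, then use the projection formula for $\tau_1$ to absorb $\cT_{\tau_2}$ inside the pushforward, using $p'' = p\circ \wt p$ and $\tau = \tau_2\circ\tau_1$. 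This yields exactly $HS(f,h)(u)$. The verification that each step preserves $L^2_{\iie}$-boundedness is routine given that all the submersions involved are $b$-submersions respecting iterated fibration structures, so fibre integration of Thom forms against $(p'')^*u$ is controlled by the $L^2$ estimate already used in the construction of $HS(\cdot)$.

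For part (2), I would compare $HS(f,h)$ and $HS(f\circ h)$ by constructing a one-parameter family of submersions $(f\circ p')^*\bbD_Z \to Z$ connecting the two. Concretely, $HS(f\circ h)$ is built from the map $\exp_Z\circ \bbD(f\circ h) : (f\circ h)^*\bbD_Z \to Z$, while $HS(f,h)$ is built from $p'' = \exp_Z\circ\bbD(f)\circ\wt p : (f\circ p')^*\bbD_Z \to Z$. After identifying the two disk bundles over $X$ (which are both pulled back from $\bbD_Z$, and are diffeomorphic via the obvious bundle isomorphism covering the identity of $X$), these two submersions are linked by a smooth homotopy: for $s\in[0,1]$, interpolate between composing the edge exponential in $Y$ then in $Z$ (via $f$) and the edge exponential directly in $Z$ (via $f\circ h$). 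This interpolation is possible because both maps come from the same underlying composite $f\circ h$ and differ only in how one performs the exponential displacement; it can be written explicitly using the family of geodesics in the complete metric $\wt g_Z$ joining the two endpoints. With this homotopy in hand, I would apply the argument of Lemma~\ref{lem:Htpy} verbatim (using a smoothly varying family of bundle isomorphisms and unitary identifications of $\cV''$-coefficients along the homotopy) to produce a bounded operator $\Upsilon: L^2_{\iie}(Z; \Iie\Lambda^*\otimes \cV'') \to L^2_{\iie}(X; \Iie\Lambda^*\otimes \cV)$ with
\[
HS(f,h) - HS(f\circ h) = d_{\cV}\Upsilon + \Upsilon\, d_{\cV''}.
\]

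The main obstacle I expect lies not in the formal algebra -- that is essentially a translation of the Hilsum--Skandalis argument -- but in two geometric points. The first is verifying that all the maps in the diagram are genuine $b$-fibrations preserving the iterated fibration structures, so that the base change identity $(p')^*(\tau_0)_* = (\tau_1)_*(\wt p)^*$ holds and the fibre integrations land back in $L^2_{\iie}$ with uniform estimates; this requires a careful check stratum by stratum, inductively on depth, using the fact that $f$ and $h$ are smooth strongly stratified maps (Definition \ref{correct-smooth}). The second, subtler, point is to construct the interpolating homotopy in part (2) so that at every time $s$ the intermediate submersion to $Z$ remains transverse to the $\iie$-structure on $Z$; here one has to use that the edge exponential in the complete metric is well-behaved with respect to the boundary fibrations and that $f,h$ respect these fibrations. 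Once these geometric verifications are in place, the $L^2$-boundedness of $\Upsilon$ follows as in \cite{H-S}, because the $s$-integration only loses a bounded factor compared to the non-homotopic case already handled in the construction of $HS(f\circ h)$ and $HS(f,h)$.
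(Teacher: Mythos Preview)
Your approach is essentially the same as the paper's: both parts use the base-change identity $(p')^*(\tau_0)_* = (\tau_1)_*(\wt p)^*$ for part~(1) and a homotopy of submersions plus Lemma~\ref{lem:Htpy} for part~(2). The one simplification you are missing is that the paper writes down the interpolating submersion explicitly as $(t;\zeta,\xi,\eta)\mapsto \exp_{f(\exp_{h(\zeta)}(t\xi))}(\eta)$, i.e.\ just scaling the $\bbD_Y$-fibre variable by $t$, which avoids your more delicate geodesic-interpolation construction entirely.
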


\begin{proof} For simplicity, we give the proof only in the case $\Gamma =\{1\}.$
Using the specific definitions of $\tau_1, \wt p, p', \tau_0$ one checks easily that $(\tau_1)_* \wt p^* = (p')^* (\tau_0)_*.$ 
Therefore, $(\wt p)^* \cT_{\tau_0}$ is indeed a Thom form associated with $\tau_1.$
Since $p''= p \circ \wt p,$ one gets:
$$
	HS(f,h)
	= (\tau_2)_* (\tau_1)_* (\cT_{\tau_2}  \wedge \wt p^*( \cT_{\tau_0} \wedge p^*) ) 
	$$ Then replacing  $(\tau_1)_* \wt p^*$ by $(p')^* (\tau_0)_*$  one gets:
	
	$$HS(f,g)= (\tau_2)_* (\cT_{\tau_2}  \wedge (p')^* ( (\tau_0)_*  (\cT_{\tau_0} \wedge (p)^*) ) )
	= HS(h) \circ HS(f).
	$$

Next, notice that the maps
\begin{equation*}
	(t; \zeta, \xi, \eta) \mapsto \exp_{f ( \exp_{h(\zeta)} (t\xi) )} (\eta)
\end{equation*}
are a homotopy between $p'': (f \circ p')^*\bbD_{Z} \to Z$ and $\hat p: (f \circ h)^*\bbD_{Z} \to Z$  
within submersions. Hence we can use the previous lemma to guarantee the existence of $\Upsilon$. 
\end{proof}

Instead of the usual $L^2$ inner product, we will consider the quadratic form
\begin{equation*}
\begin{gathered}
	Q_{X}: \CI( X ; \Iie \Lambda^* \otimes \cV ) \times \CI( X ; \Iie \Lambda^* \otimes \cV ) \to C^*_r\Gamma \\
	Q_{X}(u,v) = \int_{X} u \wedge v^*
\end{gathered}
\end{equation*}
and also the analogous $Q_{Y}$, $Q_{\bbD_{Y}}$, $Q_{f^*\bbD_{Y}} $. Recall that any element of $\CI(X ; \Iie \Lambda^* \otimes \cV )$ vanishes at the boundary of $X$ so that $Q_{X}$ is indeed well defined. (We point out that the corresponding quadratic form in  Hilsum-Skandalis \cite[page 87]{H-S} is given by $ i^{ |u| (n- |u|)} Q_{X}(u,v).$)
We denote the adjoint of an operator $T$ with respect to $Q_{X}$ (or $Q_{Y}$) by $T'$. Thus, for instance, $d_{\cV}' = - d_{\cV}$.

From Theorem \ref{theo:kk}, we know that the signature data on $\hat X$ defines an element of $K_{\dim X}(C^*_r\Gamma)$ and similarly for the data on $\hat Y$.
Hilsum and Skandalis gave a criterion for proving that two classes are the same which we now employ.

\begin{proposition}\label{prop:h-ska} Consider a stratum-preserving homotopy equivalence 
$f: \hat X \rightarrow \hat Y,$ where $\dim \hat X=n $ is even.  Denote still by $f$ the induced map $\wt X \rightarrow \wt Y.$
The bounded operator 
\begin{equation*}
	HS(f):L^2_{\iie}(Y; \Iie \Lambda^* \otimes \cV') 
	\to L^2_{\iie}(X; \Iie \Lambda^* \otimes \cV)
\end{equation*}
satisfies the following properties:
\begin{itemize}
 \item [a)] $HS(f) d_{\cV'} = d_{\cV} HS(f)$ and $HS(f)(\Dom d_{\cV'}) \subseteq \Dom d_{\cV}$
 \item [b)] $HS(f)$ induces an isomorphism $HS(f): \ker d_{\cV'}/ \Im d_{\cV'} \to \ker d_{\cV} / \Im d_{\cV}$
 \item [c)] There is a bounded operator $\Upsilon$ on a Hilbert module associated to $Y$ 
 such that $\Upsilon(\Dom d_{\cV'}) \subseteq \Dom d_{\cV'}$ and
	$\Id - HS(f)'HS(f) = d_{\cV'} \Upsilon + \Upsilon d_{\cV'}$
 \item [d)]
	There is a bounded self-adjoint involution $\eps$ on $Y$ such that $\eps(\Dom d_{\cV'}) \subseteq \Dom d_{\cV'}$, 
	which commutes with $\Id - HS(f)'HS(f)$ and anti-commutes with $d_{\cV'}$.
\end{itemize}
Hence the signature data on $\hat X$ and $\hat Y$ define the same element of $K_{0}(C^*_r\Gamma)$. 
\end{proposition}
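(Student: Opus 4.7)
The strategy is to verify each of the four conditions (a)--(d) and then invoke the Hilsum--Skandalis K-theoretic criterion \cite{H-S} in the $C^*_r\Gamma$-linear setting. For condition (a), I would argue directly from the formula \eqref{def-of-HS}: pullback by the submersion $p$ commutes with $d_{\cV'}$, wedging with the closed Thom form $\cT$ commutes with $d$, and the fibre integration $(\pi_X)_*$ commutes with $d$. To make sense of this on the maximal domain, I would first check on smooth compactly supported forms (away from the singular strata, where everything is classical) and then extend by density using boundedness of $HS(f)$ on $L^2_{\iie}$ together with the closedness of $d_{\cV}$ and $d_{\cV'}$ established in Section~7. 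This same argument, applied to a homotopy inverse $h$ of $f$, shows that $HS(h)$ is also a chain map.

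For (b), let $h:\wh{Y}\to\wh{X}$ be a stratum-preserving smooth homotopy inverse to $f$, so that $f\circ h$ and $h\circ f$ are stratum-preserving homotopic to the respective identities. Two applications of Lemma~\ref{lem:Comp} give $HS(h)HS(f) - HS(f\circ h) = d_{\cV}\Upsilon_1 + \Upsilon_1 d_{\cV}$ and similarly for $HS(f)HS(h)$, so it suffices to show $HS(f\circ h)$ and $HS(\mathrm{id})$ induce the same map on $L^2$-cohomology. This follows from Lemma~\ref{lem:Htpy} applied to the stratum-preserving homotopy. The resulting cohomology isomorphism is then identified with the isomorphism on intersection cohomology $I\!H^*_m(\wh Y)\cong I\!H^*_m(\wh X)$ induced by a stratum-preserving homotopy equivalence, via Theorem~\ref{theo:cheeger} (Cheeger's de Rham isomorphism).

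For (c), I would take the bounded operator $\Upsilon$ to be the one produced by combining Lemmas~\ref{lem:Comp} and~\ref{lem:Htpy} applied to the identity-like composition $HS(f)'HS(f)$; the key observation, as in Hilsum--Skandalis \cite{H-S}, is that $HS(f)'$ can be realized (up to sign depending on the degree) as the Hilsum--Skandalis replacement of $f$ thought of in the opposite direction, so that $HS(f)' HS(f)$ is chain homotopic to $HS(f\circ f')$, which is chain homotopic to the identity. For (d), the involution $\varepsilon$ is the grading operator $(-1)^{\deg}$ on $\Iie\Lambda^*(Y)\otimes\cV'$ (or the analogous operator $i^{k(n-k)}$ that appears in Poincar\'e duality on forms); one checks directly that $\varepsilon$ anticommutes with $d_{\cV'}$, preserves the maximal domain because it is a bounded bundle endomorphism, and commutes with $HS(f)'HS(f)$ as a consequence of the symmetry of the quadratic form $Q_Y$. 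Once all four conditions hold, one invokes the $C^*_r\Gamma$-linear version of the Hilsum--Skandalis K-theory criterion: the data $(L^2_{\iie}, d_{\cV}, \varepsilon)$ and $(L^2_{\iie}, d_{\cV'}, \varepsilon')$, suitably assembled, define classes in $K_0(C^*_r\Gamma)$ which coincide because of the intertwining by $HS(f)$ up to chain homotopy, combined with the involution $\varepsilon$ that identifies the signature operator with $d+d^*$ on the $\pm 1$-eigenspaces.

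The main obstacle will be condition (c), and more precisely the verification that the chain homotopy operator $\Upsilon$ produced by the composition $HS(f)' HS(f)$ genuinely preserves the maximal domains of $d_{\cV'}$ and $d_{\cV}$ and is bounded in the iterated incomplete edge $L^2$ setting. In the Hilsum--Skandalis argument on closed manifolds this is automatic; here one must control $\Upsilon$ near each singular stratum, using the fact that $f$ and its lift $\wt f$ preserve the iterated fibration structures, so that all the auxiliary maps ($\bbD(f)$, $\exp$, $\cT$) have the right behaviour on $\Iie\Lambda^*$. The argument ultimately reduces to checking that the relevant fibration $p$ restricts on each boundary face to a fibration compatible with the iterated fibration structure, allowing fibre integration to produce operators bounded in $L^2_{\iie,\Gamma}$; this compatibility is exactly the content of Definition~\ref{correct-smooth} of a smoothly stratified map.
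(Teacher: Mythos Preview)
Your plan for (a), (b), and (d) matches the paper's argument. The paper also stresses, at the outset, that because the signature operator (and hence $d_{\cV}$, $d_{\cV'}$) has a \emph{unique} closed extension, the domain is the minimal one; therefore any bounded operator that commutes or anticommutes with $d$ on the core automatically preserves the domain. This dissolves the ``main obstacle'' you anticipate in your last paragraph: no stratum-by-stratum control of $\Upsilon$ is needed, only boundedness on $L^2_{\iie}$ and the chain-homotopy identity.

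The genuine gap is in your treatment of (c). You write that ``$HS(f)'$ can be realized (up to sign) as the Hilsum--Skandalis replacement of $f$ thought of in the opposite direction, so that $HS(f)'HS(f)$ is chain homotopic to $HS(f\circ f')$.'' This is not how it works: $HS(f)'$ is \emph{not} of the form $HS(g)$ for any map $g:\hat X\to\hat Y$, and there is no map ``$f'$'' whose composition with $f$ would be relevant. What the paper does is compute $HS(f)'$ directly from the pairing $Q_X$. Choosing the Thom form $\cT$ on $f^*\bbD_Y$ to be the pull-back by $\bbD_Y(f)$ of a Thom form $\wt\cT$ on $\bbD_Y$, one finds
\[
HS(f)'v \;=\; p_*\bigl(\bbD_Y(f)^*\wt\cT \wedge \pi_X^* v\bigr),
\]
a push-forward along $p$, not a pull-back. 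One then checks the identity $\bbD_Y(f)^*\pi_Y^*(\pi_Y)_* = \pi_X^*(\pi_X)_*\bbD_Y(f)^*$ and uses that $\bbD_Y(f)$, being a homotopy equivalence of manifolds with corners, preserves the relative fundamental class (so $Q_{f^*\bbD_Y}(\bbD_Y(f)^*\alpha,\bbD_Y(f)^*\beta)=Q_{\bbD_Y}(\alpha,\beta)$). These two facts eliminate $f$ entirely from the composition, yielding
\[
HS(f)'HS(f)u \;=\; e_*\bigl(\wt\cT \wedge \pi_Y^*(\pi_Y)_*(\wt\cT \wedge e^*u)\bigr),
\]
an operator that lives solely on $\bbD_Y\to Y$. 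Now $e=\exp$ is homotopic to $\pi_Y$, and with $\pi_Y$ in place of $e$ the expression is identically $u$; Lemma~\ref{lem:Htpy} then produces the bounded $\Upsilon$ with $\Id - HS(f)'HS(f)=d_{\cV'}\Upsilon+\Upsilon d_{\cV'}$. Without this explicit computation of $HS(f)'$ and the reduction to an operator independent of $f$, your argument for (c) does not go through.
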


\begin{proof}
The final sentence follows from ($a$)-($d$) and Lemma 2.1 in Hilsum-Skandalis \cite{H-S}.

In Section 7 we showed that the signature operator has a unique closed extension, it follows that so do $d_{\cV}$ and $d_{\cV'}$ (see, e.g., \cite[Proposition 11]{Hunsicker-Mazzeo}). Since this domain is the minimal domain, as soon as we know that an operator is bounded in $L^2_{\iie}$ and commutes or anticommutes with these operators, we know that it preserves their domains.

a) Since $HS(f)$ is made up of pull-back, push-forward, and exterior multiplication by a closed form, $HS(f)d_{\cV'} = d_{\cV}HS(f)$. 

b) From ($a$) we know that $HS(f)$ induces a map $\ker d_{\cV'}/ \Im d_{\cV'} \to \ker d_{\cV} / \Im d_{\cV}$.
Let $h$ denote a homotopy inverse of $f$ and consider
\begin{equation*}
	HS(h): L^2_{\iie}(X; \Iie \Lambda^* \otimes \cV) 
	\to L^2_{\iie}(Y; \Iie \Lambda^* \otimes \cV').
\end{equation*}
We know from Lemma \ref{lem:Comp} that $HS(f \circ h)$ and $HS(h) \circ HS(f)$ induce the same map in cohomology and, from Lemma \ref{lem:Htpy} that $HS(f \circ h)$ induces the same map as the identity. Since the same is true for $HS(f \circ h)$ we conclude that $HS(h)$ and $HS(f)$ are inverse maps in cohomology and hence each is an isomorphism.

c) Recall that $p: f^* \bbD_{Y} \rightarrow Y$, being a proper submersion, is a fibration. Choose a Thom form 
$\wt \cT$ for the fibration $\pi_{Y}: \bbD_{Y} \rightarrow Y$ so that $\bbD_{Y} (f)^* \wt  \cT$ defines a Thom form 
for the fibration $\pi_{X}: f^* \bbD_{Y} \rightarrow X.$
These two facts allow us to carry out the following computation, where $u \in C^\infty (\wt Y; \Iie \Lambda^* \otimes \cV')$ and 
$v \in C^\infty (X; \Iie \Lambda^* \otimes \cV).$
\begin{equation*}
\begin{split}
	Q_{X}(HS(f)u, v) 
	&= Q_{X}\lrpar{ (\pi_{X})_*( \bbD_{Y} (f)^* \wt \cT \wedge p^*u), v } \\
	&= Q_{f^*\bbD_{Y}} (\bbD_{Y} (f)^* \wt \cT \wedge p^*u, \pi_{X}^*v ) \\
	&= (-1)^{ n(n-| v |)} Q_{f^*\bbD_{Y}} ( p^*u, \bbD_{Y} (f)^* \wt \cT \wedge \pi_{X}^*v ) \\
	&= (-1)^{ n(n-| v |)}Q_{Y} ( u, p_* (\bbD_{Y} (f)^* \wt \cT \wedge \pi_{X}^*v ) ).
\end{split}
\end{equation*}
Since $n$ is even this shows that 
$HS(f)' v =  p_* (\bbD_{Y} (f)^* \wt \cT \wedge \pi_{X}^*v )$
and hence
\begin{equation*}
	HS(f)'HS(f)u = p_* ( \bbD_{Y} (f)^* \wt \cT \wedge \pi_{X}^*  (\pi_{X})_*((\bbD_{Y} (f)^* \wt \cT \wedge p^*u) ) ).
\end{equation*}
Next one checks easily that, for any differential form $ \omega $ on $\bbD_{Y}$,
$$ \bbD_{Y} (f)^*  \pi^*_{Y}   (\pi_{Y})_* \omega =  \pi^*_{X}  (\pi_{X})_* \bbD_{Y} (f)^* \omega.
$$ 
and so, from the identity  $p^* = \bbD_{Y} (f)^* e^*,$
\begin{equation*}
	HS(f)'HS(f)u = (e \circ \bbD_{Y} (f))_* ( \bbD_{Y} (f)^*(\,  \wt \cT \wedge \pi_{Y}^*  (\pi_{Y})_*( \wt \cT \wedge e^*u) ) ).
\end{equation*}

Now observe that  $\bbD_{Y} (f): f^* \bbD_{Y} \rightarrow  \bbD_{Y},$ being a homotopy equivalence of manifolds with corners,
sends the relative fundamental class of $f^* \bbD_{Y}$ to the relative fundamental class of $ \bbD_{Y}$ and so
$$
Q_{f^* \bbD_{Y}} ( \bbD_{Y} (f)^*\alpha  , \bbD_{Y} (f)^* \beta ) =Q_{ \bbD_{Y}} (\alpha  ,  \beta ).
$$ From this identity, the previous equation, and the fact that $e$ induces a fibration, one checks easily that
$$
 Q_{Y} ( HS(f)'HS(f)u , w)= Q_{Y} ( e_* (\, \wt \cT \wedge \pi_{Y}^*  (\pi_{Y})_*( \wt \cT \wedge e^*u) ) , w)
$$
and hence
$$
HS(f)'HS(f)u = e_* (\,\wt \cT \wedge  \pi_{Y}^*  (\pi_{Y})_*( \wt \cT \wedge e^*u) ).
$$
Finally, $e$ is homotopic to $\pi_{Y}$, and since
$$
(\pi_{Y})_* (\, \wt \cT \wedge \pi_{Y}^*  (\pi_{Y})_*( \wt \cT \wedge \pi_{Y}^*u) )= (\pi_{Y})_* (\wt \cT \wedge \pi_{Y}^*u) = u,
$$
Lemma \ref{lem:Htpy}, $\Id - HS(f)'HS(f) = d_{\cV'} \Upsilon + \Upsilon d_{\cV'}$ as required.

d) It suffices to take $\eps u = (-1)^{|u|} u$.
\end{proof}

\noindent {\bf Remark.} Consider now the case of   an odd dimensional Witt space $ \hat{X} $ endowed with an edge adapted iterated metric $g$ and a reference
map $r: \hat{X} \rightarrow B \Gamma.$ We have defined in Section 7  the higher signature index class 
$ \Ind\, (\wt \eth_{\sign}) \in KK_1(\bbC, C^*_r \Gamma)\simeq K_1(C^*_r \Gamma)  $ associated to the twisted signature 
operator 
defined by the data $ (\hat{X} , g, r) $. Recall that there is a suspension  isomorphism $\Sigma: K_1(C^*_r \Gamma) \leftrightarrow \wt K _0(C^*_r \Gamma \otimes C(S^1))$ which is induced by taking the
Kasparov product with the Dirac operator of $S^1$. Consider 
the even dimensional Witt space $\hat{X} \times S^1$ endowed with the obvious stratification and with  the reference map 
$$
 r \times \Id_{S^1}: \hat{X} \times S^1 \rightarrow B ( \Gamma \times \mathbb{Z}) \simeq B \Gamma \times S^1.
$$ 
As explained in \cite[p. 624]{LLP}, \cite[\S 3.2]{LPJFA}, the suspension of  the odd index class 
$ \Ind\, (\wt \eth_{\sign}) \in KK_1(\bbC, C^*_r \Gamma)\simeq K_1(C^*_r \Gamma) $
is equal to the even signature index class associated to the data
$ (\hat{X} \times S^1, g \times (d \theta)^2 , r  \times \Id_{S^1} ) .$ 
If now $f:  \hat{X}\to   \hat{Y} $  is  a stratified homotopy equivalence of odd dimensional
Witt spaces, then $f$ induces a stratified homotopy equivalence from $\hat{X}\times S^1$ to    
$\hat{Y}\times S^1$. By the previous Proposition the signature index classes of
$\hat{X}\times S^1$ and     
$\hat{Y}\times S^1$ are the same. Then using the suspension isomorphism $\Sigma$,  we deduce finally that the odd 
signature index classes associated to $\hat{X}$ and $\hat{Y} $ are the same.
Thus, the (smooth) stratified homotopy invariance of the signature index class is established
for Witt spaces of arbitrary dimension.

\section{Assembly map and stratified homotopy invariance of  higher signatures}\label{section:novikov}
Consider  the assembly map $\beta: K_* (B\Gamma) \to K_*(C^*_r \Gamma)$.
The rationally injectivity  of this map is known as the  strong Novikov conjecture for $\Gamma$. In the closed case
it implies that the Novikov higher signatures are oriented homotopy invariants. The rational injectivity
of the assembly map is still unsettled in general, although it is known to hold for large classes
of discrete groups; for closed manifolds having these fundamental  groups the higher signatures are thus homotopy invariants.
The following  is the main topological result of this paper:

\begin{theorem}\label{theo:main}
Let $\hat X$ be an oriented Witt space,  $r: \hat X\to B\pi_1 (\hat X)$ the classifying map
for the universal cover, and let $\Gamma:= \pi_1 (\hat X)$.
If the assembly map $K_* (B\Gamma) \to K_*(C^*_r \Gamma)$
is rationally injective, then the Witt-Novikov  higher signatures
$$\{ \langle \alpha, r_* L_* (\hat X) \rangle, \alpha\in H^* (B\Gamma,\bbQ) \}$$
are stratified homotopy invariants.
\end{theorem}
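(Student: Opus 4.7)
The proof will be essentially a bookkeeping argument that threads together the four main inputs already established in the paper: the assembly identity $\beta(r_*[\eth_{\sign}]) = \Ind(\wt\eth_{\sign})$ of Corollary \ref{cor:assembly}; the stratified homotopy invariance of the twisted signature index class proved in Section \ref{sect:shi} (Proposition \ref{prop:h-ska} in the even case, together with the $S^1$-suspension remark following it in the odd case); the identification $\ch_*[\eth_{\sign}]_{\bbQ} = L_*(\hat X)$ of Theorem \ref{theo:cmwu}; and the hypothesized rational injectivity of $\beta: K_*(B\Gamma)\otimes\bbQ \to K_*(C^*_r\Gamma)\otimes\bbQ$. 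The strategy is the Kasparov--Mishchenko template for closed manifolds, now valid in our setting because each of the listed ingredients has been extended to Witt spaces earlier in the paper.

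Concretely, let $f: \hat X' \to \hat X$ be a smoothly stratified homotopy equivalence. Because $f$ restricts to a homotopy equivalence of regular parts, it induces an isomorphism $f_*: \pi_1(\hat X') \xrightarrow{\cong} \pi_1(\hat X) = \Gamma$, and $r' := r \circ f: \hat X' \to B\Gamma$ is a classifying map for the corresponding $\Gamma$-cover of $\hat X'$. The Mishchenko bundle on $\hat X'$ is then canonically the pullback by $f$ of the Mishchenko bundle on $\hat X$, so Proposition \ref{prop:h-ska} (and its odd-dimensional corollary via $S^1$-suspension) applies and yields
\begin{equation*}
\Ind(\wt\eth_{\sign,\hat X}) \,=\, \Ind(\wt\eth_{\sign,\hat X'}) \quad \text{in}\quad K_*(C^*_r\Gamma).
\end{equation*}
By the assembly identity, this reads $\beta(r_*[\eth_{\sign,\hat X}]) = \beta(r'_*[\eth_{\sign,\hat X'}])$, whence rational injectivity of $\beta$ gives the equality
\begin{equation*}
r_*[\eth_{\sign,\hat X}] \,=\, r'_*[\eth_{\sign,\hat X'}] \quad \text{in}\quad K_*(B\Gamma)\otimes\bbQ.
\end{equation*}

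Applying the (rational) homological Chern character $\ch: K_*(\,\cdot\,)\otimes\bbQ \to H_*(\,\cdot\,,\bbQ)$ and exploiting its naturality for the continuous maps $r$ and $r'$, together with Theorem \ref{theo:cmwu} on both sides, produces
\begin{equation*}
r_*L_*(\hat X) \,=\, \ch\bigl(r_*[\eth_{\sign,\hat X}]_{\bbQ}\bigr) \,=\, \ch\bigl(r'_*[\eth_{\sign,\hat X'}]_{\bbQ}\bigr) \,=\, r'_*L_*(\hat X') \quad \text{in}\quad H_*(B\Gamma,\bbQ).
\end{equation*}
Pairing with an arbitrary $\alpha \in H^*(B\Gamma,\bbQ)$ gives $\sigma_\alpha(\hat X) = \sigma_\alpha(\hat X')$, which is the desired stratified homotopy invariance.

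As a plan, there is no hard step left at this point: all the substantive analysis (essential self-adjointness, compact resolvent, construction of the unbounded Kasparov bimodule defining $\Ind(\wt\eth_{\sign})$, the stratified homotopy invariance of that class via the Hilsum--Skandalis replacement adapted to iterated edge geometry, and the Moscovici--Wu identification of the Chern character with the Goresky--MacPherson $L$-class) has been done in Sections 6--10 and in Theorem \ref{theo:cmwu}. The only delicate verification I would want to write out carefully is that the Mishchenko bundles and classifying maps on $\hat X$ and $\hat X'$ are related by pullback under $f$ in the precise form needed to invoke Proposition \ref{prop:h-ska}; this is a functoriality check on the constructions of Section 7 and is routine. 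The genuinely hard part of the whole circle of ideas, namely the stratified homotopy invariance of the index class itself, was already the subject of Section \ref{sect:shi}, so from the present vantage point Theorem \ref{theo:main} becomes a formal consequence.
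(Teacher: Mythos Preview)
Your proof is correct and follows exactly the same four-step Kasparov template as the paper: equality of twisted index classes (Proposition~\ref{prop:h-ska}), the assembly identity (Corollary~\ref{cor:assembly}), rational injectivity of $\beta$, and the Chern character identification with $L_*$ (Theorem~\ref{theo:cmwu}). The only cosmetic difference is the direction in which you name the homotopy equivalence $f$ and which space carries the primed data.
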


 \begin{proof}
The proof proceeds in four steps and is directly inspired by Kasparov's proof in the closed case, see
for example \cite{Kasparov-contemporary}  and the references therein:

   \begin{enumerate}
  \item Consider  $({\hat X}', r':{\hat X}'\to B\Gamma)$
 and $(\hat X,r: \hat X\to B\Gamma)$, with $r=r'\circ f$ and  $f: \hat X\to {\hat X}'$
 a stratified homotopy equivalence between (smoothly stratified) oriented Witt spaces. Denote by $ \wt \eth_{\sign}' $ the twisted signature operator associated to
 $({\hat X}', r':{\hat X}'\to B\Gamma)$.
 We have proved that
 $$ \Ind (\wt \eth_{\sign}) = \Ind (\wt \eth_{\sign}')  \quad \text{in}\quad K_* (C^*_r \Gamma)\otimes \bbQ  \,.$$
 \item We know that 
 the assembly map sends $r_* [\eth_{\sign}]\in K_* (B\Gamma)$ to the Witt index class $\Ind (\wt \eth_{\sign}).$
 More explicitly:
 $$\beta (r_* [\eth_{\sign}])= \Ind (\wt \eth_{\sign})\quad\text{in}\quad K_*(C^*_r \Gamma)\otimes \bbQ$$
  \item We deduce from the assumed rational injectivity of the assembly map  that
 $$ r_* [\eth_{\sign}] = (r')_* [\eth_{\sign}']\quad \text{in}\quad K_* (B\Gamma)\otimes \bbQ.$$
 \item Since we know from Cheeger/Moscovici-Wu that $\Ch_* (r_* [\eth_{\sign}])=r_* (L_* (\hat X))$
 in $ H_* (B\Gamma,\bbQ)$ we finally get that 
 $$ r_* (L_* (\hat X)) = (r')_* (L_* ({\hat X}'))\quad \text{in}\quad H_* (B\Gamma,\bbQ)$$
 which obviously implies the stratified homotopy invariance of  the higher signatures
 $\{ <\alpha, r_* L_* (\hat X)>, \alpha\in H^* (B\Gamma,\bbQ) \}$.  
\end{enumerate}
\end{proof}

Examples of discrete groups for which the assembly map is rational injective include:
amenable groups,
discrete subgroups of  Lie groups with a finite number of connected components, Gromov hyperbolic groups, discrete groups acting properly
on bolic spaces, 
countable subgroups of $GL(\bbK)$ for $\bbK$ a field.

\section{The symmetric signature on Witt spaces}\label{section:symmetric}

\subsection{The symmetric signature in the closed case} $\;$ \newline
Let $X$ be a closed orientable manifold and let $r:X\to B\Gamma$ be a classifying map
for the universal cover.
The symmetric signature of Mishchenko, $\sigma (X,r)$,
is a purely topological object \cite{Mish1}. In its most
sophisticated presentation, it  is an element in  the $L$-theory groups $L^* (\bbZ\Gamma)$.
 In general one can define the symmetric signature of any algebraic Poincar\'e complex, i.e.,
a cochain complex of finitely generated $\bbZ\Gamma$-modules satisfying a kind
of Poincar\'e duality. The Mishchenko symmetric signature corresponds to the choice
of the Poincar\'e complex defined by the cochains on the universal cover.
In the treatment of the Novikov conjecture one is in fact interested in a 
less sophisticated invariant, namely the image of $\sigma (X,r)\in L^* (\bbZ\Gamma)$
 under the natural map $\beta_{\bbZ}: L^* (\bbZ\Gamma)\to
L^* (C^*_r\Gamma)$. Recall also that there is a natural isomorphism $\nu: L^* (C^*_r\Gamma)\rightarrow
K_* (C^*_r \Gamma)$ (which is in fact valid for any $C^*$-algebra). The $C^*$-algebraic symmetric
signature is, by definition, the element $\sigma_{_{C^*_r \Gamma}}(X,r) := \nu (\beta_{\bbZ} (\sigma (X,r))$;
thus $\sigma_{_{C^*_r \Gamma}}(X,r)\in K_* (C^*_r \Gamma)$.
The following result, due to Mishchenko and Kasparov, generalizes the equality between the numeric 
index of the signature operator and the topological signature. With the usual notation:
\begin{equation}\label{equality}
\Ind (\wt \eth_{\sign})=\sigma_{_{C^*_r \Gamma}}(X,r)\in K_* (C^*_r \Gamma)
\end{equation}
As a corollary we see that the signature index class 
is a homotopy invariant; this is the topological approach
to the homotopy invariance of the signature index class that we have mentioned in
the introductory remarks in  Section \ref{sect:shi}. The equality of the $C^*$-algebraic symmetric signature with the 
signature index class (formula (\ref{equality}) above) can be
restated as saying that the following diagram is commutative
\begin{equation}
  \label{diagram}
  \begin{CD}
   \Omega^{{\rm SO}}_* (B\Gamma) @>{{\rm Index}}>> K_i (C^*_r \Gamma)\\
    @VV{\sigma}V  @V{\nu^{-1}}VV\\
  L^* (\bbZ\Gamma)  @>{\beta_{\bbZ}}>> L^*(C^*_r \Gamma).
  \end{CD}
\end{equation}
where $i\equiv *$ mod 2.

\subsection{The symmetric signature on Witt spaces.} $\;$ \newline
The middle perversity intersection homology groups of a Witt space do satisfy
Poincar\'e duality over the rationals. Thus, it is natural to expect that for  a Witt space
$\hat X$ endowed with a reference map $r:\hat X\to B\Gamma$ it should be possible
to define a symmetric signature $\sigma^{{\rm Witt}}_{\bbQ \Gamma}(X,r) \in L^* (\bbQ \Gamma).$
And indeed,  the definition of symmetric signature
in the Witt context, together with its expected properties, such as Witt bordism invariance,
does appear in the literature, see for example  \cite{Wei},  \cite{Chang},  \cite{Wei-higher-rho}.
However, no rigorous account of this definition was given in these references,
which is unfortunate, given that things are certainly more complicated 
 than in the smooth
case and for diverse reasons that for the sake of brevity we shall not go into.

Fortunately, in a recent paper Markus Banagl \cite{banagl-msri} has given a rigorous  definition
of the symmetric signature on Witt spaces\footnote{Banagl actually concentrates on the more restrictive class of IP spaces, for which an integral symmetric signature,
i.e. an element in $L^* (\bbZ\Gamma)$, exists; it is easy to realize that his construction can be given for the larger
class of Witt spaces, producing, however, an element in $L^* (\bbQ \Gamma)$.} using  surgery techniques as well as previous results
of Eppelmann \cite{Eppelmann}. Banagl's symmetric signature is an element $\sigma^{\rm Witt}_{\bbQ\Gamma} (\hat X,r)\in L^* (\bbQ \Gamma)$;
we refer directly to Banagl's interesting article for the definition and only point out that directly from his construction we can
conclude that

\begin{itemize}
\item the symmetric signature $\sigma^{\rm Witt}_{\bbQ\Gamma}  (\hat X,r)$ is equal to (the rational) Mishchenko's symmetric signature
if $\hat X$ is a closed compact manifold;
\item the Witt symmetric signature is a Witt bordism invariant; it defines a group homomorphism
$\sigma^{\rm Witt}_{\bbQ\Gamma} : \Omega^{\rm Witt}_* (B\Gamma) \to L^* (\bbQ \Gamma)$.
\end{itemize}

On the other hand, it is not known whether Banagl's symmetric signature $\sigma^{\rm Witt}_{\bbQ\Gamma} (\hat X,r)$
 is a stratified homotopy invariant.

We define the $C^*$-algebraic Witt symmetric signature as the image of $\sigma^{\rm Witt}_{\bbQ\Gamma}  (\hat X,r)$
under the composite 
$$L^* (\bbQ \Gamma)\xrightarrow{\beta_{\bbQ}} L^* (C^*_r \Gamma)\xrightarrow{\nu} K_* (C^*_r \Gamma)\,.$$
We denote the $C^*$-algebraic Witt symmetric signature by $\sigma^{{\rm Witt}}_{_{C^*_r \Gamma}}(X,r)$.

\subsection{Rational equality of the Witt symmetric signature and of the signature index class}$\;$ \newline
Our most general goal would be to prove that
there is a commutative diagram
\begin{equation}
  \label{diagram-witt}
  \begin{CD}
   \Omega^{{\rm Witt}}_*(B\Gamma) @>{{\rm Index}}>> K_i (C^*_r \Gamma)\\
    @VV{\sigma^{\rm Witt}_{\bbQ\Gamma}}V  @V{\nu^{-1}}VV\\
  L^* (\bbQ\Gamma)  @>{\beta_{\bbQ}}>> L^*(C^*_r \Gamma).
  \end{CD}
\end{equation}
or, in formul\ae
$$\sigma^{{\rm Witt}}_{_{C^*_r \Gamma}}(X,r)= \Ind (\wt \eth_{\sign}) \quad \text{in} \quad  K_i (C^*_r \Gamma)$$
with $\Ind(\wt \eth_{\sign})$ the signature index class decribed in the previous sections.
We shall be happy with a little less, namely the rational equality.

\begin{proposition}\label{prop:magic}
Let $\sigma^{{\rm Witt}}_{C^*_r \Gamma}(X,r)_{\bbQ}$ and $ \Ind (\wt \eth_{\sign})_{\bbQ} $ be
the rational classes,  in the rationalized K-group $K_i (C^*_r \Gamma)\otimes \bbQ$, defined by the Witt symmetric signature 
and by the signature index class.
Then 
\begin{equation}\label{rational}
\sigma^{{\rm Witt}}_{C^*_r \Gamma}(X,r)_{\bbQ}= \Ind (\wt \eth_{\sign})_{\bbQ}\quad\text{in}\quad K_i (C^*_r \Gamma)\otimes \bbQ
\end{equation}
\end{proposition}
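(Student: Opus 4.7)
The plan is to identify both $\sigma^{\rm Witt}_{C^*_r\Gamma}(\widehat X,r)_{\bbQ}$ and $\Ind(\wt\eth_{\sign})_{\bbQ}$ as the values of two group homomorphisms from a smoothly stratified Witt bordism group into $K_*(C^*_r\Gamma)\otimes \bbQ$, and then appeal to Proposition \ref{prop:ko} to reduce the equality to the closed smooth case, which is the classical Mishchenko--Kasparov identity \eqref{equality}.

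More precisely, first I would verify that the two assignments
\begin{equation*}
\alpha_{\rm an}\colon [\widehat X,r]\longmapsto \Ind(\wt\eth_{\sign})_{\bbQ},
\qquad
\alpha_{\rm top}\colon [\widehat X,r]\longmapsto \sigma^{\rm Witt}_{C^*_r\Gamma}(\widehat X,r)_{\bbQ}
\end{equation*}
descend to well-defined group homomorphisms $\Omega^{\rm Witt,s}_*(B\Gamma)\otimes \bbQ \to K_*(C^*_r\Gamma)\otimes\bbQ$. For $\alpha_{\rm an}$ this is precisely the content of \eqref{bordsim-inv} (combined with Theorem \ref{thm:bordism}); the homomorphism property can be obtained, as in \cite{ros-weinberget-at-2} in the closed case, by realizing disjoint union and additivity at the level of Kasparov bimodules. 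For $\alpha_{\rm top}$ the bordism invariance and homomorphism property are part of Banagl's construction of $\sigma^{\rm Witt}_{\bbQ\Gamma}$ in \cite{banagl-msri}, post-composed with the ring homomorphism $\nu\circ\beta_{\bbQ}\colon L^*(\bbQ\Gamma)\to K_*(C^*_r\Gamma)$, which is clearly additive.

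Next I would invoke Proposition \ref{prop:ko}: the natural forgetful/inclusion map
\begin{equation*}
\Omega^{\rm SO}_*(B\Gamma)\otimes \bbQ \longrightarrow \Omega^{\rm Witt,s}_*(B\Gamma)\otimes \bbQ
\end{equation*}
is surjective. Consequently, to prove $\alpha_{\rm an}=\alpha_{\rm top}$ it suffices to check equality on classes of the form $[M,u\colon M\to B\Gamma]$ with $M$ a closed oriented smooth manifold. For such $M$ the stratification is trivial, the incomplete iterated edge metric reduces to an ordinary Riemannian metric, $\wt\eth_{\sign}$ is the classical Mishchenko--Fomenko twisted signature operator, and $\sigma^{\rm Witt}_{\bbQ\Gamma}(M,u)$ reduces to the rational Mishchenko symmetric signature (as noted in the listed properties of Banagl's construction). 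The identity $\alpha_{\rm an}([M,u])=\alpha_{\rm top}([M,u])$ is then exactly the commutativity of the classical diagram \eqref{diagram}, i.e. the Mishchenko--Kasparov theorem \eqref{equality}.

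The main obstacle I anticipate is the bordism-invariance/additivity of $\alpha_{\rm top}$ in the smoothly stratified setting: one must ensure that Banagl's symmetric signature, which is a priori defined for Witt spaces (possibly without a smooth stratification), restricts to a homomorphism on $\Omega^{\rm Witt,s}_*(B\Gamma)\otimes\bbQ$, and that its definition is compatible with the natural map $\Omega^{\rm Witt,s}_*(B\Gamma)\to \Omega^{\rm Witt}_*(B\Gamma)$. This is essentially a naturality check inside Banagl's construction, but it is the one place where one has to reconcile the smoothly stratified viewpoint used throughout this paper with the (PL/topological) framework in which the symmetric signature is originally defined. Once this compatibility is in place, together with the bordism-invariance of $\alpha_{\rm an}$ established in Section 8 and Proposition \ref{prop:ko}, the reduction to the closed smooth case and application of \eqref{equality} yields \eqref{rational}.
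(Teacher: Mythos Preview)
Your proposal is correct and follows essentially the same route as the paper: both define the analytic and topological assignments as homomorphisms out of $\Omega^{\rm Witt,s}_*(B\Gamma)\otimes\bbQ$, invoke Proposition~\ref{prop:ko} to reduce to closed oriented manifolds, and then apply the classical Mishchenko--Kasparov identity \eqref{equality}. The only difference is that the paper treats the compatibility of Banagl's symmetric signature with the smoothly stratified bordism group as evident, whereas you flag it explicitly as a point requiring care.
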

\begin{proof}
We already  know from \cite{banagl-msri} that the rationalized symmetric signature 
defines  a homomorphism from $(\Omega^{{\rm Witt}}_*(B\Gamma))_{\bbQ}$ to $K_i (C^*_r \Gamma)\otimes \bbQ$.
However, it also clearly defines a homomorphism $(\Omega^{{\rm Witt},s}_*(B\Gamma))_{\bbQ}\to  K_i (C^*_r \Gamma)\otimes \bbQ$,
exactly as the signature index class.
For notational convenience, let $\mathcal{I}: (\Omega^{{\rm Witt},s}_*(B\Gamma))_{\bbQ}\to  K_i (C^*_r \Gamma)\otimes \bbQ$
be the (Witt) signature  index morphism; let $\mathcal{I}': (\Omega^{{\rm Witt},s}_*(B\Gamma))_{\bbQ}\to  K_i (C^*_r \Gamma)\otimes \bbQ$
be the (Witt) symmetric signature morphism. We want to show that 
$$\mathcal{I} = \mathcal{I} '\,.$$
We know from Proposition \ref{prop:ko}  that the natural map 
$\Omega^{{\rm SO}}_*(B\Gamma) \rightarrow  \Omega^{{\rm Witt},s}_*(B\Gamma)$ induces   a rational surjection
$$  s:  ( \Omega^{{\rm SO}}_*(B\Gamma) )_{\bbQ}\rightarrow  (\Omega^{{\rm Witt},s}_*(B\Gamma))_{\bbQ}. $$
In words, a  smoothly stratified Witt space $X$ with reference map $r:X\to B\Gamma$ is smoothly stratified
Witt bordant to $k$-copies of a closed oriented
compact manifold $M$ with reference map $\rho: M\to B\Gamma$.
Moreover, we remark that the Witt  index classes and the Witt symmetric signature of an oriented  closed compact manifold
coincide with the classic signature index class and the Mishchenko symmetric signature. 
Then
$$\mathcal{I} ([X,r])= \mathcal{I} (k[M,\rho])=\mathcal{I}' (k[M,\rho]) = \mathcal{I}' ([X,r])\,$$
with the first and third equality following from the above remark and the second equality obtained using 
the fundamental result of Kasparov and Mishchenko on closed manifolds.
The proof is complete.
\end{proof}

The above Proposition  together with Proposition \ref{prop:h-ska} implies at once the following result:

\begin{corollary}\label{cor:shi-of-banagl}
The $C^*$-algebraic symmetric signature defined by Banagl is a rational stratified 
homotopy invariant.
\end{corollary}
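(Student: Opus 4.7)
The plan is to deduce the corollary by combining the two substantial results that immediately precede it: the analytic stratified homotopy invariance of the signature index class (Proposition \ref{prop:h-ska} together with its odd-dimensional suspension remark) and the rational identification of the Banagl symmetric signature with the index class (Proposition \ref{prop:magic}). Since both inputs are now available, the corollary should be a short formal consequence, not a new piece of analysis.

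First I would fix the set-up. Let $f:\hat X\to\hat X'$ be a (smooth) stratified homotopy equivalence of oriented Witt spaces, with reference maps $r':\hat X'\to B\Gamma$ and $r=r'\circ f:\hat X\to B\Gamma$. I need to show
\[
\sigma^{\mathrm{Witt}}_{C^*_r\Gamma}(\hat X,r)_{\bbQ}
= \sigma^{\mathrm{Witt}}_{C^*_r\Gamma}(\hat X',r')_{\bbQ}
\quad\text{in}\quad K_*(C^*_r\Gamma)\otimes\bbQ.
\]
By Proposition \ref{prop:magic} applied to both $(\hat X,r)$ and $(\hat X',r')$, each of these rational classes coincides with the rational signature index class:
\[
\sigma^{\mathrm{Witt}}_{C^*_r\Gamma}(\hat X,r)_{\bbQ}=\Ind(\wt\eth_{\sign})_{\bbQ},\quad
\sigma^{\mathrm{Witt}}_{C^*_r\Gamma}(\hat X',r')_{\bbQ}=\Ind(\wt\eth_{\sign}')_{\bbQ}.
\]
So it is enough to know that $\Ind(\wt\eth_{\sign})=\Ind(\wt\eth_{\sign}')$ in $K_*(C^*_r\Gamma)$ (a fortiori rationally).

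This latter equality is exactly the content of Proposition \ref{prop:h-ska} in even dimensions: the four properties (a)--(d) of the Hilsum--Skandalis replacement $HS(f)$ verified there, together with the Hilsum--Skandalis Lemma 2.1 of \cite{H-S}, yield the coincidence of the two unbounded Kasparov $(\bbC,C^*_r\Gamma)$-bimodules in $KK_0(\bbC,C^*_r\Gamma)$. In the odd-dimensional case I would invoke the remark following Proposition \ref{prop:h-ska}: cross with $S^1$ to reduce to the even case (the product $\hat X\times S^1$ is again a Witt space with a natural iterated edge metric and reference map $r\times\Id_{S^1}$, and $f\times\Id_{S^1}$ is a stratified homotopy equivalence), apply the even result, then undo the cross product via the suspension isomorphism $\Sigma:K_1(C^*_r\Gamma)\xrightarrow{\cong} \widetilde K_0(C^*_r\Gamma\otimes C(S^1))$, which matches the even index class of the product with the odd index class of the factor.

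Putting these together gives $\sigma^{\mathrm{Witt}}_{C^*_r\Gamma}(\hat X,r)_{\bbQ}=\sigma^{\mathrm{Witt}}_{C^*_r\Gamma}(\hat X',r')_{\bbQ}$, which is precisely the claimed rational stratified homotopy invariance. Since both of the building blocks are already proved, there is no genuine obstacle here; the only delicate point to verify in writing is that the suspension argument used to pass from odd to even in the remark to Proposition \ref{prop:h-ska} is compatible, under Proposition \ref{prop:magic}, with the corresponding product formula for Banagl's symmetric signature, so that the equality of symmetric signatures of $\hat X\times S^1$ and $\hat X'\times S^1$ really does descend to an equality for $\hat X$ and $\hat X'$. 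This compatibility is implicit in the multiplicativity of the rationalized index/symmetric signature morphism and the fact that the suspension isomorphism is induced by exterior Kasparov product with the Dirac class of $S^1$, so it follows from the construction.
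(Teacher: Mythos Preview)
Your proposal is correct and follows exactly the route the paper takes: the corollary is stated immediately after Proposition~\ref{prop:magic} with the remark that it and Proposition~\ref{prop:h-ska} together imply the result ``at once,'' with no further argument given.

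One small comment: the ``delicate point'' you raise at the end is a non-issue and can be dropped. You do not need any compatibility between suspension and Banagl's symmetric signature. Proposition~\ref{prop:magic} applies directly to the odd-dimensional spaces $(\hat X,r)$ and $(\hat X',r')$, identifying their $C^*$-algebraic symmetric signatures with their odd index classes rationally; the suspension argument in the remark after Proposition~\ref{prop:h-ska} lives entirely on the index-class side and already delivers $\Ind(\wt\eth_{\sign})=\Ind(\wt\eth_{\sign}')$ in $K_1(C^*_r\Gamma)$. So there is no need to pass through the symmetric signatures of $\hat X\times S^1$ at all.
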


This Corollary does not seem to be obvious from a  purely topological point of view.
We add that very recently Friedman and McClure have given an alternative definition
 of symmetric signature on Witt spaces; while its relationship with Banagl's definition
 is for the time being unclear, we point out that the symmetric signature of 
 Friedman and McClure is a stratified homotopy invariant; moreover, with the same proof
 given above, its image in $K_* (C^*_r \Gamma)$ is rationally equal to our signature index class.

\section{Epilogue}\label{section:epilogue}

Let $\hat X$ be an orientable Witt pseudomanifold with fundamental group $\Gamma$. 
We endow the regular part of $\hat X$ with an adapted iterated edge metric $g$ (Proposition \ref{prop:rescaling}).
Let $\hat X ^\prime$ 
be a Galois $\Gamma$-covering and $r: \hat X\to B\Gamma$ a classifying map for $ \hat X ^\prime$. 
We now restate once more  the signature package for the pair $(\hat X,r:\hat X\to B\Gamma)$ indicating precisely where 
the individual items have been established in this paper.

\begin{list}
 {(1)} \item The signature operator defined by the edge (adapted) iterated  metric $g$
with values in the Mishchenko bundle $r^* E\Gamma \times_\Gamma C^*_r\Gamma$
defines  a signature index class  $\Ind (\wt \eth_{\sign})\in K_* (C^*_r \Gamma)$, $* \equiv \dim X \;{\rm mod}\; 2$.
{\it Established in Theorem \ref{theo:kk}.}
 \end{list}

\begin{list}
 {(2)} \item The signature index class is a (smooth) Witt bordism invariant; more precisely  it defines a group homomorphism 
$\Omega^{{\rm Witt},s}_* (B\Gamma) \to K_* (C^*_r \Gamma)\otimes \bbQ$.
{\it This is Theorem \ref{thm:bordism}, together with \eqref{bordsim-inv}.}
\end{list}

\begin{list}
 {(3)}\item The signature index class is a stratified homotopy invariant.
{\it Proposition \ref{prop:h-ska}.}
\end{list}

\begin{list}
 {(4)}
\item  There is a  K-homology signature class $[\eth_{\sign}]\in K_* (X)$ whose Chern
 character is, rationally, the  homology L-Class of Goresky-MacPherson.
  {\it Theorem \ref{theo:k-homology} and Theorem \ref{theo:cmwu}.}
  \end{list}
 
 \begin{list}
 {(5)}
\item The assembly map $\beta: K_* (B\Gamma)\to K_* (C^*_r\Gamma)$ sends the
class $ r_* [\eth_{\sign}]$ into $\Ind (\wt \eth_{\sign})$. 
{\it Corollary \ref{cor:assembly}.}
\end{list}

\begin{list}
 {(6)}
\item If  the assembly map  is rationally injective one  can deduce from the above results the homotopy invariance of the
 Witt-Novikov higher signatures. {\it Theorem \ref{theo:main}.}
\end{list}

 \begin{list}
 {(7)} \item There is a topologically defined $C^*$-algebraic symmetric signature 
 $\sigma^{{\rm Witt}}_{C^*_r \Gamma}(X,r)$\\$\in  K_* (C^*_r \Gamma)$ which is equal to the analytic index class
 $ \Ind (\wt \eth_{\sign})$ rationally. 
{\it  This is Banagl's construction together with our Proposition \ref{prop:magic}.}
   \end{list}


\end{document}